\newtheorem{thm}{Theorem}[section]
\newtheorem{cor}[thm]{Corollary}
\newtheorem{lem}[thm]{Lemma}
\newtheorem{prop}[thm]{Proposition} 
\newtheorem{propnots}[thm]{Proposition et notations}
\theoremstyle{definition} 
\newtheorem{rem}[thm]{Remark}
\newtheorem{defn}[thm]{Definition} 
\newtheorem{defn-lem}[thm]{Definition-Lemma}
\newtheorem{example}[thm]{Example}
\newtheorem{notation}[thm]{Notation}
\newtheorem{notations}[thm]{Notations}
\theoremstyle{remark}
\numberwithin{equation}{section}
\newcommand{\abs}[1]{\left\vert#1\right\vert}
\newcommand{\ord}{\text{ord}\:} 
\newcommand{\ac}{\overline{\text{ac}}\:}
\newcommand{\Z}{\mathbb Z} 
\newcommand{\IN}{\mathbb N}
\newcommand{\R}{\mathbb R}
\newcommand{\eps}{\varepsilon} 
\newcommand{\ra}{\rightarrow}
\newcommand{\Gm}{\mathbb G_{m}} 
\newcommand{\mgg}{\mathcal M_{\mathbb G_{m}}^{\mathbb G_{m}}}
\newcommand{\N}{\mathcal N}
\newcommand{\mes}{\text{mes}\:}
\newcommand{\supp}{\text{supp}}
\renewcommand{\k}{\mathbf k}
\newcommand\ag{{a}}\newcommand\bg{{b}}
\newcommand\mg{{\mu}}
\newcommand\sg{{\sigma}}
\newcommand{\m}{\mathfrak m}
\definecolor{vert}{rgb}{0,0.6,0.2}
\newcommand{\GN}{\overline{\N}}
\newcommand{\Eu}{\chi_c}
\newcommand{\disc}{\text{disc}\:}
\title[Newton transformations and motivic invariants at infinity of plane curves]{Newton transformations \\ and \\ motivic invariants at infinity of plane curves}
\author{Pierrette Cassou-Noguès}
\address{Institut de Math\'ematiques de Bordeaux, UMR 5251, Universit\'e de Bordeaux, 350, Cours de la Lib\'eration, 33405, Talence Cedex, France}
\email{Pierrette.Cassou-Nogues@math.u-bordeaux.fr}
\author{Michel Raibaut} 
\address{Laboratoire de Math\'ematiques, UMR 5127, Université Grenoble Alpes, Universit\'e Savoie Mont-Blanc, B\^atiment Chablais, Campus Scientifique, 
Le Bourget du Lac, 73376 Cedex, France} 
\email{Michel.Raibaut@univ-savoie.fr}
\urladdr{www.lama.univ-savoie.fr/$\sim$raibaut/}
\begin{document} 
\maketitle{} 
\begin{abstract}
	In this article we give an expression of the motivic Milnor fiber at infinity and the motivic nearby cycles at infinity of a polynomial $f$ in two variables with coefficients in an algebraic closed field of characteristic zero. This expression is given in terms of some motives associated to the faces of the Newton polygons appearing in the Newton algorithm at infinity of $f$ without any condition of convenience or non degeneracy. In the complex setting, we compute the Euler characteristic of the generic fiber of $f$ in terms of the area of the surfaces associated to faces of the Newton polygons. Furthermore, if $f$ has isolated singularities, we compute similarly the classical invariants at infinity $\lambda_{c}(f)$ which measures the non equisingularity at infinity of the fibers of $f$ in $\mathbb P^2$, 
	and  we prove the equality between the topological and the motivic bifurcation sets and give an algorithm to compute them.
\end{abstract}
\today 
\tableofcontents{}
\section*{Introduction} 
 Let $\k$ be an algebraic closed field of characteristic zero. Let $f$ be a polynomial with coefficients in $\k$. 
Using the motivic integration theory, introduced by Kontsevich in \cite{Kon95a}, and more precisely constructions of Denef -- Loeser in \cite{DenLoe98b, DenLoe99a,DenLoe02a} and  Guibert -- Loeser -- Merle in \cite{GuiLoeMer06a}, Matsui--Takeuchi in \cite{Matsui-Takeuchi-13, Matsui-Takeuchi-14} and independently the second author in \cite{Rai11} (see also \cite{Rai10a} and \cite{Rai12})
defined a \emph{motivic Milnor fiber at infinity} of $f$, denoted by $S_{f,\infty}$. It is an element of 
$\mathcal M_{\{\infty\} \times \Gm}^{\Gm}$, a modified Grothendieck ring of varieties over $\k$ endowed with an action of the multiplicative group $\Gm$ of $\k$. 
It follows from Denef--Loeser results that the motive $S_{f,\infty}$ is a ``motivic'' incarnation of the topological Milnor fiber at infinity of $f$, denoted by $F_\infty$ and endowed with its monodromy action $T_\infty$. 
For instance, when $\k$ is the field of complex numbers, the motive $S_{f,\infty}$ realizes on the Euler characteristic of $F_\infty$ and the monodromy zeta function or the Steenbrink's spectrum of $(F_\infty,T_\infty)$.

In the same way, in \cite[\S 4]{Rai11} (see also \cite{Matsui-Takeuchi-14}) a notion of \emph{motivic nearby cycles at infinity} 
$S_{f,a}^{\infty}$ of $f$ for a value $a$ is defined as an element of the Grothendieck ring $\mathcal M_{\{a\} \times \Gm}^{\Gm}$. 
This motive is constructed using a compactification $X$ of the graph of $f$ and arcs with origin in the closure of the fiber $f^{-1}(a)$ in $X$. 
It is shown in \cite{Rai11} that this motive does not depend on the chosen compactification. 
If $f$ has isolated singularities and $\k$ is the field of complex numbers, Fantini and the second author proved in \cite{FR} that the Euler characteristic of the motive $S_{f,a}^{\infty}$ is equal to $(-1)^{d-1}\lambda_{a}(f)$, where $d$ is the dimension of the ambient space and $\lambda_{a}(f)$ is the classical invariant which measures the lack of equisingularity at infinity of the usual compactification of $f$ in $\mathbb P_{\mathbb C}^{d} \times \mathbb P_{\mathbb C}^{1}$ (see for instance \cite{ArtLueMel00b} and \cite{Tibar}).
Furthermore, the second author introduced in \cite{Rai11} a \emph{motivic bifurcation set} $B_{f}^{\text{mot}}$ as the set of values $a$ which belong to the discriminant of $f$ or such that $S_{f,a}^{\infty}\neq 0$. It is shown in \cite{Rai11} that this set is finite, and for instance if $f$ has isolated singularities at infinity, it is proven in \cite{FR} that the usual topological bifurcation set $B_{f}^{\text{top}}$ of $f$ is included in $B_{f}^{\text{mot}}$.

In this article we investigate the case of polynomials in $\k[x,y]$ in full generality (namely without any assumptions of convenience or non degeneracy w.r.t any Newton polygon) using ideas of Guibert in \cite{Gui02a}, Guibert, Loeser and Merle in \cite{GuiLoeMer05a}, and the works of the first author and Veys in the case of an ideal of $\k[[x,y]]$ in \cite{CassouVeys13, Cassou-Nogues-Veys-15} (see also \cite{carai-antonio} for the equivariant case). 

More precisely, the first main results of this article are the expression, in Theorem \ref{thm:thmSfinfini} and Theorem \ref{thmcyclesprochesinfini}, of the 
\emph{motivic Milnor fiber at infinity} $S_{f,\infty}$ and the \emph{motivic nearby cycles at infinity} $S_{f,a}^{\infty}$ for a value $a$, in terms of some motives associated to faces of the Newton polygons appearing in the Newton algorithm at infinity presented in \cite{Cassou11} and recalled in Definition \ref{algoNewtoninfini}. These formulas use in particular the computation using the Newton algorithm (Theorem \ref{thmSfeps}) of some specific motivic nearby cycles
$\left(S_{h^{\eps},x\neq 0}\right)_{(0,0),0}$ , for $h$ any element of $\k[x^{-1},x,y]$ of the form $x^{-M}g(x,y)$ and where $h^{\epsilon}$ denotes $h$ or $h^{-1}$ if $\epsilon$ is $1$ or $-1$. Note that in the statement of these theorems, we give an explicit expression of the rational form of the motivic zeta functions defining 
$S_{f,a}^{\infty}$ and $S_{f,\infty}$. This is done to study in a future article the motivic monodromy conjecture for $f$ for the monodromy at infinity or the monodromy around a value of a bifurcation set.

Applying the realization of the Euler characteristic on the motives $S_{f,\infty}$ and $S_{f,a}^{\infty}$, we deduce in Corollary \ref{Kouchnirenkoformulagenfiber} and Corollary \ref{calcullambdaaire}, a Kouchnirenko type formula for the Euler characteristic of the generic fiber of $f$ and the invariant $\lambda_{a}(f)$ (in the case of isolated singularities), in terms of the area of surfaces associated to faces of Newton polygons appearing in the Newton algorithm at infinity. 

Finally, by area considerations, all these results allow to prove the fundamental Theorem \ref{Bftop=Bfmot} of this article, which states that in the isolated singularities case, we have the equality 
$$B_f^{\text{top}}=B_f^{\text{Newton}}=B_f^{\text{mot}}$$
where $B_f^{\text{Newton}}$ is the Newton bifurcation set of $f$ which is defined in an algorithmic way (Definition \ref{Newtonbifset}) using the Newton algorithm at infinity of $f$. {Up to factorisation of polynomials, this} gives in particular an algorithm to compute the usual topological bifurcation set for curves without assumptions of convenience or non degeneracy toward any Newton polygon, recovering and generalizing the analogous result for curves by Némethi and Zaharia in \cite{NemZah90a}. 

\section{Newton algorithms} \label{section1}
Let $\k$ be an algebraically closed field of characteristic zero, with multiplicative group denoted by $\Gm$.

\begin{defn}[Coefficients and support] 
	The \emph{support} of a polynomial $f(x,y) = \sum_{(a,b)\in \Z^2}c_{a,b}x^{a}y^{b}$ with coefficients in $\k$, is the set defined by 
	$\text{Supp}(f)=\{(a,b)\in \Z^2 \vert c_{a,b} \neq 0\}$. Sometimes, we will denote by $c_{a,b}(f)$ the coefficient $c_{a,b}$ of $f$.
\end{defn}

\subsection {Newton algorithm} 
\subsubsection{Newton polygons} \label{section:Newton-algorithm-local}

\begin{notation}  
	Let $E$ be a subset of $\Z_{\geq -n} \times \Z_{\geq -m}$ with $(n,m)$ in $\mathbb N^2$. We denote by $\Delta (E)$ the smallest convex set containing $E+\R ^2_+=\{a+b, a\in E, b \in \R ^2_+\}.$
\end{notation}
\begin{defn}[Newton diagram for a set, for a polynomial, vertices]
	A subset $\Delta$ of $\R^2$ is called \emph{Newton diagram} if $\Delta = \Delta(E)$ for some set $E$ in 
	$\Z_{\geq -n} \times \Z_{\geq -m}$
	with $(n,m)$ in $\mathbb N^2$. The smallest set $E_0$ of $\Z^2$ such that 
	$\Delta=\Delta (E_0)$ is called the \emph{set of vertices} of $\Delta$. 
	The \emph{Newton diagram} $\Delta(f)$ of a polynomial $f$ in $\k[x^{-1},x,y]$ is the diagram $\Delta(\text{Supp} f)$.
\end{defn} 

\begin{rem} The set of vertices of a Newton diagram is finite.\end{rem} 

\begin{defn}[Newton polygon, one dimensional faces, zero dimensional faces, horizontal and vertical faces] \label{def:Newton-polygon-height}
	Let $\Delta$ be a Newton diagram and $E_0 = \{v_0, \dots, v_d\}$ be its set of vertices with $v_i = (a_i,b_i)$ in $\Z^2$ satisfying $a_{i-1}<a_i$ and $b_{i-1}>b_i$, for any $i$ in $\{1,\dots,d\}$.
	For such $i$, we denote by $S_i$
	the segment $[v_{i-1},v_i]$ and by $l_{S_i}$ the line supporting $S_i$. 
	We define the \emph{Newton polygon} of $\Delta$ as the set 
	$$\mathcal{N}(\Delta)=\{S_i\}_{i \in \{1,\dots,d\}} {\cup \{v_i\}_{i \in \{0,\dots,d\}}},$$ 
	the \emph{height} of $\Delta$ as the integer $h(\Delta)=b _0-b_d$, the \emph{one dimensional faces} of $\mathcal{N}(\Delta)$ as the segments $S_i$, 
	the \emph{zero dimensional faces} of $\mathcal{N}(\Delta)$ as the vertices $v_i$ and among them the \emph{vertical face} $\gamma_v$ as $v_0$ and the \emph{horizontal face} $\gamma_h$ as $v_d$.
\end{defn}

\begin{defn}[Newton polygon at the origin, height of a polynomial] \label{def:Newton-polygon-origin}
The \emph{Newton polygon at the origin} $\mathcal{N}(f)$ of a polynomial $f$ in $\k[x^{-1},x,y]$ is the Newton polygon $\mathcal{N}(\Delta(f))$.
The \emph{height of $f$}, denoted by $h(f)$, is the height $h(\Delta(f))$.
\end{defn}

\begin{defn}[Face polynomials, roots and multiplicities, non degenerate case] \label{def:facepolynomial} 
	\label{def:zero-partie-initiale}\label{def:roots}
	 Let $f$ be a polynomial in $\k[x^{-1},x,y]$ and $\gamma$ be a face of $\N(f)$.
	If $\gamma$ has dimension zero, then $\gamma$ is a point $(a_0,b_0)$ and we denote by $f_{\gamma}$ the monomial 
	$c_{(a_0,b_0)}(f)x^{a_0}y^{b_0}$.
	If $\gamma$ has dimension one, then $\gamma$ is supported by a line $l$ and we define 
	$$f_\gamma(x,y):=\sum_{(\ag,\bg)\in l\cap \mathcal{N}(f)}c_{\ag,\bg}(f)x^{\ag}y^{\bg}.$$ 
	As the field $\k$ is algebraically closed, there exist $c$ in $\k$, $(a_\gamma,b_\gamma)$ in $\mathbb{Z}\times\mathbb{N}$,  $(p,q)$ in $\mathbb N^2$ and coprime, 
	$r$ in $\IN^*$, $\mu_i$ in $\k^*$ (all different) and $\nu_i$ in $\IN^*$ such that 
	$$f_\gamma(x,y) = cx^{a_\gamma}y^{b_\gamma}\prod _{1\leq i\leq r }(y^p-\mg_ix^q)^{\nu_i}.$$
	The polynomial $f_\gamma$ is called \emph{face polynomial} of $f$ associated to the face $\gamma$. In the one dimensional case, each $\mu_i$ is called \emph{root} with \emph{multiplicity} $\nu_i$ of the face polynomial $f_\gamma$. The set of roots is denoted by $R_\gamma$. The roots are said to be \emph{simple} if all the $\nu_i$ are equal to 1.
	Following \cite{Kou76a}, $f$ is said \emph{non degenerate} with respect to its Newton polygon $\N(f)$, if for each one dimensional face $\gamma$ in
	$\N(f)$, the face polynomial $f_\gamma$ has no critical points on the torus $\Gm^2$, in particular all its roots are simple.
\end{defn} 

\begin{defn}[Rational polyhedral convex cone]
Let $I$ be a finite set. A \emph{rational polyhedral convex cone} of {$\mathbb R^{\abs{I}}\setminus \{0\}$}
is a convex part of {$\mathbb R^{\abs{I}}\setminus \{0\}$} defined by a finite number of linear
inequalities with integer coefficients of type $a\leq 0$ and $b>0$ and stable
under multiplication by elements of $\mathbb R_{>0}$.
\end{defn}

In the well-known following proposition, we introduce notations used throughout this article.

\begin{propnots}[Function $\m$, dual cone $C_\gamma$ and normal vector $\vec{\eta}_\gamma$] \label{prop:dualfan-Newton-local}
	Let $E$ be a subset of some $\Z_{\geq -n}\times \Z_{\geq -m}$ with $(n,m)$ in $\mathbb N^2$.
	Let $(p,q)$ be in $\mathbb N^2$ with $\gcd(p,q)=1$ and $$l_{(p,q)}:(a,b) \in \mathbb R^2 \mapsto ap+bq.$$
	\begin{enumerate}
		\item The minimum of the restriction $l_{(p,q)\mid \Delta(E)}$, denoted by $\m(p,q)$, is reached on a face denoted by $\gamma(p,q)$ of $\Delta(E)$. 
			Furthermore, the linear map $l_{(p,q)}$ is constant on the face $\gamma(p,q)$.  
		\item For any face $\gamma$ of $\Delta(E)$, we denote by $C_\gamma$ the interior in its own generated vector space in $\mathbb R^2$, of the positive cone generated by the set $\{(p,q) \in \mathbb N^2 \mid \gamma(p,q) = \gamma\}$. This set is called \emph{dual cone} to the face $\gamma$ and is a relatively open rational polyhedral convex cone of $(\mathbb R_{\geq 0})^{2}$.
	\end{enumerate}
	For a one dimensional face $\gamma$, we denote by $\vec{n}_\gamma$ the normal vector to the face $\gamma$ with integral non negative coordinates and the smallest norm. With these notations we have the following properties.
	\begin{enumerate}[resume] 	
		       \item The dual cone $C_\gamma$ of any one dimensional face $\gamma$ of $\Delta(E)$ is the cone 
			       $\mathbb R_{>0} \vec{n}_\gamma$.
		       \item Any zero dimensional face $\gamma$ of $\Delta(E)$ is an intersection of two one dimensional faces $\gamma_1$ and $\gamma_2$ of $\Delta(E)$ (may be not compact) and its dual cone $C_\gamma$ is the cone $\mathbb R_{>0}\vec{n}_{\gamma_1} + \mathbb R_{>0} \vec{n}_{\gamma_2}$.
		       \item The set of dual cones $(C_\gamma)_{\gamma \in \N(\Delta(E))}$ is a fan of $(\mathbb R_{\geq 0})^{2}$, called \emph{dual fan} of $\Delta(E)$.
	\end{enumerate}
\end{propnots}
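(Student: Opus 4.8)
The plan is to identify $C_\gamma$ with the relative interior of the normal cone of the face $\gamma$ of the polyhedron $\Delta(E)$, and then to read off all five assertions from the elementary combinatorics of the normal fan of a two-dimensional polyhedron whose recession cone is $(\R_{\geq 0})^2$. First I would record the structure of $\Delta(E)$. By the remark on vertex sets, $E_0=\{v_0,\dots,v_d\}$ is finite and $\Delta(E)=\Delta(E_0)=\operatorname{conv}(E_0)+(\R_{\geq 0})^2$ is a two-dimensional polyhedron with recession cone $(\R_{\geq 0})^2$. With the ordering $a_{i-1}<a_i$, $b_{i-1}>b_i$, its lower-left boundary consists of the vertical half-line $R_0=\{a_0\}\times[b_0,+\infty)$, the segments $S_1,\dots,S_d$, and the horizontal half-line $R_{d+1}=[a_d,+\infty)\times\{b_d\}$; the proper faces of $\Delta(E)$ are the vertices $v_0,\dots,v_d$ and the one-dimensional faces $R_0,S_1,\dots,S_d,R_{d+1}$ (the extreme ones non compact, as allowed in (4)). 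For a one-dimensional face $\gamma$ I take $\vec{n}_\gamma$ to be the primitive integer vector orthogonal to $\gamma$ with nonnegative coordinates; since $\Delta(E)$ lies on the side of the supporting line of $\gamma$ towards which $\vec{n}_\gamma$ points and consecutive edges turn, the rays $\R_{>0}(1,0)=\R_{>0}\vec{n}_{R_0}$, $\R_{>0}\vec{n}_{S_1},\dots,\R_{>0}\vec{n}_{S_d}$, $\R_{>0}(0,1)=\R_{>0}\vec{n}_{R_{d+1}}$ occur in this cyclic order inside the first quadrant.

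For part (1): given $(p,q)\in(\R_{\geq 0})^2\setminus\{0\}$, the form $l_{(p,q)}$ is nonnegative on the recession cone $(\R_{\geq 0})^2$, hence bounded below on $\Delta(E)$, with $\inf_{\Delta(E)}l_{(p,q)}=\min_{0\le i\le d}l_{(p,q)}(v_i)=:\m(p,q)$ attained, and the minimizing locus $\gamma(p,q):=\Delta(E)\cap\{l_{(p,q)}=\m(p,q)\}$ is the face of $\Delta(E)$ spanned by the minimizing vertices, completed in the two boundary cases $(p,q)\in\R_{>0}(1,0)$ resp. $\R_{>0}(0,1)$ by the unbounded ray $R_0$ resp. $R_{d+1}$. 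In all cases it is a face of $\Delta(E)$ on which $l_{(p,q)}$ is by construction constant, and it is proper (zero- or one-dimensional) since $l_{(p,q)}\neq 0$. This gives (1) for all real $(p,q)$, in particular for the coprime integer vectors of the statement.

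For parts (2)--(4): to a proper face $\gamma$ I associate $N_\gamma:=\{(p,q)\in(\R_{\geq 0})^2\mid l_{(p,q)}\text{ attains its minimum over }\Delta(E)\text{ at every point of }\gamma\}$; written out on the finitely many vertices $v_i$ this is a rational polyhedral convex cone, and the previous paragraph shows that $(p,q)\in(\R_{\geq 0})^2\setminus\{0\}$ satisfies $\gamma(p,q)=\gamma$ precisely when $(p,q)\in\operatorname{relint}(N_\gamma)$. The angular ordering of the normal rays yields $N_{S_i}=\R_{\geq 0}\vec{n}_{S_i}$ (and $N_{R_0}=\R_{\geq 0}(1,0)$, $N_{R_{d+1}}=\R_{\geq 0}(0,1)$) for one-dimensional faces, and $N_{v_i}=\R_{\geq 0}\vec{n}_{\gamma_1}+\R_{\geq 0}\vec{n}_{\gamma_2}$ for a vertex $v_i=\gamma_1\cap\gamma_2$ its two adjacent edges, with $\vec{n}_{\gamma_1},\vec{n}_{\gamma_2}$ linearly independent. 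It then remains to match the definition of $C_\gamma$: since the integer points are dense in the rational cone $N_\gamma$, the $\R_{\geq 0}$-span of $\{(p,q)\in\mathbb N^2\mid\gamma(p,q)=\gamma\}=\mathbb N^2\cap\operatorname{relint}(N_\gamma)$ equals $\operatorname{relint}(N_\gamma)\cup\{0\}$, whose interior in its own linear span is $\operatorname{relint}(N_\gamma)$. Hence $C_\gamma=\operatorname{relint}(N_\gamma)$, a relatively open rational polyhedral convex cone of $(\R_{\geq 0})^2$, equal to $\R_{>0}\vec{n}_\gamma$ in the one-dimensional case (3) and to $\R_{>0}\vec{n}_{\gamma_1}+\R_{>0}\vec{n}_{\gamma_2}$ in the zero-dimensional case (4).

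For part (5): every $(p,q)\in(\R_{\geq 0})^2\setminus\{0\}$ lies in $C_{\gamma(p,q)}$ and in no other $C_\gamma$, so the $C_\gamma$ partition $(\R_{\geq 0})^2\setminus\{0\}$; their closures are the cones $N_\gamma$, which by the angular ordering of the $\vec{n}_\gamma$ are the rays $\R_{\geq 0}(1,0),\R_{\geq 0}\vec{n}_{S_1},\dots,\R_{\geq 0}\vec{n}_{S_d},\R_{\geq 0}(0,1)$ together with the two-dimensional cones between consecutive rays, each face of a higher-dimensional one being again in the list and two of them meeting along a common face. This is precisely the statement that $(C_\gamma)_\gamma$ is a fan of $(\R_{\geq 0})^2$. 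I expect the only delicate points to be the bookkeeping forced by the non-compact faces $R_0,R_{d+1}$ (needed for (1) and (4) to hold verbatim) and the final identification $C_\gamma=\operatorname{relint}(N_\gamma)$ extracting the relative interior from the indirect ``span of integer points'' definition; the rest is the standard theory of normal fans of polyhedra.
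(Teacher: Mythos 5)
Your proof is correct. Note that the paper itself offers no argument here: the proposition is introduced with the phrase ``well-known'' and serves mainly to fix notation ($\m$, $C_\gamma$, $\vec n_\gamma$), so there is no proof of record to compare against; what you have written is the standard normal-fan justification, and it supplies exactly the details the paper omits. Your identification of $C_\gamma$ with the relative interior of the closed normal cone $N_\gamma$ is the right way to reconcile the paper's indirect definition (interior, in its own span, of the cone generated by the integer points $(p,q)$ with $\gamma(p,q)=\gamma$) with the explicit descriptions in points (3) and (4); the density argument you invoke (rational directions are dense in the open arc of directions of $N_\gamma$, each realized by an integer vector) is what makes that step work. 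Your bookkeeping of the two non-compact edges $R_0$ and $R_{d+1}$ is also worth keeping: it is needed for (1) when $(p,q)=(1,0)$ or $(0,1)$ and for (4) at the extreme vertices $v_0,v_d$, and it exposes a slight imprecision in the statement of (5), since the paper indexes the fan only by the compact faces in $\mathcal N(\Delta(E))$, so the boundary rays $\R_{>0}(1,0)$ and $\R_{>0}(0,1)$ occur only as faces of the closures $\overline{C_{v_0}}$, $\overline{C_{v_d}}$ rather than as cones $C_\gamma$ in their own right; your version, which also records the normal rays of the non-compact edges, is the cleaner formulation.
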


\subsubsection{Newton algorithm}

\begin{defn}[Newton transformations, Newton transforms and compositions] \label{defn:Newton-map-local} 
	Let $(p,q)$ be in $\IN^2$ with $\gcd(p,q)=1$. Let $(p',q')$ be in $\IN^2$ such that
	$pp'-qq'=1$. Let $\mu$ be in $\Gm$. We define the \emph{Newton transformation} associated to $(p,q,\mu)$ as the application
	\begin{equation} \label{formule:Newtontransform} 
		\begin{matrix} 
		&\sigma_{(p,q,\mu)}&:&  \k[x^{-1},x,y] &\longrightarrow& \k[x_1^{-1},x_1, y_1]\\ 
		&&&f(x,y)&\mapsto &f(\mu^{q'}x_1^p, x_1^q(y_1+\mu ^{p'}))\\ 
	\end{matrix}.
        \end{equation}
	We call $\sigma _{(p,q,\mu)}(f)$ a Newton transform of $f$ and denote it by $f_{{\sigma_{(p,q,\mu)}}}$ or simply $f_\sigma$. More generally, let $\Sigma _n=(\sigma _1,\cdots, \sigma _n)$ be a finite sequence of Newton maps $\sigma_i$, we define the \emph{composition} $f_{\Sigma _n}$ by induction: 
	$f_{\Sigma _1}=f_{\sigma_1}$, $ f_{\Sigma _i}=(f_{\Sigma _{i-1}})_{\sigma _i}$ for any $i$.
\end{defn}

\begin{rem} 
	The Newton map $\sigma _{(p,q,\mu)}$ depends on $(p',q')$, nevertheless if $(p'+lq,q'+lp)$ is another pair, then
	$$f(\mu^{q'+lp}x_1^p, x_1^q(y_1+\mu^{p'+lq}))=
	f(\mu^{q'}(x_1\mu ^l)^p, (x_1\mu^l)^q(y_1\mu^{-lq}+\mu ^{p'})),$$
	for any $f$ in $\k[x^{-1},x,y]$. Furthermore, there is exactly one choice of $(p',q')$ satisfying $pp'-qq'=1$ and 
	$p'\leq q$ and $q'<p$. In the sequel we will always assume these inequalities.
	This will make procedures canonical.
\end{rem}

\begin{rem} Sometimes, for instance in subsection \ref{sec:cashorizontalinfini}, we will work with polynomials in $\k[x,y,y^{-1}]$.
	In this case, the Newton polygon $\N(f)$ is defined as in Definition \ref{def:Newton-polygon-origin} and we go back to the case $\k[x^{-1},x,y]$, using a Newton maps as
$$\begin{matrix} 
	&\sigma_{(p,q,\mu)}&:& \k[x,y,y^{-1}] &\longrightarrow& \k[x_1^{-1},x_1,y_1]\\ 
	&&&f(x,y)&\mapsto &f(x_1^p(y_1+\mu ^{q'}), x_1^q\mu ^{p'})\\ 
\end{matrix}
$$
\end{rem}

\begin{lem}[Newton lemma] \label{lem:Newton-alg} 
	Let $(p,q)$ be in $\IN^2$ with $\gcd(p,q)=1$. 
	Let $\mu$ be in $\Gm$.
	Let $f$ be a non zero element in $\k[x^{-1},x,y]$ and $f_1$ be its Newton transform 
	$\sg_{(p,q,\mu)}(f)$ in $\k[x_1^{-1},x_1,y_1]$ and $\m$ as above, defined relatively to $\N(f)$.
	\begin{enumerate}
		\item If there does not exist a one dimensional face $\gamma$ of $\mathcal{N}(f)$ whose
			supporting line has equation $p\ag +q\bg=N$, for some $N$,
			then there is a polynomial $u(x_1,y_1)$ in $\k[x_1,y_1]$ with $u(0,0)\neq 0$ such that
			$f_1(x_1,y_1)=x_1^{\m(p,q)} u(x_1,y_1)$.
		\item If there exists a one dimensional face $\gamma$ of $\mathcal{N}(f)$ whose supporting line
			has equation $p\ag +q\bg = N$, if $\mu$ is not a root of $f_\gamma$, then $\m(p,q)=N$ and there is a polynomial $u(x_1,y_1)$ in $\k[x_1,y_1]$ with $u(0,0)\neq 0$ such that $f_1(x_1,y_1)=x_1^{N} u(x_1,y_1)$.
			
		\item If there exists a one dimensional face $\gamma$ of $\mathcal{N}(f)$ whose supporting line has equation $p\ag +q\bg = N$, if $\mu$ is a root of $f_\gamma$ of multiplicity $\nu$ then $\m(p,q)=N$ and there is a polynomial $g_1(x_1,y_1)$ in $\k[x_1,y_1]$ with  
		$g_1(0,0)=0$ and $g_1(0,y_1)$ of valuation $\nu$, such that $f_1(x_1,y_1)=x_1^{N} g_1(x_1,y_1).$
		In that case we have in particular the inequality $h(f)\geq \nu \geq h(f_1)$.
	\end{enumerate} 
\end{lem}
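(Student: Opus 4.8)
The plan is to compute directly the effect of the substitution $x = \mu^{q'}x_1^p$, $y = x_1^q(y_1+\mu^{p'})$ on each monomial $x^a y^b$ appearing in $f$, and then read off the Newton polygon of $f_1$ from that of $f$. First I would fix a monomial $c_{a,b}x^a y^b$ of $f$ and observe that
\begin{equation*}
\sigma_{(p,q,\mu)}(x^a y^b) = c_{a,b}\,\mu^{q'a + p'b}\, x_1^{pa+qb}\,(y_1+\mu^{p'})^b,
\end{equation*}
so every monomial of $f$ contributes terms whose $x_1$-exponent is exactly $l_{(p,q)}(a,b) = pa+qb$, while the $y_1$-exponents that appear range over $\{0,1,\dots,b\}$. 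Hence, after grouping, $f_1(x_1,y_1) = \sum_{(a,b)\in\operatorname{Supp}(f)} c_{a,b}\mu^{q'a+p'b} x_1^{pa+qb}(y_1+\mu^{p'})^b$, and the smallest power of $x_1$ that can possibly survive is $\m(p,q) = \min_{(a,b)\in\operatorname{Supp}(f)} l_{(p,q)}(a,b)$, realized precisely on the face $\gamma(p,q)$ in the sense of Proposition \ref{prop:dualfan-Newton-local}. So I would write $f_1(x_1,y_1) = x_1^{\m(p,q)} g_1(x_1,y_1)$ with $g_1 \in \k[x_1,y_1]$, and the whole lemma reduces to understanding $g_1(0,y_1)$ and $g_1(0,0)$.

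Next I would identify $g_1(0,y_1)$ explicitly. Setting $x_1 = 0$ kills every monomial with $l_{(p,q)}(a,b) > \m(p,q)$, leaving only the contributions from the face $\gamma(p,q)$, so
\begin{equation*}
g_1(0,y_1) = \sum_{(a,b)\in \gamma(p,q)\cap \operatorname{Supp}(f)} c_{a,b}\,\mu^{q'a+p'b}\,(y_1+\mu^{p'})^b.
\end{equation*}
For case (1), when there is no one-dimensional face with supporting line $p a + q b = N$, the face $\gamma(p,q)$ is a single vertex $(a_0,b_0)$, so $g_1(0,y_1) = c_{a_0,b_0}\mu^{q'a_0+p'b_0}(y_1+\mu^{p'})^{b_0}$, which at $y_1=0$ equals $c_{a_0,b_0}\mu^{q'a_0+p'b_0+p'b_0}\neq 0$ — and since $u(x_1,y_1):=g_1(x_1,y_1)$ is a genuine polynomial (all exponents $pa+qb-\m(p,q)$ are $\geq 0$ here and only a vertex contributes so there is no negative-$x_1$ issue), this is exactly the claimed form $x_1^{\m(p,q)}u$ with $u(0,0)\neq 0$. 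For cases (2) and (3), $\gamma = \gamma(p,q)$ is the one-dimensional face on the line $pa+qb = N$; by Proposition \ref{prop:dualfan-Newton-local} the minimum of $l_{(p,q)}$ on $\Delta(f)$ is attained there, so $\m(p,q) = N$. Writing the face polynomial as $f_\gamma(x,y) = cx^{a_\gamma}y^{b_\gamma}\prod_i (y^p - \mu_i x^q)^{\nu_i}$ as in Definition \ref{def:facepolynomial}, I would substitute and factor: since $y^p - \mu_i x^q \mapsto \mu^{\cdot}x_1^{\cdot}\bigl((y_1+\mu^{p'})^p - \mu_i\mu^{q-\cdots}\bigr)$ (bookkeeping the exponents via $pp'-qq'=1$), one finds that $g_1(0,y_1)$ is, up to a nonzero constant and a power of $(y_1+\mu^{p'})$, equal to $\prod_i\bigl((y_1+\mu^{p'})^p - \mu_i\mu^{\cdots q}\bigr)^{\nu_i}$, so its vanishing at $y_1=0$ is governed by whether $\mu^p\cdot(\text{unit}) = \mu_i\cdot(\text{unit})$, i.e. whether $\mu$ is one of the roots $\mu_i$. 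If $\mu$ is not a root (case 2), $g_1(0,0)\neq 0$ and we set $u = g_1$. If $\mu = \mu_i$ with multiplicity $\nu = \nu_i$ (case 3), then $(y_1+\mu^{p'}) - (\text{that factor})$ has a simple zero at $y_1 = 0$ raised to the power $\nu_i$, hence $g_1(0,y_1)$ has valuation exactly $\nu$ in $y_1$ and $g_1(0,0)=0$.

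Finally, for the height inequality $h(f)\geq \nu \geq h(f_1)$ in case (3): the height $h(f) = b_0 - b_d$ is the vertical extent of the Newton polygon, which on the face $\gamma$ is at least the exponent range $p\sum_i\nu_i$ coming from the product decomposition of $f_\gamma$, and in particular $h(f)\geq \nu_i p \geq \nu_i = \nu$ (using $p\geq 1$); for the other inequality, $h(f_1) = b'_0 - b'_{d'}$ is bounded above by the $y_1$-degree of $g_1(0,y_1)$, and since $g_1(0,y_1)$ has valuation $\nu$ at $y_1=0$, the vertex of $\N(f_1)$ on the vertical axis sits at height $\leq \nu$, giving $h(f_1)\leq \nu$. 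The main obstacle I anticipate is the exponent bookkeeping in cases (2)–(3): keeping track of the powers of $\mu$ and of $x_1$ produced when $(y^p - \mu_i x^q)$ is transformed, and checking carefully — using $pp'-qq'=1$ and the normalization $p'\leq q$, $q'<p$ — that the term of minimal $x_1$-degree in $\sigma_{(p,q,\mu)}(f_\gamma)$ is precisely $x_1^N$ times the advertised factorization, with no accidental further cancellation. Everything else is a routine substitution-and-degree computation.
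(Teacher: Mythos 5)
Your proposal is correct and is essentially the same direct-substitution computation to which the paper delegates the proof (it only cites \cite{CassouVeys13}, \cite{Cassou-Nogues-Veys-15}, \cite{carai-antonio}): transform each monomial, factor out $x_1^{\m(p,q)}$, observe that $g_1(0,y_1)$ is the transform of the face polynomial, and use $pp'-qq'=1$ to see that the factor $(y_1+\mu^{p'})^p-\mu_i\mu^{qq'}$ equals $\mu^{qq'}(\mu-\mu_i)$ at $y_1=0$, so it vanishes exactly when $\mu=\mu_i$ and then with a simple zero in characteristic zero, which also yields the height bounds as you argue. The only blemishes are cosmetic: the transformed monomial's coefficient is $c_{a,b}\,\mu^{q'a}$ rather than $c_{a,b}\,\mu^{q'a+p'b}$ (you double-count $\mu^{p'b}$), and in the last step the precise point is that $(0,\nu)$ is the top vertex of the Newton diagram of $g_1=x_1^{-N}f_1$, so $h(f_1)=\nu-b_d\leq\nu$, which is exactly what your valuation argument provides.
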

\begin{proof} This lemma is proved by a simple computation, see \cite{CassouVeys13}, \cite{Cassou-Nogues-Veys-15} 
	(or \cite[Lemma 2]{carai-antonio}). 
\end{proof}

\begin{rem}
	If $f_1(x_1,y_1)$ is equal to $x_1^{n_1}y_1^{m_1}u(x_1,y_1)$, where
	$(n_1,m_1)$ belongs to $\Z \times \IN^2$ and $u\in \k[x_1,y_1]$ is a unit in $\k[[x_1,y_1]]$, we say for
	short that $f_1$ is a \emph{monomial times a unit}. From this lemma, we see that there is a finite number of triples $(p,q,\mu)$ such that $\sigma _{(p,q,\mu)}(f)$ is
	eventually not a monomial times a unit in $\k[[x_1,y_1]]$. These triples are given by the equations of the faces of the Newton polygon and the roots of the
	corresponding face polynomials.
\end{rem}
We recall the notion of Newton algorithm based on Lemma \ref{lem:Newton-alg} and refer to \cite{ArtCasLue05a}, \cite{CassouVeys13}, \cite{Cassou-Nogues-Veys-15} for more details.

\begin{defn}[Newton algorithm] \label{def:algo-Newton}
	Let $f$ be a polynomial in $\k[x^{-1},x,y]$. The \emph{Newton algorithm} of $f$ is defined by induction.
	It starts by applying Newton transformations given by the equations of the faces of the Newton polygon $\N(f)$ and the roots of the corresponding face polynomials. 
	Then, this process is applied on each Newton transform until a \emph{base case} of the form $u(x,y)x^{-M}y^m$ or 
	$u(x,y)x^{-M}(y-\mu x^q+g(x,y))^{m}$ is obtained with $\mu$ in $\Gm$, $(M,m)$ in $\mathbb{Z}\times\mathbb{N}$, $q$ in $\mathbb{N}$, $g(x,y)=\sum_{a+bq>q} c_{a,b}x^ay^b$ in $\k[x,y]$, and $u(x,y)$ in $\k[[x,y]]$
	with $u(0,0)\neq 0$. The output of the algorithm is the sequence of the Newton transform polynomials produced.
\end{defn}
This definition is a consequence of the following Lemma \ref{lem:stabilite} and Theorem \ref{thm:algo-Newton}. 
\begin{defn-lem}[Stability lemma]\label{lem:stabilite} Let $f$ be a polynomial in $\k[x^{-1},x,y]$.
	\begin{enumerate} 
		\item \label{lem2.11} Let $\sigma$ be a Newton transformation. If the height of the Newton transform $f_\sigma$ is equal to the height of $f$, then the Newton polygon of $f$ has a unique face $\gamma$ with face polynomial 
			$f_\gamma(x,y)=x^ky^l(y-\mu x^q)^{\nu}$, with $(k,l,\nu)$ in $\mathbb{Z} \times \mathbb{N}^2$, $\mu$ in $\Gm$ and $q$ in 
			$\mathbb{N}^*$. If the height in the Newton process remains constant, we say that the Newton algorithm \emph{stabilizes}.
		\item  \label{stabilisation} If the Newton algorithm of $f$ stabilizes with height $m$ then $f$ can be written as
			\begin{equation} \label{eqcasdebase}
				f(x,y)=U(x,y)x^{M}(y{-\mu}x^q+g(x,y))^{m}
			\end{equation}
			with $\mu$ in $\Gm$, $(M,q)\in \mathbb{Z}\times\mathbb{N}$, $g(x,y)=\sum_{a+bq>q} c_{a,b}x^ay^b$ in $\k[x,y]$ and $U(x,y)\in \k[[x,y]]$ 
			with $U(0,0)\neq 0$.
	\end{enumerate}
\end{defn-lem}
\begin{proof} The proof of point \ref{lem2.11} is Lemma 2.11 in \cite{CassouVeys13}. The proof of point \ref{stabilisation} is similar to that of \cite[Lemma 4]{carai-antonio}. 
\end{proof}

\begin{thm}\label{thm:algo-Newton}
	For all $f(x,y)$ in $\k[x^{-1},x,y]$, there exists a natural integer $n_0$,  such
	that for any sequence of Newton maps  $\Sigma _n=(\sigma _1,\cdots, \sigma _n)$ 
	with $n\geq n_0$, 
	$f_{\Sigma _n}$ is of the form $u(x,y) cx^{-M}y^{m}$ or $u(x,y) cx^{-M}(y -\mu x^q+g(x,y))^{m}$, with $\mu$ in $\Gm$,
	$(M,m,q)$ in $\mathbb{Z} \times \mathbb{N}^2$, $g(x,y)=\sum_{a+bq>q} c_{a,b}x^ay^b$ in  $\k[x,y]$, and $u(x,y)$ belongs to $\k[[x,y]]$ with $u(0,0)\neq 0$.
\end{thm}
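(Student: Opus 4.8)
The plan is to prove Theorem \ref{thm:algo-Newton} by induction on the height $h(f)$, using the Stability Lemma \ref{lem:stabilite} as the engine that controls what happens when the height fails to drop. First I would observe that the statement is really a finiteness statement: after applying enough Newton maps, every branch of the algorithm reaches a base case. By Lemma \ref{lem:Newton-alg}, for a given $f$ only finitely many triples $(p,q,\mu)$ produce a Newton transform $f_\sigma$ that is not a monomial times a unit in $\k[[x_1,y_1]]$ — namely those coming from the equations of the one-dimensional faces of $\N(f)$ and the roots of the associated face polynomials. Moreover, for any such triple Lemma \ref{lem:Newton-alg}(3) gives $h(f) \geq \nu \geq h(f_\sigma)$, so the height is nonincreasing along every branch.

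The induction is then on $m := h(f)$. If $h(f) = 0$, the Newton polygon is ``horizontal'' in the relevant sense and $f$ is already a monomial $cx^{-M}y^{m}$ times a unit (here $m=0$), so we may take $n_0 = 0$; more precisely one checks the base case of Definition \ref{def:algo-Newton} is reached immediately. For the inductive step, fix $f$ with $h(f) = m > 0$ and consider any branch $\Sigma_n = (\sigma_1, \dots, \sigma_n)$. There are two cases. Either at some step the height strictly drops, i.e. $h(f_{\Sigma_j}) < m$ for some $j \leq$ (the number of relevant triples for each intermediate polygon, which is finite), in which case we apply the inductive hypothesis to $f_{\Sigma_j}$ to get a bound on the remaining number of steps; combining the finitely many branching possibilities at each of the finitely many ``height-$m$'' stages with these bounds yields a uniform $n_0$. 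Or the height stays equal to $m$ forever along this branch, in which case the Stability Lemma \ref{lem:stabilite}(\ref{stabilisation}) applies directly: $f$ itself is already of the form $U(x,y)x^{M}(y - \mu x^q + g(x,y))^m$ with $U(0,0) \neq 0$ and $g(x,y) = \sum_{a+bq>q} c_{a,b}x^ay^b$, which is precisely the desired base-case form, so again the branch stabilizes at a base case (and in fact one can read off directly that all further transforms are monomials times units, or iterate the stable face).

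The main obstacle is making the quantifier structure precise: the statement demands a single $n_0$ that works for \emph{all} sequences $\Sigma_n$, so I must bound the \emph{depth} of the tree of all possible Newton transforms, not just show each individual branch terminates. The key points to nail down are: (i) at each polygon only finitely many faces $\times$ finitely many roots give branching, with an effective count bounded in terms of the polygon; (ii) whenever we follow a root of multiplicity $\nu$ the height of the transform is at most $\nu \leq h(f)$, and the height can only stay equal to $h(f)$ when the polygon has the single special face of Lemma \ref{lem:stabilite}(\ref{lem2.11}), a configuration that (by the Stability Lemma) forces the base-case shape immediately rather than producing unbounded branching; (iii) therefore along any branch the height strictly decreases after a controlled number of steps unless we are already in a base case, and a straightforward induction on $h(f)$ — taking $n_0$ to be, say, the product/sum of the branching bounds over the $h(f)$ possible height levels — gives the uniform bound. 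The rest is the routine bookkeeping of assembling these finite quantities, which I would not spell out in detail; the substance is entirely contained in Lemmas \ref{lem:Newton-alg} and \ref{lem:stabilite}.
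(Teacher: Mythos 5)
Your overall strategy is the intended one: the paper itself offers no argument beyond ``similar to \cite[Theorem 1]{carai-antonio}'', and the ingredients you use — finiteness of the relevant triples and monotonicity of the height from Lemma \ref{lem:Newton-alg}, plus Definition-Lemma \ref{lem:stabilite} for the constant-height case, assembled by induction on $h(f)$ — are exactly the ones the statement is set up to rest on. However, two of the steps as you actually state them are incorrect, and they are precisely where the content of the proof lies. First, in (ii) you claim that when the height does not drop, the single special face of Lemma \ref{lem:stabilite}(\ref{lem2.11}) ``forces the base-case shape immediately''. That is false: $f=(y-x)^2+x^{100}$ has a unique face with face polynomial $(y-x)^2$, yet $f$ is not of the form $u\,x^{-M}(y-\mu x^q+g)^2$ (its germ at the origin is reduced with two distinct branches), and the height drops only two steps later. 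Point (\ref{stabilisation}) applies only when the height stays constant along the \emph{whole} process. The correct source of uniformity is different: by (\ref{lem2.11}) at most one pair (face, root) can preserve the height at each stage, so the constant-height continuation is a single deterministic path; either this path is finite, and its length (not ``the number of relevant triples'', which is the wrong bound for the time $j$ of the first height drop) bounds the number of constant-height steps on every branch, or it is infinite, and then (\ref{stabilisation}) says $f$ itself is already a base case. Without this dichotomy your ``controlled number of steps'' in (iii) is unjustified and the uniform $n_0$ does not follow from per-branch termination alone.

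Second, the conclusion is ``$f_{\Sigma_n}$ is of base form for \emph{all} $n\geq n_0$ and \emph{all} sequences of Newton maps'', so you need that the two base-case forms are stable under arbitrary further Newton maps; you also use this implicitly when you take $n_0=0$ for $h(f)=0$ and in the stabilizing-branch case. Your parenthetical justification — that all further transforms are monomials times units — is wrong: applying $\sigma_{(1,q,\mu)}$ to $u\,x^{-M}(y-\mu x^q+g)^m$ gives $u'\,x_1^{-M+qm}\bigl(y_1+x_1\tilde h(x_1,y_1)\bigr)^m$, which is in general a base case of the \emph{second} form (take for the new $\mu$ the coefficient of the lowest pure power of $x_1$ in $x_1\tilde h$, all other terms lying strictly above the corresponding line), not a monomial times a unit; only the non-canonical triples give monomials times units via Lemma \ref{lem:Newton-alg}(1)--(2). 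This closure property is a short computation, but it must be proved: without it, ``every branch reaches a base case within $n_0$ steps'' does not imply the statement, since the non-base nodes of the tree of transforms are not a priori closed under taking prefixes, and your finite-branching bookkeeping (or any K\H{o}nig-type argument) only controls that subtree. With these two repairs — the deterministic constant-height path plus the stabilization dichotomy, and the stability of the base-case forms — your induction on the height goes through.
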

\begin{proof} The proof is similar to  \cite[Theorem 1]{carai-antonio}. 
\end{proof}

\subsubsection{Local dicritical faces, Newton generic and non generic values and Newton bifurcation set}
\begin{defn}[Local dicritical face, discriminant] \label{def:face-dicritique-discriminant}
	Let $f$ be a polynomial in $\k[x^{-1},x,y]$. 
	\begin{itemize}
		\item  By definition, if $f$ belongs to $\k[x,y]$ then it does not have a local dicritical face. Otherwise, a \emph{local dicritical face} is defined as a one dimensional face of the Newton polygon of $\Delta (\text{Supp}(f) \cup \{(0,0)\})$ which contains $(0,0)$.
		\item A local dicritical face is said to be \emph{smooth} if an equation of its underlying line is $\alpha+q\beta=0$ with $q$ in $\mathbb N$.
		\item If $\gamma$ is a local dicritical face, we define its \emph{associated polynomial (relatively to $f$)} as the polynomial 
			$\sum_{(a,b) \in \gamma}c_{(a,b)}(f)x^ay^b$. In particular if $\gamma$ is a face of $\N(f)$, then this polynomial is the face polynomial $f_{\gamma}$. The associated polynomial to $\gamma$ can be written under the form $P_\gamma(x^{-q}y^{p})$ with $(p,q)$ in $(\mathbb N^*)^2$ and coprime, and $P_{\gamma}(s)$ a polynomial in $\k[s]$. The discriminant of $P_{\gamma}(s)-c$ with respect to $s$, element of $\k[c]$, is called \emph{discriminant of the face $\gamma$ (relatively to $f$)}.
	\end{itemize}
\end{defn}

\begin{rem} A value $c_0$ is a root of the discriminant of $\gamma$ if and only if the polynomial $P_{\gamma}(s)-c_0$ has a multiple root. \end{rem}

\begin{defn}[Newton generic and non generic values] \label{Newtongenericvalues-local}
Let $f$ be a polynomial in $\k[x^{-1},x,y]$. 
	\begin{itemize}
		\item  A value $c_0$ is \emph{Newton non generic} for $f$, if $f$ admits a local dicritical face $\gamma$ and $c_0$ satisfies one of the two conditions: 
		\begin{itemize}
			\item  $c_0 \neq c_{(0,0)}(f)$ is a root of the discriminant of the face $\gamma$,
			\item  $c_0=c_{(0,0)}(f)$ is a root of the discriminant of the face $\gamma$ or $\gamma$ is not smooth.
		 \end{itemize}
		\item  A value $c_0$ is \emph{Newton generic} for $f$ if it is not Newton non generic.
	\end{itemize}
\end{defn}

\begin{defn}[Local Newton bifurcation set]	
	Let $f$ be in $\k[x^{-1},x,y]$.  The \emph{local Newton bifurcation set} of $f$, denoted by 
	$B_{f,loc}^{\text{Newton}}$, is the set of Newton non generic values of either $f$ or a Newton transform $f_\Sigma$ where $\Sigma$ is a composition of Newton maps in the Newton algorithm of $f$.
\end{defn}

\begin{example}[Local Newton bifurcation set of the base cases] \label{Bf^Newton-base-case}
	The local Newton bifurcation set of a base case (Definition \ref{def:algo-Newton}) is contained in $\{0\}$.
	It is empty if the base case is assumed with isolated singularities.
\end{example}
\begin{proof} Using definitions, the value 0 is the unique Newton non generic value candidate of a base case polynomial.
\end{proof}

\begin{prop}[Finiteness of the local Newton bifurcation set]
	The local Newton bifurcation set of an element of $\k[x^{-1},x,y]$ is finite.
\end{prop}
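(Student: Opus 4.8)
The plan is to show that the Newton algorithm of $f$ produces only finitely many polynomials (up to the stabilization phenomenon) and that each of them contributes only finitely many Newton non generic values, so that the union is finite.

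First I would use Theorem \ref{thm:algo-Newton}: there is an integer $n_0$ such that every composition $\Sigma_n$ of at least $n_0$ Newton maps sends $f$ to a base case. Combined with the Newton Lemma \ref{lem:Newton-alg} and the remark following it, at each stage of the algorithm only finitely many triples $(p,q,\mu)$ give a Newton transform that is not a monomial times a unit, namely those coming from the equations of the one dimensional faces of the current Newton polygon and the roots of the corresponding face polynomials. Since the set of vertices of a Newton diagram is finite, each Newton polygon has finitely many faces, and each face polynomial has finitely many roots. Hence the tree of all relevant Newton transforms $f_\Sigma$ is finitely branching, and by Theorem \ref{thm:algo-Newton} it has bounded depth once the height has dropped to its stable value; by the Stability Lemma \ref{lem:stabilite}, once the height stabilizes the polynomial is already in the base case form \eqref{eqcasdebase}, so no further branching is needed beyond Example \ref{Bf^Newton-base-case}. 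Therefore the set of polynomials $\{f\} \cup \{f_\Sigma \mid \Sigma \text{ a composition of Newton maps in the algorithm of }f\}$ that we must consider is finite.

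Next I would bound the Newton non generic values attached to a single polynomial $g$ in this finite list. By Definition \ref{Newtongenericvalues-local}, a Newton non generic value of $g$ exists only if $g$ has a local dicritical face $\gamma$; there are finitely many such faces (again since the Newton polygon has finitely many faces). For each such $\gamma$, the non generic values are either $c_{(0,0)}(g)$ or roots of the discriminant of $\gamma$, i.e. roots of the discriminant with respect to $s$ of $P_\gamma(s) - c \in \k[c]$; this discriminant is a nonzero polynomial in $c$ (it is, up to sign and a power of the leading coefficient, the resultant of $P_\gamma(s)-c$ and $P_\gamma'(s)$, and its leading term in $c$ is nonzero when $\deg P_\gamma \geq 2$, while if $\deg P_\gamma \leq 1$ there are no multiple roots at all), hence has finitely many roots. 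So $g$ contributes finitely many Newton non generic values.

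Finally, $B_{f,loc}^{\text{Newton}}$ is the union over the finite list of polynomials $g$ of these finitely many values, hence finite. The only point requiring a little care — and the main obstacle — is the combinatorial finiteness argument in the second paragraph: one must genuinely invoke Theorem \ref{thm:algo-Newton} and the Stability Lemma to ensure the algorithm terminates (the branching alone is finite at each step, but a priori the tree could be infinitely deep without these results), and one must check that the ``stabilizing'' branches do not escape Example \ref{Bf^Newton-base-case}.
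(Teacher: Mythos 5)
Your proof is correct and follows essentially the same route as the paper: finiteness of the Newton algorithm (finite branching over faces and roots, bounded depth by Theorem \ref{thm:algo-Newton}, base cases handled by Example \ref{Bf^Newton-base-case}) combined with the fact that each polynomial in the process contributes only finitely many Newton non generic values, since dicritical faces are finite in number (the paper notes there is in fact at most one) and the discriminant of a face has finitely many roots. Your extra check that the discriminant is a nonzero polynomial in $c$ is a detail the paper leaves implicit, but it does not change the argument.
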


\begin{proof} 
	Indeed a polynomial $f$ in $\k[x^{-1},x,y]$ has at most one local dicritical face and its discriminant has finitely many roots. Thus, the number of Newton non generic values of $f$ is finite. We conclude by Definition \ref{def:algo-Newton} and Example \ref{Bf^Newton-base-case}.
\end{proof}
	
\newpage
\subsection{Newton algorithm at infinity}
\subsubsection{Newton polygon at infinity}
\begin{defn}[Newton polygon at infinity] \label{sec:polygones-Newton-infini} 
	Let $E$ be a nonempty finite subset of $\mathbb{N}^2$. 
	We consider
	$\Delta_{0,\infty}(E)$, $\Delta _{\infty,\infty}(E)$ and $\Delta_{\infty,0}(E)$ the smallest convex sets containing respectively
	$E+(\R_+\times \R_-)$, $E+(\R_-\times \R_-)$ and $E+(\R_-\times \R_+)$.
For any $(i,j)$ equal to $(0,\infty)$, $(\infty,\infty)$ or $(\infty,0)$, we denote by $V_{i,j} (E)$ the set of vertices of $\Delta_{i,j}(E)$. 
We define the sets
$$V_{\infty}(E)=V_{\infty,0}(E)\cup V_{\infty,\infty}(E)\cup V_{0,\infty}(E) 
\:\text{and}\: \Delta_{\infty}(E)=\text{convex hull}(V_{\infty}(E)).$$
The set $V_{\infty}(E)$ is called set of \emph{vertices at infinity} of $E$ and we order its elements in the following way: 
\begin{itemize}
	\item  on  $V_{\infty,\infty}(E)\cup V_{0,\infty}(E)$, we define $(\alpha,\beta)<(\alpha',\beta')$ if and only if $\alpha<\alpha'$,
	\item  on  $V_{\infty,0}(E)\cup V_{\infty,\infty}(E)$, we define $(\alpha,\beta)<(\alpha',\beta')$ if and only if $\beta>\beta'$. 
\end{itemize}
Write $V_{\infty}(E)=\{v_0,\cdots,v_m\}$, with $v_i=(\alpha_i,\beta_i)$ and $v_0<v_1<\cdots<v_m$.
Let $S_i$ be the line segment whose endpoints are $v_{i-1}$ and $v_i$. We denote by $\mathcal S$ the set of these segments.
The \emph{Newton polygon at infinity of $E$} is defined as the set
$$\mathcal{N}_{\infty}(E)=\{S_1,\cdots,S_m\}\cup V_{\infty}(E).$$
For any $(i,j)$ equal to $(0,\infty)$, $(\infty,\infty)$ or $(\infty,0)$, we define 
$$\mathcal{N}_{i,j}(E)=\{S \in \mathcal S \vert S \ \text{has both end points in} \ V_{i,j}(E)\} \cup V_{i,j}(E).$$
\end{defn}
\begin{rem} The vertical and horizontal faces of $\N_\infty(E)$ are not contained in the union $\N_{(\infty,\infty)}\cup \N_{(0,\infty)} \cup \N_{(\infty,0)}$.
\end{rem}

\begin{lem}[Function $\m$, dual cone $C_\gamma$ and normal vector $\vec{\eta}_\gamma$] \label{prop:dualfan}
	Let $E$ be a finite set of $\mathbb Z^2$.
	Let $(p,q)$ be in $\mathbb Z^2$ with $\gcd(p,q)=1$ and $l_{(p,q)}:(a,b) \in \mathbb R^2 \mapsto ap+bq$.
	Let $\Delta$ be the convex hull of $E$ and $\mathcal F(\Delta)$ its set of faces.
	\begin{enumerate}
		\item The maximum of the restriction $l_{(p,q)\mid \Delta}$, denoted by $\m(p,q)$, is 
			reached on a face denoted by $\gamma(p,q)$ of $\Delta$. 
			Furthermore, the linear map $l_{(p,q)}$ is constant on the face $\gamma(p,q)$.  
		\item For any face $\gamma$ of $\Delta$, we denote by $C_\gamma$ the interior, in its own generated vector
			space in $\mathbb R^2$, of the positive cone generated by the set 
			$\{(\alpha,\beta) \in \mathbb Z^2 \mid \gamma(\alpha,\beta) = \gamma\}$.
			This set is called \emph{dual cone} of the face $\gamma$ and is a relatively open rational polyhedral convex cone of {$\mathbb R^2$}.
	\end{enumerate}

               For a one dimensional face $\gamma$, we denote by $\vec{n}_\gamma$ the normal vector to $\gamma$, exterior to $\Delta$, with integral coordinates and the smallest norm. With these notations we have:

	       \begin{enumerate}[resume] 	
		       \item The dual cone $C_\gamma$ of any one dimensional face $\gamma$ of $\Delta$ is the cone $\mathbb R_{>0} \vec{n}_\gamma$.
         	       \item Any zero dimensional face $\gamma$ of $\Delta$ is an intersection of two one dimensional faces $\gamma_1$ and $\gamma_2$ of $\Delta$ and its dual cone $C_\gamma$ is the cone $\mathbb R_{>0}\vec{n}_{\gamma_1} + \mathbb R_{>0} \vec{n}_{\gamma_2}$.
		       \item The set of dual cones $(C_\gamma)_{\gamma \in \mathcal F(\Delta)}$ is a fan of 
		       $\mathbb R^{2}$, called \emph{dual fan} of $\Delta$.
	       \end{enumerate}
\end{lem}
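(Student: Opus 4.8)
The plan is to recognise the statement as the classical description of the outer normal fan of the rational polytope $\Delta=\operatorname{conv}(E)$; the argument then runs exactly as for Proposition~\ref{prop:dualfan-Newton-local}, the only differences being that $\Delta$ is now compact, so the maximum of a linear form exists automatically, and that the cones live in all of $\mathbb R^{2}$ rather than in the positive quadrant. I assume $\Delta$ two-dimensional, which is the situation needed below; when $\Delta$ is a point or a segment, items (3)--(5) about exterior normals are vacuous or trivial. For (1): $E$ being finite, $\Delta$ is compact, so $l_{(p,q)}$ attains a maximum $\m(p,q)$ on $\Delta$, and the set where it is attained is the intersection of $\Delta$ with a supporting line, hence a face $\gamma(p,q)$ on which $l_{(p,q)}$ is constant by construction; all of this makes sense for any $(\alpha,\beta)\in\mathbb R^{2}$ and is invariant under positive scaling, so coprimality is only a normalisation.

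For (2), I would attach to a face $\gamma$ its normal cone $N(\gamma):=\{c\in\mathbb R^{2}\mid\gamma\subseteq\gamma(c)\}$. Picking $x_{0}$ in the relative interior of $\gamma$ (which may be chosen rational, $\Delta$ being a lattice polytope) and using that a linear form on $\Delta$ is maximised at a vertex of $\Delta$, hence at a point of $E$, one gets $N(\gamma)=\{c\mid\langle c,x_{0}-v\rangle\ge 0\ \text{for all}\ v\in E\}$, a closed rational polyhedral convex cone. The standard normal-fan fact is that $\operatorname{relint} N(\gamma)=\{c\in\mathbb R^{2}\mid\gamma(c)=\gamma\}$, since a point in the relative boundary of $N(\gamma)$ already lies in $N(\gamma')$ for a face $\gamma'\supsetneq\gamma$. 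It then remains to check that the integral points of $\operatorname{relint} N(\gamma)$ positively span a cone $K$ with $\operatorname{relint} K=\operatorname{relint} N(\gamma)$: as $N(\gamma)$ is rational, its rational points are dense in it and scale to integral points inside the relatively open cone $\operatorname{relint} N(\gamma)$, whence $\overline K=N(\gamma)$ and $\operatorname{relint} K=\operatorname{relint} N(\gamma)$. This is precisely the claim that $C_{\gamma}=\operatorname{relint} N(\gamma)$ is a relatively open rational polyhedral convex cone.

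For (3) and (4), with $\Delta$ two-dimensional: if $\gamma$ is an edge, $l_{c}$ is maximised exactly on $\gamma$ iff $c$ is a positive multiple of the outward normal to the line supporting $\gamma$, so taking $\vec n_{\gamma}$ primitive gives $\mathbb Z^{2}\cap\operatorname{relint} N(\gamma)=\mathbb Z_{>0}\vec n_{\gamma}$ and $C_{\gamma}=\mathbb R_{>0}\vec n_{\gamma}$; and every vertex $v$ of $\Delta$ lies on exactly two edges $\gamma_{1},\gamma_{2}$ (the planar fact behind (4)), with $l_{c}$ maximised at $v$ iff $c$ lies in the convex cone spanned by $\vec n_{\gamma_{1}}$ and $\vec n_{\gamma_{2}}$, so $N(v)=\mathbb R_{\ge 0}\vec n_{\gamma_{1}}+\mathbb R_{\ge 0}\vec n_{\gamma_{2}}$ (two-dimensional, as the edges are not collinear) and $C_{v}=\mathbb R_{>0}\vec n_{\gamma_{1}}+\mathbb R_{>0}\vec n_{\gamma_{2}}$.

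For (5): each $c\in\mathbb R^{2}$ lies in $C_{\gamma(c)}$ and, by (2), in no other $C_{\gamma}$, so $\mathbb R^{2}=\bigsqcup_{\gamma}C_{\gamma}$, while $\overline{C_{\gamma}}=N(\gamma)=\bigsqcup_{\gamma'\supseteq\gamma}C_{\gamma'}$; hence the faces of $N(\gamma)$ are the $N(\gamma')$ with $\gamma'$ a face of $\Delta$ containing $\gamma$, and $N(\gamma)\cap N(\gamma')=N(\gamma'')$ for $\gamma''$ the smallest face of $\Delta$ containing $\gamma\cup\gamma'$, so $\{\,\overline{C_{\gamma}}\,\}_{\gamma\in\mathcal F(\Delta)}$ is a complete fan of $\mathbb R^{2}$ — the dual fan. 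I expect the only non-formal point to be the integrality step in (2), together with the bookkeeping needed to match the convention ``$C_{\gamma}=$ interior in its own span'' with $\operatorname{relint} N(\gamma)$ and to handle degenerate $\Delta$; everything else is textbook normal-fan theory.
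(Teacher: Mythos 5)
Your proof is correct: the paper states this lemma without proof (it is the at-infinity analogue of the local statement in Proposition \ref{prop:dualfan-Newton-local}, which the paper presents as well known), and what you give is exactly the classical outer-normal-fan argument the paper implicitly relies on, including the only genuinely non-formal point, namely that the integral points of $\operatorname{relint}N(\gamma)$ positively span a cone with the same relative interior. One caveat: the degenerate case where $\Delta$ is a segment is not vacuous for the paper --- it is used in the quasi-homogeneous case of Theorem \ref{thmcyclesprochesinfini}, where the dual cone of the unique one-dimensional face is the whole normal line $\mathbb R_{>0}(p,q)+\mathbb R_{>0}(-p,-q)$ and a vertex is not the intersection of two edges --- but this is an imprecision of the lemma's statement (items (3)--(4) must then be read through the definition in item (2)), not a gap in your argument.
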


\begin{defn}[Newton polygon at infinity and global Newton polygon] \label{def:Ninfinif} \label{def:polygon-global}
	Let $f$ be a polynomial in $\k[x,y]$.
	We define 
	$\mathcal{N}_{\infty}(f)=\mathcal{N}_{\infty}(\text{Supp} f \cup \{(0,0)\}),\:
	\Delta_{\infty}(f)=\Delta_{\infty}(\text{Supp} f \cup \{(0,0)\})\:\:\text{and}\:\:\overline{\Delta}(f)=\text{convex hull}(\text{Supp} f).$
	The set $\N_{\infty}(f)$ is called \emph{Newton polygon at infinity} of $f$. Similarly to Definition \ref{sec:polygones-Newton-infini}, 
	the \emph{global Newton polygon} $\GN(f)$ is defined as the set of vertices and segments of $\overline{\Delta}(f)$.	
	We define also $\N_\infty(f)^{o}$ as the set of faces of $\N_\infty(f)$ which do not contain the origin
	and we simply denote $\N_{\infty,\infty}(\text{Supp} f \cup \{(0,0)\})$ by $\N_{\infty,\infty}(f)$ and define similarly 
	$\N_{0,\infty}(f)$ and $\N_{\infty,0}(f)$. 
	We introduce $\widetilde{\N}(f)$ defined as the set $(\overline{\N}(f)\setminus \N(f)) \cup \{\gamma_h,\gamma_v\}.$
\end{defn}

\begin{rem} \label{rem:egalitepolygones}
	If $c\neq c_{(0,0)}(f)$, we have $\mathcal{N}_{\infty}(f)= \mathcal{N}_{\infty}(\text{Supp}(f -c))$. If $f(0,0)\neq 0$ then $\overline{\N}(f)=\N_{\infty}(f)$.
\end{rem}

\subsubsection{Newton algorithm at infinity}
\begin{defn}[Newton transformation at infinity] \label{def:lestransformationsdeNewton}
	Let $(p,q)$ be a primitive vector of $\mathbb Z^2$ and $\mu$ be an element of $\k$.
	We define the \emph{Newton transformation at infinity} ${\sigma_{(p,q,\mu)}}$ by
		\begin{equation}
				\begin{matrix} 
					&\sigma_{(p,q,\mu)}&:& \k[x,y] &\longrightarrow & \k[v^{-1},v,w]\\ 
					&&& f(x,y)&\mapsto &  f(\mu^{q'}v^{-p}, v^{-q}(w+\mu ^{p'}))\\
				\end{matrix}
		\end{equation}
	        in the case $p>0$ and $q>0$, choosing $(p',q')$ in $\mathbb Z^2$ with $qq'-pp'=1$;
		
		\begin{equation} \label{transformation-Newton-infini-zero}
				\begin{matrix} 
					&\sigma_{(p,q,\mu)}&:& \k[x,y] &\longrightarrow& \k[v^{-1},v,w]\\
					&&&f(x,y)&\mapsto &f(\mu^{q'}v^{-p}, v^{-q}(w+\mu ^{p'}))\\
				\end{matrix}
			\end{equation}
		in the case $p>0$ and $q<0$, choosing $(p',q')$ in $\mathbb Z^2$ with $pp'-qq'=1$;

		\begin{equation}\label{transformation-Newton-zero-infini}
				\begin{matrix} 
					&\sigma _{(p,q,\mu)}&:& \k[x,y] &\longrightarrow& \k[v^{-1},v,w]\\ 
					&&&f(x,y)&\mapsto &f(v^{-p}(w+\mu ^{q'}),\mu^{p'}v^{-q})\\
				\end{matrix}
		\end{equation}
		in the case $p<0$ and $q>0$ choosing $(p',q')$ in $\mathbb Z^2$ with $pp'-qq'=1$; 
		
		\begin{equation}
				\begin{matrix} 
					&\sigma _{(p,q,\mu)}&:& \k[x,y] &\longrightarrow& \k[v^{-1},v,w]\\ 
					&&&f(x,y)&\mapsto &f((w+\mu ),v^{-1})\\
				\end{matrix}
		\end{equation}
                        in the case $p=0,q=1$;
			\begin{equation}
				\begin{matrix} 
					&\sigma_{(p,q,\mu)}&:& \k[x,y] &\longrightarrow& \k[v^{-1},v,w]\\
					&&&f(x,y)&\mapsto &f(v^{-1}, (w+\mu ))\\
				\end{matrix}
			\end{equation}
			in the case $p=1,q=0$.
\end{defn}

\begin{rem} \label{def:Newton-process-infinity}
	In the following we will apply these Newton maps, for each face of $\N_{\infty}(f)$ or $\GN{(f)}\setminus \N(f)$ and
	each root of the face polynomials, and we will get elements in $\k[v^{-1},v,w]$ that we have studied in the previous section. 
\end{rem}
	
	\begin{notations} \label{rem:factorisationinfini}
	Let $f$ be a polynomial in $\k[x,y]$. If $\gamma$ is a one dimensional face of ${ \GN(f)\setminus \N(f)}$, then its primitive exterior normal vector $(p,q)$ belongs to $\mathbb Z^2 \setminus (\mathbb Z_{\leq 0})^2$ and the face $\gamma$ is supported by a line with equation $pa+qb=N$. Furthermore, 
\begin{itemize} 
	\item the face $\gamma$ belongs to $\mathcal{N}_{\infty,\infty}(f)$ if and only if $p>0$ and $q>0$, and in that case we have 
		the factorisation
		$$f_\gamma(x,y)=x^{a_S}y^{b_S}\prod_{\mu_i \in R_\gamma}(x^q-\mu_iy^p)^{\nu_i}\:\:\text{and}\:\: 
		f_\gamma(v^{-p},v^{-q}w)=v^{-N}w^{b_S}\prod_{\mu_i \in R_\gamma}(1-\mu_iw^p)^{\nu_i},$$
	\item the face $\gamma$ belongs to ${\mathcal{N} _{0,\infty}(f)}$ if and only if $p<0$ and $q>0$, and in that case we have 
		the factorisation		
		$$f_\gamma(x,y)=x^{a_S}y^{b_S}\prod_{\mu_i \in R_\gamma}(\mu_ix^qy^{-p}-1)^{\nu_i} \:\:\text{and}\:\: 
		f_\gamma(v^{-p}w,v^{-q})=v^{-N}w^{a_S}\prod_{\mu_i \in R_\gamma}(\mu_i w^q-1)^{\nu_i},$$
	\item the face  $\gamma$ is horizontal if and only if $(p,q)=(0,1)$, and in that case we have the factorisation
		$$f_\gamma(x,y)=x^{a_S}y^{b_S}\prod_{\mu_i \in R_\gamma}(x-\mu_i)^{\nu_i} \:\:\text{and}\:\: 
		f_\gamma(w,v^{-1})=v^{-N}w^{a_S}\prod_{\mu_i \in R_\gamma}(w-\mu_i)^{\nu_i},$$
	\item the face $\gamma$ belongs to $\mathcal{N} _{\infty,0}(f)$, if and only if $p>0$ and $q<0$, and in that case we have 
		the factorisation
		$$f_\gamma(x,y)=x^{a_S}y^{b_S}\prod_{\mu_i \in R_\gamma}(x^{-q}y^p-\mu_i)^{\nu_i} \:\:\text{and}\:\: 
		f_\gamma(v^{-p},v^{-q}w)=v^{-N}w^{b_S}\prod_{\mu_i \in R_\gamma}(w^p-\mu_i)^{\nu_i},$$
	\item the face $\gamma$ is vertical, if and only if $(p,q)=(1,0)$, and in that case we have the factorisation
		$$f_\gamma(x,y)=x^{a_S}y^{b_S}\prod_{\mu_i \in R_\gamma}(y-\mu_i)^{\nu_i} \:\:\text{and}\:\:  
		f_\gamma(v^{-1},w)=v^{-N}w^{b_S}\prod_{\mu_i \in R_\gamma}(w-\mu_i)^{\nu_i}.$$
\end{itemize} 
Each $\mu_i$ is called root of the face polynomial $f_\gamma$ and $R_\gamma$ is the set of these roots.
\end{notations}

\begin{defn}[Newton algorithm at infinity] \label{algoNewtoninfini} 
	The \emph{Newton algorithm at infinity} of a polynomial $f$ in $\k[x,y]$ consists in applying Newton maps at infinity associated to the one dimensional faces of $\GN(f)\setminus \mathcal{N}(f)$, and then to apply the Newton algorithm to each Newton transform. 
	The output of the algorithm is the sequence of the Newton transform polynomials produced.
\end{defn}
\begin{rem}
The Newton algorithm at infinity of $f-c$ with $c$ an element of $\k$, depends on $c$.
\end{rem}

\begin{lem} \label{compositionNT}
		A composition $\Sigma$ of Newton transformations in a Newton algorithm (at infinity) has the form 
		$$
		\begin{array}{ccc}
		\begin{array}{ccc}
			\k[x,y] & \to & \k[v^{-1},v,w] \\
			P(x,y) & \mapsto & P(av^A,bv^Bw+Q(v))
		\end{array}
		& \text{or} &   
                \begin{array}{ccc}
			\k[x,y] & \to & \k[v^{-1},v,w] \\
			P(x,y) & \mapsto & P(bv^Bw+Q(v),av^A)
		\end{array}
	        \end{array}$$
		with $A$ and $B$ in $\mathbb Z$, $a$ and $b$ in $\k$, and $Q$ in {$\k[v,v^{-1}]$}.
\end{lem}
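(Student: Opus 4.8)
The plan is to argue by induction on the length $n$ of the composition $\Sigma=(\sigma_1,\dots,\sigma_n)$, exploiting the recursive definition $f_{\Sigma_n}=(f_{\Sigma_{n-1}})_{\sigma_n}$. Let me call a homomorphism $\k[x,y]\to\k[v^{-1},v,w]$ \emph{admissible} if it is of one of the two displayed forms, i.e.\ $P\mapsto P(av^A,bv^Bw+Q(v))$ or $P\mapsto P(bv^Bw+Q(v),av^A)$ with $A,B\in\Z$, $a,b\in\k$ and $Q\in\k[v,v^{-1}]$; the statement to prove is that every composition of Newton transformations occurring in a Newton algorithm (Definition \ref{def:algo-Newton}) or in a Newton algorithm at infinity (Definition \ref{algoNewtoninfini}) is admissible.

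For $n=1$ I would simply inspect the formulas. Each of the five cases of the Newton transformation at infinity of Definition \ref{def:lestransformationsdeNewton} substitutes $x$ and $y$ by a monomial in $v$ and by a monomial in $v$ times $w$ plus a monomial in $v$ — possibly with the two images exchanged — hence is admissible with $Q$ a monomial; the local Newton transformation $\sigma_{(p,q,\mu)}$ of Definition \ref{defn:Newton-map-local} sends $x\mapsto\mu^{q'}x_1^p$ and $y\mapsto x_1^qy_1+\mu^{p'}x_1^q$, so it is admissible as well, and the $\k[x,y,y^{-1}]$ variant is handled identically. For the induction step, the key point is that in a Newton algorithm or a Newton algorithm at infinity the maps $\sigma_2,\dots,\sigma_n$ are all local Newton transformations (while $\sigma_1$ is either such a map or a Newton map at infinity, both admissible by the base case); so it suffices to show that post-composing an admissible map with a local $\sigma_{(p,q,\mu)}\colon\k[v^{-1},v,w]\to\k[v_1^{-1},v_1,w_1]$, i.e.\ with the substitution $v\mapsto\mu^{q'}v_1^p$, $w\mapsto v_1^q(w_1+\mu^{p'})$ (with $\mu\neq0$), yields again an admissible map.

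Carrying this out for the first form $P\mapsto P(av^A,bv^Bw+Q(v))$: the first image becomes $a\mu^{Aq'}v_1^{Ap}$, again a monomial in $v_1$, and the second becomes $b\mu^{Bq'}v_1^{Bp+q}w_1+\big(b\mu^{Bq'+p'}v_1^{Bp+q}+Q(\mu^{q'}v_1^p)\big)$, whose $w_1$--coefficient is a monomial and whose remaining part lies in $\k[v_1,v_1^{-1}]$ because $Q\in\k[v,v^{-1}]$ and $\mu\neq0$; thus the composition is again admissible of the first form, with new exponents that may now be negative — which is exactly why one must allow $A,B\in\Z$ and $Q$ a Laurent polynomial rather than a polynomial. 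The second form is treated in the same way with the two images swapped, and the induction closes. I do not expect any real obstacle: everything reduces to an elementary substitution, and the only points requiring care are the exhaustive verification of the base case over all the types of Newton maps and the sign bookkeeping that guarantees the ``monomial times $w$ plus Laurent tail'' shape — in particular the membership $Q(\mu^{q'}v_1^p)\in\k[v_1,v_1^{-1}]$ — is preserved under the substitution.
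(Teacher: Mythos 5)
Your proof is correct and follows essentially the same route as the paper, which proves the lemma by induction on the length of $\Sigma$ using the explicit formulas of Definitions \ref{defn:Newton-map-local} and \ref{def:lestransformationsdeNewton}; your explicit verification of the base case over the various Newton maps and of the stability of the ``monomial, monomial$\cdot w$ + Laurent tail'' shape under a further local transformation $v\mapsto\mu^{q'}v_1^{p}$, $w\mapsto v_1^{q}(w_1+\mu^{p'})$ is exactly the computation the paper leaves to the reader.
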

\begin{proof} 
		The proof is done by induction on the length of $\Sigma$ using the definition of a Newton transformation in Definition 
		\ref{defn:Newton-map-local} and Definition \ref{def:lestransformationsdeNewton}.
\end{proof}

\begin{lem} \label{lemme:singularitesisolees}
	Let $f$ be in $\mathbb C[x,y]$ (resp in $\mathbb C[x^{-1},x,y]$) with isolated singularities in $\mathbb C^2$ (resp in $\mathbb C^* \times \mathbb C$). Then for any composition $\Sigma$ of Newton transformations, the polynomial 
	$f_\Sigma$ in $\mathbb C[x_1^{-1},x_1,y_1]$ has isolated singularities in $\mathbb C^* \times \mathbb C$.
\end{lem}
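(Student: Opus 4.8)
Lemma \ref{lemme:singularitesisolees} asserts that the property of having isolated singularities is preserved under Newton transformations. The plan is to reduce to a single Newton transformation $\sigma = \sigma_{(p,q,\mu)}$ and then iterate, since by Lemma \ref{compositionNT} a composition $\Sigma$ is built by composing such elementary transformations, and the class of rings involved ($\mathbb{C}[x^{-1},x,y]$, $\mathbb{C}[x_1^{-1},x_1,y_1]$, possibly $\mathbb{C}[x,y,y^{-1}]$) is stable under the operation. So first I would observe that it suffices to prove: if $g \in \mathbb{C}[x^{-1},x,y]$ has isolated singularities on $\mathbb{C}^* \times \mathbb{C}$, then $g_\sigma \in \mathbb{C}[x_1^{-1},x_1,y_1]$ has isolated singularities on $\mathbb{C}^* \times \mathbb{C}$ (and similarly for the variants at infinity and for the $y^{-1}$ case). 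For the first Newton transformation applied to $f \in \mathbb{C}[x,y]$, the starting hypothesis is isolated singularities on $\mathbb{C}^2$, which a fortiori gives isolated singularities on the smaller set $\mathbb{C}^* \times \mathbb{C}$, so the same elementary statement covers that case.

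The key computation is the chain rule. Writing $\sigma(x) = \mu^{q'} x_1^p$ and $\sigma(y) = x_1^q(y_1 + \mu^{p'})$ (the local case; the infinity cases are entirely analogous with $v^{-p}$ etc.), the map $\Phi: (x_1,y_1) \mapsto (\mu^{q'}x_1^p, x_1^q(y_1+\mu^{p'}))$ sends $\mathbb{C}^* \times \mathbb{C}$ into $\mathbb{C}^* \times \mathbb{C}$, and its Jacobian determinant is a monomial $c\, x_1^{p+q-1}$ with $c \in \mathbb{C}^*$, hence nonvanishing on $\mathbb{C}^* \times \mathbb{C}$: the map $\Phi$ is étale there (in fact a covering onto its image). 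Therefore $\mathrm{d}(g_\sigma) = \mathrm{d}(g \circ \Phi) = (\mathrm{d}g \circ \Phi)\cdot \mathrm{d}\Phi$, and since $\mathrm{d}\Phi$ is invertible on $\mathbb{C}^* \times \mathbb{C}$, a point $(x_1,y_1) \in \mathbb{C}^* \times \mathbb{C}$ is a critical point of $g_\sigma$ if and only if $\Phi(x_1,y_1)$ is a critical point of $g$. Thus $\mathrm{Sing}(g_\sigma) \cap (\mathbb{C}^* \times \mathbb{C}) = \Phi^{-1}(\mathrm{Sing}(g))$ as subsets of $\mathbb{C}^* \times \mathbb{C}$. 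Since $\Phi$ restricted to $\mathbb{C}^* \times \mathbb{C}$ is a finite map (finite fibers: for fixed image $(x,y)$ with $x \ne 0$ there are finitely many $p$-th roots giving $x_1$, hence finitely many $y_1$) onto its image, the preimage of the finite set $\mathrm{Sing}(g) \cap (\text{image})$ is finite. Hence $g_\sigma$ has isolated singularities on $\mathbb{C}^* \times \mathbb{C}$.

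I would then check the remaining shapes of Newton maps: the local transformation into $\mathbb{C}[x_1,y_1]$ differs only cosmetically; the transformations at infinity (equations \eqref{transformation-Newton-infini-zero}, \eqref{transformation-Newton-zero-infini} and the cases $(p,q)=(0,1),(1,0)$) have the form $(v,w) \mapsto (\mu^{q'}v^{-p}, v^{-q}(w+\mu^{p'}))$ or the symmetric one, with Jacobian again a nonzero monomial in $v$ on $\mathbb{C}^* \times \mathbb{C}$, so the same étale-plus-finite argument applies; and the $\mathbb{C}[x,y,y^{-1}]$ variant is handled by the symmetric map. Finally, iterating over the length of $\Sigma$ using Lemma \ref{compositionNT} (each intermediate ring is one of the allowed rings, and the starting polynomial after the first step already has isolated singularities on $\mathbb{C}^* \times \mathbb{C}$) completes the proof.

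The only subtlety — and the point I would be most careful about — is boundary behavior: a Newton transformation can move points of $\mathbb{C}^* \times \mathbb{C}$ to the boundary $\{x_1 = 0\}$ or send points at "infinity" of the source into the interior, so one must be scrupulous that the statement is only about singularities on the torus-times-line $\mathbb{C}^* \times \mathbb{C}$, where $\Phi$ is genuinely étale, and that we never need to control $\mathrm{Sing}(g_\sigma)$ along $x_1 = 0$. Since the conclusion is precisely phrased for $\mathbb{C}^* \times \mathbb{C}$, this causes no problem, but it explains why the hypothesis in the $\mathbb{C}[x,y]$ case (isolated singularities on all of $\mathbb{C}^2$) is automatically strong enough: it is only used through its restriction to $\mathbb{C}^* \times \mathbb{C}$, and thereafter the weaker torus-statement propagates.
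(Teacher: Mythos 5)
Your proof is correct, but it takes a genuinely different route from the paper's. The paper does not induct on elementary transformations: it invokes Lemma \ref{compositionNT} to write the whole composition at once as $f_\Sigma(x_1,y_1)=f(x_1^A,x_1^By_1+P(x_1))$, computes the two partials by the chain rule, and argues by contradiction --- a non-isolated singularity of $f_\Sigma$ in $\mathbb C^*\times\mathbb C$ would give an infinite sequence of distinct critical points accumulating at a point with $x_1\neq 0$, whose images (checked to contain infinitely many distinct points) accumulate at a critical point of $f$, contradicting the hypothesis. You replace this accumulation argument by the structural observation that each elementary Newton map is \'etale with finite fibers on $\mathbb C^*\times\mathbb C$ (monomial Jacobian), so the critical locus of the transform on the torus is exactly the preimage of the critical locus of the source, and you induct on the length of $\Sigma$. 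Both proofs hinge on the same chain-rule fact that the Jacobian factor is a nonvanishing monomial off $x_1=0$; your packaging buys a clean one-step statement and dispenses with the distinctness bookkeeping of the sequence argument, while the paper's avoids tracking the intermediate rings by treating the composed substitution directly. Two points to tighten: (i) you implicitly use that isolated singularities of $g$ on $\mathbb C^*\times\mathbb C$ give a \emph{finite} critical set --- either justify it (a discrete algebraic subset of an affine variety is finite) or note that a local biholomorphism pulls discrete sets back to discrete sets, which already suffices; (ii) for the first transformation at infinity attached to a horizontal face or to a face of $\N_{0,\infty}(f)$, the image of the torus lies in $\mathbb C\times\mathbb C^*$ rather than $\mathbb C^*\times\mathbb C$, so the hypothesis of isolated singularities on $\mathbb C^2$ is used on that region (or one swaps the roles of $x$ and $y$); your remark about the symmetric map essentially covers this, but the claim that the hypothesis is ``only used through its restriction to $\mathbb C^*\times\mathbb C$'' is slightly off.
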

\begin{proof}
	Let $\Sigma$ be a composition of Newton transformations. It follows from Lemma \ref{compositionNT} that we can assume 
	$$f_\Sigma(x_1,y_1) = f(x_1^A,x_1^B y_1 + P(x_1))$$
	with $A$ and $B$ in $\mathbb Z\setminus \{0\}$ (the case $B=0$ is similar) and $P$ in {$\mathbb C[x_1,x_1^{-1}]$}.
	We have 
	$$
	\left\{
		\begin{array}{ccl}
			\frac{\partial f_\Sigma}{\partial x_1}(x_1,y_1) & = & 
			\frac{\partial f}{\partial x}\left(x_1^A,x_1^B y_1 + P(x_1)\right)Ax_1^{A-1} + 
			\frac{\partial f}{\partial y}\left(x_1^A,x_1^B y_1 + P(x_1)\right)(B x_1^{B-1}y_1 + P'(x_1)) \\ \\
			\frac{\partial f_\Sigma}{\partial y_1}(x_1,y_1) & = & 
			\frac{\partial f}{\partial y}\left(x_1^A,x_1^B y_1 + P(x_1)\right) x_1^{B} 
		\end{array}
		\right.
		$$
		If $f_\Sigma$ has non isolated singularities in $\mathbb C^*\times \mathbb C$, then there is a point $M=(a,b)$ in $\mathbb C^* \times \mathbb C$ and an infinite sequence of distinct points $M_n = (a_n,b_n)$ in $\mathbb C^* \times \mathbb C$ which converges to $M$ and such that for any $n$, we have 
		$\frac{\partial {f_{\Sigma}}}{\partial x_1}(M_n) =  \frac{\partial {f_{\Sigma}}}{\partial y_1}(M_n) = 0.$
		As for any $n$ we have $a_n\neq 0$, we conclude that
		$\frac{\partial f}{\partial y}(a_n^A,a_n^B b_n+P(a_n)) = \frac{\partial f}{\partial x}(a_n^A,a_n^B b_n+P(a_n)) = 0$
		which proves that all the points $(a_n^A,a_n^B b_n+P(a_n))$ are critical points of $f$. We can assume that all these points are distinct assuming for instance that infinitely many $a_n$ are distinct (otherwise there are finitely many $a_n$ but infinitely distinct $b_n$). Then we deduce that the point $(a^A,a^B b + P(a))$ is a non isolated critical point of $f$. Contradiction.
\end{proof}
\subsubsection{Newton generic or non generic values and Newton bifurcation set}
\begin{defn}[Dicritical faces at infinity, discriminant]
	Let $f$ be a polynomial in $\k[x,y]$.
	\begin{itemize}
		\item  	A \emph{dicritical face at infinity} of $f$ is a 
			one dimensional face of the Newton polygon $\N_{\infty}(f)$ which contains the origin. 
		\item  
			A dicritical face at infinity is said \emph{smooth} if its underlying line has an equation of the form $p\alpha+q\beta=0$ with $(p,q)$ in $\mathbb Z^2$ and $q=1$ (resp $p=1$) if the face belongs to $\N_{0,\infty}(f)$ (resp $\N_{\infty,0}(f)$).
		\item   If $\gamma$ is a dicritical face at infinity of $f$, then the face polynomial $f_{\gamma}$ can be written as $P_{\gamma}(x^ay^b)$ with $P_{\gamma}$ a polynomial in $\k[s]$ and $(a,b)$ coprime integers in $\mathbb Z^2$. We call \emph{discriminant of the face $\gamma$ (relatively to $f$)}, the discriminant of $P_{\gamma}(s)-c$ with respect to $s$, element of $\k[c]$.
        \end{itemize}        
\end{defn}
\begin{defn}[Newton generic and non generic values] \label{Newtongenericvalues}
Let $f$ be a polynomial in $\k[x,y]$. A value $c_0$ is said \emph{Newton non generic} for $f$ if $f$ has a dicritical face at infinity, denoted by  $\gamma$ and
{$c_0$ satisfies one of the two conditions:}
\begin{itemize}
	\item $c_0 \neq f(0,0)$ is a root of the discriminant of $f_\gamma$,
	\item $c_0 = f(0,0)$ is a root of the discriminant of $f_\gamma$ or the face $\gamma$ is not smooth.
\end{itemize}
\end{defn}
\begin{defn}[Newton bifurcation set] \label{Newtonbifset}
	Let $f$ be a polynomial in $\k[x,y]$.
	The \emph{Newton bifurcation set} of $f$, denoted by $B_f^{\text{Newton}}$, is the union of the discriminant of $f$ (formed by the critical values of $f$), the set of Newton non generic values of $f$ and the set of Newton non generic values of the Newton transforms $f_\Sigma$, where $\Sigma$ is a composition of Newton transforms during the Newton algorithm at infinity of $f$. 
\end{defn}

\begin{prop}[Finitness of the Newton bifurcation set] \label{BfNewtonfini} 
	The \emph{Newton bifurcation set} of a polynomial $f$ in $\k[x,y]$ is finite. 
\end{prop}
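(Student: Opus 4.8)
The plan is to split $B_f^{\text{Newton}}$ into the three pieces of Definition \ref{Newtonbifset} --- the discriminant of $f$, the set of Newton non generic values of $f$, and the set of Newton non generic values of the Newton transforms $f_\Sigma$ --- and to prove that each is finite; only the last piece requires real work.

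First I would recall that the discriminant of $f$, namely the set of critical values of $f\colon\k^2\to\k$, is finite: the critical locus $\{\partial f/\partial x = \partial f/\partial y = 0\}$ is a closed algebraic subset of $\k^2$ on which $f$ takes finitely many values on its zero-dimensional part and is constant along each positive-dimensional irreducible component (on such a component the restriction of $df$ vanishes identically and $\k$ has characteristic zero). Next, $\N_\infty(f)$ has finitely many one dimensional faces, so $f$ has finitely many dicritical faces at infinity, and for each such face $\gamma$ the discriminant of $P_\gamma(s)-c$ with respect to $s$ is a nonzero element of $\k[c]$ and hence has finitely many roots; together with the single value $f(0,0)$ these account for all the Newton non generic values of $f$ (Definition \ref{Newtongenericvalues}), so that set is finite.

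The heart of the argument, and the step I expect to require the most care, is to show that the Newton algorithm at infinity of $f$ involves only finitely many compositions $\Sigma$ of Newton transformations. I would model the algorithm as a rooted tree whose nodes are the Newton transforms produced, with an edge from each transform to every transform obtained from it by a single Newton map, and then check that this tree is finitely branching and of bounded depth. Finite branching is immediate: each Newton polygon has finitely many one dimensional faces and each face polynomial finitely many roots, and the algorithm applies exactly one Newton map per such pair (taken in $\GN(f)\setminus\N(f)$ at the root, and in the current Newton polygon at every other node). Bounded depth is the substantive point: the algorithm halts as soon as a base case (Definition \ref{def:algo-Newton}) is reached, and after applying the Newton maps at infinity one obtains finitely many polynomials $g_1,\dots,g_k$ in $\k[v^{-1},v,w]$; by Theorem \ref{thm:algo-Newton}, for each $g_i$ there is an integer $n_i$ such that any composition of at least $n_i$ Newton maps turns $g_i$ into a base case, so every branch of the tree has length at most $1+\max_i n_i$. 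A finitely branching tree of bounded depth is finite, hence there are finitely many compositions $\Sigma$.

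Finally, for each such $\Sigma$ the transform $f_\Sigma$ lies in $\k[v^{-1},v,w]$ by Lemma \ref{compositionNT}, it has at most one local dicritical face (Definition \ref{def:face-dicritique-discriminant}) whose associated discriminant is a nonzero element of $\k[c]$, so by the same reasoning as above (now with Definition \ref{Newtongenericvalues-local}) it has finitely many Newton non generic values. Therefore $B_f^{\text{Newton}}$ is a finite union of finite sets, and so finite.
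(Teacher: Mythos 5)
Your proof is correct and follows essentially the same route as the paper: finiteness of the discriminant, finiteness of the Newton non generic values of $f$ and of each transform $f_\Sigma$, and finiteness of the Newton algorithm at infinity (which the paper asserts, resting on Theorem \ref{thm:algo-Newton}, and which you justify explicitly via the finitely branching, bounded-depth tree). Your version simply supplies the details the paper's short proof leaves implicit.
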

	
\begin{proof} The discriminant of $f$ is a finite set. There is a finite number of dicritical faces of $f$ and for each face the set of roots of the discriminant of the face polynomial is finite. We conclude by the fact that the Newton algorithm at infinity is finite and the set of Newton non generic values of each Newton transform $f_\Sigma$ occurring in the algorithm is finite.
\end{proof}
\begin{example} \label{example:example1}
	All along this article, we will consider the following example (see also Examples \ref{example:example1Sfinfini} and \ref{example:example1Sfcinfini}).
	\begin{equation}\label{exemplef}
		f(x,y)=x^6y^4+(4x^5+3x^4)y^3+(6x^4+11x^3+3x^2)y^2+(4x^3+13x^2+2x+1)y+x^2+5x+1.
	\end{equation}
	Using for instance Grobner basis of the Jacobian ideal of $f$, we observe that $f$ only has two critical points and their Milnor number is equal to one.
	We apply the Newton algorithm at infinity of $f$ and compute the Newton bifurcation values of $f$ given by the algorithm.
	The polynomial $f$ does not have any dicritical face at infinity and $f(0,0)=1$. Let $c$ be in $\k$. The polygon $\GN(f-c) \setminus \N(f-c)$ only has two one dimensional faces: 
	\begin{itemize}
		\item $\gamma_1^{(0)}$ supported by the line of equation $-x+2y=2$ and the face polynomial of $f-c$ is $y(x^2y+1)^3$,
		\item $\gamma_2^{(0)}$ supported by the line of equation $x-y=2$ and the face polynomial of $f-c$ is $x^2(xy+1)^4$.
	\end{itemize}
	We apply the Newton algorithm at infinity:
	\begin{itemize}
		\item For the face $\gamma_1^{(0)}$, we obtain
			\begin{equation}\label{exemplef1} 
				\sigma_{\gamma_1^{(0)}} : x=v(w+1), y=-v^{-2} \:\text{and}\:
				f_1(v,w)-c:=f_{\sigma_{\gamma_1^{(0)}}}(v,w)-c = v^{-2}(5v+8w^3+\dots).
			\end{equation}
			It does not have a local dicritical face and its Newton polygon has only one one dimensional face, denoted by $\gamma_1^{(1)}$, with face polynomial $v^{-2}(5v+8w^3)$.
			We continue the algorithm and get 
			\begin{equation}\label{exemplef1sigma}
				\sigma_{\gamma_1^{(1)}}:v=-8/5v_1^3,\:w=v_1(w_1+1) \:\text{and}\:
				(f_1)_{\sigma_{\gamma_1^{(1)}}}(v_1,w_1)-c=1500 v_1^{-3}(w_1-7/30v_1+\dots),
			\end{equation}
			which is a base case (Theorem \ref{thm:algo-Newton}).
		We conclude that the face $\gamma_1^{(0)}$ does not produce any Newton bifurcation value.
		\item For the face $\gamma_2^{(0)}$, we obtain
			\begin{equation} \label{exemplef2}
				\sigma_{\gamma_2^{(0)}}:x=v^{-1},\:y=v(w-1) \: \text{and} \:
				f_2(v,w)-c:=f_{\sigma_{\gamma_2^{(0)}}}(v,w)-c=v^{-2}(w^4+(2-c)v^2+2vw^2-4v^2w-v^3+\dots).
			\end{equation}
			The polynomial $f_2$ has a local dicritical face $\gamma_0^{(2)}$ supported by a line of equation $2x+y=0$.
			The associated polynomial is $P_{\gamma_0^{(2)}}(s)=s^2+2s+2$. 	The discriminant of the polynomial $P_{\gamma_0^{(2)}}(s)-c$ is the polynomial $4(-1+c)$ in the variable $c$. 
			\begin{itemize}
				\item Assume $c=1$. Thus, $c$ is the root of the discriminant of $\gamma_{0}^{(2)}$ 
					then $c$ is a Newton bifurcation value of $f$ as a local dicritical value of $f_2$. 
					We continue the Newton algorithm, this will be used in Examples \ref{example:example1Sfinfini} and 
					\ref{example:example1Sfcinfini}.
					The Newton polygon $\N(f_2-1)$ has only one one dimensional face, denoted by $\gamma_1^{(2)}$. It is supported by the line of equation $2x+y=0$. The face polynomial is $(v^{-1}w^2+1)^2$. We get 
					\begin{equation} \label{exemplef3}
						\sigma_{\gamma_1^{(2)}}:v=-v_1^2, w=v_1(w_1+1)\:\text{and}\:
						f_3(v_1,w_1)=(f_2)_{\sigma_{\gamma_1^{(2)}}}(v_1,w_1)-1=4w_1^2-7v_1+\dots.
					\end{equation}
					The Newton polygon $\N(f_3)$ has only one one dimensional face $\gamma^{(3)}$, it is supported by the line of equation $2x+y=2$. We continue the Newton algorithm and get a base case of Theorem \ref{thm:algo-Newton}
                                        \begin{equation} \label{exemplef3sigma}
						\sigma_{\gamma^{(3)}}:v_1=4/7v_2^2, w_1=v_2(w_2+1)\:\text{and}\:
						(f_3)_{\sigma_{\gamma^{(3)}}}(v_2,w_2)=8v_2 ^2(w_2+\dots)
					\end{equation}
					
				\item Assume $c=c_{(0,0)}(f_2)=2$. Thus, as the dicritical face $\gamma_0^{(2)}$ is not smooth, $c$ is a Newton bifurcation value of $f$. 
					We continue the Newton algorithm.
					The Newton polygon $\N(f_2-2)$ has two one dimensional faces:
						$\gamma_1^{(2)}$ supported by the line of equation $2x+y=0$, with a face polynomial equal to $v^{-2}w^2(w^2+2v)$ and
						$\gamma_2^{(2)}$ supported by the line of equation $x+y=1$, with a face polynomial equal to $v(2w^2v^{-2}-4v^{-1}w-1)$ which has two simple roots $r_1$ and $r_2$.
					\begin{itemize}
						\item For the face $\gamma_1^{(2)}$ we immediately get a base case of 
							Theorem \ref{thm:algo-Newton}
							\begin{equation} \label{exemplef2sigmagamma1}
								\sigma_{\gamma_1^{(2)}}:v=-1/2v_1^2,w=v_1(w_1+1) \:\text{and}\:
								(f_2)_{\sigma_{\gamma_1^{(2)}}}(v_1,w_1)-2=8w_1-10v_1+\dots
							\end{equation}
						\item For the face $\gamma_2^{(2)}$ and one root of the face polynomial, denoted by $\mu$, we also get a base case 							\begin{equation} \label{exemplef2sigmagamma2}
								\sigma_{\gamma_2^{(2)},\mu}:v=v_1,w=v_1(w_1+\mu) \: \text{and} \:
								(f_2)_{\sigma_{\gamma_2^{(2)},\mu}}(v_1,w_1)-2=v_1(w_1+\dots)
							\end{equation}
						\end{itemize}
				\item We consider $c\notin \{1,2\} \cup \disc f$. The Newton polygon $\N(f_2-c)$ has only one one dimensional face, denoted by $\gamma_1^{(2)}$. It is supported by the line of equation $2x+y=0$. Its face polynomial is $P_{\gamma_0^{(2)}}(x^{-1}y^2)-c$ which has two simple roots $\{\mu_1,\mu_2\}$. Let $\mu$ be one of such roots. Applying the Newton algorithm we obtain
					\begin{equation} \label{exemplef2sigma}
						\sigma_{\gamma_1^{(2)},\mu}:v=\mu^{-1}v_1^2,\:w=v_1(w_1+1) \: \text{and} \:
						(f_{2})_{\sigma_{\gamma_1^{(2)}},\mu}(v_1,w_1) =  w_1 u(v_1,w_1) \: \text{or} \:  
						(f_{2})_{\sigma_{\gamma_1^{(2)}},\mu}(v_1,w_1) = *w_1 + *v_1^{n} + \dots
					\end{equation}
					where $u$ is a unit and ``$*$'' are constants. In particular, the Newton transform 
					$(f_{2})_{\sigma_{\gamma_1^{(2)}},\mu}(v_1,w_1)$ is a base case of Theorem \ref{thm:algo-Newton}, then $c$ is not a local Newton bifurcation value of $f_2$. Furthermore $c$ does not belong to 
					$\disc f$ by assumption, then $c$ is not a Newton bifurcation value of $f$.
			\end{itemize}

	\end{itemize}
\end{example}

\section{Motivic Milnor fibers} \label{chapitrerappel}

We give below an introduction to motivic Milnor fibers and refer to\cite{DenLoe99a}, \cite{DenLoe01b}, \cite{Loo02a}, \cite{GuiLoeMer06a}, \cite{GuiLoeMer05a} and \cite{CNS} for further discussion.

\subsection{Motivic setting}

\subsubsection{Grothendieck rings} \label{inverse-direct}

Let $\k$ be a field of characteristic 0 and $\Gm$ its multiplicative
group.  We call \emph{$\k$-variety}, a separated reduced scheme of finite type
over $\k$.  We denote by $\text{Var}_{\k}$ the category of $\k$-varieties and for any
$\k$-variety $S$, by $\text{Var}_{S}$ the category of $S$-varieties, where objects are
morphisms $X \ra S$ in $\text{Var}_{\k}$.
We denote by $\mathcal M_S$ the localization of the Grothendieck ring
of $S$-varieties with respect to the class $[\mathbb A^1_{\k} \times S \to S]$. We will use also the
$\Gm$-equivariant variant $\mathcal M_{S\times \Gm}^{\Gm}$ introduced in
\cite[\S 2]{GuiLoeMer06a} or \cite[\S 2]{GuiLoeMer05a}, which is generated by isomorphism
classes of objects, $Y\ra S\times \Gm$ endowed with a monomial $\Gm$-action, of the category $\text{Var}_{S\times \Gm}^{\Gm}$.
In this context the class of the projection from $\mathbb A^1_{\k} \times (S\times \Gm) \to S\times \Gm$ endowed with the trivial action is denoted by $\mathbb L$. 
Let $f:S\ra S'$ be a morphism of varieties.  The composition by $f$ induces the
\emph{direct image} group morphism $f_{!}$ and the fibred product over $S'$ induces the \emph{inverse image} ring morphism $f^*$
$$f_{!}:\mathcal M_{S\times \Gm}^{\Gm} \ra \mathcal M_{S'\times \Gm}^{\Gm},\:\:
f^{*}:\mathcal M_{S'\times \Gm}^{\Gm} \ra \mathcal M_{S\times \Gm}^{\Gm}.$$
For a variety $(X\to S\times \Gm, \sigma)$ where $\sigma$ is a monomial action of $\Gm$, we consider its fiber in 1 denoted by $X^{(1)}\to S$ and endowed with an induced action $\sigma^{(1)}$ of the group of roots of unity $\hat{\mu}$. The corresponding Grothendieck ring to its operation is denoted by $\mathcal M_{S}^{\hat{\mu}}$ and isomorphic to $\mathcal M_{S\times \Gm}^{\Gm}$ (see \cite[Proposition 2.6]{GuiLoeMer06a}).

\subsubsection{Rational series} 

Let $A$ be one of the rings $\mathbb Z[\mathbb L, \mathbb L^{-1}]$, 
$\mathbb Z[\mathbb L, \mathbb L^{-1}, (1/(1-\mathbb L^{-i}))_{i>0}]$ and $\mathcal M_{S\times \Gm}^{\Gm}$. 
We denote by $A[[T]]_{sr}$ the $A$-submodule of $A[[T]]$ generated by 1 and finite products
of terms $p_{e,i}(T)=\mathbb L^{e}T^{i}/(1-\mathbb L^{e}T^{i})$ with $e$ in
$\mathbb Z$ and $i$ in $\mathbb N_{>0}$. There is a unique $A$-linear morphism
$\lim_{T \ra \infty}:A[[T]]_{sr}\ra A$ such that for any subset
$(e_{i},j_{i})_{i\in I}$ of $\mathbb Z \times \mathbb N_{>0}$ with $I$ finite
or empty, $\lim_{T \ra \infty}(\prod_{i\in I} p_{e_{i},j_{i}}(T))$ is equal to
$(-1)^{\abs{I}}$.

\subsubsection{Polyhedral convex cone} 
We will use the following lemma similar to \cite[Lemme 2.1.5]{Gui02a} and \cite[\S 2.9]{GuiLoeMer06a}.

\begin{lem}[Rational summation on a rational polyhedral convex cone] \label{lemmerationalitedescones} \label{lemmedescones}
	Let $\phi$ and $\eta$ be two $\mathbb Z$-linear forms defined on $\mathbb Z^2$.
	Let $C$ be a rational polyhedral convex cone of $\mathbb R^2\setminus\{(0,0)\}$, such that $\phi(C)$ and $\eta(C)$ are contained in $\mathbb N$. We assume that for any $n\geq 1$, the set $C_n$ defined as {$\phi^{-1}(n)\cap C \cap \mathbb Z^2$} is finite. We consider the formal series in $\mathbb Z\left[ \mathbb L, \mathbb L^{-1} \right][[T]]$
	$$S_{\phi, \eta, C}(T) = \sum_{n\geq 1} \sum_{(k,l)\in C_n} \mathbb L^{-\eta(k,l)} T^n.$$
	\begin{enumerate}
		\item \label{cone1} If $C$ is equal to $\mathbb R_{>0} \omega_1 + \mathbb R_{>0} \omega_2 $ where $\omega_1$ and $\omega_2$ are two non colinear primitive vectors in $\mathbb Z^2$ with $\phi(\omega_1)>0$ and $\phi(\omega_2)> 0$ then, 
		denoting $\mathcal P =( ]0,1]\omega_1 + ]0,1]\omega_2 )\cap \mathbb Z^2$, we have
			\begin{equation} \label{casconedim2}
				S_{\phi,\eta,C}(T)=
				\sum_{(k_0,l_0) \in \mathcal P}
				\frac{\mathbb L^{-\eta(k_0,l_0)}T^{\phi(k_0,l_0)}}
				{(1-\mathbb L^{-\eta(\omega_1)}T^{\phi(\omega_1)})(1-\mathbb L^{-\eta(\omega_2)}T^{\phi(\omega_2)})}
			\end{equation}
			and $\lim_{T \ra \infty} S_{\phi,\eta,C}(T)=1 = \Eu(C)$, {where $\Eu$ is the Euler characteristic with compact support morphism.}

     \item \label{cone2} If $C$ is equal to $\mathbb R_{>0}\omega$ where $\omega$ is a primitive vector in $\mathbb Z^2$ with $\phi(\omega)> 0$, then we have
			\begin{equation} \label{casconedim1} 
				S_{\phi,\eta,C}(T)= \frac{\mathbb L^{-\eta(\omega)}T^{\phi(\omega)}}{1-\mathbb L^{-\eta(\omega)}T^{\phi(\omega)}}
				\:\:\text{and}\:\:\lim_{T \ra \infty} S_{\phi,\eta,C}(T)=-1= \chi_{c}(C).
			\end{equation}
		\item \label{cone3} If $C$ is equal to $\mathbb R_{\geq 0} \omega_1 + \mathbb R_{>0} \omega_2 $ where $\omega_1$ and $\omega_2$ are two non colinear primitive vectors in $\mathbb N^2$ with $\phi(\omega_1)>0$ and $\phi(\omega_2)>0$ then, denoting
		$\mathcal P =( ]0,1]\omega_1 + ]0,1]\omega_2 )\cap \mathbb Z^2$, we have
			\begin{equation} \label{casconedim2-bis}
				S_{\phi,\eta,C}(T)=
				\sum_{(k_0,l_0) \in \mathcal P}
				\frac{\mathbb L^{-\eta(k_0,l_0)}T^{\phi(k_0,l_0)}}
				{(1-\mathbb L^{-\eta(\omega_1)}T^{\phi(\omega_1)})(1-\mathbb L^{-\eta(\omega_2)}T^{\phi(\omega_2)})} 
				+ \frac{\mathbb L^{-\eta(\omega_2)}T^{\phi(\omega_2)}}{1-\mathbb L^{-\eta(\omega_2)}T^{\phi(\omega_2)}}
			\end{equation}
             and $\lim_{T \ra \infty} S_{\phi,\eta,C}(T)=0= \chi_{c}(C)$.
	\end{enumerate}
\end{lem}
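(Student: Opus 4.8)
The three cases are variations on a single explicit resummation, so the strategy is to treat case \ref{cone1} in full and then reduce cases \ref{cone2} and \ref{cone3} to it. The main point is a parametrization of the lattice points of the cone: since $\omega_1,\omega_2$ are non-colinear primitive vectors, every $(k,l)\in C\cap\mathbb Z^2$ can be written uniquely as $(k_0,l_0) + a\omega_1 + b\omega_2$ with $(k_0,l_0)\in\mathcal P=(\,]0,1]\omega_1+\,]0,1]\omega_2)\cap\mathbb Z^2$ and $(a,b)\in\mathbb N^2$. This is the standard "fundamental parallelogram" decomposition of the semigroup generated by $\omega_1,\omega_2$ inside its saturation; I would state it as a short lemma or just invoke it as well known (it is the combinatorial content behind \cite[Lemme 2.1.5]{Gui02a} and \cite[\S 2.9]{GuiLoeMer06a}, already cited). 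The finiteness hypothesis on $C_n$ guarantees that the double sum is a well-defined formal series and that rearranging it along this parametrization is legitimate.

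**Case \ref{cone1}.** With the parametrization above, and using that $\phi$ and $\eta$ are $\mathbb Z$-linear, one has $\phi(k,l) = \phi(k_0,l_0) + a\,\phi(\omega_1) + b\,\phi(\omega_2)$ and likewise for $\eta$. Substituting into $S_{\phi,\eta,C}(T)=\sum_{(k,l)\in C\cap\mathbb Z^2}\mathbb L^{-\eta(k,l)}T^{\phi(k,l)}$ and factoring,
$$
S_{\phi,\eta,C}(T) = \sum_{(k_0,l_0)\in\mathcal P}\mathbb L^{-\eta(k_0,l_0)}T^{\phi(k_0,l_0)}
\Bigl(\sum_{a\geq 0}\mathbb L^{-a\eta(\omega_1)}T^{a\phi(\omega_1)}\Bigr)
\Bigl(\sum_{b\geq 0}\mathbb L^{-b\eta(\omega_2)}T^{b\phi(\omega_2)}\Bigr),
$$
and since $\phi(\omega_1),\phi(\omega_2)>0$ each geometric series sums to $1/(1-\mathbb L^{-\eta(\omega_i)}T^{\phi(\omega_i)})$, giving \eqref{casconedim2}. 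For the limit: each summand is a product $p_{e_0,j_0}(T)$-type term times $p_{-\eta(\omega_1),\phi(\omega_1)}(T)\,p_{-\eta(\omega_2),\phi(\omega_2)}(T)$ up to the leading monomial; more precisely one rewrites $\mathbb L^{-\eta(k_0,l_0)}T^{\phi(k_0,l_0)}/\prod(1-\cdots)$ as a $\mathbb Z[\mathbb L,\mathbb L^{-1}]$-linear combination of $1$ and products of one or two $p_{e,i}$'s (expanding the numerator monomial against the denominator factors, or simply noting the numerator equals $\mathbb L^{-\eta(\omega_1)-\eta(\omega_2)}T^{\phi(\omega_1)+\phi(\omega_2)}\cdot(\text{unit})$ only when $(k_0,l_0)=\omega_1+\omega_2$), and applies $\lim_{T\to\infty}$ termwise. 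The cleaner route to the value is to observe $\lim_{T\to\infty}S_{\phi,\eta,C}(T)$ is computed by $\chi_c$-realization: $S_{\phi,\eta,C}$ is (a piece of) a motivic zeta function whose limit at infinity realizes the Euler characteristic with compact support of the cone $C$, and a $2$-dimensional open cone is homeomorphic to $\mathbb R^2$, hence $\chi_c(C)=1$. I would give the bare-hands termwise computation as the rigorous argument and mention the $\chi_c$ interpretation as the reason the answer is $\chi_c(C)$.

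**Cases \ref{cone2} and \ref{cone3}.** For \ref{cone2}, $C=\mathbb R_{>0}\omega$ and $C\cap\mathbb Z^2=\{a\omega : a\geq 1\}$, so $S_{\phi,\eta,C}(T)=\sum_{a\geq 1}\mathbb L^{-a\eta(\omega)}T^{a\phi(\omega)} = \mathbb L^{-\eta(\omega)}T^{\phi(\omega)}/(1-\mathbb L^{-\eta(\omega)}T^{\phi(\omega)}) = p_{-\eta(\omega),\phi(\omega)}(T)$, whose limit at infinity is $(-1)^1=-1=\chi_c(\mathbb R_{>0}\omega)=\chi_c(\mathbb R)$. For \ref{cone3}, decompose the half-open cone $C = \mathbb R_{\geq 0}\omega_1 + \mathbb R_{>0}\omega_2$ as the disjoint union of the open cone $\mathbb R_{>0}\omega_1+\mathbb R_{>0}\omega_2$ and the open ray $\mathbb R_{>0}\omega_2$ (the $\omega_1$-edge's relative interior being excluded, only the ray in the $\omega_2$ direction along $k=0$... more carefully: points with the $\omega_1$-coordinate zero and $\omega_2$-coordinate positive form $\mathbb R_{>0}\omega_2$). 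Splitting the lattice sum accordingly gives $S_{\phi,\eta,C}(T)$ as the sum of the expression from case \ref{cone1} and the expression from case \ref{cone2} with $\omega=\omega_2$, which is exactly \eqref{casconedim2-bis}; and $\lim_{T\to\infty}$ is additive, so the limit is $1+(-1)=0=\chi_c(C)$, consistent with $C$ being homeomorphic to a closed half-plane minus a boundary ray, i.e. contractible-minus-a-ray, with $\chi_c=0$.

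**Main obstacle.** The only genuinely delicate point is the termwise application of $\lim_{T\to\infty}$ in case \ref{cone1}: the map $\lim_{T\to\infty}$ is defined on $A[[T]]_{sr}$ via its action on products of the generators $p_{e,i}(T)$, so one must first exhibit each summand $\mathbb L^{-\eta(k_0,l_0)}T^{\phi(k_0,l_0)}/\bigl((1-\mathbb L^{-\eta(\omega_1)}T^{\phi(\omega_1)})(1-\mathbb L^{-\eta(\omega_2)}T^{\phi(\omega_2)})\bigr)$ as a finite $\mathbb Z[\mathbb L,\mathbb L^{-1}]$-combination of $1$ and such products, then check that the resulting constants sum to $1$. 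The bookkeeping is a standard inclusion–exclusion over the fundamental parallelogram $\mathcal P$ (equivalently, it is the statement that $\sum_{(k_0,l_0)\in\mathcal P}1$ equals the lattice index $[\mathbb Z^2 : \mathbb Z\omega_1+\mathbb Z\omega_2]$, and this index cancels against the normalization), and it is precisely the computation carried out in \cite[Lemme 2.1.5]{Gui02a} and \cite[\S 2.9]{GuiLoeMer06a}; I would either reproduce it briefly or cite it, and in any case cross-check the final value against $\chi_c(C)$.
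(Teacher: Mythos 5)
Your argument is correct and is essentially the paper's: for points (1) and (2) the paper simply cites the same standard computation you reproduce (unique decomposition of the lattice points of the open cone over the half-open fundamental parallelogram $\mathcal P$, then summation of geometric series; direct geometric series for a primitive ray), and it obtains point (3) exactly as you do, by splitting the half-open cone into the open cone and the ray $\mathbb R_{>0}\omega_2$ and adding the two contributions. One caution on the limit in case (1): an individual summand $\mathbb L^{-\eta(k_0,l_0)}T^{\phi(k_0,l_0)}/\bigl((1-\mathbb L^{-\eta(\omega_1)}T^{\phi(\omega_1)})(1-\mathbb L^{-\eta(\omega_2)}T^{\phi(\omega_2)})\bigr)$ with $(k_0,l_0)\neq\omega_1+\omega_2$ is not visibly an element of $A[[T]]_{sr}$, and your ``lattice index cancels'' remark is not the right bookkeeping, so instead of applying $\lim_{T\to\infty}$ term by term over $\mathcal P$ one should regroup first --- for instance subdivide $C$ into open unimodular two-dimensional cones and open rays, whose series are genuine products of two, respectively one, of the generators $p_{e,i}$, so the limits $+1$ and $-1$ add up to $\chi_c(C)=1$ --- which is precisely the computation carried out in the references you (and the paper) cite.
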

\begin{proof} The points \ref{cone1} and \ref{cone2} are similar to \cite[Lemma 1]{carai-antonio}. The point \ref{cone3} follows from the points \ref{cone1} and \ref{cone2}.
\end{proof}

\subsection{Euler characteristic and area} \label{subsection:area}
We recall the following definition introduced in \cite{carai-antonio}. We assume here $\k=\mathbb C$.
\begin{defn}[Area of a Newton polygon with respect to a polynomial]\label{area-f} 
	Let $\N$ be a Newton polygon or a Newton polygon at infinity. {We denote the set of one dimensional faces of $\N$ by $(S_i)$ with $i$ in $\{1,\dots,d\}$}. 
	Let $f$ be a polynomial such that $\N (f)=\N$ or $\N_{\infty}(f)=\N$. For each face $S_i$,  we denote by $r_i$ the number of roots of $f_{S_i}$, we denote by $s_i$ the number of points with integer coordinates on the face $S_i$ without its vertices and by $\mathcal S_i$ 
	its area defined by $\mathcal S_i = \abs{\det(v_i,w_i)}/2$ if $S_i$ is the segment $[v_i,w_i]$.
	We define the {\it area of  $\N$ with respect to $f$} as
	$$\mathcal{S}_{\N,f}= \sum_{i=1}^{d} \frac{r_i \mathcal{S}_i}{s_i+1}.$$ 
\end{defn}
\begin{rem}
	Note that if $f$ is non degenerate with respect to $\N$, then $\mathcal{S}_{\N,f}=\mathcal{S}_{\N}$ where $\mathcal{S}_{\N}$ is the area of $\N$. 
\end{rem}

\begin{prop}[Quasi-homogeneous case] \label{prop:caracteristiceuleraire}
	Let $f$ be a quasi homogeneous polynomial in $\mathbb C[x^{-1},x,y]$.
	\begin{itemize}
		\item if $f(x,y)=x^ay^b$ with $a$ in $\mathbb Z^*$ and $b$ in $\mathbb N^*$, then we have 
			$$\Eu(f^{-1}(1)\cap \Gm^2)=\Eu(f^{-1}(1))=0.$$
		\item if $f(x,y)=x^ay^b\prod_{i=1}^{r}(x^q-\mu_i y^p)^{\nu_i}$ (respectively $f(x,y)=x^ay^b\prod_{i=1}^{r}(x^qy^p-\mu_i)^{\nu_i}$), with $(p,q)$ a primitive vector of $(\mathbb N^*)^2$ and all the $\mu_i$'s are different, 
			then we have
			$$\Eu\left(f^{-1}(1)\cap \Gm^2\right) = -\frac{2r\mathcal S}{\sum_{i=1}^{r}\nu_i} = -2\mathcal S_{\N(f),f}$$
			with $\mathcal S$ the area of the associated triangle with vertices $(0,0)$, $(a+q\sum_{i=1}^{r}\nu_i,b)$, $(a,b+p\sum_{i=1}^{r}\nu_i)$ 
			(respectively of the triangle with vertices $(0,0)$, $(a,b)$, $(a+q\sum_{i=1}^{r}\nu_i,b+p\sum_{i=1}^{r}\nu_i)$).
			Furthermore, if $f$ belongs to $\mathbb C[x,y]$ then we have
			$$\Eu\left(f^{-1}(1)\cap \Gm^2\right) = -2\mathcal S_{\N_{\infty}(f),f}.$$
	\end{itemize}
\end{prop}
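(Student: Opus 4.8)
The plan is to compute the Euler characteristic with compact support directly, exploiting the quasi-homogeneity of $f$ to fiber the relevant level sets over a one-dimensional base. First I would treat the monomial case: if $f(x,y) = x^a y^b$ with $a \in \mathbb Z^*$ and $b \in \mathbb N^*$, then $f^{-1}(1) \cap \Gm^2$ is isomorphic to $\Gm$ (solving $y^b = x^{-a}$ parametrizes it by a finite cover of $\Gm$, which still has $\Eu = 0$ since $\Eu$ is multiplicative and $\Eu(\Gm) = 0$); adding the points at the boundary where a coordinate vanishes only adds finitely many points if $b \geq 1$ and $a \neq 0$, but one checks $f^{-1}(1)$ inside $\mathbb A^2$ (or $\Gm \times \mathbb A^1$, depending on the meaning of the ambient variety for $x^{-1}$) is again a $\Gm$-torsor type curve with $\Eu = 0$. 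The key tool throughout is that $\Eu$ is a ring homomorphism on $K_0(\Var_{\mathbb C})$ (additive and multiplicative) and $\Eu(\Gm) = 0$, $\Eu(\mathbb A^1) = 1$, $\Eu(\text{point}) = 1$.

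For the main case $f(x,y) = x^a y^b \prod_{i=1}^r (x^q - \mu_i y^p)^{\nu_i}$, I would introduce the monomial coordinate change suggested by quasi-homogeneity. Since $(p,q)$ is primitive, pick $(p',q')$ with $pq' - qp' = 1$ (or the analogous relation), so that the map $(x,y) \mapsto (u,t)$ with, say, $u = x^{p'} y^{q'}$-type and $t = x^{-q} y^p$ (appropriately chosen so it is an automorphism of $\Gm^2$) transforms $f$ into $u^N \cdot t^{b'} \prod_i (1 - \mu_i t)^{\nu_i}$ for suitable exponents — i.e. $f$ becomes, up to a monomial in $u$, a function of $t$ alone times a monomial in $t$. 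Then $f^{-1}(1) \cap \Gm^2$ is cut out inside $\Gm^2_{(u,t)}$ by one equation expressing $u^N$ in terms of $t$; projecting to the $t$-coordinate, the fiber over a generic $t$ (where the right-hand side is a nonzero element of $\Gm$) is a finite set of $|N|$ points, i.e. a $\mu_{|N|}$-torsor, which has $\Eu = 0$ unless $N = 0$. The locus in $t$ where the projection degenerates is where $t^{b'} \prod_i (1 - \mu_i t)^{\nu_i} = 0$, namely $t = 0$ (if $b' \neq 0$) and $t = \mu_i^{-1}$; but these $t$-values lie outside $\Gm$ only in the $t=0$ case, and the points $t = \mu_i^{-1}$ with $t \in \Gm$ give empty fibers in $\Gm^2$ (since then $u^N = 0$ has no solution in $\Gm$), so they contribute nothing. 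Hence over the relevant base $\Gm \setminus \{\mu_i^{-1}\}$ the map $f^{-1}(1) \cap \Gm^2 \to$ base is a $\mu_{|N|}$-bundle, and I get $\Eu(f^{-1}(1) \cap \Gm^2) = |N| \cdot \Eu(\Gm \setminus \{\mu_1^{-1}, \dots, \mu_r^{-1}\})$ when $N \neq 0$, which is $|N| \cdot (-r)$ since $\Eu(\Gm) = 0$ and we remove $r$ points. It remains to identify $|N|$ with the normalization factor coming from the area: with $N = pa + qb + pq\sum_i \nu_i$ (the value of $l_{(p,q)}$ on the relevant face, up to sign), one computes $|N|/(\sum_i \nu_i) = 2\mathcal S / (\sum_i \nu_i)$, where $\mathcal S$ is the area of the stated triangle — this is the elementary determinant computation $2\mathcal S = |\det((a+q\Sigma\nu, b), (a, b+p\Sigma\nu))| = |{-}p(a+q\Sigma\nu)\Sigma\nu + \cdots|$ reorganized, and matching with $r_i = r$, $s_i + 1 = \sum_i \nu_i$ in Definition \ref{area-f} gives the claimed $-2\mathcal S_{\N(f),f}$. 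The case $f(x,y) = x^a y^b \prod_i (x^q y^p - \mu_i)^{\nu_i}$ is handled identically, with the monomial change of variables adapted to the sign of the face normal, and the triangle replaced by the one listed.

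For the final assertion, that when $f \in \mathbb C[x,y]$ one has $\Eu(f^{-1}(1) \cap \Gm^2) = -2\mathcal S_{\N_\infty(f),f}$: here $f$ being quasi-homogeneous and a genuine polynomial forces $\N_\infty(f)$ to have a single one-dimensional face (the face carrying all of $\Supp f$, the other parts being forced by the origin), and for that face $\N_\infty(f)$ the area $\mathcal S_{\N_\infty(f),f}$ equals exactly the area $\mathcal S_{\N(f),f}$ of the triangle appearing above — so the last formula is just a restatement of the second bullet in the language of the Newton polygon at infinity. I would verify this by checking the face of $\N_\infty(f)$ is the segment from $(a, b + p\Sigma\nu)$ to $(a + q\Sigma\nu, b)$ (or the corresponding one in the second case), and that the triangle formed with the origin has the stated vertices, so that $\mathcal S_i = \mathcal S$, $r_i = r$, $s_i + 1 = \Sigma \nu_i$ as needed.

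I expect the main obstacle to be the careful bookkeeping of which ambient variety and which boundary strata are in play — in particular making the monomial change of variables into a genuine automorphism of $\Gm^2$ with the correct exponent $N$, tracking signs according to whether the face lies in $\N_{\infty,\infty}$, $\N_{0,\infty}$ or $\N_{\infty,0}$, and confirming in each case that the degenerate fibers of the projection either lie outside $\Gm$ or are empty, so that no extra contribution to $\Eu$ sneaks in. The $\Eu$-of-a-$\mu_{|N|}$-bundle-over-$\Gm$-minus-points argument and the determinant/area identity are then routine.
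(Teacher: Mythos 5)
Your proposal follows essentially the same route as the paper's proof: a unimodular monomial change of variables on $\Gm^2$ turns the quasi-homogeneous $f$ into a monomial in one torus variable times a one-variable polynomial, the level set $f^{-1}(1)\cap\Gm^2$ then fibers over $\Gm$ minus the $r$ roots with finite fibers of cardinality $\abs{N}$ (where $N=ap+bq+pq\sum_i\nu_i$), giving $\Eu=-r\abs{N}$, and the determinant identity $2\mathcal S=\abs{N}\sum_i\nu_i$ together with $s+1=\sum_i\nu_i$ converts this into $-2\mathcal S_{\N(f),f}$, exactly as in the paper. Two local slips should be corrected but do not affect the argument: the intermediate identity should read $\abs{N}=2\mathcal S/\sum_i\nu_i$ rather than $\abs{N}/\sum_i\nu_i=2\mathcal S/\sum_i\nu_i$, and a fiber consisting of $\abs{N}$ points has $\Eu=\abs{N}$ (it is the total space over the punctured $\Gm$, via multiplicativity for the finite cover, whose $\Eu$ equals $\abs{N}\cdot(-r)$).
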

\begin{proof}
	Assume $f(x,y)=x^ay^b$ with $a\neq 0$ and $b>0$. Let $d$ be the greatest common divisor of $a$ and $b$. We denote $a'=a/d$ and 
	$b'=b/d$. Let $u$ and $v$ integers such that $a'u+b'v=1$. Considering the change of variables in the torus $x=x_1^{a'}y_1^{b'}$, {$y=x_1^{-v}y_1^{u}$} we obtain an isomorphism between the algebraic varieties $f^{-1}(1)$ and $\Gm\times \mu_d$ with $\mu_d$ the group of $d$-roots of unity. In particular, we deduce that $\Eu(f^{-1}(1)\cap \Gm^2)$ is zero. 
	Now, we consider the case 
	$$f(x,y)=x^ay^b\prod_{i=1}^{r}(x^q-\mu_i y^p)^{\nu_i} = x^ay^{b+p\sum_{i=1}^{r}\nu_i}\prod_{i=1}^{r}(x^qy^{-p}-\mu_i)^{\nu_i}$$
	with $(p,q)$ a primitive vector of $(\mathbb N^*)^2$ and all $\mu_i$ are different. 
	The case $f(x,y)=x^ay^b\prod_{i=1}^{r}(x^qy^p-\mu_i)^{\nu_i}$ is similar.
	We denote by $N=ap+bq+pq\sum_{i=1}^{r}\nu_i$.
	We consider $u$ and $v$ integers such that $uq-vp=1$.
	On the torus, we use the change of variables $x=x_1^uy_1^{sp}$, $y=x_1^vy_1^{sq}$ with $s=1$ if $N\geq 0$ and $s=-1$ if $N\leq 0$. Remark that if $N=0$, the two changes of variables are convenient. 
	We obtain the equality 
	$$f(x,y)=g(x_1,y_1)=x_1^{au+(b+p\sum_{i=1}^{r}\nu_i)v}y_1^{\abs{N}}\prod_{i=1}^{r}(x_1-\mu_i)^{\nu_i}.$$
	In particular the variety $f^{-1}(1)\cap \Gm^2$ is isomorphic to the variety $g^{-1}(1)\cap \Gm^2$ which Euler characteristic 
	{is} equal to $-r\abs{N}$. Indeed if $N\neq 0$, the variety is $\abs{N}$ copies of the graph of the function 
	$$x_1 \mapsto \left(x_1^{au+(b+p\sum_{i=1}^{r}\nu_i)v}\prod_{i=1}^{r}(x_1-\mu_i)^{\nu_i}\right)^{-1}$$
	defined on $\Gm\setminus\{\mu_i\mid 1\leq i \leq r\}$. 
	If $N=0$, then the variety is isomorphic to {$(g^{-1}(x_1,1)=1) \times \Gm$} which has Euler characteristic zero. To conclude, it is enough to use Definition \ref{area-f} and remark that 
	$$2\mathcal S = \abs{\det\left(\left(a+q\sum_{i=1}^{r}\nu_i,b\right), \left(a,b+p\sum_{i=1}^{r}\nu_i\right)\right)} = \abs{N}\sum_{i=1}^{r}\nu_i.$$
	This formula is correct in the case of $N=0$ because the area of the triangle is zero.
%
\end{proof}

\subsection{Arcs} \label{action}
Let $X$ be a $\k$-variety. For any integer $n$, we denote by $\mathcal L_{n}(X)$ the \emph{space of $n$-jets}
of $X$. This set is a $\k$-scheme of finite type and its $K$-rational points are morphisms 
$Spec \:K[t]/t^{n+1}\ra X$, for any extension $K$ of $\k$. There are canonical
morphisms $\mathcal L_{n+1}(X)\ra \mathcal L_{n}(X)$. These morphisms are
$\mathbb A_{\k}^{d}$-bundles when $X$ is smooth with pure dimension $d$.  The
\emph{arc space} of $X$, denoted by  $\mathcal L(X)$, is the projective limit
of this system. This set is a $\k$-scheme and we denote by
$\pi_{n}:\mathcal L(X)\ra \mathcal L_{n}(X)$ the canonical morphisms {called \emph{truncation maps}}. For more details
we refer for instance to \cite{DenLoe99a,Loo02a, CNS}.

For a non zero element $\varphi$ in $K[[t]]$ or in $K[t]/t^{n+1}$, we denote by
$\ord (\varphi)$ the valuation of $\varphi$ and by $\ac (\varphi)$ 
the coefficient  of $t^{\ord \varphi}$ in $\varphi$ called the \emph{angular component} of $\varphi$. By convention $\ac (0)$ is zero. 
The multiplicative group $\Gm$ acts canonically on 
$\mathcal L_{n}(X)$ by $\lambda.\varphi(t):=\varphi(\lambda t).$ 
We consider the application \emph{origin} $\pi_0:\varphi\mapsto \varphi(0)=\varphi \mod t$.

Let $F$ be a closed
subscheme of $X$ and $\mathcal I_{F}$ be the ideal of regular functions on $X$ which
vanish on $F$. We denote by $\ord F$ the function which assigns to each arc
$\varphi$ in $\mathcal L(X)$ the bound $\inf \ord g(\varphi)$ where $g$ runs on $\mathcal I_{F,\varphi(0)}$.

\subsection{The motivic Milnor fiber morphism} \label{section:Sf} 
\subsubsection{Motivic nearby cycles, motivic Milnor fiber}
Let $X$ be a
smooth $\k$-variety of pure dimension $d$ and $f:X\ra \mathbb A^{1}_{\k}$
be a morphism.  We set $X_{0}(f)$ for the zero locus of $f$ and for $n\geq 1$, we consider the scheme 
$$ X_{n}(f) = \{\varphi \in \mathcal L(X) \mid \ord f(\varphi)=n \} $$ 
endowed with the arrow 
$(\pi_{0},\ac \circ f)$ to $X_{0}(f)\times \Gm$.
In particular for any $m\geq n$, the truncation $\pi_m(X_{n}(f))$ denoted by $X_{n}^{(m)}(f)$, is a $(X_0(f)\times \Gm)$-variety endowed with the standard action of $\Gm$.
By smoothness of $X$, we have the equality 
$$
[X_{n}^{(m)}(f)]\mathbb L^{-md} =
[X_{n}^{(n)}(f)]\mathbb L^{-nd} \in \mathcal M_{X_0(f)\times \Gm}^{\Gm}
$$ 
this element is the \emph{motivic measure} of $X_{n}(f)$ denoted
by $\mes(X_{n}(f))$ and introduced by Kontsevich in \cite{Kon95a}.
In \cite{DenLoe98b, DenLoe02a}, Denef and Loeser introduce and prove the rationality of the following \emph{motivic zeta function}
$$Z_{f}(T) = \sum_{n\geq 1} \mes(X_n(f))T^n \in \mathcal M_{X_{0}(f)\times \Gm}^{\Gm}[[T]].$$
They define the \emph{motivic nearby cycles} $S_f$ and the \emph{motivic Milnor fiber} $S_{f,x_0}$ at any point $x_0$ in $X_0(f)$ as 
$$S_f = - \lim_{T\ra \infty} Z_f(T) \in \mathcal M_{X_0(f)\times \Gm}^{\Gm}\:\:\text{and}\:\:S_{f,x_0} := i^{*}_{x_0}(S_f) 
\in \mathcal M_{\{x_0\}\times \Gm}^{\Gm}$$ 
 where $i^{*}_{x_0}$ the pull-back morphism induced by the inclusion 
 $i_{x_0}:{\{x_0\}} \ra X_0(f)$. The motive $S_{f,x_0}$ realizes on classical invariants of the topological Milnor fiber of $f$ at $x_0$ as the Milnor number (in the isolated singularity case) or the Hodge spectrum (\cite[Theorem 4.2.1]{DenLoe98b} and \cite[\S 3.5]{DenLoe02a}). 

\subsubsection{Motivic nearby cycles morphism}
Using the weak factorisation theorem, Bittner \cite{Bit05a} extends the motivic
nearby cycles as a morphism defined over all the  Grothendieck ring $\mathcal M_{X}$.
Guibert, Loeser and Merle \cite{GuiLoeMer06a} give a different
construction using the motivic integration theory, we explain it below and we will use it in the following.

\begin{defn} \label{deffctzetamodifiee} Let $X$ be a smooth $\k$-variety with pure
	dimension $d$, let $U$ be an open and dense subset of $X$, let $F$ be
	its complement and let $f:X\ra \mathbb A^{1}_{\k}$ be a morphism. Let
	$n$ be in $\mathbb N^*$ and $\delta>0$, we consider the arc space 
	$$ X_{n}^{\delta}(f):=\left\{ 
		\varphi \in \mathcal L(X)\mid \ord f(\varphi)=n, 
		\ord \varphi^{*}F \leq  n \delta 
	\right\}
	$$
	endowed with the arrow 
	$(\pi_{0},\ac \circ f)$ to $X_{0}(f)\times \Gm$.
	Then, we consider the \emph{modified motivic zeta function} 
	$$ Z_{f,U}^{\delta}(T):=\sum_{n \geq 1} \mes( X_{n}^{\delta}(f))T^{n} 
	\in \mathcal M_{X_{0}(f)\times 	\Gm}^{\Gm}[[T]].$$ 
\end{defn}
\begin{prop}[\cite{GuiLoeMer06a}(\S 3.8)] \label{thmrationaliteopen} 
	Let $U$ be an open and dense subset of a smooth $\k$-variety $X$ with pure 
	dimension. Let $f:X \ra \mathbb A_{k}^{1}$ be a morphism. There is 
	$\delta_{0}>0$ such that for any $\delta\geq \delta_{0}$, the series $Z_{f,U}^{\delta}(T)$ is rational and its
	limit is independent of $\delta$. We will denote by $\mathcal S_{f,U}$ 
	the limit $- \lim_{T\ra \infty} Z_{f,U}^{\delta}(T)$.
\end{prop}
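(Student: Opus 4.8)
The plan is to reduce the rationality and the $\delta$-stabilization to a finite summation over lattice points in rational polyhedral cones, controlled by Lemma \ref{lemmedescones} and its standard higher-dimensional analogue (\cite[\S 2.9]{GuiLoeMer06a}), following the method of Denef--Loeser and Guibert--Loeser--Merle. First I would choose an embedded resolution: a proper birational morphism $h\colon Y\ra X$ with $Y$ smooth, an isomorphism over $X\setminus(f^{-1}(0)\cup F)$, such that $D:=h^{-1}(f^{-1}(0)\cup F)_{\mathrm{red}}=\bigcup_{i\in J}E_i$ has simple normal crossings. To each component $E_i$ I attach the integers $N_i=\operatorname{ord}_{E_i}(f\circ h)$, $\nu_i=1+\operatorname{ord}_{E_i}(\mathrm{Jac}\,h)$ and $\mu_i=\operatorname{ord}_{E_i}(\mathcal{I}_F\cdot\mathcal O_Y)$. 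Since each $E_i$ is contained in $h^{-1}(f^{-1}(0))$ or in $h^{-1}(F)$, one has $N_i>0$ or $\mu_i>0$ for every $i$; set $J_0=\{i\in J\mid N_i>0\}$, and for $\emptyset\neq I\subseteq J$ put $E_I^{\circ}=\bigl(\bigcap_{i\in I}E_i\bigr)\setminus\bigcup_{j\notin I}E_j$, with the usual $\Gm$-Galois cover $\widetilde{E_I^{\circ}}\ra E_I^{\circ}\times\Gm$ recording the angular component of $f\circ h$ and carrying its monomial $\Gm$-action.

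Next I would apply the motivic change of variables formula for $h$ to the arc spaces $X_n^{\delta}(f)$, as in \cite{DenLoe99a} and \cite{GuiLoeMer06a}. An arc in $X_n^{\delta}(f)$ is contained neither in $f^{-1}(0)$ (its order along $f$ is finite) nor in $F$ (its order along $F$ is finite), hence lifts uniquely through $h$; stratifying the lifts by the vanishing orders $k_i\geq 1$ along the components $E_i$ they meet, the condition $\ord f(\varphi)=n$ reads $\sum_{i\in I}k_iN_i=n$, the condition $\ord\varphi^{*}F\leq n\delta$ reads $\sum_{i\in I}k_i\mu_i\leq n\delta$, and the relative Jacobian contributes a factor $\mathbb{L}^{-\sum_{i}k_i\nu_i}$. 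One obtains
\[
Z_{f,U}^{\delta}(T)=\sum_{\substack{\emptyset\neq I\subseteq J\\ I\cap J_0\neq\emptyset}}(\mathbb{L}-1)^{\abs{I}-1}[\widetilde{E_I^{\circ}}]\sum_{\substack{(k_i)_{i\in I}\in(\mathbb{N}_{>0})^{I}\\ \sum_ik_i\mu_i\leq\delta\sum_ik_iN_i}}\mathbb{L}^{-\sum_ik_i\nu_i}\,T^{\sum_ik_iN_i},
\]
a finite sum in $\mathcal M_{X_0(f)\times\Gm}^{\Gm}[[T]]$. For fixed $I$ with $I\cap J_0\neq\emptyset$, the inner sum is a series $S_{\varphi,\eta,C}(T)$ for the linear forms $\varphi(k)=\sum_ik_iN_i$, $\eta(k)=\sum_ik_i\nu_i$ and the rational polyhedral cone $C=\{k\in\mathbb{R}_{>0}^{I}\mid\sum_ik_i\mu_i\leq\delta\sum_ik_iN_i\}$; since $N_i>0$ or $\mu_i>0$ for each $i$, the level sets $C\cap\varphi^{-1}(n)\cap\mathbb{Z}^{I}$ are finite, so a simplicial subdivision of $C$ together with Lemma \ref{lemmedescones} (in the appropriate dimension) yields the rationality of each inner sum, hence of $Z_{f,U}^{\delta}(T)$, for every $\delta>0$.

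Finally I would prove that $\lim_{T\ra\infty}Z_{f,U}^{\delta}(T)$ is independent of $\delta$ for $\delta$ large; this is the main obstacle. Take $\delta_0=\max\{\mu_i/N_i\mid i\in J_0\}$. If $I\subseteq J_0$, then for $\delta\geq\delta_0$ every $\mu_i-\delta N_i\leq 0$, the defining inequality of $C$ is vacuous, $C=\mathbb{R}_{>0}^{I}$, and the $I$-summand does not involve $\delta$. If $I\cap J_0\neq\emptyset$ but $I\not\subseteq J_0$, then for $\delta\geq\delta_0$ the cone $C$ is the positive orthant cut by a single half-space bounded by the hyperplane $\{\sum_ik_i\mu_i=\delta\sum_ik_iN_i\}$; expanding the simplicial pieces of $C$ via \eqref{casconedim2}, \eqref{casconedim1} and \eqref{casconedim2-bis}, the extra geometric factor produced by the bounding hyperplane cancels, under $\lim_{T\ra\infty}$, the factor it truncates --- the elementary cancellation already visible in the one-variable case, where $\sum_{k\geq 1}\sum_{1\leq\ell\leq\delta k}\mathbb{L}^{-k-\ell}T^{k}$ has limit $0$ --- so such a summand contributes $0$ to the limit. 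Hence, for $\delta\geq\delta_0$, only the summands with $I\subseteq J_0$ survive in the limit, which is therefore free of $\delta$, and $\mathcal S_{f,U}:=-\lim_{T\ra\infty}Z_{f,U}^{\delta}(T)$ is well defined. We refer to \cite[\S 3.8]{GuiLoeMer06a} for the details.
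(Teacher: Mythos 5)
Your argument is correct and follows essentially the same route as the source the paper cites for this statement (Guibert--Loeser--Merle, \S 3.8): embedded resolution of $(X, f^{-1}(0)\cup F)$, the motivic change of variables giving a sum over strata $E_I^{\circ}$ of lattice-point series over the cones cut out by $\sum_i k_i\mu_i\le\delta\sum_i k_iN_i$, rationality by simplicial subdivision, and $\delta$-independence of the limit for $\delta\ge\max_{i\in J_0}\mu_i/N_i$ because the summands with $I\not\subseteq J_0$ have cones of Euler characteristic zero. This is also exactly the computation the paper itself redoes in its proof of Lemma \ref{lem:rationalite-zeta-omega}, so there is nothing to add beyond tightening the ``cancellation'' step into the statement that the limit of each simplicial piece equals its compactly supported Euler characteristic.
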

\begin{rem} \label{continuity-pushforward}
	Some remarks:
	\begin{itemize}
		\item With above notations, we deduce immediately from the proof of that proposition, the following equality
			$$f_{!}(S_{f,U}) = \lim_{T\ra \infty} \sum_{n \geq 1} \mes(X_n^{\delta}(f)\ra \{0\}\times \Gm) T^n \in \mathcal M_{\{0\}\times \Gm}^{\Gm}.$$
			Indeed it is enough to compare the computation on a log-resolution of $(X,X\setminus f^{-1}(0))$ of both sides of the equality.
			The computation of the left hand side term of the equality is done taking track of the origin of arcs above $f^{-1}(0)$ and then forgetting it 
			after application of the pushforward morphism $f_{!}$, whereas the computation of the right hand side term is directly done on the resolution. In the following sections, we will identify $\mathcal M_{\{0\}\times \Gm}^{\Gm}$ to $\mgg$ and we will simply write
			$$f_{!}(S_{f,U}) = \lim_{T\ra \infty} \sum_{n \geq 1} \mes(X_n^{\delta}(f)) T^n \in \mathcal M_{\Gm}^{\Gm},$$
			considering for any $n\geq 1$, $X_n^{\delta}(f)$ endowed with its structural map to $\Gm$.
		\item  With the same proof, Proposition \ref{thmrationaliteopen} can be extended to the case of $X$ not necessary smooth but $U$ smooth. 
			Indeed, the singular locus of $X$ is contained in $F=X\setminus U$ and the first step of the proof is a resolution of $(X,F\cup X_0(f))$.
	\end{itemize}
\end{rem}

\begin{thm}[\cite{Bit05a}, \cite{GuiLoeMer06a}(\S 3.9)] \label{extensionaugroup} 
	Let $f:X \ra \mathbb A_{k}^{1}$ be a morphism on a $\k$-variety not necessary smooth. 
	There is a unique $\mathcal M_{k}$-linear group morphism $\mathcal S_{f}:\mathcal M_{X} \ra \mathcal M_{X_{0}(f)\times \Gm}^{\Gm}$
	such that for all proper morphism $p:Z\ra X$ with $Z$ smooth, 
	and for all open and dense subset $U$ in $Z$, $\mathcal S_{f}([p:U \ra X])$ 
	is defined as $p_{!}(\mathcal S_{f\circ p,U}).$ 
\end{thm}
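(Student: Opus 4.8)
The plan is to reduce the statement to Bittner's presentation of the Grothendieck ring \cite{Bit05a} and then to the analysis, already carried out in the proof of Proposition \ref{thmrationaliteopen}, of the modified motivic zeta functions on a log-resolution. Since $\k$ has characteristic zero, resolution of singularities together with Bittner's theorem present the $\mathcal M_{\k}$-module $\mathcal M_X$ by generators $[p\colon Z\ra X]$ with $Z$ smooth over $\k$ and $p$ proper, subject to the relations $[\emptyset\ra X]=0$ and the blow-up relations $[\widetilde Z\ra X]-[E\ra X]=[Z\ra X]-[Y\ra X]$, where $Y\subset Z$ is a smooth closed subvariety, $\widetilde Z\ra Z$ its blow-up and $E\subset\widetilde Z$ the exceptional divisor. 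Taking $U=Z$ in the required formula forces $\mathcal S_f([p\colon Z\ra X])=p_!(\mathcal S_{f\circ p,\,Z})$; and in Definition \ref{deffctzetamodifiee} the boundary $F=Z\setminus Z$ is empty, so the condition $\ord\varphi^{*}F\le n\delta$ is vacuous, $Z_{f\circ p,\,Z}^{\delta}(T)=Z_{f\circ p}(T)$, and hence $\mathcal S_{f\circ p,\,Z}=S_{f\circ p}$ (the components of $Z$ on which $f\circ p$ vanishes identically contribute nothing, the associated zeta function being zero). As these classes generate $\mathcal M_X$ already as a group, an $\mathcal M_{\k}$-linear morphism with the stated property is unique if it exists; and $\mathcal M_{\k}$-linearity will be clear on generators, hence everywhere.

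For existence, I would set $\mathcal S_f([p\colon Z\ra X]):=p_!(S_{f\circ p})$ on generators and extend $\mathcal M_{\k}$-linearly; the whole point is that this assignment must kill the blow-up relations. Writing $g=f\circ p$, $\pi\colon\widetilde Z\ra Z$ for the blow-up of $Y$ and $i_Y\colon Y\hookrightarrow Z$, it suffices, after applying $p_!$, to establish the identity
\[
\pi_!\big(S_{g\circ\pi}\big)-(\pi|_E)_!\big(S_{g\circ\pi|_E}\big)=S_g-(i_Y)_!\big(S_{g|_Y}\big)
\]
in $\mathcal M_{Z_0(g)\times\Gm}^{\Gm}$. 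This is the technical heart of the argument, and the step I expect to be the main obstacle. I would prove it by choosing a log-resolution $h\colon W\ra Z$ of the pair $(Z,\ g^{-1}(0)\cup Y)$ dominating the blow-up $\pi$, and computing each of the four motivic nearby cycles above on $W$, and on the relevant restrictions of $h$, by means of the change of variables formula in motivic integration, exactly as in the proof of Proposition \ref{thmrationaliteopen}. Each term then becomes an explicit finite sum indexed by the strata of a normal crossing divisor on $W$, and the identity follows by matching these stratified contributions term by term, keeping careful track of the multiplicities along $E$ and of the identifications of the $\Gm$-actions under the change of variables; this is the computation of \cite[\S 3.9]{GuiLoeMer06a}.

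Finally one recovers the general formula $\mathcal S_f([p|_U\colon U\ra X])=p_!(\mathcal S_{f\circ p,\,U})$, for an arbitrary open dense $U\subset Z$, from the morphism just constructed. I would choose a proper morphism $\rho\colon\widehat Z\ra Z$, an isomorphism over $U$, such that $\widehat D:=\rho^{-1}(Z\setminus U)_{\mathrm{red}}$ is a normal crossing divisor with smooth intersections $\widehat D_J$ of its components; then inclusion--exclusion gives $[p|_U\colon U\ra X]=[\widehat Z\ra X]-\sum_{J\ne\emptyset}(-1)^{|J|+1}[\widehat D_J\ra X]$ in $\mathcal M_X$. Applying the already defined $\mathcal S_f$ to the right-hand side produces a combination of pushforwards of classical motivic nearby cycles. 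On the other hand, enlarging $\rho$ to a log-resolution of $(Z,\ (Z\setminus U)\cup g^{-1}(0))$ expresses $\mathcal S_{f\circ p,\,U}=-\lim_{T\ra\infty}Z_{f\circ p,\,U}^{\delta}(T)$ as the same kind of stratified sum; comparing the two expressions term by term yields the equality, which together with the uniqueness above completes the proof.
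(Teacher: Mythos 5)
The statement you are proving is not proved in the paper at all: Theorem \ref{extensionaugroup} is quoted from \cite{Bit05a} and \cite[\S 3.9]{GuiLoeMer06a}, and the machinery the paper actually works with is the Guibert--Loeser--Merle construction via the modified zeta functions of Definition \ref{deffctzetamodifiee} and Proposition \ref{thmrationaliteopen}, which is designed precisely to avoid the weak factorization theorem. Your proposal reconstructs the other cited route, Bittner's: present $\mathcal M_X$ by classes $[Z\to X]$ with $Z$ smooth and $p$ proper modulo blow-up relations, set $\mathcal S_f([Z\to X])=p_!(S_{f\circ p})$, check the relations, then recover the formula for an arbitrary open dense $U$ by inclusion--exclusion and a log-resolution computation of $\mathcal S_{f\circ p,U}$. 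The uniqueness argument is fine (the smooth proper classes generate $K_0(\mathrm{Var}_X)$, and $\mathcal M_{\k}$-linearity handles the localization, as you say), and the last reduction is essentially the stratified formula appearing in the proof of Proposition \ref{thmrationaliteopen}, so the overall strategy is legitimate --- but note that it imports the weak factorization theorem (through Bittner's presentation), i.e.\ it is genuinely a different proof from the one the paper relies on.

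The weak point is the step you yourself call the heart: the identity $\pi_!(S_{g\circ\pi})-(\pi|_E)_!\bigl(S_{(g\circ\pi)|_E}\bigr)=S_g-(i_Y)_!\bigl(S_{g|_Y}\bigr)$ is asserted, not proved, and your plan for it would not go through as described. First, the four terms cannot all be ``computed on $W$'': $S_{g|_Y}$ and $S_{(g\circ\pi)|_E}$ are nearby cycles of functions on $Y$ and on $E$, so their Denef--Loeser expressions require log-resolutions of $(Y,(g|_Y)^{-1}(0))$ and of the corresponding pair on $E$; a resolution of $(Z, g^{-1}(0)\cup Y)$ dominating the blow-up does not by itself furnish these, and one must build the resolutions compatibly before any comparison can start. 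Second, the comparison is not a naive term-by-term matching of strata: already for $g=xy$ blown up at the origin, the contribution of the exceptional stratum is the class of $\Gm$ with a free $\mu_2$-action (an $m_I$-cover with $m_I=2$), and it only equals the expected $\mathbb L-1$ after invoking the affine-bundle/linearization relation in the equivariant Grothendieck ring; keeping track of these cover classes and of the $\Gm$- (or $\hat\mu$-) structures is where the actual work lies. Third, the reference you give for this computation is misplaced: \cite[\S 3.9]{GuiLoeMer06a} does not verify blow-up relations --- Guibert, Loeser and Merle bypass Bittner's presentation entirely, proving instead that $p_!(\mathcal S_{f\circ p,U})$ depends only on the class $[U\to X]$ by motivic-integration arguments --- so the compatibility with blow-ups is exactly the content of \cite{Bit05a} and has to be carried out, not cited away. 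As it stands, your text is an accurate outline of Bittner's proof with its central lemma left open.
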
 

\subsection{Motivic zeta function and differential form} \label{section:mot-zeta-omega} 
{In the following, we will need to use motivic zeta functions of a function and a differential. These are induced by the change of variables associated to Newton transformations and studied for instance in \cite{Veys01}, \cite{ArtCasLue05a}, \cite{CassouVeys13}. 
More precisely, we will need to consider a modified version taking account of a closed subset.}

{\begin{defn}\label{def:zetafomega} Let $X$ be a $\k$-variety of pure dimension $d$ and $g : X\ra \mathbb A^1_{\k}$ be a regular map. 
Let $U$ be a smooth open subvariety of $X$. The singular locus of $X$ is contained in the closed subset $X\setminus U$ denoted by $F$.
We assume $U$ to be dense in $X$ and $X$ to be endowed with a differential form $\omega$ of degree $d$ without poles and which zero locus is a divisor denoted by $D$ and included in $F$.
For any $\delta>0$, $n$ in $\mathbb N^*$ and $l$ in $\mathbb N^{*}$, we define
$$X_{n,l}^{\delta}(g,\omega,U) = 
\left\{ \varphi \in \mathcal L(X) \:
		\mid \ord g(\varphi) = n, \: 
		\ord \varphi^{*}(\mathcal I_F) \leq n\delta,\:
		\ord \omega(\varphi) = l 
\right\}
$$ 
endowed with its structural map $(\pi_{0},\ac \circ g)$ to $(D \cap g^{-1}(0))\times \Gm$. 
For any $m\geq n$ the truncation $\pi_m(X_{n,l}^{\delta}(g,\omega,U))$ is endowed with the standard action of $\Gm$.
We define the motivic zeta function associated to $(g,\omega,U)$ as 
$$\mathcal Z_{g,\omega,U}^{\delta}(S,T) = 
\sum_{n\geq 1} \sum_{l\geq 1} \mes(X_{n,l}^{\delta}(g,\omega,U)) S^l T^n 
\in \mathcal M_{(D\cap g^{-1}(0))\times	\Gm}^{\Gm}[[S,T]].$$
\end{defn}
}
\begin{lem} \label{lem:rationalite-zeta-omega}
	For any $\delta>0$, the motivic zeta function $\mathcal Z_{g,\omega,U}^{\delta}(S,T)$ is
	rational in variables $S$ and $T$.  The evaluation
	$Z_{g,\omega,U}^{\delta}(\mathbb L^{-1},T)$ is well-defined, and when $T$ goes to
	infinity this series has a limit independent from $\delta$ large enough.  
\end{lem}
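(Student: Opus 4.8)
The plan is to reduce the statement to an explicit rational-function computation via a log-resolution of singularities, following the pattern of the Denef--Loeser computation of the motivic zeta function and its refinement in \cite{GuiLoeMer06a}, \cite{Veys01} and \cite{ArtCasLue05a}. First I would choose a proper birational morphism $h\colon Y \to X$ such that $Y$ is smooth, $h$ is an isomorphism over $U \setminus (g^{-1}(0)\cup D)$, and the total transform of $g^{-1}(0)\cup F \cup D$ together with the exceptional locus is a simple normal crossings divisor $\bigcup_{i\in A} E_i$ on $Y$. Along each $E_i$ I record three numerical data: the multiplicity $N_i$ of $g\circ h$ along $E_i$, the multiplicity $m_i$ of $\mathcal I_F$ pulled back along $E_i$ (equivalently the order of vanishing of the ideal defining $F$), and the order $\nu_i$ of the pulled-back form $h^{*}\omega$ along $E_i$ (finite since $\omega$ has no poles; here the hypothesis that the zero divisor $D$ of $\omega$ lies in $F$ guarantees the relevant $E_i$ are among those already in the SNC divisor). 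Because $h$ is an isomorphism over the dense smooth open set, the motivic measure is insensitive to the modification, so $\mes(X_{n,l}^{\delta}(g,\omega,U))$ can be computed upstairs on $Y$.

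Next I would stratify $\mathcal L(Y)$ according to which components $E_i$ an arc meets and with what contact orders, writing $A_I = \bigcap_{i\in I} E_i \setminus \bigcup_{j\notin I} E_j$ for $I\subseteq A$. A standard fiber-by-fiber computation of the motivic measure (as in \cite[\S 3]{DenLoe99a} or \cite{GuiLoeMer06a}) gives, for each $I$, the contribution as a sum over tuples $(k_i)_{i\in I} \in (\mathbb Z_{>0})^{|I|}$ of a class in $\mathcal M^{\Gm}$ times $\prod_{i\in I}\mathbb L^{-k_i}$, subject to the constraints $\sum_{i\in I} k_i N_i = n$, $\sum_{i\in I} k_i m_i \le n\delta$, and $\sum_{i\in I} k_i \nu_i = l$ (the constraint on $\nu_i$ being the new ingredient compared with the usual zeta function, but formally identical in nature). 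Substituting into the double generating series, $\mathcal Z_{g,\omega,U}^{\delta}(S,T)$ becomes a finite $\mathcal M^{\Gm}$-linear combination of series of the form $\sum_{(k_i)} \bigl(\prod_i \mathbb L^{-k_i} S^{k_i\nu_i}\bigr) T^{\sum k_i N_i}$ over the cone cut out by $\sum k_i m_i \le n\delta$ inside $(\mathbb Z_{>0})^{|I|}$. Each such cone is a rational polyhedral convex cone, so by Lemma \ref{lemmedescones} (applied with the linear forms $\phi = \sum_i N_i(\cdot)$ and an appropriate form for the $S$-variable, partitioning the cone according to the $\delta$-inequality into finitely many rational simplicial subcones) the inner sums are rational functions in $S$ and $T$ with denominators products of terms $1-\mathbb L^{e}S^{a}T^{b}$; this establishes the first assertion.

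For the evaluation at $S=\mathbb L^{-1}$, I would observe that after this substitution each building block $\mathbb L^{-k_i}S^{k_i\nu_i}$ becomes $\mathbb L^{-k_i(1+\nu_i)}$, so $Z_{g,\omega,U}^{\delta}(\mathbb L^{-1},T)$ is a finite combination of series $S_{\phi,\eta,C}(T)$ in the sense of Lemma \ref{lemmedescones} with $\phi = \sum N_i k_i$ and $\eta = \sum (1+\nu_i)k_i$; the denominators $1-\mathbb L^{-\eta(\omega_j)}T^{\phi(\omega_j)}$ never vanish as formal series since $\phi(\omega_j)>0$ on the relevant primitive generators, so the evaluation is well-defined. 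For independence of the limit from $\delta$ large enough, I would argue exactly as in the proof of Proposition \ref{thmrationaliteopen}: the only cones whose contribution can depend on $\delta$ are those involving a component $E_i$ with $m_i = 0$, i.e.\ $E_i$ not in the support of $\mathcal I_F$; such components lie over $U$, where $X$ is smooth and the form has no extra zeros, and a direct check (or comparison with the unmodified zeta function) shows their contribution to $\lim_{T\to\infty}$ stabilizes once $\delta$ exceeds the maximum of $m_i/N_i$ over the finitely many $E_i$ meeting $g^{-1}(0)$, after which increasing $\delta$ only enlarges the admissible cone by faces whose $\lim_{T\to\infty}$ contribution vanishes by Lemma \ref{lemmedescones}\eqref{cone3}.

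The main obstacle I anticipate is bookkeeping the interaction of the three constraints simultaneously: unlike the classical case where one tracks only $(N_i, m_i)$, here the extra datum $\nu_i$ must be carried through the cone decomposition, and one must check that the partition of each cone into simplicial pieces adapted to the $\delta$-inequality can be chosen compatibly with the linear form defining the $S$-exponent so that Lemma \ref{lemmedescones} applies verbatim. This is routine but requires care; none of it is conceptually new relative to \cite{GuiLoeMer06a} and \cite{CassouVeys13}, which is presumably why the paper states it as a lemma with the expectation of a short proof.
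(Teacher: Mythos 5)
Your proposal follows essentially the same route as the paper's proof: a log-resolution of $(X,\,D\cup g^{-1}(0)\cup F)$, a stratified decomposition of the zeta function into classes $[U_I]$ times sums over the $\delta$-constrained cones $C_I^{\delta}$ with the extra variable $S$ carrying the multiplicities of $h^{*}\omega$, rationality via a partition of these cones into rational simplicial subcones (the higher-dimensional analogue of Lemma \ref{lemmedescones}, which the paper phrases as a toric change of variables), and the limit handled with the same bound $\delta \geq \sup_{i\in I} N_i(\mathcal I_F)/N_i(g)$. The only slips are bookkeeping and do not affect the three claims: the motivic measure contributes $\mathbb L^{-\nu_i k_i}$ with $\nu_i-1$ the multiplicity of the Jacobian of $h$ along $E_i$ (not $\mathbb L^{-k_i}$), and the pieces whose cones stay genuinely $\delta$-dependent are those $I$ containing a component with $N_i(g)=0$ rather than $m_i=0$; their contribution to the limit vanishes, while for $I$ with all $N_i(g)>0$ the constraint becomes vacuous once $\delta$ exceeds $\max_i m_i/N_i$, exactly as in the paper.
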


\begin{proof} The differential form $\omega$ defines a divisor on $X$ denoted
	by $D$. We consider a log-resolution $(h,Y,E)$ of the couple $(X,D\cup g^{-1}(0) \cup F)$ 
	such that $h^{-1}(D)$, $h^{-1}(g^{-1}(0))$ and $h^{-1}(F)$ are normal crossing divisors as union of irreducible components of $E$.
	We denote by $(E_i)_{i\in A}$ the set of irreducible components of $E$. 
	We consider the following divisors
	$$\text{Jac}(h)=\sum_{i\in A}(\nu_i-1)E_i,\:
	\text{div}(g\circ h)=\sum_{i\in A}N_{i}(g)E_i,
	\text{div}(h^{*}\omega)=h^{-1}(D)=\sum_{i\in A}N_i(\omega)E_i \:\:\text{and}\:\:
	h^{-1}(F) = \sum_{i \in A}N_{i}(\mathcal I_F)E_i.
	$$ 
	Following the proof of Denef-Loeser \cite[Theorem 2.2.1]{DenLoe98b} and
	Guibert-Loeser-Merle\cite[Proposition 3.8]{GuiLoeMer06a}, we have 
	$$\mathcal Z_{g,\omega,U}^{\delta}(S,T) = \sum_{I \in \mathfrak I}\: [U_I \to (D\cap g^{-1}(0))\times \Gm,\sigma]S_{I}(S,T)$$ where
	$\mathfrak I = \{ I \subset A \mid  I \cap C_\omega \neq \emptyset\: \text{and}\: I \cap C_g \neq \emptyset \},\:
	C_g=\{i\in A \mid N_i(g)\neq 0\}, C_\omega=\{i \in A \mid N_i(\omega) \neq 0\};$ 
	for any $I$ in $\mathfrak I$, $U_I$ is a variety defined in \cite[\S 3.4]{GuiLoeMer06a}
	endowed with a structural map to the stratum $E_I^0 = \cap_{i\in I}E_i \setminus \cup_{j\notin I} E_j$ 
	composed with $g$ and a structural map to $\Gm$, and
	$$S_{I}(S,T)= \sum_{(k_i)\in C_I^{\delta}}\prod_{i\in I} 
	\left(\mathbb L^{-\nu_i} S^{N_{i}(\omega)} T^{N_i(g)}\right)^{k_i}\:\:
	\text{with}\:\:
	C_{I}^{\delta} = \left\{ 
		\begin{array}{c|c} 
			(k_i)\in \mathbb N_{\geq 1}^{\abs{I}} & 
			\sum_{i\in I} k_i N_{i}(\mathcal I_F) \leq \delta \sum_{i \in I} k_i N_i(g) 
		\end{array} 
	\right\}.
	$$
	For instance, if $C_{I}^{\delta} = \mathbb N_{\geq 1}^{\abs{I}}$ then we have
		$S_{I}(S,T) = 
		\prod_{i\in I} \frac{\mathbb L^{-\nu_i}  S^{N_{i}(\omega)} T^{N_i(g)}}
		{1-\mathbb L^{-\nu_i} S^{N_{i}(\omega)} T^{N_i(g)}}.
		$ 
	More generally, the rationality of $S_I$ is shown using a partition of the cone $C_{I}^{\delta}$ 
	in subcones (generated by a basis of primitive vectors of $\mathbb Z^{\abs{I}}$) and a toric change 
	of variables. This implies the rationality of the zeta function.
	Furthermore, we remark that 
		$S_{I}(\mathbb L^{-1}, T)$ and $\mathcal Z_{g,\omega,U}^{\delta}(\mathbb L^{-1}, T)$ are well defined.
	Finally, as in the proof of \cite[Proposition 3.8]{GuiLoeMer06a} 
	if $I\setminus C_g$ is not empty then $\lim_{T\ra \infty} S_{I}(\mathbb L^{-1},T)=0$.	
	If $I$ is included in $C_g$ then the limit $\lim_{T\ra \infty} S_{I}(\mathbb L^{-1},T)$ is equal to 
	$(-1)^{\abs{I}}$ if $\delta \geq \sup_{i\in I} \frac{N_{i}(\mathcal I_F)}{N_{i}(g)}$. 
\end{proof}

\begin{defn}[Motivic nearby cycles and Milnor fiber relatively to an open set and a differential form]Let $X$ be a $\k$-variety of pure dimension $d$ and $g : X\ra \mathbb A^1_{\k}$
be a regular map. Let $U$ be a smooth open subvariety of $X$ and $F$ be the
closed subset $X\setminus U$. Assume $U$ to be dense in $X$.
Let $\omega$ be a differential form of degree $d$ without poles and which zero locus is a divisor $D$ included in $F$.
We call the zeta function $\mathcal Z_{g,\omega,U}^{\delta}(\mathbb L^{-1},T)$, 
	\emph{motivic zeta function of $g$ relatively to the open set $U$ and
	the differential form $\omega$} and we denote it by 
	$$Z_{g,\omega,U}^{\delta}(T) = \sum_{n\geq 1} (\: \sum_{l \geq 1} \mes(X_{n,l}^{\delta}(g,\omega,U)) \mathbb L^{-l} \:) T^{n} \in 
	\mathcal M_{{(D \cap g^{-1}(0))}\times \Gm}^{\Gm}\left[ \left[ T \right] \right].$$ 
	We consider also its limit, still called \emph{motivic nearby cycles}, which does not depend on $\delta >>1$,
	$$ S_{g,\omega, U} = -\lim_{T\ra \infty} Z_{g,\omega, U}^{\delta}(T) \in 
	\mathcal M_{{(D \cap g^{-1}(0))}\times \Gm}^{\Gm}.$$ 
	For any point $x_0$ in $X_0(g)$, we consider the \emph{motivic Milnor fiber} of $g$ at $x_0$ and relatively to $U$ and $\omega$
	$$\left(S_{g,\omega,U}\right)_{x_0} := 
	i_{\{x_0\}}^{*}\left(S_{g,\omega, U}\right) \in \mathcal M_{\{x_0\}\times \Gm}^{\Gm}.$$

\end{defn} 

\begin{rem} \label{rem:independanceenomega}
	Some remarks:
	\begin{itemize}
		\item By Lemma \ref{lem:rationalite-zeta-omega} the motivic nearby cycles 
	$ S_{g,\omega, U} $ {depends} only on the irreducible {components} of the divisor of $\omega$ and 
	not on its multiplicities. Thus, if the divisor of $\omega$ is {equal to $F$} then $S_{g,\omega, U}$ does not depend on $\omega$.
\item 	The point $x_0$ is the origin of arcs defining $\left(S_{g,\omega,U}\right)_{x_0}$. As in Remark \ref{continuity-pushforward} we can identify 
	$\mathcal M_{\{x_0\}\times \Gm}^{\Gm}$ with $\mathcal M_{\Gm}^{\Gm}$. 
	\end{itemize}
\end{rem}

\subsection{The motivic Milnor fiber $\left(S_{f^\eps,x\neq 0}\right)_{((0,0),0)}$ with $f(x,y)=x^{-M}g(x,y)$ and $\eps=\pm$}
\label{Sfeps}  In section \ref{section:Sfinfini}, we will compute the motivic Milnor fiber at infinity (Theorem \ref{thm:thmSfinfini}) and the motivic nearby cycles at infinity
(Theorem \ref{thmcyclesprochesinfini}) of a polynomial in $\k[x,y]$. For this computation, we will need to compute motivic Milnor fibers of $1/f$ or $f$ along the open set $x\neq 0$ with $f$ an element of $\k[x^{-1},x,y]$. We do it in this section in Theorem \ref{thmSfeps}. 

\subsubsection{Setting}  \label{setting}

\begin{notations}\label{Neps}
In this section we consider an integer $M$ in $\mathbb Z$ and a polynomial $f$ in $\k[x^{-1},x,y]$ equal to 
$$f(x,y) = x^{-M}g(x,y) = \sum_{(a,b) \in \mathbb Z \times \mathbb N} c_{a,b}{(f)}x^a y^{b}$$ 
where $g$ is a polynomial in $\k[x,y]$ not divisible by $x$. 
Let $\N(f)$ be the Newton polygon at the origin of $f$ defined in Definition \ref{def:Newton-polygon-origin} and $\m$ the associated function defined in Proposition \ref{prop:dualfan-Newton-local} relatively to $\Delta(f)$.
We consider a sign $\eps$ in $\{\pm\}$. If ``$\eps=+$'' then $f^\eps$ denotes $f$ and if ``$\eps=-$'' then $f^\eps$ denotes $1/f$.
We denote by $\N(f)^\eps$ the set of compact faces $\gamma$ {in $\N(f)$} such that
	\begin{itemize} 
		\item if $\gamma=(a,b)$ is the horizontal face $\gamma_h$ (Definition \ref{def:Newton-polygon-height}) with dual cone 
			$\mathbb R_{>0}(0,1)+\mathbb R_{>0} \omega$, then 
			\begin{itemize}
				\item if $b=0$ then {$\gamma_h$ belongs to $\N(f)^\eps$ if and only if $\eps a>0$},
				\item if $b\neq 0$ then {$\gamma_h$ belongs to $\N(f)^\eps$ if and only if $\eps((a,b)\mid \omega)>0$}.
			\end{itemize}
		\item if $\gamma$ is a non horizontal zero-dimensional face with dual cone $\mathbb R_{>0}\omega_1 + \mathbb R_{>0} \omega_2$ then {$\gamma $ belongs to $\N(f)^\eps$ if and only if} $\eps\m(\omega_1)>0$ and $\eps \m(\omega_2)>0$.
		\item  if $\gamma$ is a one-dimensional face with dual cone $\mathbb R_{>0}\omega$ then {$\gamma $ belongs to $\N(f)^\eps$ if and only if} $\eps\m(\omega)>0$.
	\end{itemize}
\end{notations}

\begin{rem} \label{Nfepsfacedim1}
	The one dimensional faces of $\N(f)^\eps$ are the faces supported by lines with equation of  type $ap+bq=N$ with
	$(p,q)$ in $\mathbb N^2$ and $\eps N >0$. Remark that the intersection of two one dimensional faces of $\N(f)^\eps$ belongs to $\N(f)^\eps$.
\end{rem}

\begin{notations} \label{notations:Xeps-omega}
In this section we study the function $f^\eps$ at the (indeterminacy) point $(0,0)$ with value 0. In order to do that, we denote by $F_{\eps}$ the closed subset of $\mathbb A_\k^2$ where $f^\eps$ is not defined and we consider 
$$X_\eps = \overline{\{(x,y,z)\in (\mathbb A^2_{\k} \setminus F_\eps) \times \mathbb A^1_{\k} \mid z=f^{\eps}(x,y)\}} \subset \mathbb A^3_{\k}.$$
We have the following commutative diagram 
$$ \xymatrix{ 
\mathbb A_\k^2 \setminus F_{\eps} \ar^-{i_\eps}[r] \ar_{f^\eps}[d] & X_\eps \ar^{\pi_\eps}[ld] \\ 
   \mathbb A_\k^1 } 
$$ 
\noindent where $i_\eps$ is the open immersion
given by $i_\eps(x,y)=\left(x,y,f^{\eps}(x,y)\right)$ and $\pi_\eps$ is the projection along the last coordinate which extends the application $f^\eps$ to $X_\eps$.
We will work relatively to the open set $U_\eps = \left\{(x,y,z) \in X_\eps \mid x\neq 0\right\}$
and we denote by $\mathcal I_\eps$ the sheaf ideal defining $X_\eps\setminus U_\eps$. We consider an integer $\nu$ in $\mathbb N_{\geq 1}$ and a differential $\omega$ on $X_\eps$, with zero locus contained in the divisor $``x=0"$ of $X_\eps$ and the restriction to the open set $i(\mathbb A_\k^2 \setminus F_\eps)$ is equal to $x^{\nu-1}dx \wedge dy$. 

\end{notations}

\begin{rem} \label{rem:ferme}
	The open set $U_\eps$ is smooth because it is contained in the image of $i_\eps$. The singular locus of $X_\eps$ is contained in the divisor $``x=0"$ of $X_\eps$.
	For any arc $\varphi$ in $\mathcal L(X_\eps)$, the condition $\ord \varphi^*\mathcal I_{\eps} \leq \delta \ord f^{\eps}(\varphi)$ is $\ord x(\varphi) \leq \delta \ord f^{\eps}(\varphi)$.
\end{rem}

\subsubsection{The motive $\left(S_{f^\eps, \omega, x\neq 0}\right)_{((0,0),0)}$}
We use notations of subsection \ref{setting}.
We fix $\delta>0$. We have
\begin{equation} \label{zeta} 
	\left(Z_{\pi_\eps,\omega,U_\eps}^{\delta}(T)\right)_{((0,0),0)} = 
	\sum_{n\geq 1} (
	                     \sum_{n\delta \geq k \geq 1} \mathbb L^{-(\nu-1)k}	\mes(X_{\eps,n,k})
		       \:)
		       T^n 
\end{equation}
by {Definition \ref{def:zetafomega}} and Remark \ref{rem:ferme}, where for any integers $n\geq 1$ and $k\geq 1$, we define
\begin{equation}\label{Xepsnk}
	X_{\eps,n,k} = 
        \left\{ 
			\varphi \in \mathcal L(X) | \varphi(0)=((0,0),0),\: 
			\ord x(\varphi) = k,\: \ord \omega(\varphi) = (\nu-1)k,\:
			\ord \pi_\eps(\varphi) = \ord z(\varphi) = n 
        \right\} 
\end{equation}
endowed with the structural map
$\ac \circ z : X_{\eps,n,k} \ra \Gm,\: \varphi \mapsto \ac(z(\varphi))$.

\begin{rem}\label{rem:identification} 
	The origin of each arc of $X_{\eps,n,k}$ is $((0,0),0)$ and the generic point belongs to $U_\eps$, so from the definition of $X_\eps$,
	for any integers $n\geq 1$ and $k\geq 1$,
	there is an isomorphism between $X_{\eps,n,k}$ and 
	$$ \left\{
		\begin{array}{c|l} 
			(x(t),y(t))\in \mathcal L(\mathbb A^2_\k) &
			\begin{array}{l} 
				\ord x(t)= k,\:\: 
				\ord \omega(x(t),y(t))=(\nu-1)k,\:
				\ord y(t)>0,\:
				\ord f^\eps(x(t),y(t))=n
		\end{array} 
		\end{array}
	\right\},$$ 
	endowed with the map 
	$\ac \circ f^{\eps} : (x(t),y(t)) \mapsto \ac(f^{\eps}(x(t),y(t)))$.
	In this section, we will identify these arc spaces.
\end{rem}

\begin{rem}
        We will use the following notation 
        $$\left(Z_{f^\eps,\omega,x\neq 0}^{\delta}(T)\right)_{((0,0),0)}:=\left(Z_{\pi_\eps,\omega,U_\eps}^{\delta}(T)\right)_{((0,0),0)}.$$
	It follows from the definition of $\omega$ and Remark \ref{rem:independanceenomega} that the limit 
	$$\left(S_{f^\eps, \omega, x\neq 0}\right)_{((0,0),0)} : = -\lim_{T \to \infty} \left(Z_{f^\eps,\omega,x\neq 0}^{\delta}(T)\right)_{((0,0),0)}$$
	does not depend on the chosen differential form $\omega$ with zero locus contained in the divisor ``$x=0$" of $X_\eps$. In the following, we will simply denote it by 
	$\left(S_{f^\eps, x\neq 0}\right)_{((0,0),0)}$. This motive belongs to $\mathcal M_{\{((0,0),0)\}\times \Gm}^{\Gm}$ considered as $\mgg$ as in 
	Remark \ref{continuity-pushforward}. 
\end{rem}

\subsubsection{Computation of $\left(S_{f^\eps, x\neq 0}\right)_{((0,0),0)}$ in the case $g(0,0)\neq 0$}
\begin{prop} \label{casgunite}
	Let $\eps$ be in $\{\pm\}$ and $f(x,y)=x^{-M}g(x,y)$ be in $\k[x,x^{-1},y]$ with $g$ be in $\k[x,y]$ satisfying $g(0,0)\neq 0$.\\
	If $M\eps \geq 0 $ then we have 
	$$\left(Z_{f^\eps, x\neq 0}(T)\right)_{((0,0),0)} = 0\: and \:\left(S_{f^\eps, x\neq 0}\right)_{((0,0),0)} = 0.$$ 
	If $M\eps < 0$ then we have 
			$$\left(Z_{f^\eps, x\neq 0}(T)\right)_{((0,0),0)} = \left[x^{-\eps M}:\Gm \to \Gm, \sigma_{\Gm}\right] 
			\frac{\mathbb L^{-1}T^{-\eps M}}{1-\mathbb L^{-1}T^{-\eps M}}\:\text{and}\:
			\left(S_{f^\eps, x\neq 0}\right)_{((0,0),0)} = \left[x^{-\eps M}:\Gm \to \Gm, \sigma_{\Gm}\right].$$
\end{prop}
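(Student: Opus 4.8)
The strategy is to compute the zeta function \eqref{zeta} directly from the explicit arc-space description of $X_{\eps,n,k}$ given in Remark \ref{rem:identification}: an arc of $X_{\eps,n,k}$ is a pair $(x(t),y(t))$ in $\mathcal L(\A^2_\k)$ with $\ord x(t)=k$, $\ord y(t)>0$ and $\ord f^\eps(x(t),y(t))=n$, the structural map being $\ac\circ f^\eps$. The whole point is that the hypothesis $g(0,0)\neq 0$ makes the behaviour of $f^\eps$ along such arcs transparent. Writing $x(t)=x_kt^k+\cdots$ with $x_k\in\Gm$, and using $x(0)=y(0)=0$, the series $g(x(t),y(t))$ is a unit of $\k[[t]]$ with $\ac\bigl(g(x(t),y(t))\bigr)=g(0,0)$; hence $f(x(t),y(t))=x(t)^{-M}g(x(t),y(t))$ has order $-Mk$ and $(1/f)(x(t),y(t))=x(t)^{M}g(x(t),y(t))^{-1}$ has order $Mk$, so in both cases $\ord f^\eps(x(t),y(t))=-\eps Mk$ and $\ac f^\eps(x(t),y(t))=x_k^{-\eps M}\,g(0,0)^{\eps}$, where $g(0,0)^{\eps}$ denotes $g(0,0)$ if $\eps=+$ and $g(0,0)^{-1}$ if $\eps=-$; crucially this is independent of the higher coefficients of $x$ and of $y$ altogether.

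Next I would split into the two stated cases. If $M\eps\geq 0$ then $-\eps Mk=-(M\eps)k\leq 0<n$ for all $n,k\geq 1$, so every $X_{\eps,n,k}$ is empty; therefore the zeta function \eqref{zeta} is $0$ and $\left(S_{f^\eps,x\neq 0}\right)_{((0,0),0)}=-\lim_{T\to\infty}0=0$. If $M\eps<0$, put $N:=-\eps M$, an integer $\geq 1$; then $X_{\eps,n,k}=\emptyset$ unless $n=Nk$, and when $n=Nk$ the condition $\ord f^\eps=n$ is automatically satisfied by every arc with $\ord x=k$, $\ord y>0$. So $X_{\eps,Nk,k}$ is the cylinder $\{\,\ord x(t)=k,\ \ord y(t)>0\,\}\subset\mathcal L(\A^2_\k)$, with structural map $x_k\mapsto x_k^{N}g(0,0)^{\eps}$.

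It then remains to compute $\mes(X_{\eps,Nk,k})$ and sum. Truncating at a level $m\geq Nk$ identifies this cylinder with $\Gm\times\A^{2m-k}$ (a free choice of $x_k\in\Gm$ and of $x_{k+1},\dots,x_m,y_1,\dots,y_m$), carrying the monomial $\Gm$-action $\lambda\cdot x_j=\lambda^jx_j$, $\lambda\cdot y_j=\lambda^jy_j$; using that an affine space with a linear action has class $\mathbb L^{\dim}$ (\cite[\S 2]{GuiLoeMer06a}) and the normalisation $\mes=[\,\cdot\,]\,\mathbb L^{-2m}$, one obtains $\mes(X_{\eps,Nk,k})=\mathbb L^{-k}\,[\Gm\xrightarrow{\,x\mapsto x^{N}g(0,0)^{\eps}\,}\Gm]$. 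Since $\k$ is algebraically closed one may choose $c\in\Gm$ with $c^{N}=g(0,0)^{\eps}$; the automorphism $x\mapsto cx$ of $\Gm$ is equivariant and identifies the last object over $\Gm$ with $[x^{-\eps M}:\Gm\to\Gm,\sigma_\Gm]$. Feeding this into \eqref{zeta} with $\nu=1$ (the limit $S$ is $\omega$-independent by Remark \ref{rem:independanceenomega}) and $\delta\geq 1$, for which the constraint $k\leq n\delta$ is vacuous on the surviving terms $n=Nk$, gives
$$\left(Z_{f^\eps,x\neq 0}(T)\right)_{((0,0),0)}=\sum_{k\geq1}\mathbb L^{-k}\,[x^{-\eps M}:\Gm\to\Gm,\sigma_\Gm]\,T^{Nk}=[x^{-\eps M}:\Gm\to\Gm,\sigma_\Gm]\,\frac{\mathbb L^{-1}T^{-\eps M}}{1-\mathbb L^{-1}T^{-\eps M}},$$
and since $\frac{\mathbb L^{-1}T^{-\eps M}}{1-\mathbb L^{-1}T^{-\eps M}}$ is one of the rational series $p_{e,i}(T)$, whose limit at infinity is $-1$, applying $-\lim_{T\to\infty}$ yields $\left(S_{f^\eps,x\neq 0}\right)_{((0,0),0)}=[x^{-\eps M}:\Gm\to\Gm,\sigma_\Gm]$.

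The one place that will require care is the tracking of the $\Gm$-equivariant (monomial-action) structure through the measure computation, and in particular the identification $[\Gm\xrightarrow{x\mapsto x^{N}g(0,0)^{\eps}}\Gm]=[x^{-\eps M}:\Gm\to\Gm,\sigma_\Gm]$: this is exactly where algebraic closedness of $\k$ enters (extraction of an $N$-th root of $g(0,0)^{\eps}$) and where one must check that the translation $x\mapsto cx$ is compatible with the monomial action of the correct weight. The order estimates and the geometric summation, by contrast, are routine within the formalism of the motivic zeta function.
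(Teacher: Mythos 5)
Your proof is correct and follows essentially the same route as the paper: emptiness of the arc spaces when $M\eps\geq 0$, and for $M\eps<0$ the identification of $X_{\eps,n,k}$ (nonempty only for $n=-\eps Mk$) as a cylinder whose truncation is a bundle over $(x^{-\eps M}:\Gm\to\Gm,\sigma_\Gm)$ with measure $\mathbb L^{-k}[x^{-\eps M}:\Gm\to\Gm,\sigma_\Gm]$, followed by geometric summation and the limit. Your extra step rescaling by an $N$-th root of $g(0,0)^{\eps}$ to absorb the constant into the class is just a careful spelling-out of a point the paper's proof leaves implicit.
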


\begin{proof}
	Let $(n,k)$ be in $(\mathbb N^*)^2$. If $M\eps \geq 0$, then $\left(Z_{f^\eps, x\neq 0}(T)\right)_{((0,0),0)}$ is equal to zero because each arc space 
	$X_{\eps,n,k}$ is empty. If $M\eps <0$, then by Remark \ref{rem:identification} the arc space {$X_{\eps,n,k}$} is non empty if and only if $n=-\eps Mk$.
	The $n$-jet space {$\pi_n(X_{\eps,n,k})$} endowed with the canonical $\Gm$-action on jets is a bundle over $(x^{-\eps M}:\Gm  \to \Gm,\sigma_{\Gm})$ with fiber $\mathbb A^{2n-k}$ and $\sigma_{\Gm}$ is the action by translation of $\Gm$ defined by $\sigma_{\Gm}(\lambda,x)=\lambda.x$ for any $(\lambda,x)$ in $\Gm^2$. 
	Then by definition, the motivic measure of {$X_{\eps,n,k}$} is equal to $[x^{-\eps M}:\Gm \ra \Gm,\sigma_{\Gm}]\mathbb L^{-k}$ and the result follows by summation and limit.
\end{proof}

\subsubsection{Computation of $\left(S_{f^\eps, x\neq 0}\right)_{((0,0),0)}$ in the case $g(0,0)= 0$}
\label{sec:decompZgamma} \label{casg(0,0)=0}

Using notations of subsection \ref{setting}, the Newton algorithm and the strategy of the first author and Veys in \cite{CassouVeys13} (see also \cite{carai-antonio}), we express the motivic zeta function $(Z_{f^\eps,\omega,x\neq 0}^{\delta}(T))_{((0,0),0)}$ and the motivic Milnor fiber 
$(S_{f^\eps, x\neq 0})_{((0,0),0)}$ in terms of Newton polygons of $f$ and its Newton transforms.
\begin{notations} \label{notationsactionsformdiff} \label{actionGmGm2}
We use notations $R_\gamma$ and ${\sigma_{(p,q,\mu)}}$ introduced in Remark \ref{def:roots} and Definition \ref{defn:Newton-map-local}.
We define $\sigma_{\Gm}$ and $\sigma_{\Gm^2}$ the actions of $\Gm$ on $\Gm$ and $\Gm^2$ by 
$\sigma_{\Gm}(\lambda,x)=\lambda x\: \text{and} \: \sigma_{\Gm^2}(\lambda,(x,y))=(\lambda x, \lambda y)$.
For any $(p,q)$ in $\mathbb N^2$, we consider the differential form 
$\omega_{p,q}(v,w)=v^{\nu p + q - 1}dv \wedge dw$.
\end{notations}

\begin{rem} \label{notation-identification-action} {Let $\gamma$ be a face in $\N(f)$.}
	Let $(\alpha,\beta)$ be in $C_\gamma \cap (\mathbb N^*)^2$. By construction of the Grothendieck ring of varieties in 
	\cite[\S 2]{GuiLoeMer05a} and \cite[\S 2]{GuiLoeMer06a} (see \cite[Proposition 3.13]{Rai11}), the class 
	$[f_\gamma:\Gm^2 \setminus (f_\gamma=0) \to \Gm, \sigma_{\alpha,\beta}]$ with $\sigma_{\alpha,\beta}$ the action of $\Gm$ defined by
	$\sigma(\alpha,\beta)(\lambda,(x,y))=(\lambda^\alpha x,\lambda^\beta y)$, does not depend on $(\alpha,\beta)$ 
        in $C_\gamma \cap (\mathbb N^*)^2$. We replace $\sigma_{\alpha,\beta}$ by $\sigma_\gamma$.
\end{rem}

\begin{thm}[Computation of $\left(S_{f^{\eps},x\neq 0}\right)_{((0,0),0)}$] \label{thmSfeps} 
	Let $f(x,y)=x^{-M}g(x,y)$ be in $\k[x,x^{-1},y]$ with $g$ be in $\k[x,y]$ not divisible by $x$ and satisfying $g(0,0)=0$.
	Let $\nu$ be in $\mathbb N_{\geq 1}$ and $\omega$ be the associated differential form in \emph{Notations \ref{notations:Xeps-omega}}.
	Let $\eps$ be in $\{\pm\}$.
	Then, writing $(a,b)$ the horizontal face $\gamma_h$ of $\N(f)$ defined in \emph{Definition \ref{def:Newton-polygon-height}}, we have for any $\delta>0$
	\begin{equation}
		\begin{array}{ccl}
		                (Z_{f^\eps,\omega,x\neq 0}^{\delta}(T))_{((0,0),0)} & = & 
				[(x^ay^b)^\eps:\Gm^r \to \Gm, \sigma_{\Gm^r}]R_{(a,b),\eps,\omega}^{\delta}(T) \\
				& & +  \sum_{\gamma \in \N(f)\setminus \{\gamma_h\}} [f_{\gamma}^\eps:\Gm^2\setminus (f_\gamma=0) \to \Gm, \sigma_{\gamma}] 
				R_{\gamma,\eps,\omega}^{\delta}(T)\\
				&  & +\sum_{\gamma \in \N(f),\: \dim \gamma = 1} \sum_{\mu \in R_\gamma} 
				(Z_{f_{\sigma_{(p,q,\mu)}}^{\eps},\omega_{p,q,\mu},v\neq 0}^{\delta/p}(T))_{((0,0),0)}.
	        \end{array}
	\end{equation}
	with $\sigma_{(p,q,\mu)}$ the Newton transformation defined in Definition \ref{defn:Newton-map-local}.
	\begin{itemize}
		\item Furthermore, if $b=0$ then,
			\begin{itemize}
				\item in the case $``\eps = + "$, the motivic Milnor fiber $(S_{f,x\neq 0})_{((0,0),0)}$ is 
					\begin{equation} \label{casb=0+} \begin{array}{ccl} 
						(S_{f,x\neq 0})_{((0,0),0)} & = & s^{(+)} [x^{a}:\Gm \to \Gm, \sigma_{\Gm}] 
						+ \sum_{\gamma \in \N(f)^{+}\setminus \{\gamma_h\}}(-1)^{\dim \gamma+1} 
						[f_{\gamma}:\Gm^2\setminus (f_\gamma=0) \to \Gm, \sigma_{\gamma}] \\
						&   & + \sum_{\gamma \in \N(f),\: \dim \gamma = 1} \sum_{\mu \in R_\gamma} 
						(S_{f_{\sigma_{(p,q,\mu)}},v\neq 0})_{((0,0),0)}
					\end{array}
				        \end{equation} 
				\item in the case $``\eps = -"$, the motivic Milnor fiber $(S_{1/f,x\neq 0})_{((0,0),0)}$ is 0 if 
					{$f$ belongs to $\k[x,y]$}, otherwise
					\begin{equation} \begin{array}{ccl} \label{resS1/fb0}
						(S_{1/f,x\neq 0})_{((0,0),0)} & = & s^{(-)} [1/x^{a}:\Gm \to \Gm, \sigma_{\Gm}] 
						+ \sum_{\gamma \in \N(f)^{-}\setminus \{\gamma_h\}}(-1)^{\dim \gamma+1} 
						[1/f_{\gamma}:\Gm^2\setminus (f_\gamma=0) \to \Gm, \sigma_{\gamma}] \\
						&   & + \sum_{\gamma \in \N(f)^{-},\: \dim \gamma = 1} \sum_{\mu \in R_\gamma} 
						(S_{1/f_{\sigma_{(p,q,\mu)}},v\neq 0})_{((0,0),0)}
					\end{array}
				\end{equation}
			\end{itemize}
		\item if $b\neq 0$ then,
			\begin{itemize}
				\item
					in the case $``\eps = +"$, the motivic Milnor fiber $(S_{f,x\neq 0})_{((0,0),0)}$ is 
					\begin{equation} 
					\begin{array}{ccl} 
						(S_{f,x\neq 0})_{((0,0),0)} & = & - s^{(+)} [x^ay^b : \Gm^2 \to \Gm, \sigma_{\Gm^2}] 
						+ \sum_{\gamma \in \N(f)^{+}\setminus \{\gamma_h\}}(-1)^{\dim \gamma+1} 
						[f_{\gamma}:\Gm^2\setminus (f_\gamma=0) \to \Gm, \sigma_{\gamma}] \\
						&   & + \sum_{\gamma \in \N(f),\: \dim \gamma = 1} \sum_{\mu \in R_\gamma} 
						(S_{f_{\sigma_{(p,q,\mu)}},v\neq 0})_{((0,0),0)}
					\end{array}
				        \end{equation} 
				\item in the case $``\eps = -"$, the motivic Milnor fiber $(S_{1/f,x\neq 0})_{((0,0),0)}$ is 0 if 
					{$f$ belongs to $\k[x,y]$}, otherwise
					\begin{equation} \label{res1/f}
					\begin{array}{ccl}
						(S_{1/f,x\neq 0})_{((0,0),0)} & = &  
						 \sum_{\gamma \in \N(f)^{-}\setminus \{\gamma_h\}}(-1)^{\dim \gamma+1} 
						 [1/f_{\gamma}:\Gm^2\setminus (f_\gamma=0) \to \Gm, \sigma_{\gamma}] \\
						&   & + \sum_{\gamma \in \N(f)^{-},\: \dim \gamma = 1} \sum_{\mu \in R_\gamma} 
						(S_{1/f_{\sigma_{(p,q,\mu)}},v\neq 0})_{((0,0),0)}
					\end{array}
				        \end{equation}
			\end{itemize}
	\end{itemize}
	{with $r=1$ if $b=0$ otherwise $r=2$, $s^{(\eps)} = 1$ if $\gamma_h$ is in $\N(f)^\eps$ otherwise $s^{(\eps)} = 0$, and for any face $\gamma$ in $\N(f)$, $R_{\gamma,\eps,\omega}^{\delta}(T)$ are rational functions defined in \emph{(Propositions \ref{ex:cas-x^N} and \ref{lem:zeta=})}.}
\end{thm}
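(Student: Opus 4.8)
The plan is to compute $(Z_{f^\eps,\omega,x\neq 0}^{\delta}(T))_{((0,0),0)}$ directly from its defining sum $\sum_{n,k}\mes(X_{\eps,n,k})\mathbb L^{-(\nu-1)k}T^n$, and then stratify the arc spaces $X_{\eps,n,k}$ according to the behaviour of an arc $(x(t),y(t))$ with respect to the Newton polygon $\N(f)$. The key observation is the one already recorded in Remark \ref{rem:identification}: such an arc has $\ord x(t)=k>0$ and $\ord y(t)>0$, so writing $p=\ord x(t)$, $q=\ord y(t)$ and $\mu=\ac(y(t))\big/\ac(x(t))^{q/p}$ (after passing to a ramified cover so that $\gcd(p,q)=1$), we are in exactly the situation described by the Newton lemma (Lemma \ref{lem:Newton-alg}). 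First I would split the arcs into three classes: (i) arcs whose generic direction $(p,q)$ is such that $\gamma(p,q)=\gamma_h$ is the horizontal face; (ii) arcs with $\gamma(p,q)=\gamma$ a zero-dimensional non-horizontal face or a one-dimensional face, but with angular component $\mu$ \emph{not} a root of the face polynomial $f_\gamma$; (iii) arcs with $\gamma(p,q)=\gamma$ a one-dimensional face and $\mu$ equal to one of the roots in $R_\gamma$.

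For class (i), along the horizontal face $f(x(t),y(t))=c_{(a,b)}x(t)^a y(t)^b(1+\cdots)$, so after the change of variables the zeta-function contribution factors as the class $[(x^ay^b)^\eps:\Gm^r\to\Gm,\sigma_{\Gm^r}]$ times a purely combinatorial rational function $R_{(a,b),\eps,\omega}^{\delta}(T)$ obtained by summing $\mathbb L^{-(\nu-1)k}T^n$ over the admissible $(p,q)$ in the dual cone of $\gamma_h$; this is where the dichotomy $b=0$ versus $b\neq 0$ and the sign conditions defining $\N(f)^\eps$ in Notations \ref{Neps} enter, and where Lemma \ref{lemmedescones} (parts \ref{cone1}--\ref{cone3}) is invoked to evaluate the limit $\lim_{T\to\infty}$, producing the Euler-characteristic factors $s^{(\eps)}$, $-s^{(\eps)}$, or $0$ according to whether the relevant cone is a half-line, a full two-dimensional cone, or a half-open cone. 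For class (ii), by parts (1) and (2) of the Newton lemma the Newton transform $f_{\sigma_{(p,q,\mu)}}$ (or rather the relevant lift) is a monomial times a unit, so the arc contributes a ``trivial'' piece; summing these over the cone $C_\gamma$ gives the terms $[f_\gamma^\eps:\Gm^2\setminus(f_\gamma=0)\to\Gm,\sigma_\gamma]R_{\gamma,\eps,\omega}^{\delta}(T)$, with $R_{\gamma,\eps,\omega}^{\delta}(T)$ the rational function from Propositions \ref{ex:cas-x^N} and \ref{lem:zeta=}, and again Lemma \ref{lemmedescones} supplies the sign $(-1)^{\dim\gamma+1}$ in the limit. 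For class (iii), by part (3) of the Newton lemma the arc, after the Newton transformation $\sigma_{(p,q,\mu)}$, becomes an arc on the Newton transform $f_\sigma$ with a strictly smaller height, and the change-of-variables formula for motivic integrals (as in \cite{CassouVeys13}, with the differential $\omega$ transforming into $\omega_{p,q,\mu}$ and $\delta$ rescaling to $\delta/p$) identifies its contribution with $(Z_{f_{\sigma_{(p,q,\mu)}}^{\eps},\omega_{p,q,\mu},v\neq 0}^{\delta/p}(T))_{((0,0),0)}$. The finiteness guaranteed by Theorem \ref{thm:algo-Newton} ensures this recursion terminates, so the formula is well-posed, and the rationality in Lemma \ref{lem:rationalite-zeta-omega} lets us take $\lim_{T\to\infty}$ term by term to pass from the zeta function to the motivic Milnor fiber.

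The main obstacle, and the step requiring the most care, is the bookkeeping of the angular-component data and the $\Gm$-actions when passing through the Newton transformation: one must check that the induced monomial action $\sigma_\gamma$ on $\Gm^2\setminus(f_\gamma=0)$ is well-defined independently of the chosen $(\alpha,\beta)\in C_\gamma\cap(\mathbb N^*)^2$ (this is Remark \ref{notation-identification-action}, which rests on the Grothendieck-ring identities of \cite{GuiLoeMer05a,GuiLoeMer06a}), and that the boundary arcs — those lying in the closure of two adjacent cones, i.e. with generic direction along a one-dimensional face but $\mu\notin R_\gamma$, versus those with $\mu\in R_\gamma$ — are counted exactly once. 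The clean way to organize this is to partition the set of relevant $(p,q)\in\mathbb N^2$ as a fan refining the dual fan of $\Delta(f)$, attribute to each relatively open cone a single face $\gamma(p,q)$, and for each one-dimensional face further partition the slice $\{(p,q):\gamma(p,q)=\gamma\}\times\Gm$ (the $\Gm$-factor recording $\mu$) into the finitely many points $\mu\in R_\gamma$ (recursive contributions) and their complement (a $(\Gm\setminus R_\gamma)$-bundle, handled by Propositions \ref{ex:cas-x^N}, \ref{lem:zeta=}). I would then assemble the three families of contributions, verify that the horizontal- and vertical-face arcs are not double-counted (using the Remark that $\gamma_v,\gamma_h\notin\N_{(\infty,\infty)}\cup\cdots$), and finally specialize $S\mapsto\mathbb L^{-1}$ and apply $-\lim_{T\to\infty}$, reading off the four displayed formulas from the sign tables in Lemma \ref{lemmedescones}.
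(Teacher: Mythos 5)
Your proposal follows essentially the same route as the paper: your three classes of arcs are exactly the paper's decomposition of the zeta function along the dual fan of $\N(f)$ into the horizontal-face piece (Proposition \ref{ex:cas-x^N}), the pieces $Z^{\delta,=}_{\eps,\gamma,\omega}$ where the angular components avoid the zeros of $f_\gamma$ (evaluated via the cone-summation Lemma \ref{lemmedescones} in Proposition \ref{lem:zeta=}), and the pieces $Z^{\delta,<}_{\eps,\gamma,\omega}$ indexed by the roots of $f_\gamma$, which the change of variables of Proposition \ref{prop:mesXnmu} converts into the zeta functions of the Newton transforms with $\omega_{p,q,\mu}$ and $\delta/p$, followed by the same recursion through the Newton algorithm and its base cases (Examples \ref{ex:monomial} and \ref{ex:casdebase-f}). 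This is the argument of subsection \ref{proofthmSfeps}, so your approach is correct and coincides with the paper's.
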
 

 \begin{proof}
	 In the case ``$\eps=-$'', if $M\leq 0$, namely $f$ belongs to $\k[x,y]$, then $1/f$ does not vanish, so for any integers $n$ and $k$, the arc space $X_{-,n,k}$ (defined in formula (\ref{Xepsnk})) is empty and the motivic zeta function
	$\left(Z_{1/f,\omega,x\neq 0}(T) \right)_{((0,0),0)}$ is equal to 0. We give the ideas of the general proof and refer to subsection \ref{proofthmSfeps} for details.
	In subsection \ref{sec:decompositionzeta}, formula (\ref{formula:decomposition}), {for $\eps$ in $\{\pm\}$,} we consider the decomposition of the motivic zeta function
	   \begin{equation} 
	        (Z_{f^\eps,\omega,x\neq 0}^{\delta}(T))_{((0,0),0)}  = \sum_{\gamma \in \N(f)} Z^\delta_{\eps,\gamma,\omega}(T).
           \end{equation}
	   In subsection  \ref{sec:Z_(N,0)}, Proposition \ref{ex:cas-x^N}, in the case of the horizontal face $\gamma_h$ defined in Definition 
	   \ref{def:Newton-polygon-height}, we show the rationality and compute the limit of $Z^\delta_{\eps,\gamma_h,\omega}(T)$.
	   This limit does not depend on $\omega$.
	   In subsections \ref{sec:rationalite-limit-Zgamma=} and \ref{sec:rationalite-limite-Zgamma<} we consider the case of a non horizontal face $\gamma$. Depending on the fact that the face polynomial $f_\gamma$ vanishes or not on the angular components of the coordinates of an arc 
	   (Remark \ref{rem:n-inferieur-m(ord(x),ord(y)}), we decompose in formula (\ref{eq:decomposition-zeta-function}) the zeta function $Z^\delta_{\eps,\gamma,\omega}(T)$ as a sum of $Z_{\eps,\gamma,\omega}^{\delta,=}(T)$ and $Z_{\eps,\gamma,\omega}^{\delta,<}(T)$. 
	   In particular if the face $\gamma$ is zero dimensional then $Z_{\eps,\gamma,\omega}^{\delta,<}(T)$ is zero. {See also Remark \ref{nulliteZf}.} 
	   In Proposition \ref{lem:zeta=} we show the rationality and compute the limit of the zeta function $Z_{\eps,\gamma,\omega}^{\delta,=}(T)$. This limit does not depend on $\omega$.
	   In subsection \ref{sec:rationalite-limite-Zgamma<}, Proposition \ref{prop:rationalityZ<}
	   we prove the decomposition 
	   $$Z_{\eps,\gamma,\omega}^{\delta,<}(T) = \sum_{\mu \in R_\gamma} 
	   (Z_{f_{\sigma_{(p,q,\mu)}}^{\eps}, \omega_{p,q,\mu}, v\neq 0}^{\delta/p})_{((0,0),0)}.$$
	   Applying the Newton algorithm (Lemma \ref{lem:Newton-alg}) inductively, using the base cases (Examples \ref{ex:monomial} and \ref{ex:casdebase-f}), we recover the rationality of $(Z_{f^\eps,\omega,x\neq 0}^{\delta}(T))_{((0,0),0)}$, compute $(S_{f^\eps,\omega,x\neq 0})_{((0,0),0)}$ and check its independence on $\omega$.
   \end{proof}

Using Proposition \ref{prop:caracteristiceuleraire}, we deduce from Theorem \ref{thmSfeps} {a Kouchnirenko type formula computing the Euler characteristic of the motivic Milnor fiber $\left(S_{1/f,x\neq 0} \right)_{((0,0),0)}$. This formula will be used in Corollary \ref{calcullambdaaire}.}   

\begin{cor}[Kouchnirenko type formula for $\tilde{\chi}_{c}((S_{f^{\eps},x\neq 0})^{(1)}_{((0,0),0)})$] \label{KFSfeps} 
Let $f(x,y)=x^{-M}g(x,y)$ with $M$ in $\mathbb Z$ and $g$ in $\k[x,y]$ not divisible by $x$ and satisfying $g(0,0)=0$.
We denote $(a,b)=\gamma_h$ the horizontal face in Definition \ref{def:Newton-polygon-height}.
		\begin{itemize}
		\item If $b=0$ then we have,
		   \begin{itemize}
		     \item in the case $``\eps = + "$, we have 
			     \begin{equation} \label{KFSf+}
			    \begin{array}{ccl} 
				    \tilde{\chi_{c}}\left((S_{f,x\neq 0})^{(1)}_{((0,0),0)}\right) & = & 
				    s^{(+)}a -2 \sum_{\gamma \in \N(f)^{+},\dim \gamma=1} \mathcal S_{\N(f_\gamma),f_\gamma}
			          + \sum_{\gamma \in \N(f),\: \dim \gamma = 1} \sum_{\mu \in R_\gamma} 
				  \tilde{\chi}_{c}\left((S_{f_{\sigma_{(p,q,\mu)}},v\neq 0})^{(1)}_{((0,0),0)}\right)
			    \end{array}
			 \end{equation} 
		      \item in the case $``\eps = -"$, $\tilde{\chi}_{c}\left((S_{1/f,x\neq 0})^{(1)}_{((0,0),0)}\right)$ is 0 if 
			      {$f$ belongs to $\k[x,y]$}, otherwise we have
			  \begin{equation} 
		             \begin{array}{ccl}
				     \tilde{\chi}_{c}\left((S_{1/f,x\neq 0})^{(1)}_{((0,0),0)}\right) & = & 
				     s^{(-)}a -2 \sum_{\gamma \in \N(f)^{-},\dim \gamma=1} \mathcal S_{\N(f_\gamma), f_\gamma}
				     + \sum_{\gamma \in \N(f)^{-},\: \dim \gamma = 1} \sum_{\mu \in R_\gamma} 
				     \tilde{\chi}_{c}\left((S_{1/f_{\sigma_{(p,q,\mu)}},v\neq 0})^{(1)}_{((0,0),0)}\right)
					\end{array}
				\end{equation}
			\end{itemize}
		\item If $b\neq 0$ then we have,
			\begin{itemize}
				\item in the case $``\eps = +"$, we have 
					\begin{equation} \label{KFSf+b}
					\begin{array}{ccl} 
						\tilde{\chi}_{c}\left((S_{f,x\neq 0})^{(1)}_{((0,0),0)}\right) & = & 
						-2\sum_{\gamma \in \N(f)^{+},\dim \gamma=1} \mathcal S_{\N(f_\gamma), f_\gamma}
						 + \sum_{\gamma \in \N(f),\: \dim \gamma = 1} \sum_{\mu \in R_\gamma} 
						 \tilde{\chi}_{c}\left((S_{f_{\sigma_{(p,q,\mu)}},v\neq 0})^{(1)}_{((0,0),0)}\right)
					\end{array}
				        \end{equation} 
				\item in the case $``\eps = -"$, $\tilde{\chi}_{c}\left((S_{1/f,x\neq 0})^{(1)}_{((0,0),0)}\right)$ is 0 if {$f$ belongs to $\k[x,y]$}, otherwise we have
					\begin{equation} 
					\begin{array}{ccl}
						\tilde{\chi}_{c}\left((S_{1/f,x\neq 0})^{(1)}_{((0,0),0)}\right) & = &  
						 -2\sum_{\gamma \in \N(f)^{-},\dim \gamma=1} \mathcal S_{\N(f_\gamma), f_\gamma}
						  + \sum_{\gamma \in \N(f)^{-},\: \dim \gamma = 1} \sum_{\mu \in R_\gamma} 
						  \tilde{\chi}_{c}\left((S_{1/f_{\sigma_{(p,q,\mu)}},v\neq 0})^{(1)}_{((0,0),0)}\right)
					\end{array}
				        \end{equation}
			\end{itemize}
	\end{itemize}
	{where  $s^{(\eps)} = 1$ if $\gamma_h$ is in $\N(f)^\eps$ otherwise $s^{(\eps)} = 0$.}

\end{cor}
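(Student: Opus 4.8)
The plan is to obtain the formulas by applying the Euler characteristic with compact support $\tilde{\chi}_{c}$ to the identities for $\left(S_{f^{\eps},x\neq 0}\right)_{((0,0),0)}$ proved in Theorem \ref{thmSfeps}. Recall that $\tilde{\chi}_{c}$ is additive and, on the realization $\mgg$, sends a class $[V\to\Gm,\sigma]$ to $\Eu(V^{(1)})$, the Euler characteristic with compact support of its fibre at $1$; it is moreover insensitive to the $\Gm$-action and factors through $\mathbb L\mapsto 1$, so it is well defined on the localized Grothendieck ring. Being additive, it is enough to evaluate $\tilde{\chi}_{c}$ on each summand of the four cases of Theorem \ref{thmSfeps} and to add the results. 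All the nontrivial arithmetic will come from Proposition \ref{prop:caracteristiceuleraire}.

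First I would treat the monomial leading term. In the case $b\neq 0$ it is $-s^{(+)}[x^{a}y^{b}:\Gm^2\to\Gm,\sigma_{\Gm^2}]$ for $\eps=+$ and absent for $\eps=-$; its fibre at $1$ is $\{x^{a}y^{b}=1\}\cap\Gm^2$, which has Euler characteristic $0$ by the first item of Proposition \ref{prop:caracteristiceuleraire}, so this term contributes nothing, which explains the absence of a leading term in the corollary when $b\neq 0$. In the case $b=0$ the term is $s^{(\eps)}[(x^{a})^{\eps}:\Gm\to\Gm,\sigma_\Gm]$, whose fibre at $1$ is a finite set of $\abs{a}$ roots of unity; using that $\eps a>0$ whenever $s^{(\eps)}=1$ (this is the condition defining $\N(f)^{\eps}$ in Notations \ref{Neps}), the contribution of this term is $s^{(\eps)}a$.

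Next I would treat the face polynomial terms $(-1)^{\dim\gamma+1}[f_\gamma^{\eps}:\Gm^2\setminus(f_\gamma=0)\to\Gm,\sigma_\gamma]$, for $\gamma$ running over $\N(f)^{\eps}\setminus\{\gamma_h\}$. The fibre at $1$ of such a term is $\{(x,y)\in\Gm^2\mid f_\gamma(x,y)=1\}$, the inequality $f_\gamma\neq 0$ being automatic there and $1/f_\gamma=1$ being equivalent to $f_\gamma=1$, and $f_\gamma$ is quasi-homogeneous, so Proposition \ref{prop:caracteristiceuleraire} applies. When $\dim\gamma=0$ the face polynomial is a nonconstant monomial and its $1$-level set in the torus has Euler characteristic $0$ (first item of Proposition \ref{prop:caracteristiceuleraire}), so the term vanishes; when $\dim\gamma=1$ one has $(-1)^{\dim\gamma+1}=1$ and the second item of Proposition \ref{prop:caracteristiceuleraire} gives $\Eu\left(\{f_\gamma=1\}\cap\Gm^2\right)=-2\,\mathcal S_{\N(f_\gamma),f_\gamma}$, where $\N(f_\gamma)$ is the single segment $\gamma$ and $\mathcal S_{\N(f_\gamma),f_\gamma}$ is the area of Definition \ref{area-f}. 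Summing over the non-horizontal faces of $\N(f)^{\eps}$ and discarding the zero-dimensional contributions therefore produces exactly the term $-2\sum_{\gamma\in\N(f)^{\eps},\,\dim\gamma=1}\mathcal S_{\N(f_\gamma),f_\gamma}$.

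Finally, the remaining summands of Theorem \ref{thmSfeps} are the motivic Milnor fibers $\left(S_{f_{\sigma_{(p,q,\mu)}}^{\eps},v\neq 0}\right)_{((0,0),0)}$ of the Newton transforms, which $\tilde{\chi}_{c}$ sends to $\tilde{\chi}_{c}\big((S_{f_{\sigma_{(p,q,\mu)}}^{\eps},v\neq 0})^{(1)}_{((0,0),0)}\big)$, and the index ranges for $\gamma$ and $\mu$ (all one-dimensional faces of $\N(f)$ when $\eps=+$, the one-dimensional faces of $\N(f)^{-}$ when $\eps=-$) together with the vanishing statement for $f$ in $\k[x,y]$ when $\eps=-$ are inherited verbatim from Theorem \ref{thmSfeps}. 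Collecting the three kinds of contributions yields the four announced formulas. I expect the only delicate point to be the bookkeeping — checking that precisely the zero-dimensional face terms drop out and that the index sets $\N(f)$ versus $\N(f)^{\eps}$ are the correct ones in each of the four cases — since the genuine computational content is already entirely contained in Proposition \ref{prop:caracteristiceuleraire}.
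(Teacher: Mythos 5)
Your proposal is exactly the paper's (essentially one-line) deduction of the corollary: apply the Euler characteristic realization $\tilde{\chi}_{c}$ termwise to the four formulas of Theorem \ref{thmSfeps}, using Proposition \ref{prop:caracteristiceuleraire} so that the monomial and zero-dimensional face terms contribute $0$ (or $\abs{a}$ points when $b=0$), each one-dimensional face in $\N(f)^{\eps}$ contributes $-2\mathcal S_{\N(f_\gamma),f_\gamma}$, and the Newton-transform summands pass through unchanged with the same index sets. The one point to fix is the $b=0$ leading term when $\eps=-$: your own count (the fibre at $1$ has $\abs{a}=\eps a$ points) gives the contribution $s^{(\eps)}\eps a$, i.e. $-s^{(-)}a$, which is what the paper actually uses in Example \ref{example:example1Sfinfini} (contribution $+1$ for $a=-1$), so you should write $s^{(\eps)}\eps a$ rather than $s^{(\eps)}a$; the displayed $s^{(-)}a$ in the corollary is a sign slip, and your justification, not your final transcription, is the correct one.
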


\subsubsection{Proof of Theorem \ref{thmSfeps}} \label{proofthmSfeps}
{In all this subsection we consider {the rational function}
$f(x,y)$ equal to $x^{-M}g(x,y)$ with $M$ in $\mathbb Z$ and $g$ in $\k[x,y]$ not divisible by $x$ and satisfying $g(0,0)=0$. We fix also an integer $\nu$ in $\mathbb N_{\geq 1}$ and denote by $\omega$ the associated differential form in Notations \ref{notations:Xeps-omega}.
We consider  $\eps$ in $\{\pm\}$ and $\delta \geq 1$.}

\paragraph{Decomposition of the zeta function along $\N(f)$} \label{sec:decompositionzeta}

\begin{notation}
	For any face $\gamma$ of $\N(f)$ {with dual cone $C_\gamma$}, for any integers $n\geq 1$ and $k\geq 1$, using Remark \ref{rem:identification} we consider 
$$X_{\eps,n,k}^{\gamma} = \left\{(x(t),y(t)) \in  X_{\eps,n,k} \mid (\ord x(t), \ord y(t))\in C_\gamma\right\},$$
endowed with its structural map to $(\ac \circ f^\eps)$ to $\Gm$ and we decompose  
\begin{equation} \label{decomparcs}
	X_{\eps,n,k} = \bigsqcup_{\gamma \in \N(f)} X_{\eps,n,k}^{\gamma}.
\end{equation}

\end{notation}

\begin{rem}\label{rem:n-inferieur-m(ord(x),ord(y)} 
   For any arc $(x(t),y(t))$ in $X^{\gamma}_{\eps,n,k}$, we can write
   $f(x(t),y(t))=t^{\m(\ord x(t),\ord y(t))}\tilde{f}(x(t),y(t),t)$
   with $\tilde{f}$ in $\k[x,y,u]$ and $\m$ the function defined in Proposition \ref{prop:dualfan-Newton-local} relatively to $\Delta(f)$.
   As $\ord f^\eps(x(t),y(t))=n$ we have $$\m(\ord x(t),\ord y(t)) \leq n/\eps=n\eps.$$
   {Two cases occur:} 
   \begin{itemize}
	   \item[$\bullet$]  $\eps n=\m(\ord(x),\ord(y))$, namely $n=\eps \m(\ord x, \ord y)$, if and only if $f_\gamma(\ac x, \ac y)\neq 0$.
	   \item[$\bullet$]  $\m(\ord(x),\ord(y))<n\eps$ if and only if $\dim \gamma =1$ and $f_\gamma(\ac x,\ac y) = 0$.
   \end{itemize}

\end{rem}

\begin{notations} \label{notationsconezeta}
	Let $\eps$ be in $\{\pm\}$ and $\gamma$ be a face of $\N(f)$. We introduce some notations.
	\begin{itemize}
		\item We consider the cones of $\mathbb R^2$ and $\mathbb R^3$
			$$ 
			C_{\eps,\gamma}^{\delta,=} = \left\{ \begin{array}{c|l} 
				                                 (\alpha,\beta) \in C_\gamma & 
				                                 \begin{array}{l} 
					                             0 < \alpha \leq \eps\m(\alpha,\beta) \delta,\: 0 < \beta
				                                 \end{array}
			                                     \end{array} 
						     \right\}  
			\:\:
			\text{and}
			\:\:
			C_{\eps,\gamma}^{\delta,<} = 
			\left\{
				\begin{array}{c|l} (n,\alpha,\beta) \in \mathbb R_{>0} \times
					C_\gamma & \begin{array}{l} 
						      \m(\alpha,\beta)<\eps n \\ 
						      0 < \alpha \leq n\delta,\: 0<\beta 
					           \end{array} 
				\end{array} 
			\right\}.
			$$ 

		\item For any $(\alpha,\beta)$ be in $C_{\eps,\gamma}^{\delta,=}$ and any $(n,\alpha,\beta)$ in $C_{\eps,\gamma}^{\delta,<}$, we consider the arc spaces
			$$ X_{\eps,\alpha,\beta}^{=} = 
			\left\{ 
				\begin{array}{c|c}
					(x(t),y(t))\in \mathcal L(\mathbb A^{2}_{\k}) & 
					\ord x(t) = \alpha,\: \ord y(t) = \beta,\:f_\gamma(\ac x(t),\ac y(t))\neq 0,\:\ord f^{\eps}(x(t),y(t))= \eps\m(\alpha,\beta)
				\end{array} 
			\right\} 
			$$ and 
			$$
			X_{\eps, n,\alpha,\beta}^{<} = 
			\left\{ 
				\begin{array}{c|c}
					(x(t),y(t))\in \mathcal L(\mathbb A^{2}_{\k}) 
					&
					{\ord x(t) = \alpha,\: \ord y(t) = \beta},\:f_\gamma(\ac x(t),\ac y(t))=0,\: \ord f^{\eps}(x(t),y(t))= n
				\end{array}
			\right\}
			$$
			endowed with $\ac f^{\eps}$, their structural map to $\Gm$

		\item  For any $\delta>0$ and any face $\gamma$ in $\N(f)$, we consider the motivic zeta function
			\begin{equation} \label{defZepsgamma} 
				Z^\delta_{\eps,\gamma,\omega}(T) = 
				\sum_{n\geq 1} ( \sum_{n\delta \geq k \geq 1} \mathbb L^{-(\nu-1)k} \mes(X_{\eps,n,k}^{\gamma})) T^n.
			\end{equation}
			Furthermore, if $\gamma$ is not horizontal we consider 
			\begin{equation} 
				Z_{\eps,\gamma,\omega}^{\delta,=}(T)=
				\sum_{n\geq 1}
				\sum_{\text{\tiny{
					$\begin{array}{c}
					(\alpha,\beta)\in C_{\eps,\gamma}^{\delta,=}\cap 
					\left(\mathbb N^{*}\right)^2 \\ n=\eps \m(\alpha,\beta)
				        \end{array}$
				      }}} 
				\mathbb L^{-(\nu-1)\alpha} \mes\left( X_{\eps, \alpha,\beta}^{=} 	\right)
				T^{n}
			\end{equation}
			and
			\begin{equation} \label{sommme<}
				Z_{\eps,\gamma,\omega}^{\delta,<}(T)=
				\sum_{n\geq 1}
				\sum_{\text{\tiny{
					          $\begin{array}{c} 
								  (\alpha,\beta) \in \left(\mathbb N^{*}\right)^2 \: s.t \\ 
								  (n,\alpha,\beta)\in C_{\eps,\gamma}^{\delta,<}\cap \mathbb N^3 
							  \end{array}$}
						  }}
				\mathbb L^{-(\nu-1)\alpha} \mes( X_{\eps,n,\alpha,\beta}^{<})T^{n}.
			\end{equation}
	\end{itemize}
\end{notations}

\begin{rem} If $\dim \gamma = 0$ then $Z_{\eps,\gamma, {\omega}}^{\delta,<}(T)=0$. \end{rem}
\begin{rem} \label{rem:sommefinie} As $\gamma$ {is not the} horizontal face, we observe that for any $n\geq 1$, the following sets are finite
	$$\{(\alpha,\beta) \in C_{\eps,\gamma}^{\delta,=} \mid n=\eps \m(\alpha,\beta)\}\:\text{and}\: \{(\alpha,\beta) \in C_{\gamma} \mid (n,\alpha,\beta) \in C_{\eps,\gamma}^{\delta,<}\}.$$ 
\end{rem}

\begin{prop} \label{prop:decomposition}
	For any $\delta>0$, for any $\eps$ in $\{\pm\}$, we have the decomposition 
\begin{equation}\label{formula:decomposition} 
	(Z_{f^\eps,\omega,x\neq 0}^{\delta}(T))_{((0,0),0)}  = \sum_{\gamma \in \N(f)} Z^\delta_{\eps,\gamma,\omega}(T) 
\end{equation}
with the following equality for any non horizontal face $\gamma$ in $\N(f)$
\begin{equation} \label{eq:decomposition-zeta-function}
	Z^{\delta}_{\eps,\gamma,\omega}(T) = Z_{\eps,\gamma,\omega}^{\delta,=}(T) + Z_{\eps,\gamma,\omega}^{\delta,<}(T).
\end{equation}
\end{prop}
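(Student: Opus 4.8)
The statement to prove is Proposition \ref{prop:decomposition}: the decomposition \eqref{formula:decomposition} of the zeta function $(Z_{f^\eps,\omega,x\neq 0}^{\delta}(T))_{((0,0),0)}$ as a sum over the faces of $\N(f)$ of the pieces $Z^\delta_{\eps,\gamma,\omega}(T)$, together with the further splitting \eqref{eq:decomposition-zeta-function} of each non-horizontal piece into an ``$=$'' part and a ``$<$'' part. The first equality is a bookkeeping statement: it says that partitioning arcs by the dual cone containing $(\ord x, \ord y)$ respects the zeta function. The second equality is the statement that, for a non-horizontal face, the arcs splitting into those on which $f_\gamma$ does not vanish on angular components versus those on which it does produces exactly the two sub-series.

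First I would prove \eqref{formula:decomposition}. By \eqref{zeta} and Remark \ref{rem:identification}, $(Z_{f^\eps,\omega,x\neq 0}^{\delta}(T))_{((0,0),0)}$ is the sum over $n\ge 1$ and $1\le k\le n\delta$ of $\mathbb L^{-(\nu-1)k}\mes(X_{\eps,n,k})T^n$, where $X_{\eps,n,k}$ is the arc space of Remark \ref{rem:identification}. Every arc $(x(t),y(t))$ in $X_{\eps,n,k}$ has $\ord x(t)=k>0$ and $\ord y(t)>0$, hence $(\ord x(t),\ord y(t))\in(\mathbb N^*)^2$; by Proposition \ref{prop:dualfan-Newton-local}(1)--(2) and Lemma \ref{prop:dualfan}, the point $(\ord x(t),\ord y(t))$ lies in exactly one dual cone $C_\gamma$, since the $(C_\gamma)_{\gamma\in\N(f)}$ form a fan. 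This yields the disjoint decomposition $X_{\eps,n,k}=\bigsqcup_{\gamma\in\N(f)}X_{\eps,n,k}^{\gamma}$ of \eqref{decomparcs}, compatible with the structural maps $\ac\circ f^\eps$ to $\Gm$. Since the motivic measure is additive on disjoint unions of arc spaces (more precisely, on the truncations $\pi_m$ for $m$ large, which are locally closed and cover $\pi_m(X_{\eps,n,k})$ disjointly), we get $\mes(X_{\eps,n,k})=\sum_{\gamma}\mes(X_{\eps,n,k}^{\gamma})$ in $\mathcal M_{\Gm}^{\Gm}$. Substituting and exchanging the (finite, for each $n$, since $k\le n\delta$) sums gives \eqref{formula:decomposition} with $Z^\delta_{\eps,\gamma,\omega}(T)$ as defined in \eqref{defZepsgamma}.

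Next I would prove \eqref{eq:decomposition-zeta-function} for a non-horizontal face $\gamma$. Fix such $\gamma$, fix $n\ge 1$, $1\le k\le n\delta$. For an arc $(x(t),y(t))\in X_{\eps,n,k}^{\gamma}$, write $(\alpha,\beta)=(\ord x(t),\ord y(t))\in C_\gamma$, so $\alpha=k$. By Remark \ref{rem:n-inferieur-m(ord(x),ord(y)}, exactly one of two mutually exclusive alternatives holds: either $\eps n=\m(\alpha,\beta)$ and $f_\gamma(\ac x,\ac y)\neq 0$, or $\m(\alpha,\beta)<\eps n$ and (necessarily $\dim\gamma=1$ and) $f_\gamma(\ac x,\ac y)=0$. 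Accordingly $X_{\eps,n,k}^{\gamma}$ is the disjoint union, over $(\alpha,\beta)\in C_\gamma\cap(\mathbb N^*)^2$ with $\alpha=k$, of the sub-arc-spaces consisting of arcs with given $(\ord x,\ord y)=(\alpha,\beta)$ and in the first (resp. second) alternative; these are exactly $X_{\eps,\alpha,\beta}^{=}$ (when $n=\eps\m(\alpha,\beta)$ and $\alpha=k\le\eps\m(\alpha,\beta)\delta$, i.e. $(\alpha,\beta)\in C_{\eps,\gamma}^{\delta,=}$) and $X_{\eps,n,\alpha,\beta}^{<}$ (when $\m(\alpha,\beta)<\eps n$ and $\alpha=k\le n\delta$, i.e. $(n,\alpha,\beta)\in C_{\eps,\gamma}^{\delta,<}$), respectively. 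Taking measures, using additivity again, multiplying by $\mathbb L^{-(\nu-1)\alpha}T^n$ and summing over $n$ and over the relevant $(\alpha,\beta)$ — each such sum being finite for fixed $n$ by Remark \ref{rem:sommefinie} — reproduces precisely the definitions of $Z_{\eps,\gamma,\omega}^{\delta,=}(T)$ and $Z_{\eps,\gamma,\omega}^{\delta,<}(T)$ in Notations \ref{notationsconezeta}, which gives \eqref{eq:decomposition-zeta-function}. When $\dim\gamma=0$ the second alternative never occurs, consistently with $Z_{\eps,\gamma,\omega}^{\delta,<}(T)=0$.

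The main obstacle is not conceptual but careful bookkeeping: one must check that the indexing constraints $1\le k\le n\delta$, $n=\eps\m(\alpha,\beta)$, $\m(\alpha,\beta)<\eps n$, $0<\alpha\le n\delta$, $0<\beta$ translate exactly into membership of $(\alpha,\beta)$ in $C_{\eps,\gamma}^{\delta,=}\cap(\mathbb N^*)^2$ or of $(n,\alpha,\beta)$ in $C_{\eps,\gamma}^{\delta,<}\cap\mathbb N^3$, with no arc counted twice or omitted — in particular that the dichotomy of Remark \ref{rem:n-inferieur-m(ord(x),ord(y)} is genuinely exhaustive and exclusive, and that the condition $\ord\omega(x(t),y(t))=(\nu-1)k$ imposed in $X_{\eps,n,k}$ (Remark \ref{rem:identification}) is automatically satisfied and hence does not interfere with the decomposition (this follows from $\omega|_{i(\mathbb A^2\setminus F_\eps)}=x^{\nu-1}dx\wedge dy$ in Notations \ref{notations:Xeps-omega}, so that $\ord\omega=(\nu-1)\ord x$ whenever $\ord x\wedge$ the Jacobian term is generic, which holds for arcs in $U_\eps$). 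Once these identifications are made, additivity of the motivic measure over locally closed partitions of jet spaces does the rest.
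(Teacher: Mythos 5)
Your proof is correct and follows essentially the same route as the paper, whose own argument is exactly the one you spell out: additivity of the motivic measure applied to the arc-space partition (\ref{decomparcs}) for (\ref{formula:decomposition}), and the dichotomy of Remark \ref{rem:n-inferieur-m(ord(x),ord(y)} for (\ref{eq:decomposition-zeta-function}). You merely make the bookkeeping explicit where the paper states it in one sentence.
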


\begin{proof} The proof follows from the additivity of the measure, using equality (\ref{decomparcs}) for (\ref{formula:decomposition}) and 
	Remark \ref{rem:n-inferieur-m(ord(x),ord(y)} for (\ref{eq:decomposition-zeta-function}).
\end{proof}

\paragraph{Rationality and limit of $Z^\delta_{\eps,\gamma_h, \omega}(T)$} \label{sec:Z_(N,0)}

In this subsection we study the case of the horizontal face $\gamma_h$. 
\begin{notation} If $x$ is a real number, we will denote by $[x]$ its integral part. We use $\N(f)^{\eps}$ defined in Notations \ref{Neps}.\end{notation}

\begin{prop}[Case of the horizontal face] \label{ex:cas-x^N} \label{ex:cas-x^N-cas2} 
	{Write $(a,b)$} the horizontal face $\gamma_h$ of $\mathcal{N}(f)$ with $a$ in $\mathbb Z$ and $b$ in $\mathbb N$. The dual cone of $\gamma_h$ is $C_{\gamma_h} = \mathbb R_{>0}(0,1)+\mathbb R_{>0}(p,q)$ with $(p,q)$ a primitive vector and $p\neq 0$. Let $N=ap+bq= (\gamma_h \mid (p,q))$.
	\begin{itemize}
		\item If $b=0$, then 
			\begin{itemize}
				\item if $\eps a >0$ then, there is $\delta_0>0$ such that, for any $\delta\geq \delta_0$,
					the motivic zeta function $Z^\delta_{\eps,\gamma_h, \omega}(T)$ is rational equal to 
					\begin{equation} \label{cas-face-horizontale}
						Z^\delta_{\eps,\gamma_h, \omega}(T) = [x^{\eps a}:\Gm\to \Gm,\sigma_{\Gm}]  R_{\gamma_h,\eps,\omega}^{\delta}(T) \:
					\end{equation}
					with $R_{\gamma_h,\eps,\omega}^{\delta}(T)$ computed in formula (\ref{gamma=(a,0)}) and converges to $-1$, when $T$ goes to $\infty$.
				\item if $\eps a \leq 0$, namely $\gamma_h \notin \N(f)^{\eps}$, then for any $\delta \geq 1$, 
					we have $Z^\delta_{\eps,\gamma_h, \omega}(T)=0$.

			\end{itemize}
		\item If $b\neq 0$, then we have
			\begin{equation}\label{cas-face-quelconcque}
				Z^\delta_{\eps,\gamma_h, \omega}(T) = [(x^ay^b)^{\eps}:\Gm^2\to \Gm,{\sigma_{\Gm^2}}]  R_{\gamma_h,\eps,\omega}^{\delta}(T),
			\end{equation}
			\begin{itemize}
				\item if ``$\eps=+$'' and $N>0$ then $R_{\gamma_h,\eps,\omega}^{\delta}(T)$ is computed in formula (\ref{Rgamma1}) and {converges to} 1.
				\item if ``$\eps=+$'' and $N\leq 0$, {namely $\gamma_h \notin \N(f)^{+}$}, then $R_{\gamma_h,\eps,\omega}^{\delta}(T)$ is computed in formula (\ref{Rgamma2}) and {converges to} 0.
			        \item if ``$\eps=-$'' and $N\geq 0$ then $R_{\gamma_h,\eps,\omega}^{\delta}(T)=0$.
				\item if ``$\eps=-$'' and $N<0$ then $R_{\gamma_h,\eps,\omega}^{\delta}(T)$ is computed in formula (\ref{Rgamma3}) and {converges to} 0.
			\end{itemize}
	\end{itemize}

\end{prop}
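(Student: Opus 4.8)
The plan is to evaluate $Z^\delta_{\eps,\gamma_h,\omega}(T)$ directly from its definition \eqref{defZepsgamma}, by describing the arc spaces $X^{\gamma_h}_{\eps,n,k}$ explicitly, computing their motivic measures, and then recognizing the resulting generating series as a lattice-point sum over the cone $C_{\gamma_h}$, to which Lemma \ref{lemmerationalitedescones} applies for both the rationality and the limit.

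First I would parametrize $X^{\gamma_h}_{\eps,n,k}$ via Remark \ref{rem:identification}: it is the set of arcs $(x(t),y(t))$ in $\mathcal L(\mathbb A^2_\k)$ with $\ord x(t)=k$, $\ord y(t)=\beta>0$, $(\ord x,\ord y)\in C_{\gamma_h}$, $\ord\omega(x(t),y(t))=(\nu-1)k$ (which is automatic once $\ord x=k$, since the divisor of $\omega$ on $\mathbb A^2$ is $(\nu-1)\{x=0\}$), and $\ord f^\eps(x(t),y(t))=n$. As $\gamma_h$ is a zero-dimensional face, $f_{\gamma_h}$ is the monomial $c_{(a,b)}(f)x^ay^b$, which does not vanish on $\Gm^2$; hence by Remark \ref{rem:n-inferieur-m(ord(x),ord(y)} every such arc falls in the case $\eps n=\m(\ord x,\ord y)$, i.e. $n=\eps\,\m(k,\beta)=\eps(ak+b\beta)$, with $\ac f^\eps(x(t),y(t))=\bigl(c_{(a,b)}(f)(\ac x)^a(\ac y)^b\bigr)^\eps$. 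Thus there is no ``$<$'' contribution, and two situations occur: if $b\neq0$ then $\beta=(\eps n-ak)/b$ is determined by $(n,k)$ and the structural map is the monomial map $(\ac x,\ac y)\mapsto (c_{(a,b)}(f)(\ac x)^a(\ac y)^b)^\eps$; if $b=0$ then $n=\eps ak$ depends on $k$ alone, $\beta\ge1$ still runs over the lattice points of $C_{\gamma_h}$ above $k$, and the structural map sees only the angular component of $x$.

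Next I would compute the measures. For fixed admissible $(k,\beta)$, truncating at level $m\gg0$ exhibits the jet space, over the torus $\Gm^r$ ($r=1$ if $b=0$, $r=2$ otherwise) carrying the class $[f_{\gamma_h}^\eps:\Gm^r\to\Gm,\sigma_{\Gm^r}]$, as an affine bundle on which $\Gm$ acts linearly by rescaling jet coordinates; so that piece has measure $\mathbb L^{-(k+\beta)}$ times this class (times the free torus factor carrying the angular component of $y$ when $b=0$). Absorbing the constant $c_{(a,b)}(f)$ by a translation on the target $\Gm$, inserting the weight $\mathbb L^{-(\nu-1)k}$, and summing over the admissible $(k,\beta)$ — summing first over $\beta$ when $b=0$, which produces an extra geometric factor combining with the free torus factor — one obtains
$$Z^\delta_{\eps,\gamma_h,\omega}(T)=\bigl[f_{\gamma_h}^\eps:\Gm^r\to\Gm,\sigma_{\Gm^r}\bigr]\,R^\delta_{\gamma_h,\eps,\omega}(T),$$
where $R^\delta_{\gamma_h,\eps,\omega}(T)$ is a sum over the lattice points of the part of $C_{\gamma_h}$ cut out by $0<\alpha\le\eps\m(\alpha,\beta)\delta$ and $\eps\m(\alpha,\beta)\ge1$ of the monomials $\mathbb L^{-\eta(\alpha,\beta)}T^{\phi(\alpha,\beta)}$ with $\eta(\alpha,\beta)=\nu\alpha+\beta$ and $\phi(\alpha,\beta)=\eps\m(\alpha,\beta)=\eps(a\alpha+b\beta)$. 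For $\delta$ large the constraint $\alpha\le\phi\delta$ is vacuous on $\{\phi\ge1\}$, so $R^\delta$ is of the type treated in Lemma \ref{lemmerationalitedescones}, which gives its rational closed form \eqref{gamma=(a,0)}, \eqref{Rgamma1}, \eqref{Rgamma2} or \eqref{Rgamma3} and computes $\lim_{T\to\infty}R^\delta=\chi_{c}$ of the effective cone. The case analysis is then the computation of $\phi$ on the bounding rays: $\phi(0,1)=\eps b$ and $\phi(p,q)=\eps N$. When $b=0$, $\phi$ depends on $\alpha$ only; if $\eps a>0$ the summation over $\beta$ reduces the problem to a one-dimensional sum along $\{\alpha>0\}$, whose limit is $-1=\chi_{c}(\text{half-line})$ by case \ref{cone2} of Lemma \ref{lemmerationalitedescones}, and if $\eps a\le0$ (that is, $\gamma_h\notin\N(f)^\eps$) the condition $\eps ak\ge1$ has no solution, every $X^{\gamma_h}_{\eps,n,k}$ is empty and $Z^\delta=0$. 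When $b\neq0$ and $\eps=+$, one has $\phi(0,1)=b>0$, so for $N>0$ the functional $\phi$ is positive on all of $C_{\gamma_h}$, the effective cone is the open two-dimensional $C_{\gamma_h}$ and the limit is $1=\chi_{c}(C_{\gamma_h})$ (case \ref{cone1}), while for $N\le0$ (that is, $\gamma_h\notin\N(f)^+$) the region $\{\phi\ge1\}$ is, up to a finite set, a half-open two-dimensional cone (one bounding ray with $\phi=0$ if $N=0$, asymptotically so if $N<0$) with $\chi_{c}=0$. When $b\neq0$ and $\eps=-$, one has $\phi(0,1)=-b<0$, so for $N\ge0$ no admissible lattice point exists and $R^\delta=0$, and for $N<0$ the region is again a half-open cone with $\chi_{c}=0$.

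The main obstacle I anticipate is twofold. First, the careful bookkeeping of the monomial $\Gm$-action on the jet coordinates under the identification $\mathcal M^{\Gm}_{S\times\Gm}\cong\mathcal M^{\hat\mu}_S$, in particular the treatment of the free angular-component-of-$y$ torus factor in the case $b=0$ and the verification that the geometric summation over $\beta$ it produces combines with the rest into the stated rational function with limit $-1$. Second, the reduction, uniform in $\delta$ large, of the $\delta$-truncated cone to the region $\{\phi\ge1\}$ without affecting the limit — this is where the $\delta$-independence of Proposition \ref{thmrationaliteopen} (via Lemma \ref{lem:rationalite-zeta-omega}) is used — together with the handling of the bounding rays on which $\phi$ vanishes: on such a ray one must first sum the convergent geometric series in $\mathbb L$ and only then invoke the one-dimensional case of Lemma \ref{lemmerationalitedescones}, rather than applying case \ref{cone1} directly.
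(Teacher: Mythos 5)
Your overall architecture coincides with the paper's: stratify by the face, compute the measure of each piece as $\mathbb L^{-k-\beta}$ times the class of the monomial map, reduce to a lattice-point sum over (a truncation of) $C_{\gamma_h}$, and read off rationality and the limit from Lemma \ref{lemmerationalitedescones} via Euler characteristics of cones. The case $b=0$ and the cases ($b\neq 0$, $\eps=+$, $N>0$) and ($\eps=-$, $N\geq 0$) are handled essentially as in the paper.

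There is, however, a genuine gap in the cases $\gamma_h\notin\N(f)^{\eps}$ with $b\neq 0$, i.e.\ $\eps=+$, $N\leq 0$ and $\eps=-$, $N<0$. Your key reduction — ``for $\delta$ large the constraint $\alpha\le \eps\m(\alpha,\beta)\delta$ is vacuous on $\{\eps\m\ge 1\}$'' — is false precisely there. The constraint is homogeneous, so whether it holds depends only on the ray through $(\alpha,\beta)$; when $N\leq 0$ the linear form $\eps\m=\eps(a\alpha+b\beta)$ is nonpositive near the ray $\mathbb R_{>0}(p,q)$ of $C_{\gamma_h}$, so along the level set $\{\eps\m=n\}$ one finds lattice points of the cone with $\alpha$ arbitrarily large, and these violate $\alpha\le n\delta$ for every fixed $\delta$. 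Worse, if you actually drop the truncation, each coefficient of $T^n$ becomes an infinite sum (the fibers $\phi^{-1}(n)\cap C$ are infinite), so the hypotheses of Lemma \ref{lemmerationalitedescones} fail and the series is not even defined in $\mathcal M_{\Gm}^{\Gm}[[T]]$; note that the bound $k\le n\delta$ is built into the definition (\ref{defZepsgamma}) and is exactly what guarantees the finiteness used in Remark \ref{rem:sommefinie}. The correct treatment — the paper's — is to keep the truncation and identify the truncated cone $C_{\eps,\gamma_h}^{\delta,=}$ explicitly as a half-open cone $\mathbb R_{\geq 0}\omega_1+\mathbb R_{>0}\omega^{\delta}$ with one $\delta$-dependent primitive bounding ray $\omega^{\delta}$ (namely $(b\delta,1-a\delta)$ or $(b\delta,-(1+a\delta))$), and then apply point \ref{cone3} of Lemma \ref{lemmerationalitedescones}: the rational function genuinely depends on $\delta$, as in formulas (\ref{Rgamma2}) and (\ref{Rgamma3}), but its limit $0$ does not. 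Your closing suggestion (summing a geometric series along the ray where $\phi$ vanishes before invoking the one-dimensional case) does not repair this, since the problem is not that ray itself (its points have $n=0$ and contribute nothing) but the unbounded strings of lattice points with fixed small $n$ running parallel to it.
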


\begin{proof} 

	Assume first $b=0$. Then, by Remark \ref{rem:n-inferieur-m(ord(x),ord(y)}, for any arc $(x(t),y(t))$ with $(\ord x(t),\ord y(t))$ in $C_{\gamma_h}$ we have
	$$\ord f^{\eps}(x(t),y(t)) = \eps \m(x(t),y(t))= \eps a\ord x(t).$$ 
	\begin{itemize}
		\item[$\bullet$]  Assume $\eps a \leq 0$ then for any $(n,k)$ in 
			$\mathbb N^{*2}$, the set $X_{n,k,\gamma_h}^\delta$ is empty and $Z_{\eps,\gamma_h}^{\delta}(T)=0$.
		\item[$\bullet$]  Assume $\eps a>0$ and $\delta>1$. As $a\eps$ is an integer, the condition $\delta \ord f^{\eps}(x(t),y(t)) \geq \ord x(t)$ is satisfied.
			Then applying {formula (\ref{defZepsgamma})}, we have 
			$Z^\delta_{\eps,\gamma_h, \omega}(T) = \sum_{k\geq 1}\mathbb L^{-(\nu-1)k} \mes( X_{\eps,\eps ak,k}^{\gamma_h})T^{\eps a k}$.
			We prove now formula (\ref{cas-face-horizontale})
			with
			\begin{equation} \label{gamma=(a,0)}
				R_{\gamma_h,\eps,\omega}^{\delta}(T) = 
				-1+\frac{1}{1-\mathbb L^{-\nu p - q}T^{p\eps a}} \sum_{r=0}^{p-1} \mathbb L^{-\nu r - [qr/p]}T^{r\eps a}.
			\end{equation}
			A couple $(k,l)$ in $(\mathbb N^{*})^2$ belongs to $C_{\gamma_h}$ if and only if $pl>qk$. Then, for any $k\geq 1$ we have
			$$X_{\eps,\eps ak,k}^{\gamma_h} = \{(x(t),y(t))\in \mathcal L(\mathbb A^2_{\k}) \mid \ord x(t)=k,\: p.\ord y(t) >qk, \text{namely}\: \ord y(t) \geq [qk/p]+1\}$$
		and by definition of the motivic measure, we get
		$ \mes(X_{\eps, \eps ak,k}^{\gamma_h}) = [x^{\eps a}:\Gm \ra \Gm, \sigma_{\Gm}]\mathbb L^{-[qk/p]-k}$ and
		$$Z_{\eps,\gamma_h, \omega}^{\delta}(T)=[x^{\eps a} : \Gm \ra \Gm, \sigma_{\Gm}] \sum_{k\geq 1} \mathbb L^{-\nu k - [qk/p]}T^{k\eps a}.$$
			Let $k\geq 1$, there is a unique integer $r$ {in} $\{0,\dots,p-1\}$ such that $k=[k/p]p+r$. 
			There is also a unique integer $\beta_r$ in $\{0,\dots,p-1\}$ such that 
			$qr=[qr/p]p+\beta_r$ implying $[qk/p]=[k/p]q+[qr/p]$. We obtain equality (\ref{gamma=(a,0)}), writing $k$ as $lp+r$ with $l$ {in} $\mathbb N$ and $r$ {in} $\{0,\dots,p-1\}$, and  by decomposition of $Z_{\eps,\gamma_h, \omega}^{\delta}(T)$ as the sum of $p$ formal series 
			$$Z_{\eps,\gamma_h, \omega}^{\delta}(T)= 
			[x^{\eps a} : \Gm \ra \Gm, \sigma_{\Gm}] \left(-1+\sum_{r=0}^{p-1} \sum_{l\geq 0} \mathbb L^{-\nu(lp+r)-lq-[qr/p]}T^{a\eps(lp+r)}\right).$$
	\end{itemize}
        Assume $b\neq 0$, by Remark \ref{rem:n-inferieur-m(ord(x),ord(y)}, for any arc $(x(t),y(t))$ with $(\ord x(t),\ord y(t))$ in $C_{\gamma_h}$, we have
	$$\ord f^{\eps}(x(t),y(t)) = \eps \m(x(t),y(t))= \eps (a\ord x(t)+b\ord y(t)).$$
	In particular, the motivic zeta function can be written as
	$$Z^\delta_{\eps,\gamma_h, \omega}(T) = 
	\sum_{n\geq 1} \sum_{\text{\tiny{$\begin{array}{c}(k,l)\in C_{\eps,\gamma_h}^{\delta,=} \cap (\mathbb N^{*})^2 \\ n=\eps(ak+bl) \end{array}$}}} 
	\mathbb L^{-(\nu-1)k} \mes\left( X_{\eps,k,l}\right)T^{n}$$
	with $X_{\eps,k,l}=\{(x(t),y(t))\in \mathcal L(\mathbb A^2_k)\mid \ord x(t)=k, \ord y(t)=l\}$
	with its structural map, $(x(t),y(t)) \mapsto \ac(x(t)^{\eps a}y(t)^{\eps b})$,
	and 
	\begin{equation}\label{defCepsgammahdelta} C_{\eps,\gamma_h}^{\delta,=}=\{(k,l)\in (\mathbb R_{>0})^2 \mid l>kq/p,\:\eps(ak+bl)\delta \geq k \} \subseteq C_{\gamma_h}.\end{equation}
	For any $(k,l)$ in $C_{\gamma_h} \cap (\mathbb N^*)^2$, for any integer $m\geq \m(k,l)$, the $m$-jet space 
	$\pi_m(X_{\eps,k,l})$ with the canonical $\Gm$-action is a bundle over $((x^ay^b)^\eps:\Gm^2\to \Gm,\sigma_{k,l})$ with fiber $\mathbb A^{2m-k-l}$ and for any $(\lambda,x,y)$ in $\Gm^3$, $\sigma_{k,l}(\lambda,(x,y))=(\lambda^{k} x,\lambda^{l} y)$.
	By definition of the motivic measure and Remark \ref{notation-identification-action}, we get
	$\mes\left( X_{\eps,k,l}\right) = \mathbb L^{-k-l}[(x^ay^b)^\eps:\Gm^2\to \Gm,\sigma_{\Gm^2}]$ and formula (\ref{cas-face-quelconcque}) with
	$$
	R_{\gamma_h,\eps,\omega}^{\delta}(T) = 
	\sum_{n\geq 1} \sum_{\text{\tiny{$\begin{array}{c}(k,l)\in C_{\eps,\gamma_h}^{\delta,=} \cap (\mathbb N^*)^2 \\ n=\eps(ak+bl) \end{array}$}}} 
	\mathbb L^{-\nu k-l}T^{n}.$$

	\begin{itemize}
		\item Assume ``$\eps=+$'' and $N=ap+bq>0$, namely $\frac{q}{p}>-\frac{a}{b}$, then there is $\delta_0>0$ such that for any $\delta \geq \delta_0$,
			$\frac{q}{p} \geq \frac{1-a\delta}{b\delta}$ then, for any $(k,l)$ in $C_{\gamma_h}$, we have 
			$\delta(ak+bl)>\delta k (a+bq/p) \geq \delta k(a+(1-a\delta)/\delta) = k$
			inducing the equality
			$C_{\eps,\gamma_h}^{\delta,=}=C_{\gamma_h} = \mathbb R_{>0}(0,1) + \mathbb R_{>0}(p,q).$
			Then, applying Lemma \ref{lemmerationalitedescones} formula (\ref{casconedim2}), we obtain
			\begin{equation} \label{Rgamma1} 
				R_{\gamma_h,\eps,\omega}^{\delta}(T) =  \sum_{(k_0,l_0) \in \mathcal P}
				\frac{\mathbb L^{-\nu k_0-l_0}T^{a k_0 + b l_0}}
				{(1-\mathbb L^{-1}T^{b})(1-\mathbb L^{-\nu p - q}T^{ap+bq})}
			\end{equation}
		with $\mathcal P =( ]0,1](0,1) + ]0,1](p,q) )\cap \mathbb N^2$. We have $\lim_{T \ra \infty} R_{\gamma_h,\eps,\omega}^{\delta}(T) = 1$.

	\item Assume ``$\eps=+$'' and $N=ap+bq \leq 0$, namely $\frac{q}{p} \leq -\frac{a}{b}$ (this case occurs only for $a<0$), then for any $\delta > 0$,
		$\frac{1-a\delta}{b\delta} > \frac{q}{p}$ and in that case, we remark that 
		$C_{\eps,\gamma_h}^{\delta,=} = \left\{ 
			                                \begin{array}{c|c}
								 (k,l)\in (\mathbb R_{>0})^2 &
								 l \geq k \left(\frac{1-\delta a}{\delta b} \right)
							 \end{array}
					          \right\}  
						  = \mathbb R_{\geq 0}(0,1) + \mathbb R_{>0}{\omega^\delta}
		$ 
		{with $\omega^\delta = (b\delta,1-a\delta)$}. Then, applying Lemma \ref{lemmerationalitedescones} {formula (\ref{casconedim2-bis})}, 
		we obtain
		\begin{equation}\label{Rgamma2}
			R_{\gamma_h,\eps,\omega}^{\delta}(T)  =  
			\sum_{(k_0,l_0) \in \mathcal P}
			\frac{\mathbb L^{-\nu k_0-l_0}T^{a k_0 + b l_0}}
			{(1-\mathbb L^{-1}T^{b})(1-\mathbb L^{-((\nu,1)\mid \omega^\delta)}T^{((a,b)\mid \omega^\delta)})} 
			+  
			\frac{\mathbb L^{-((\nu,1)\mid(\omega^\delta))}T^{((a,b)\mid \omega^\delta))}}
			{1-\mathbb L^{-((\nu,1)\mid(\omega^\delta))}T^{((a,b)\mid \omega^\delta))}}
		\end{equation} 
	with $\mathcal P =( ]0,1](0,1) + ]0,1] \omega^\delta )\cap \mathbb N^2$. We have $\lim_{T \ra \infty} R_{\gamma_h,\eps,\omega}^{\delta}(T)=0$.

\item Assume ``$\eps=-$''. Then, by formula (\ref{defCepsgammahdelta}) we have,
	$C_{\eps,\gamma_h}^{\delta,=}=\{(k,l)\in (\mathbb R_{>0})^2 \mid -k(1+a \delta)/(b\delta) \geq l > kq/p \}.$
	Remark that for any $\delta>0$, we have the inequality $-a/b>-1/(b\delta)-a/b.$
	\begin{itemize}
		\item If $N=qb+ap\geq 0$, namely $\frac{q}{p}\geq -\frac{a}{b}$, then the cone $C_{\eps,\gamma_h}^{\delta,=}$ is empty for any $\delta>0$, and 
			$R_{\gamma_h,\eps,\omega}^{\delta}(T)  = 0$.
		\item  If $N=qb+ap<0$, namely $\frac{q}{p}<-\frac{a}{b}$, then there is $\delta_0>0$ such that for any $\delta\geq \delta_0$, we have 
			$-(1+a \delta)/(b\delta) > q/p$ and in that case 
			$C_{\eps,\gamma_h}^{\delta,=}=\mathbb R_{\geq 0}(p,q) + \mathbb R_{>0}{\omega_\delta}$
			{with $\omega^\delta = (b\delta,-(1+a\delta))$} and by Lemma \ref{lemmerationalitedescones} {formula (\ref{casconedim2-bis})} we have 
			\begin{equation} \label{Rgamma3}
				R_{\gamma_h,\eps,\omega}^{\delta}(T)  =  
				\sum_{(k_0,l_0) \in \mathcal P}
				\frac{\mathbb L^{-\nu k_0-l_0}T^{a k_0 + b l_0}}
				{(1-{\mathbb L^{-((\nu,1)\mid (p,q))}T^{((a,b)\mid (p,q))})(1-\mathbb L^{-((\nu,1)\mid \omega^\delta)}}T^{((a,b)\mid \omega^\delta})} 
				+  
				\frac{\mathbb L^{-((\nu,1)\mid(\omega^\delta))}T^{((a,b)\mid \omega^\delta))}}
				{1-\mathbb L^{-((\nu,1)\mid(\omega^\delta))}T^{((a,b)\mid \omega^\delta))}}
			\end{equation} 
		with $\mathcal P =( ]0,1]{(p,q)} + ]0,1] \omega^\delta )\cap \mathbb N^2$. We have $\lim_{T \ra \infty} R_{\gamma_h,\eps,\omega}^{\delta}(T)=0$.
\end{itemize}
              \end{itemize}
\end{proof}

\paragraph{Rationality and limit of $Z_{\eps,\gamma}^{\delta,=}(T)$ for $\gamma$ non horizontal} \label{sec:rationalite-limit-Zgamma=}
		
We use the subset $\N(f)^\eps$ of $\N(f)$ defined in Notations \ref{Neps} and the {actions} $\sigma_\gamma$ defined in {Remark}
\ref{notation-identification-action}. Similarly to \cite{Gui02a}, \cite{GuiLoeMer05a} and \cite{Rai11} we have

\begin{prop} \label{lem:zeta=} 
	        Let $\gamma$ be a non horizontal face of $\N(f)$. The motivic zeta function $Z_{\eps,\gamma,\omega}^{\delta,=}(T)$ is rational and for $\delta$ large enough,
		we have the convergence
		$$\lim_{T \ra \infty} Z_{\eps,\gamma, \omega}^{\delta,=}(T) =
				s_\gamma^{\eps}(-1)^{\dim \gamma} \left[f_{\gamma}^{\eps} : \Gm^{2}\setminus (f_\gamma = 0) \ra \Gm, \sigma_{\gamma}\right]  \in \mgg$$
				with $s_\gamma^\eps=1$ if $\gamma$ is a face in $\N(f)^\eps$, otherwise $s_\gamma^\eps=0$.
	More precisely, there is a rational function $R_{\gamma,\eps,\omega}^{\delta}(T)$ such that
	\begin{equation} \label{eqa}
		Z_{\eps,\gamma,\omega}^{\delta,=}(T) = 
		[f_\gamma^{\eps}:\Gm^2 \setminus (f_\gamma=0) \to \Gm, \sigma_{\gamma}]
		R_{\gamma,\eps,\omega}^{\delta}(T).
	\end{equation}
	\begin{itemize}
		\item If $\gamma$ is a zero dimensional face $(a,b)$ in $\mathbb Z \times \mathbb N$, with  
			$C_\gamma = \mathbb R_{>0} \omega_1 + \mathbb R_{>0} \omega_2$ with $\omega_1$ and $\omega_2$ primitive vectors in $\mathbb N^*\times \mathbb N$, 
				\begin{itemize}
				\item if $\gamma$ {belongs to} $\N(f)^\eps$, namely $\eps((a,b)\mid \omega_1)>0$ and $\eps((a,b)\mid \omega_2)>0$,
					$R_{\gamma,\eps,\omega}^{\delta}(T)$ is a rational function computed in {formula (\ref{Rgammacas2})} which does not depend on $\delta$ large enough and converges to 1,	
				\item if $\eps((a,b)\mid \omega_1)\leq 0$ and $\eps((a,b)\mid \omega_2) \leq 0$, then for any 
					$\delta>0$, $R_{\gamma,\eps}^{\delta}(T)=0$,
				\item otherwise, for any $\delta>0$, $R_{\gamma,\eps,\omega}^{\delta}(T)$ is a rational function {as in formula (\ref{Rgammacas3})} 
				and converges to $0$.
			\end{itemize}

		\item If $\gamma$ is a one dimensional face supported by a line of equation $ap+bq=N$ with dual cone $C_\gamma=\mathbb R_{>0}(p,q)$ then,
			\begin{itemize}
				\item if $\gamma$ {does not belong to} $\N(f)^\eps$, namely $\eps N\leq 0$, then $R_{\gamma,\eps,\omega}^{\delta}(T)=0$.
				\item if $\gamma$ {belongs to} $\N(f)^\eps$, namely $\eps N>0$, then for $\delta \geq \frac{p}{\eps N}$, 
					$R_{\gamma,\eps,\omega}^{\delta}(T)$ is computed in formula (\ref{Rgammacas1}), does not depend on $\delta$ and converges to $-1$.
			\end{itemize}
	\end{itemize}
\end{prop}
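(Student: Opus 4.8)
The plan is to compute the motivic measure of the arc spaces $X_{\eps,\alpha,\beta}^{=}$ occurring in the definition of $Z_{\eps,\gamma,\omega}^{\delta,=}(T)$, to factor out of the sum a single class of $\mathcal M_{\Gm}^{\Gm}$, and then to reduce the remaining rational form $R_{\gamma,\eps,\omega}^{\delta}(T)$ to a lattice--point summation over a rational polyhedral cone handled by Lemma~\ref{lemmerationalitedescones}. For the measure, I would fix $(\alpha,\beta)$ in $C_{\eps,\gamma}^{\delta,=}\cap(\mathbb N^*)^2$ and set $n=\eps\m(\alpha,\beta)$. Since $(\alpha,\beta)$ lies in $C_\gamma$ we have $\gamma(\alpha,\beta)=\gamma$, so for an arc $(x(t),y(t))$ with $\ord x(t)=\alpha$ and $\ord y(t)=\beta$ the initial term of $f(x(t),y(t))$ is $t^{\m(\alpha,\beta)}f_\gamma(\ac x(t),\ac y(t))$; by the first case of Remark~\ref{rem:n-inferieur-m(ord(x),ord(y)} the condition $f_\gamma(\ac x(t),\ac y(t))\neq 0$ is equivalent to $\ord f^{\eps}(x(t),y(t))=n$, and then $\ac f^{\eps}(x(t),y(t))=f_\gamma^{\eps}(\ac x(t),\ac y(t))$. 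Hence for $m$ large, $\pi_m(X_{\eps,\alpha,\beta}^{=})$ is an $\mathbb A^{2m-\alpha-\beta}$--bundle with linear fibre action over $\Gm^2\setminus(f_\gamma=0)$, with structural morphism $f_\gamma^{\eps}$ to $\Gm$ and, by quasi--homogeneity of $f_\gamma$ along $\gamma$, the monomial action $\sigma_{\alpha,\beta}$ of weight $n$; by Remark~\ref{notation-identification-action} (and its evident analogue for $1/f_\gamma$) its class is independent of $(\alpha,\beta)$ in $C_\gamma\cap(\mathbb N^*)^2$, its action being denoted $\sigma_\gamma$. This gives $\mes(X_{\eps,\alpha,\beta}^{=})=\mathbb L^{-\alpha-\beta}[f_\gamma^{\eps}:\Gm^2\setminus(f_\gamma=0)\to\Gm,\sigma_\gamma]$, and since $dx\wedge dy$ has empty zero locus on $\mathbb A^2$ we have $\ord\omega(x(t),y(t))=(\nu-1)\alpha$; substituting into the definition yields the factorisation~(\ref{eqa}) with $R_{\gamma,\eps,\omega}^{\delta}(T)=\sum_{(\alpha,\beta)\in C_{\eps,\gamma}^{\delta,=}\cap(\mathbb N^*)^2}\mathbb L^{-\eta(\alpha,\beta)}T^{\eps\m(\alpha,\beta)}$, where $\eta(\alpha,\beta):=\nu\alpha+\beta$.

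Next I would use that $\m$ is linear on $C_\gamma$, being the restriction of the $\mathbb Z$--linear form attached to any vertex of $\gamma$: explicitly $\m=l_{(a,b)}$ when $\gamma=(a,b)$ is zero dimensional, and $s(p,q)\mapsto sN$ on $C_\gamma=\mathbb R_{>0}(p,q)$ when $\gamma$ is one dimensional supported by $ap+bq=N$. Setting $\phi=\eps\m$, the series $R_{\gamma,\eps,\omega}^{\delta}(T)$ is precisely $S_{\phi,\eta,C_{\eps,\gamma}^{\delta,=}}(T)$ in the notation of Lemma~\ref{lemmerationalitedescones}, whose hypotheses hold on the truncated cone $C_{\eps,\gamma}^{\delta,=}$ by Remark~\ref{rem:sommefinie} together with the observation that $\alpha\le\phi(\alpha,\beta)\delta$ forces $\phi>0$ there. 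It then remains to identify $C_{\eps,\gamma}^{\delta,=}=\{(\alpha,\beta)\in C_\gamma\mid\alpha\le\eps\m(\alpha,\beta)\delta\}$ (the conditions $\alpha,\beta>0$ being automatic on $C_\gamma$ for a non horizontal face). \textbf{(a)} If $\gamma$ is one dimensional with $\eps N\le 0$, or zero dimensional with $\eps\m(\omega_1)\le0$ and $\eps\m(\omega_2)\le0$, this cone is empty, so $R_{\gamma,\eps,\omega}^{\delta}(T)=0$ and $s_\gamma^\eps=0$. \textbf{(b)} If $\gamma$ is one dimensional with $\eps N>0$, then for $\delta\ge p/(\eps N)$ one has $C_{\eps,\gamma}^{\delta,=}=C_\gamma$, and Lemma~\ref{lemmerationalitedescones}(\ref{cone2}) gives
\begin{equation}\label{Rgammacas1}
R_{\gamma,\eps,\omega}^{\delta}(T)=\frac{\mathbb L^{-(\nu p+q)}T^{\eps N}}{1-\mathbb L^{-(\nu p+q)}T^{\eps N}},
\end{equation}
independent of $\delta$, with limit $-1=(-1)^{\dim\gamma}s_\gamma^\eps$. \textbf{(c)} If $\gamma$ is zero dimensional with $\eps\m(\omega_1)>0$ and $\eps\m(\omega_2)>0$, then the scale--invariant ratio $\alpha/\eps\m(\alpha,\beta)$ is bounded on $C_\gamma$, so for $\delta$ large $C_{\eps,\gamma}^{\delta,=}=C_\gamma$ and Lemma~\ref{lemmerationalitedescones}(\ref{cone1}) gives, with $\mathcal P=(\,]0,1]\omega_1+\,]0,1]\omega_2)\cap\mathbb Z^2$,
\begin{equation}\label{Rgammacas2}
R_{\gamma,\eps,\omega}^{\delta}(T)=\sum_{(k_0,l_0)\in\mathcal P}\frac{\mathbb L^{-\eta(k_0,l_0)}T^{\eps\m(k_0,l_0)}}{(1-\mathbb L^{-\eta(\omega_1)}T^{\eps\m(\omega_1)})(1-\mathbb L^{-\eta(\omega_2)}T^{\eps\m(\omega_2)})},
\end{equation}
independent of $\delta$ large, with limit $1=(-1)^{\dim\gamma}s_\gamma^\eps$. \textbf{(d)} In the remaining zero dimensional case, after relabelling so that $\eps\m(\omega_1)>0\ge\eps\m(\omega_2)$ (not both zero), the inequality $\alpha\le\eps\m(\alpha,\beta)\delta$ cuts $C_\gamma$ along the ray $\{\alpha=\eps\m(\alpha,\beta)\delta\}$, whose primitive generator $\omega^\delta\in\mathbb N^2$ satisfies $\eps\m(\omega^\delta)=\alpha/\delta>0$, so that $C_{\eps,\gamma}^{\delta,=}=\mathbb R_{\ge0}\omega_1+\mathbb R_{>0}\omega^\delta$, and Lemma~\ref{lemmerationalitedescones}(\ref{cone3}) gives, with $\mathcal P=(\,]0,1]\omega_1+\,]0,1]\omega^\delta)\cap\mathbb Z^2$,
\begin{equation}\label{Rgammacas3}
R_{\gamma,\eps,\omega}^{\delta}(T)=\sum_{(k_0,l_0)\in\mathcal P}\frac{\mathbb L^{-\eta(k_0,l_0)}T^{\eps\m(k_0,l_0)}}{(1-\mathbb L^{-\eta(\omega_1)}T^{\eps\m(\omega_1)})(1-\mathbb L^{-\eta(\omega^\delta)}T^{\eps\m(\omega^\delta)})}+\frac{\mathbb L^{-\eta(\omega^\delta)}T^{\eps\m(\omega^\delta)}}{1-\mathbb L^{-\eta(\omega^\delta)}T^{\eps\m(\omega^\delta)}},
\end{equation}
with limit $0=(-1)^{\dim\gamma}s_\gamma^\eps$. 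In each case $Z_{\eps,\gamma,\omega}^{\delta,=}(T)$ is rational, and taking $T\to\infty$ in~(\ref{eqa}) yields $\lim_{T\to\infty}Z_{\eps,\gamma,\omega}^{\delta,=}(T)=s_\gamma^\eps(-1)^{\dim\gamma}[f_\gamma^{\eps}:\Gm^2\setminus(f_\gamma=0)\to\Gm,\sigma_\gamma]$, independent of $\omega$.

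The step I expect to be the main obstacle is the description of $C_{\eps,\gamma}^{\delta,=}$ in case \textbf{(d)}: one must check that the constraint $\alpha\le\eps\m(\alpha,\beta)\delta$ truncates $C_\gamma$ exactly to a half--open rational cone of the type covered by Lemma~\ref{lemmerationalitedescones}(\ref{cone3}) (in particular that the cutting ray is rational, lies in $\mathbb N^2$, and is the one included), as well as the uniform stabilisation $C_{\eps,\gamma}^{\delta,=}=C_\gamma$ for $\delta$ large in cases \textbf{(b)} and \textbf{(c)}, which is what makes the limit independent of $\delta$ large.
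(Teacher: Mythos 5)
Your proof is correct and follows essentially the same route as the paper: compute $\mes(X_{\eps,\alpha,\beta}^{=})=\mathbb L^{-\alpha-\beta}\,[f_\gamma^{\eps}:\Gm^2\setminus(f_\gamma=0)\to\Gm,\sigma_\gamma]$, factor this class out, and evaluate the remaining lattice sum over $C_{\eps,\gamma}^{\delta,=}$ via Lemma \ref{lemmerationalitedescones}, with the same case analysis on that cone (empty, equal to $C_\gamma$ for $\delta$ large, or half-open after truncation). The only variation is harmless: in the mixed-sign zero-dimensional case you cut $C_\gamma$ directly by the line $\alpha=\eps\delta\m(\alpha,\beta)$, whereas the paper first treats the sub-case $(\gamma\mid\omega_1)=0$ and reduces $\eps(\gamma\mid\omega_1)<0$ to it via the null direction $(b,-a)$; both give the same half-open cone and formula (\ref{Rgammacas3}).
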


\begin{proof} 
	Let $\gamma$ be a non horizontal face of $\N(f)$. We use Notations \ref{notationsconezeta}. 
	For any $(\alpha,\beta)$ in $C_\gamma \cap (\mathbb N^*)^2$, for any integer $m\geq \m(\alpha,\beta)$, the $m$-jet space 
	$\pi_m(X_{\eps,\alpha,\beta}^{=})$ with the canonical $\Gm$-action is a bundle over $(f_\gamma:\Gm ^2\setminus (f_\gamma=0) \to \Gm,\sigma_{\alpha,\beta})$ with fiber $\mathbb A^{2m-\alpha-\beta}$ and for any $(\lambda,x,y)$ in $\Gm^3$, $\sigma_{\alpha,\beta}(\lambda,(x,y))=(\lambda^\alpha x,\lambda^\beta y)$. Then, by definition of the motivic measure
	$\mes(X_{\eps,\alpha,\beta}^{=}) = \mathbb L^{-\alpha -\beta}[(f_\gamma:\Gm ^2\setminus (f_\gamma=0) \to \Gm,\sigma_{\alpha,\beta})]$ in $\mgg$.
	Then, using Remark \ref{notation-identification-action} we have formula \ref{eqa}, with 
	\begin{equation} 
		R_{\gamma,\eps,\omega}^{\delta}(T)=\sum_{n\geq 1}
		\sum_{\text{\tiny{$\begin{array}{c}(\alpha,\beta)\in C_{\eps,\gamma}^{\delta,=}\cap \left(\mathbb N^{*}\right)^2 \\ n=\eps \m(\alpha,\beta)\end{array}$}}} 
		\mathbb L^{-\nu\alpha -\beta} T^{n}.
	\end{equation}
	The {proof of Proposition \ref{lem:zeta=}} follows from Lemma \ref{lemmerationalitedescones} using the following description of the cone $C_{\eps,\gamma}^{\delta}$. 
	\begin{itemize}
		\item  {Assume $\gamma$ is a zero dimensional face equal to $(a,b)$ with $b>0$.}
			The associated cone $C_\gamma$ has dimension 2. It can be
			described as $C_\gamma =\mathbb R_{>0}\omega_1 + \mathbb R_{>0} \omega_2$ where $\omega_1$ and $\omega_2$ are the primitive
			normal vectors of adjacent faces $\gamma_1$ and $\gamma_2$ of $\gamma$, elements of $\mathbb N^* \times \mathbb N$, because $\gamma$ is not horizontal. {For any $(\alpha,\beta)$ in $C_\gamma$, we have $\m(\alpha,\beta)=(\gamma \mid (\alpha,\beta))$.}
			\begin{itemize}

				\item  Assume $\eps(\omega_1 \mid \gamma)>0$ and $\eps(\omega_2 \mid \gamma)>0$, namely $\gamma \in \N(f)^\eps$. 
					Let $\delta$ be a positive real number satisfying
					$$\delta \geq \max\left(\frac{((1,0)\mid \omega_1)}{\eps (\gamma\mid \omega_1)} , \frac{((1,0)\mid \omega_2)}{\eps (\gamma\mid \omega_2)} \right).	$$
					In that case the cone $C_{\eps,\gamma}^{\delta,=}$ is equal to $C_\gamma$.
					Indeed, let $(\alpha,\beta)$ be in $C_\gamma$. 
					There are real numbers $\lambda$ and $\mu$ {in $\mathbb R_{>0}$} such that 
					$(\alpha, \beta) = \lambda \omega_1 + \mu \omega_2$. 
					Then, the inequalities defining the cone $C_{\eps,\gamma}^{\delta,=}$ are satisfied:
					$$\text{$\eps\m(\alpha,\beta) = \eps((\alpha,\beta)\mid \gamma) > 0$ and 
					$\delta\eps\m(\alpha,\beta) = \eps\delta(\lambda (\gamma\mid \omega_1) + \mu(\gamma\mid \omega_2)) \geq ((1,0)\mid (\alpha,\beta)) = \alpha$.}
					$$
					Then, applying Lemma \ref{lemmerationalitedescones} we obtain 
					\begin{equation} \label{Rgammacas2} 
						R_{\gamma,\eps,\omega}^{\delta}(T)=\sum_{(\alpha_0,\beta_0)\in \mathcal P_\gamma}
						\frac{\mathbb L^{-((\nu,1)\mid (\alpha_0 , \beta_0))}T^{\eps((a,b)\mid(\alpha_0,\beta_0))}}
						{(1-\mathbb L^{-((\nu,1)\mid \omega_1)}T^{\eps((a,b)\mid \omega_1)})
						(1-\mathbb L^{-((\nu,1)\mid \omega_2)}T^{\eps((a,b)\mid \omega_2)})}
					\end{equation}
					        with $\mathcal P_\gamma = (]0,1]\omega_1 + ]0,1]\omega_2)\cap \mathbb N^2$,
						and $\lim_{T\ra \infty}R_{\gamma,\eps,\omega}^{\delta}(T)=1$.

				\item Assume $\eps(\gamma\mid \omega_2)>0$ and $(\gamma\mid \omega_1)=0$. 
					The case $\eps(\gamma\mid \omega_1)>0$ and $(\gamma\mid \omega_2)=0$ is symmetric{al}.
					By definition we have $C_{\eps,\gamma}^{\delta,=} = C_\gamma \cap L_{\delta}^{-1}(\mathbb R_{\geq 0})$
					with $L_{\delta}:(\alpha,\beta)\mapsto \eps\delta(\gamma\mid (\alpha,\beta))-\alpha.$
					For any $\delta > \frac{(\omega_2 \mid (1,0))}{\eps(\gamma\mid \omega_2)}$, we have
					{$$L_{\delta}(\omega_1)= -(\omega_1 \mid (1,0)) <0\: \text{and} \: L_\delta(\omega_2)>0.$$}
					Let $\omega^{\delta}=\frac{L_\delta(\omega_2)}{(\omega_1\mid(1,0))}\omega_1 + \omega_2$ element of $C_\gamma$.
					As $L_\delta(\omega^\delta)=0$ and $L_{\delta}^{-1}(\mathbb R_{\geq 0})$ is a half-plane, 
					we can conclude $C_{\eps,\gamma}^{\delta,=}=\mathbb R_{\geq 0}\omega_2 + \mathbb R_{>0}\omega^\delta.$
					Then, applying Lemma \ref{lemmerationalitedescones} we obtain
					\begin{equation} \label{Rgammacas3}
						R_{\gamma,\eps,\omega}^{\delta}(T)=\sum_{(\alpha_0,\beta_0)\in \mathcal P_\gamma^{\delta}}
						\frac{\mathbb L^{-\nu \alpha_0 - \beta_0}T^{\eps((a,b)\mid(\alpha_0,\beta_0))}}
						{(1-\mathbb L^{-((\nu,1)\mid \omega^{\delta})}T^{\eps((a,b)\mid \omega^{\delta})})
						(1-\mathbb L^{-((\nu,1)\mid \omega_2)}T^{\eps((a,b)\mid \omega_2)})} 
						+ \frac{\mathbb L^{-((\nu,1)\mid {\omega^\delta})}T^{\eps((a,b)\mid {\omega^\delta})}}
						{1-\mathbb L^{-((\nu,1)\mid {\omega^\delta})}T^{\eps((a,b)\mid {\omega^\delta})}}
					\end{equation}
					with $\mathcal P_\gamma^{\delta}= (]0,1]\omega^\delta + ]0,1]\omega_2)\cap \mathbb N^2$ 
					and $\lim R_{\gamma,\eps}(T) = 0$.

				\item Assume $\eps(\gamma\mid \omega_2)>0$ and $\eps(\gamma\mid \omega_1)<0$. 
					The case $\eps(\gamma\mid \omega_1)>0$ and $\eps(\gamma\mid \omega_2)<0$ is symmetric{al}.
					By convexity, annulling the linear form $(\gamma\mid .)$, the vector $\omega=(b,-a)$, 
					is an element of the cone $C_\gamma$. We observe that any element $u$ in 
					$\mathbb R_{>0}\omega + \mathbb R_{\geq 0}\omega_1 \subset C_\gamma$ does not belong to 
					$C_{\eps,\gamma}^{\delta,=}$, because $\eps \m(u)<0$. Then, we conclude that 
					$$C_{\eps,\gamma}^{\delta,=} = 
					\{(\alpha,\beta) \in \mathbb R_{> 0}\omega_2 + \mathbb R_{>0}\omega \mid 0< \alpha < \eps \m(\alpha,\beta)\delta,\:0<\beta\}.$$ 
					As $\eps(\omega_2 \mid \gamma)>0$ and $\eps(\omega\mid \gamma)=0$ we can apply the previous point.

				\item Assume $\eps(\gamma\mid \omega_2)\leq 0$ and $\eps(\gamma\mid \omega_1)\leq0$.
				{Then $C_{\eps,\gamma}^{\delta,=}$ is empty, because for any $(\alpha,\beta)$ in $C_\gamma$, $\eps\m(\alpha,\beta) \leq 0$.}
				
			\end{itemize}
		\item {Assume $\gamma$ is a one dimensional compact face} supported by a line with
			equation $mp+nq=N$ with $(p,q)$ non negative integers and $gcd(p,q)=1$.
			In particular in that case we have $C_\gamma = \mathbb R_{>0}(p,q)$. 
			Let $(a,b)$ be a point in $\gamma$ with integral coordinates. 
			Then, for any $(kp,kq)$ in $C_\gamma$ we have $\m(kp,kq) = ((a,b)\mid (kp,kq)) = kN$ 
			and 
			$$C_{\eps,\gamma}^{\delta,=} = \{(kp,kq)\in C_\gamma\mid 1\leq kp \leq \eps kN \delta \}.$$
			In particular, under the condition
			$\eps N\leq 0$ the set $C_{\eps,\gamma}^{\delta,=}$ is empty, otherwise for any $\delta \geq \frac{p}{\eps N}$, the set $C_{\eps,\gamma}^{\delta,=}$ is equal to $C_\gamma$. Applying Lemma \ref{lemmerationalitedescones} we obtain the following expression of
			$R_{\gamma,\eps,\omega}^{\delta}(T)$ implying its convergence to $-1$
			\begin{equation} \label{Rgammacas1} 
				R_{\gamma,\eps,\omega}^{\delta}(T) = \frac{\mathbb L^{-(\nu p +q)}T^{\eps N}}{1-\mathbb L^{-(\nu p +q)}T^{\eps N}}.
			\end{equation}
				\end{itemize}
\end{proof}

\paragraph{Rationality and limit of $Z_{\eps,\gamma,\omega}^{\delta,<}(T)$ for a one-dimensional face $\gamma$} \label{sec:rationalite-limite-Zgamma<}

\begin{rem} \label{nulliteZf}  Let $\gamma$ be a one dimensional face in $\N(f)$ and assume ``$\eps=-1$''. The face $\gamma$ is supported by a line of equation $mp+nq=N$
	with $(p,q)$ in $(\mathbb N^*)^2$ and $gcd(p,q)=1$. If $N\geq 0$, namely $\gamma$ does not belong to $\N(f)^{-}$ (Remark \ref{Nfepsfacedim1}), then the cone
        $C_{-,\gamma}^{\delta,<} = 
	\{(n,kp,kq)\in \mathbb R_{>0}\times C_\gamma \mid kN< -n,\: 0<kp\leq n\delta\}$ 
	is empty and $Z^{\delta,<}_{-,\gamma,\omega}(T)=0$. This is the reason why the second summation in {equations (\ref{resS1/fb0}) and (\ref{res1/f}) is only on} $\N(f)^{-}$.
\end{rem}

\begin{rem} By additivity of the measure, by equation (\ref{sommme<}) and Definition \ref{def:zero-partie-initiale} and Remark \ref{rem:sommefinie}, for any one dimensional face $\gamma$ in $\N(f)$,
the motivic zeta function $Z_{\eps,\gamma}^{\delta,<}(T)$ has the following
decomposition 
$$Z_{\eps,\gamma,\omega}^{\delta,<}(T) = \sum_{\mu \in R_{\gamma}}
\sum_{(n,\alpha,\beta)\in C_{\eps,\gamma}^{\delta,<}\cap \mathbb N^3} \mathbb L^{-(\nu-1)\alpha}\mes(X_{(n,\alpha,\beta),\mu}) T^n
$$ 
where
$X_{(n,\alpha,\beta),\mu} = 
\left\{ \begin{array}{c|l}
	\left(x(t),y(t)\right) \in \mathcal L(\mathbb A^2_{\k})   
	& \begin{array}{l} 
		\ac(y(t))^p = \mu \ac(x(t))^q, \ord x(t)=\alpha,\: \ord y(t)=\beta,\: \ord f^{\eps}(x(t),y(t))=n
	\end{array} 
\end{array} 
	  \right\}.
$
\end{rem}
\begin{prop} \label{prop:mesXnmu} 
	Let $\gamma$ be a one dimensional face of $\N(f)$, let $\mu$ be a root of $f_\gamma$ and ${\sigma_{(p,q,\mu)}}$ the induced Newton transform 
	(Definition \ref{defn:Newton-map-local}). 
	For any $k>0$, denoting $\alpha = pk$ and $\beta = qk$, we have
	$$\mes(X_{(n,\alpha,\beta),\mu})=\mathbb L^{-(p+q-1)k}
	\mes(Y_{(n,k)}^{{\sigma_{(p,q,\mu)}}}) $$
	with 
	$ Y_{(n,k)}^{{\sigma_{(p,q,\mu)}}} = 
	\left\{ 
		\begin{array}{c|c}
			\left(v(t),w(t)\right) \in \mathcal L(\mathbb A^2_k) &
			\begin{array}{l} 
				\ord v(t)=k,\:\ord w(t)>0,\: \ord f^{\eps}_{{\sigma_{(p,q,\mu)}}}(v(t),w(t)) = n 
			\end{array}
		\end{array} 
	\right\}.
	$ 
\end{prop}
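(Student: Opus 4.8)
The plan is to realise $X_{(n,\alpha,\beta),\mu}$ as the image of $Y_{(n,k)}^{\sigma_{(p,q,\mu)}}$ under the change of variables on arc spaces induced by the Newton transformation, and to produce the factor $\mathbb L^{-(p+q-1)k}$ from the order of the associated Jacobian. Write $\sigma:=\sigma_{(p,q,\mu)}$, which by Definition \ref{defn:Newton-map-local} is induced by the morphism $\mathbb A^2\to\mathbb A^2$, $(v,w)\mapsto(\mu^{q'}v^p,\,v^q(w+\mu^{p'}))$ with $pp'-qq'=1$, so that $f^{\eps}_{\sigma}=f^{\eps}\circ\sigma$ as formal series. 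I would introduce the map on arcs $\Psi\colon(v(t),w(t))\mapsto(x(t),y(t))$ with $x(t)=\mu^{q'}v(t)^p$ and $y(t)=v(t)^q(w(t)+\mu^{p'})$, and first prove that $\Psi$ restricts to a bijection from $Y_{(n,k)}^{\sigma}$ onto $X_{(n,\alpha,\beta),\mu}$ for $\alpha=pk$, $\beta=qk$. The direct inclusion is a bookkeeping of valuations: if $\ord v(t)=k$, $\ord w(t)>0$ and $\ord f^{\eps}_{\sigma}(v(t),w(t))=n$, then $\ord x(t)=pk=\alpha$, $\ac(y(t))=\mu^{p'}\ac(v(t))^q$ so $\ord y(t)=qk=\beta$, and $\ac(y(t))^p=\mu^{pp'}\ac(v(t))^{pq}=\mu\,\ac(x(t))^q$ (using $\ac(x(t))^q=\mu^{qq'}\ac(v(t))^{pq}$ and $pp'-qq'=1$), while $\ord f^{\eps}(x(t),y(t))=\ord f^{\eps}_{\sigma}(v(t),w(t))=n$; hence $\Psi(v(t),w(t))\in X_{(n,\alpha,\beta),\mu}$.

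For the converse inclusion I would start from $(x(t),y(t))\in X_{(n,\alpha,\beta),\mu}$: since $\ord x(t)=pk$ and $\operatorname{char}\k=0$, the series $x(t)/\mu^{q'}$ has exactly $p$ $p$-th roots of order $k$, pairwise differing by a factor in $\mu_p$. For any such root $v_0(t)$, using $\ac(v_0(t))^p=\ac(x(t))/\mu^{q'}$ and $pp'-qq'=1$ one gets $\bigl(\ac(y(t))/(\mu^{p'}\ac(v_0(t))^q)\bigr)^p=\ac(y(t))^p/(\mu\,\ac(x(t))^q)=1$ by the angular condition defining $X_{(n,\alpha,\beta),\mu}$, so this quotient lies in $\mu_p$; because $\gcd(p,q)=1$ the map $\zeta\mapsto\zeta^q$ is a bijection of $\mu_p$, so exactly one of the $p$ roots $v(t)$ satisfies $\ac(y(t)/v(t)^q)=\mu^{p'}$. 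Setting $w(t):=y(t)/v(t)^q-\mu^{p'}$ then yields $\ord w(t)>0$ and $\ord f^{\eps}_{\sigma}(v(t),w(t))=\ord f^{\eps}(x(t),y(t))=n$, so $(v(t),w(t))\in Y_{(n,k)}^{\sigma}$ with $\Psi(v(t),w(t))=(x(t),y(t))$; injectivity is clear since this recipe reconstructs $(v(t),w(t))$ from $(x(t),y(t))$. Finally $\Psi$ is equivariant for the $\Gm$-actions $\lambda\cdot\varphi(t)=\varphi(\lambda t)$ and, as $f^{\eps}_{\sigma}=f^{\eps}\circ\sigma$, intertwines the structural maps $\ac\circ f^{\eps}_{\sigma}$ and $\ac\circ f^{\eps}$, so the bijection is an isomorphism of $\Gm$-varieties over $\Gm$.

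It then remains to compare motivic measures through $\Psi$. The Jacobian of $\sigma$ is $\det\begin{pmatrix} p\mu^{q'}v^{p-1} & 0 \\ qv^{q-1}(w+\mu^{p'}) & v^q\end{pmatrix}=p\mu^{q'}\,v^{p+q-1}$, so $\ord\operatorname{Jac}(\sigma)(v(t),w(t))=(p+q-1)k$ is constant on $Y_{(n,k)}^{\sigma}$; moreover $Y_{(n,k)}^{\sigma}\subset\mathcal L(\{v\neq0\})$, and $\sigma$ is étale on $\{v\neq0\}$ since its Jacobian vanishes only along $v=0$. Applying the change of variables formula for motivic integrals (as in \cite{DenLoe99a}, in the form used for Newton transformations in \cite{Veys01}, \cite{ArtCasLue05a}, \cite{CassouVeys13}) to the stable set $Y_{(n,k)}^{\sigma}$, on which $\Psi$ is injective with image $X_{(n,\alpha,\beta),\mu}$ and on which $\ord\operatorname{Jac}(\sigma)$ is the constant $(p+q-1)k$, gives $\mes(X_{(n,\alpha,\beta),\mu})=\mathbb L^{-(p+q-1)k}\,\mes(Y_{(n,k)}^{\sigma})$, as claimed. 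The part requiring the most care is the bijectivity of $\Psi$ — precisely, that the angular-component condition combined with $\gcd(p,q)=1$ singles out a unique $p$-th root — together with checking that the hypotheses of the transformation rule apply on the relevant stable pieces, so that the constant order of the Jacobian contributes exactly the uniform factor $\mathbb L^{-(p+q-1)k}$; the rest is routine manipulation of valuations and angular components under $\sigma$.
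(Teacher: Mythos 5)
Your argument is correct, and its first half — the explicit arc-level bijection induced by $\sigma_{(p,q,\mu)}$, with the angular condition $\ac(y)^p=\mu\,\ac(x)^q$ together with $\gcd(p,q)=1$ (Hensel plus a choice of $p$-th root of unity) singling out the unique preimage with $\ord w>0$ — is exactly the mechanism of the paper's proof, which is carried out for the analogous Proposition \ref{prop:mesXnmu-infini-infini} and invoked here by reference. Where you genuinely diverge is the measure-comparison step: the paper never invokes the change-of-variables theorem, but instead works at a fixed jet level $L$, introduces auxiliary spaces $\overline{X}^{(L)}$, $\overline{Y}^{(L)}$ of ``unit'' jets that are affine bundles of ranks $\alpha+\beta$ and $k$ over the truncations of $X_{(n,\alpha,\beta),\mu}$ and $Y_{(n,k)}^{\sigma}$, and exhibits an explicit isomorphism $\Phi^{\sigma}$ between them, so that the factor $\mathbb L^{-\alpha-\beta+k}=\mathbb L^{-(p+q-1)k}$ drops out of the bundle ranks; you instead quote the Denef--Loeser transformation rule with the Jacobian $p\mu^{q'}v^{p+q-1}$ of constant order $(p+q-1)k$ on the cylinder. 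Your route is legitimate, but note two points of care: $\sigma$ is not birational (it is generically $p$-to-$1$ on $v\neq 0$), so the textbook proper-birational statement does not apply verbatim — you need the variant for a morphism that is étale off a divisor and injective on the given stable pieces, which is precisely what your proved bijection supplies, and which the paper's hands-on jet computation sidesteps entirely; and the phrase ``$Y_{(n,k)}^{\sigma}\subset\mathcal L(\{v\neq 0\})$'' is inaccurate as written, since these arcs have origin on $v=0$ — what you need (and what is true) is that they are not contained in $\{v=0\}$, so $\ord\operatorname{Jac}(\sigma)$ is finite and equal to $(p+q-1)k$ on all of $Y_{(n,k)}^{\sigma}$. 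With these adjustments the two proofs deliver the same identity; the paper's version is more self-contained, yours is shorter once the change-of-variables machinery is granted, and both are compatible with the $\Gm$-actions and the structural maps to $\Gm$, as you observe.
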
 
\begin{proof}
The proof is similar to that of \cite[Lemma 3.3]{CassouVeys13} (see also \cite[Proposition 6]{carai-antonio} or the
proof of Proposition \ref{prop:mesXnmu-infini-infini} below).
\end{proof}
\begin{rem} 
	Let $\gamma$ be a one dimensional face of $\N(f)$ supported by a line of equation 
	$ap+bq=N$. Let $\mu$ be a root of $f_\gamma$ and ${\sigma_{(p,q,\mu)}}$ the induced Newton
	transform. By Lemma \ref{lem:Newton-alg}, there is a polynomial $\tilde{f}_{{\sigma_{(p,q,\mu)}}}$ in
	$\k[v,w]$ such that
	$$f_{{\sigma_{(p,q,\mu)}}}(v,w)=v^{N}\tilde{f}_{{\sigma_{(p,q,\mu)}}}(v,w).$$ 
	In particular the Newton transform $f_{{\sigma_{(p,q,\mu)}}}$ satisfies the same type of conditions on $f$, {defined in subsection} \ref{setting}, and the motive 
	 $(S_{f^{\eps}_{{\sigma_{(p,q,\mu)}}}, \omega_{p,q,\mu}, v\neq 0})_{((0,0),0)}$ is well defined.
\end{rem}

\begin{prop}\label{prop:rationalityZ<} 
	Let $\gamma$ be a one dimensional face of $\N(f)$ supported by a line of equation $ap+bq=N$ with $p$ and $q$ in $(\mathbb N^*)^2$ and coprime.
	{With Notations \ref{notationsactionsformdiff}}, for $\delta$ large enough, the motivic zeta function
	$Z_{\eps,\gamma,\omega}^{\delta,<}$ can be decomposed as 
	\begin{equation} \label{eqZdelta<} 
		Z^{\delta,<}_{\eps,\gamma,\omega}(T) = 
		\sum_{\mu \in R_\gamma} (Z_{f_{{\sigma_{(p,q,\mu)}}}^{\eps}, \omega_{p,q,\mu}, v\neq 0}^{\delta/p})_{((0,0),0)},
	\end{equation}
	In particular we have,
	$$-\lim_{T \ra \infty}	
	(
	Z_{f^{\eps}_{{\sigma_{(p,q,\mu)}}},\omega_{p,q,\mu}, v\neq 0}^{\delta/p}(T)
	)_{((0,0),0)} = 
	(S_{f^{\eps}_{{\sigma_{(p,q,\mu)}}},v\neq 0})_{((0,0),0)} \in \mgg.
	$$
	Furthermore, by Remark \ref{nulliteZf}, if ``$\eps=-$'' then $Z_{-,\gamma,\omega}^{\delta,<}=0$ for all face $\gamma$ not in $\N(f)^{-}$.
\end{prop}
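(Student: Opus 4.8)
The plan is to transform, root by root, the decomposition of $Z_{\eps,\gamma,\omega}^{\delta,<}(T)$ recalled just before the statement, namely $Z_{\eps,\gamma,\omega}^{\delta,<}(T)=\sum_{\mu\in R_\gamma}\sum_{(n,\alpha,\beta)\in C_{\eps,\gamma}^{\delta,<}\cap\mathbb N^3}\mathbb L^{-(\nu-1)\alpha}\mes(X_{(n,\alpha,\beta),\mu})T^n$, into the announced sum of motivic zeta functions of the Newton transforms $f_{\sigma_{(p,q,\mu)}}$. Since $\gamma$ is one dimensional with dual cone $C_\gamma=\mathbb R_{>0}(p,q)$, any $(\alpha,\beta)$ in $C_\gamma\cap\mathbb N^2$ is of the form $(pk,qk)$ for a unique integer $k\geq1$, so the inner sum runs over pairs $(n,k)$ with $n\geq1$, $k\geq1$ and $(n,pk,qk)\in C_{\eps,\gamma}^{\delta,<}$. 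First I would feed in Proposition \ref{prop:mesXnmu}, which gives $\mes(X_{(n,pk,qk),\mu})=\mathbb L^{-(p+q-1)k}\mes(Y_{(n,k)}^{\sigma_{(p,q,\mu)}})$ in $\mgg$, structural maps to $\Gm$ included (the two maps agree since $f^\eps$ and $f_{\sigma_{(p,q,\mu)}}^\eps$ take the same values on corresponding arcs). Collecting exponents of $\mathbb L$, the coefficient $\mathbb L^{-(\nu-1)pk}\mathbb L^{-(p+q-1)k}$ becomes $\mathbb L^{-(\nu p+q-1)k}$, precisely the weight attached to the differential $\omega_{p,q}(v,w)=v^{\nu p+q-1}\,dv\wedge dw$ of Notations \ref{notationsactionsformdiff}; here $\omega_{p,q,\mu}$ denotes the form induced from $\omega$ by $\sigma_{(p,q,\mu)}$, a nonzero scalar multiple of $\omega_{p,q}$ with zero locus contained in $v=0$, so that by Remark \ref{rem:independanceenomega} the precise choice is immaterial.

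Next I would recognise the arc space $Y_{(n,k)}^{\sigma_{(p,q,\mu)}}$, together with its structural map $\ac\circ f_{\sigma_{(p,q,\mu)}}^\eps$ to $\Gm$, as the arc space appearing in formula (\ref{zeta}) for the function $f_{\sigma_{(p,q,\mu)}}^\eps$, the open set $v\neq0$, the form $\omega_{p,q,\mu}$ and the point $((0,0),0)$; this is legitimate because, by Lemma \ref{lem:Newton-alg}, $f_{\sigma_{(p,q,\mu)}}=v^{N}\tilde{f}_{\sigma_{(p,q,\mu)}}$ with $\tilde{f}_{\sigma_{(p,q,\mu)}}$ a polynomial not divisible by $v$, so $f_{\sigma_{(p,q,\mu)}}$ satisfies the standing hypotheses of subsection \ref{setting}. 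Under $x=\mu^{q'}x_1^{p}$ an arc with $\ord x_1=k$ has $\ord x=pk$, so the truncation inequality $\ord x\leq\delta\,\ord f^\eps$ defining $C_{\eps,\gamma}^{\delta,<}$ becomes $pk\leq n\delta$, i.e. $k\leq n(\delta/p)$, which is exactly the range of summation in $(Z_{f_{\sigma_{(p,q,\mu)}}^\eps,\omega_{p,q,\mu},v\neq0}^{\delta/p}(T))_{((0,0),0)}$. The only remaining mismatch is the inequality $\m(pk,qk)=kN<\eps n$ present in $C_{\eps,\gamma}^{\delta,<}$ but absent from the definition of that zeta function, and I would dispose of it by showing it is automatic whenever $Y_{(n,k)}^{\sigma_{(p,q,\mu)}}\neq\emptyset$: for an arc $(v(t),w(t))$ in that set one has $\eps n=\ord f_{\sigma_{(p,q,\mu)}}(v(t),w(t))=kN+\ord\tilde{f}_{\sigma_{(p,q,\mu)}}(v(t),w(t))$ (recall $f_{\sigma_{(p,q,\mu)}}^\eps$ has order $n>0$, so $\eps\,\ord f^\eps=\ord f$), and since $v(0)=w(0)=0$ while $\tilde{f}_{\sigma_{(p,q,\mu)}}(0,0)=0$ by Lemma \ref{lem:Newton-alg} in the case of a root, the last summand is at least $1$, whence $kN<\eps n$. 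Thus the two index sets coincide once the spurious empty terms are dropped, and for $\delta$ large enough this yields equality (\ref{eqZdelta<}); applying $-\lim_{T\ra\infty}$, with Lemma \ref{lem:rationalite-zeta-omega} for the existence and $(\delta/p)$-independence of the limit and Remark \ref{rem:independanceenomega} for its $\omega_{p,q,\mu}$-independence, gives the stated formula for $(S_{f_{\sigma_{(p,q,\mu)}}^\eps,v\neq0})_{((0,0),0)}$.

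Finally, when $\eps=-$ and $\gamma$ is not a face of $\N(f)^{-}$, i.e. $N\geq0$, the same identity reads $-n=kN+\ord\tilde{f}_{\sigma_{(p,q,\mu)}}(v(t),w(t))\geq1$ for any hypothetical arc, which is impossible; hence $Y_{(n,k)}^{\sigma_{(p,q,\mu)}}=\emptyset$ for all $n,k\geq1$, every summand vanishes, and we recover Remark \ref{nulliteZf}. The hard part will be exactly this bookkeeping of cones and index sets — checking that the description of $C_{\eps,\gamma}^{\delta,<}$ collapses onto the truncation condition $k\leq n(\delta/p)$ of the transform's zeta function once the now-redundant inequality $\m<\eps n$ has been absorbed — rather than the identification of the monomial $\Gm$-actions and angular components, which is already packaged as an equality in $\mgg$ by Proposition \ref{prop:mesXnmu}.
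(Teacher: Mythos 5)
Your proof is correct and follows essentially the same route as the paper's: you parametrise $C^{\delta,<}_{\eps,\gamma}$ by $(n,k)$ with $(\alpha,\beta)=(pk,qk)$, invoke Proposition \ref{prop:mesXnmu}, collect $\mathbb L^{-(\nu-1)pk}\mathbb L^{-(p+q-1)k}=\mathbb L^{-(\nu p+q-1)k}$, and identify the result with formula (\ref{zeta}) for $f^{\eps}_{\sigma_{(p,q,\mu)}}$ at truncation level $\delta/p$, concluding via Lemma \ref{lem:rationalite-zeta-omega} and Remark \ref{rem:independanceenomega}. Your explicit check that the inequality $kN<\eps n$ is automatic on non-empty arc spaces (via Lemma \ref{lem:Newton-alg}, since $\tilde f_{\sigma_{(p,q,\mu)}}$ vanishes at the origin when $\mu$ is a root), and the resulting rederivation of Remark \ref{nulliteZf}, only make explicit a step the paper leaves implicit, so the two arguments coincide.
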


\begin{proof} Let $\gamma$ be a one dimensional face of $\N(f)$.
	For any element $(\alpha,\beta)$ in $C_\gamma$ there is $k>0$ such that
	$\alpha =pk$ and $\beta = qk$. 
	Note that $\m(\alpha,\beta) = \m(pk,qk)=k{N}$.
	By Notations \ref{notationsconezeta}, we remark that the cone
	$$ C_{\eps,\gamma}^{\delta,<} \cap \mathbb N^{3} =
	\left\{ 
		\begin{array}{c|l} 
		   (n,\alpha,\beta) \in \mathbb R_{>0} \times C_\gamma & 
		   \begin{array}{l} 
			\m(\alpha,\beta) < \eps n, \:	1 \leq \alpha \leq n\delta 
		  \end{array}
	        \end{array} 
        \right\} \cap \mathbb N^3,
        $$ 
is in bijection with the set $\overline{C}_{\eps,\gamma}^{\delta,<} \cap \mathbb N^2$ with
$\overline{C}_{\eps,\gamma}^{\delta,<} = 
\left\{ 
	\begin{array}{c|l} 
		(n,k) \in \mathbb R_{>0}^2 & 
		\begin{array}{l} 
			\m(pk,qk) < \eps n, \: 	1 \leq pk \leq n\delta 
		\end{array} 
	\end{array}
\right\}.
$
Using this notation we prove equality {(\ref{eqZdelta<})}. 
For any integer $n$, we consider 
$$
\text{$C_{\eps,\gamma,n}^{\delta,<}=\left\{(\alpha,\beta)\in (\mathbb R_{>0})^2 \mid (n,\alpha,\beta) \in C_{\eps,\gamma}^{\delta,<}\right\}$ and 
$\overline{C}_{\eps,\gamma,n}^{\delta,<}=\left\{k\in \mathbb R_{>0} \mid (n,k) \in \overline{C}_{\eps,\gamma}^{\delta,<}\right\}.$}
$$
\noindent Using Proposition \ref{prop:mesXnmu} we have 
$$ \begin{array}{ccl} 
	Z^{\delta,<}_{\eps,\gamma,\omega}(T) & = & 
	\sum_{\mu \in R_\gamma}  
	\sum_{n\geq 1} 
	\sum_{(\alpha,\beta) \in C_{\eps,\gamma,n}^{\delta,<}\cap \mathbb N^2}
	\mathbb L^{-(\nu-1)pk}\mes(X_{(n,\alpha,\beta),\mu})T^n \\ 
		& = & \sum_{\mu \in R_\gamma} 
		      \sum_{n\geq 1} 
		      \sum_{k \in {\overline C}_{\eps,\gamma,n}^{\delta,<} {\cap \mathbb N}}
		      \mathbb L^{-(\nu-1)pk} \mathbb L^{-(p+q-1)k} \mes (Y_{(n,k)}^{{\sigma_{(p,q,\mu)}}})T^{n},
    \end{array} 
$$ 
but using the definition of the zeta function {in formula (\ref{zeta})} we have  
	$$ \begin{array}{ccl}
		(
		Z_{f^{\eps}_{{\sigma_{(p,q,\mu)}}},\omega_{p,q,\mu},v\neq 0}^{\delta/p}(T)
		)_{((0,0),0)}& = & 
		\sum_{n \geq 1} (\sum_{n\delta/p \geq k \geq 1}  \mathbb L^{-(\nu p+q-1)k}
		\mes(Y_{n,k}^{\delta/p,{\sigma_{(p,q,\mu)}}}))T^{n} \\  
		& = & \sum_{n\geq 1} 
		      \sum_{k \in {\overline C}_{\eps,\gamma,n}^{\delta,<}}
		\mathbb L^{-(\nu p+q-1)k}
		\mes({Y_{(n,k)}^{{\sigma_{(p,q,\mu)}}}})T^{n}
	\end{array}
	$$ 
	with  
	$Y_{n,k}^{\delta/p, {\sigma_{(p,q,\mu)}}} = 
	\left\{
		\begin{array}{c|c} 
			\left(v(t),w(t)\right) \in \mathcal L(\mathbb A^2_k) &
			\begin{array}{l} 
				(v(0),w(0))=0, \:
				\ord v(t) = k \leq n\delta /p \\
				\ord f^{\eps}_{{\sigma_{(p,q,\mu)}}} (v(t),w(t)) = n, \:
				\ord \omega_{p,q,\mu}(v(t),w(t)) = (\nu p+q-1)k 
			\end{array}
		\end{array} 
	\right\}.
	$\\
	In particular we can conclude thanks to  section \ref{section:mot-zeta-omega},
	for $\delta$ large enough, that the motivic zeta function is rational and
	has a limit independent on $\delta$ when $T$ goes to infinity
	$$-\lim_{T \ra \infty}	
	(
	Z_{f^{\eps}_{{\sigma_{(p,q,\mu)}}},\omega_{p,q,\mu}, v\neq 0}^{\delta/p}(T)
	)_{((0,0),0)} = 
	(S_{f^{\eps}_{{\sigma_{(p,q,\mu)}}}, v\neq 0})_{((0,0),0)} \in \mgg.
	$$
\end{proof}

{\paragraph{Base case $f(x,y)=U(x,y)x^{-M}y^m$}
\begin{example} \label{ex:monomial}
	Let $f(x,y)= U(x,y)x^{-M}y^{m}$ with $M$ {in} $\mathbb Z$, $m$ {in} $\mathbb N$ and $U$ {in} $\k[[x,y]]$ with $U(0,0)\neq 0$.
	The Newton polygon $\N(f)$ has only one face $(-M,m)$ denoted by $\gamma_h$.
              \begin{itemize}
		\item If $m=0$, then 
			\begin{itemize}
				\item if $-\eps M \leq 0$, then for any $\delta \geq 1$, $Z^\delta_{\eps,\gamma_h, \omega}(T)=0$ 
					{and $\left(S_{f,x\neq 0} \right)_{((0,0),0)}=0$}
				\item if $-\eps M >0$, then there is $\delta_0>0$ such that for any $\delta\geq \delta_0$,
				       $Z^\delta_{\eps,\gamma_h, \omega}(T)$ {is computed in formula (\ref{gamma=(a,0)})} and 
					\begin{equation} \label{cam0}
						{\left(S_{f^{\eps},x\neq 0} \right)_{((0,0),0)}} = [x^{-\eps M}:\Gm \ra \Gm, \sigma_{\Gm}]
				        \end{equation}
					where $\sigma_{\Gm}$ is the action by translation of $\Gm$ on $\Gm$.
			\end{itemize}
		\item If $m\neq 0$ then 
			$$Z^\delta_{\eps,\gamma_h, \omega}(T) = [(x^{-M}y^m)^{\eps}:\Gm^2\to \Gm, {\sigma_{\Gm^2}}]  R_{\gamma_h,\eps,\omega}^{\delta}(T),$$
			\begin{itemize}
				\item if ``$\eps=+$'' and $-M>0$ then $R_{\gamma_h,\eps,\omega}^{\delta}(T)$ is computed in formula (\ref{Rgamma1}) and 
					$${\left(S_{f,x\neq 0} \right)_{((0,0),0)} = -[(x^{-M}y^m)^{\eps}:\Gm^2\to \Gm, {\sigma_{\Gm^2}}]},$$
			        \item if ``$\eps=+$'' and $-M\leq 0$ then $R_{\gamma_h,\eps,\omega}^{\delta}(T)$ is computed in formula (\ref{Rgamma2}) and 
					{$\left(S_{f,x\neq 0} \right)_{((0,0),0)}=0$},
				\item if ``$\eps=-$'' and $-M\geq 0$ then $R_{\gamma_h,\eps,\omega}^{\delta}(T)=0$ and {$\left(S_{1/f,x\neq 0} \right)_{((0,0),0)}=0$,}
				\item if ``$\eps=-$'' and $-M<0$ then $R_{\gamma_h,\eps,\omega}^{\delta}(T)$ is computed in formula (\ref{Rgamma3}) and 
					{$\left(S_{1/f,x\neq 0} \right)_{((0,0),0)}=0$.}
			\end{itemize}
	\end{itemize}
\end{example}
\begin{proof}  As $U$ is a unit, as all the arcs $(x(t),y(t))$ used in the computation of the motivic Milnor fiber at the origin satisfy $(x(0),y(0))=(0,0)$, we can assume  $U(x,y)=1$. As $f$ is a monomial its Newton polygon $\N(f)$ has only one face, the horizontal face $\gamma_h=(-M,{m})$ and the proof follows immediately from Proposition \ref{ex:cas-x^N}.
\end{proof}
}
\paragraph{Base case $f(x,y)=U(x,y)x^{-M}(y-\mu x^q+g(x,y))^m$} \label{subsection:basecase}

\begin{example} \label{ex:casdebase-f}
	Let $f(x,y)=U(x,y)x^{-M}(y-\mu x^q+g(x,y))^m$ with {$\mu$ in $\Gm$}, $M$ {in} $\mathbb Z$, $q$ {in} $\mathbb N$, $m$ {in} $\mathbb N^{*}$, 
        $U$ {in} $\k[[x,y]]$ with $U(0,0)\neq 0$ and $g(x,y)=\sum_{a+bq>q} c_{a,b}x^ay^b$ in $\k[x,y]$. We denote by $\gamma$ the one dimensional compact face.
	 Let $\nu \in \mathbb N_{\geq 1}$ and $\omega$ the associated differential form in Notations \ref{notations:Xeps-omega}.
	Then, there is $\delta_0>0$, such that for any $\delta \geq \delta_0$, we have 
         $$ \begin{array}{ccl}
					\left(Z_{f^\eps,\omega,x\neq 0}^{\delta}(T)\right)_{((0,0),0)} & = & 
					          \left[x^{\eps(-M+mq)}:\Gm \to \Gm, \sigma_{\Gm}\right] R_{(-M+mq,0),\eps,\omega}^{\delta}(T) \\
						  & + & \left[x^{-\eps M}(y-\mu x^q)^{\eps m}:\Gm^2 \setminus (y=\mu x^q) \to \Gm, \sigma_{ {\gamma} }\right] R_{\gamma,\eps,\omega}^{\delta}(T)\\
					      & + & 
					      \left[x^{-\eps M}y^{\eps m}:\Gm^2 \to \Gm,\sigma_{\Gm^2}\right] R_{(-M,m),\eps,\omega}^{\delta}(T)\\
					      & + & \left[x^{-\eps M}\xi^{\eps m} : ((y=\mu x^q)\cap \Gm^2) \times \Gm \to \Gm,\sigma_{\Gm^3}\right] S^{\delta}_{\omega}(T)\\
					    \end{array}
	 $$
	 where 
	 $\sigma_{\Gm^3}$ is the action $\sigma_{\Gm^3}(\lambda,(x,y,\xi)) = (\lambda x, \lambda y, \lambda \xi)$ and
	 $R_{(-M+mq,0),\eps,\omega}^{\delta}$, $R_{\gamma,\eps,\omega}^{\delta}$, $ R_{(-M,m),\eps,\omega}^{\delta}$, 
	 and $S^{\delta}_{\omega}(T)$  are rational functions defined in Propositions \ref{ex:cas-x^N} and \ref{lem:zeta=} {and depending on the context, formulas (\ref{eqSdelta1}), (\ref{eqSdelta2}), (\ref{eqSdelta3}) and (\ref{eqSdelta4})}.
         
		Furthermore,
		\begin{itemize}
			\item  If {$-M> 0$} then 
				\begin{equation}\label{eq1}
					{\left(S_{f,x\neq 0} \right)_{((0,0),0)} = - [x^{-M}y^m:\Gm^2 \to \Gm,\sigma_{\Gm^2}] \: \text{and} \: 
					\left(S_{1/f,x\neq 0} \right)_{((0,0),0)} = 0.}
				\end{equation}
			\item If {$-M\leq 0$} then 
				\begin{equation}\label{eq2}
					{\left(S_{f,x\neq 0} \right)_{((0,0),0)} = 0 \: \text{and} \: 
					\left(S_{1/f,x\neq 0} \right)_{((0,0),0)} = 0.}
				\end{equation}
		\end{itemize}
\end{example}

\begin{proof}
	The proof is similar to \cite[Example 3]{carai-antonio}. We start the proof by some preliminary remarks. 
	\begin{itemize}
		\item The dual cone to the face $x^{-M}y^m$ is $\mathbb R_{>0}(1,0) + \mathbb R_{>0}(1,q)$. Thus, the face $x^{-M}y^m$ belongs to 
			$\N(f)^+$ if and only if {$-M>0$}. Furthermore, if $M>0$ then the face $x^{-M}y^m$ belongs to $\N(f)^-$ if and only if $-M+mq{<} 0$. 
	        \item  As $U$ is a unit, as all the arcs $(x(t),y(t))$ used in the computation of the motivic Milnor fiber at the origin satisfy $(x(0),y(0))=(0,0)$, 
			we can assume in the following $U(x,y)=1$. We denote by $h(x,y)$ the polynomial $y-\mu x^q + g(x,y)$. We denote by $\gamma$ the compact one-dimensional face of the Newton polygon of $f$ with face polynomial $x^{-M}(y-\mu x^q)^m$. 

		\item The Newton polygon of $f$ has three face polynomials $x^{-M+qm}$, $x^{-M}y^{m}$ and $f_{\gamma}(x,y)=x^{-M}(y-\mu x^q)^m$. 
			Applying the decomposition formula (\ref{formula:decomposition}), we get for any $\delta \geq 1$: 
			$$Z_{f^\eps, \omega,x\neq 0}^{\delta}(T)= Z_{\eps,x^{qm-M},\omega}^{\delta}(T) + Z_{\eps,x^{-M}y^m,\omega}^{\delta}(T) + Z_{\eps,\gamma, \omega}^{\delta,=}(T)+ Z_{\eps,\gamma,\omega}^{\delta,<}(T).$$
			The rationality and the limit of $Z_{\eps,x^{qm-M},\omega}^{\delta}(T)$, $Z_{\eps,x^{-M}y^m,\omega}^{\delta}(T)$ and 
			$Z_{\eps,\gamma,\omega}^{\delta,=}(T)$ are given in Propositions \ref{ex:cas-x^N} and \ref{lem:zeta=}.
	\end{itemize}
	As $C_\gamma = \mathbb R_{>0}(1,q)$, the set $C_{\eps,\gamma}^{\delta,<}\cap \mathbb N^3$ (Notations \ref{notationsconezeta})  
	is in bijection with $C^{\delta} = \left\{ (n,k) \in (\mathbb N^{*})^2 \mid -Mk+qkm <\eps n,\: 0 < k \leq n\delta \right\}.$
	Then, by its definition in formula (\ref{sommme<}), we have
	$$Z_{\eps,\gamma,\omega}^{\delta,<}(T) = \sum_{(n,k)\in C^{\delta}} \mathbb L^{-(\nu-1)k}\mes(X_{n,(k,kq)})T^n$$
	with for any $(n,k)$ in $\mathbb N^2$
	$$X_{n,(k,kq)} = 
	\left\{ 
		\begin{array}{c|c}
			\varphi = (x(t),y(t)) \in \mathcal L(\mathbb A^2_\k) 
			& \begin{array}{l}
				\ord x(t)=k,\: \ord y(t)=qk,\: \ac y(t)= \mu \ac x(t)^q,\: \ord h(\varphi(t))=\frac{n+Mk\eps}{m\eps}
			\end{array}
		\end{array}
	\right\}.
	$$
	We introduce, for any $(k,l)$ in $(\mathbb N^{*})^2$ 
	$$X_{l,k}^{<}(h)=
	 \left\{ 
		\begin{array}{c|c}
			\varphi = (x(t),y(t)) \in \mathcal L(\mathbb A^2_\k) 
			& \begin{array}{l}
				\ord x(t)=k,\: \ord y(t)=qk,\:
				\ac y(t)= \mu \ac x(t)^q,\:
				\ord h(\varphi(t))=l
			\end{array}
		\end{array}
	 \right\}
	$$
	endowed with the map to $\Gm$ : $(x(t),y(t)) \mapsto \left((\ac x)^{-M}(\ac h(\varphi))^m\right)^\eps$.
        Remark that, if $X_{l,k}^{<}$ is not empty, then $l > kq$.\\
        We introduce
	$\tilde{C}^{\delta} =  
	\{(k,l) \in (\mathbb R_{>0})^2 \mid l>kq,\: \eps(-Mk + ml)>0,\: k\leq \eps(-Mk + ml)\delta\}.
	$
	{Writing $n=\eps(-Mk+ml)$,} we have 
	$$Z_{\eps,\gamma,\omega}^{\delta,<}(T)=\sum_{(k,l)\in \tilde{C}^{\delta} \cap (\mathbb N^*)^2} \mathbb L^{-(\nu-1)k}\mes(X_{l,k}^{<}(h)) T^{\eps(-Mk+ml)}.$$
	As $h$ is a polynomial Newton non-degenerate, we have (see for instance \cite[Lemme 2.1.1]{Gui02a}, \cite{GuiLoeMer06a} or \cite[Lemme 3.17]{Rai11})
	$$\mes(X_{l,k}^{<}(h)) = [(x^{-M}\xi^m)^{\eps}:(y=\mu x^q)\cap \Gm^2 \times \Gm \to \Gm, \sigma_{k,l}]\mathbb L^{-k -l}$$
	with $\sigma_{k,l}(\lambda,(x,y,\xi))=(\lambda^kx, \lambda^{kq}y,\lambda^{l}\xi)$. Using the construction of the Grothendieck ring $\mgg$, we obtain the equality 
	$$[(x^{-M}\xi^m)^{\eps}:(y=\mu x^q)\cap \Gm^2 \times \Gm \to \Gm, \sigma_{k,l}] = [(x^{-M}\xi^m)^{\eps}:(y=\mu x^q)\cap \Gm^2 \times \Gm \to \Gm, \sigma_{1,1}]$$
	(see \cite[Example 3]{carai-antonio} for details).
	Then we have, 
	$Z_{\eps,\gamma,\omega}^{\delta,<}(T) =  [(x^{-M}\xi^m)^{\eps}:(y=\mu x^q)\cap \Gm^2 \times \Gm \to \Gm, \sigma_{\Gm^{2}}] S^{\delta}_{\omega}(T) $
	with 
	$$S^{\delta}_{\omega}(T)= \sum_{n\geq 1}\sum_{\text{\tiny{$\begin{array}{c}(k,l)\in \tilde{C}^{\delta} \\  n = \eps(-Mk+ml) \end{array}$}}} 
	\mathbb L^{-\nu k -l}T^{n}.$$
  
	The rationality result is a consequence of Lemma \ref{lemmerationalitedescones}.
	
	\begin{itemize}
		\item If $M\leq 0$ {and ``$\eps = +$'' then} the assumption $-Mk+ml>0$ is always satisfied, 
			the condition $k\leq (-Mk+ml)\delta$ is also satisfied for any $\delta \geq 1$. Then we have 
			$\tilde{C}^{\delta} = \{(k,l)\in (\mathbb R_{>0})^2\mid kq<l\} = \mathbb R_{>0}(0,1) + \mathbb R_{>0}(1,q)$
			and by Lemma \ref{lemmerationalitedescones} denoting
		$\mathcal P =\left( ]0,1](0,1) + ]0,1](1,q) \right)\cap \mathbb N^2$ we have 
		\begin{equation} \label{eqSdelta1}
			S^{\delta}_{\omega}(T)=
			\sum_{(k_0,l_0) \in \mathcal P}
			\frac{\mathbb L^{-\nu k_0-l_0}T^{\eps(-Mk_0+ml_0)}}
			{(1-\mathbb L^{-1}T^{\eps m})(1-\mathbb L^{-\nu -q}T^{\eps(-M+mq)})} \underset{T \to \infty}{\to} 1.
		\end{equation}

	\item If $M>0$ and {``$\eps=+$''} remark that for any $\delta >0$, $\frac{m\delta}{1+M\delta} < \frac{m}{M}$ and
		$\tilde{C}^{\delta} = 
		\left\{\begin{array}{c|c} (k,l)\in (\mathbb R_{>0})^2 & 
		kq<l ,\:  k \leq \frac{lm\delta}{1+M\delta}\end{array}\right\}$
		Furthermore, $\frac{m\delta}{1+M\delta} \to \frac{m}{M}$ when $\delta \to +\infty$. Thus, 
		\begin{itemize}
		\item if $mq-M\leq 0$, namely $\gamma \notin \N(f)^+$ {then we have the inequalities}
				$m\delta/(1+M\delta) < m/M \leq 1/q$
				{implying}
				$$\tilde{C}^{\delta} = \left\{\begin{array}{c|c} (k,l) \in (\mathbb R_{>0})^2 & k \leq \frac{m\delta}{1+M\delta} l \end{array} \right\}
				= \mathbb R_{\geq 0}(0,1)+ \mathbb R_{>0}\omega^\delta$$
				with $\omega^\delta  = \left(1,(1+M\delta)/(m\delta)\right)$. By Lemma \ref{lemmerationalitedescones} denoting
			$\mathcal P =( ]0,1](0,1) + ]0,1]\omega^{\delta} )\cap \mathbb N^2$
			we have 	
			\begin{equation} \label{eqSdelta2}
				S^{\delta}_{\omega}(T)=
				\sum_{(k_0,l_0) \in \mathcal P}
				\frac{\mathbb L^{-\nu k_0 -l_0}T^{\eps(-M k_0 + ml_0)}}
				{(1-\mathbb L^{-1}T^{\eps m})(1-\mathbb L^{-((\nu,1)\mid \omega^\delta))}T^{(\eps(-M,m)\mid \omega^\delta)})} 
				+ \frac{\mathbb L^{-((\nu,1)\mid \omega^\delta))}T^{(\eps(-M,m)\mid \omega^\delta)}}
				{1-\mathbb L^{-((\nu,1)\mid \omega^\delta))}T^{(\eps(-M,m)\mid \omega^\delta)}} \underset{T \to \infty}{\to} 0.
			\end{equation}

		\item if $mq-M> 0$, namely $\gamma \in \N(f)^+$, then for $\delta$ large enough we have, the inequalities
			$1/q < m\delta/(1+M\delta) < m/M$
			inducing 
			$$\tilde{C}^{\delta} = \left\{\begin{array}{c|c} (k,l)\in (\mathbb R_{>0})^2 & k < l/q  \end{array} \right\}
			= \mathbb R_{>0}(0,1) + \mathbb R_{>0}(1,q).$$
			By Lemma \ref{lemmerationalitedescones} denoting
		$\mathcal P =\left( ]0,1](0,1) + ]0,1](1,q) \right)\cap \mathbb N^2$, we have  
		\begin{equation} \label{eqSdelta3}
			S^{\delta}_{\omega}(T)=
			\sum_{(k_0,l_0) \in \mathcal P}
			\frac{\mathbb L^{-\nu k_0-l_0}T^{\eps(-Mk_0+ml_0)}}
			{(1-\mathbb L^{-1}T^{-\eps M})(1-\mathbb L^{-\nu -q}T^{\eps(-M+mq)})} \underset{T \to \infty}{\to} 1.
		\end{equation}
\end{itemize}
		\item {If $M\leq 0$ and ``$\eps=-$", {namely $\gamma \notin \N(f)^-$}, then the cone $\tilde{C}^{\delta}$ is empty and $S_{\omega}^{\delta}(T)=0$.}
		\item  If $M>0$ and {``$\eps=-$"}, {namely $\gamma \in \N(f)^-$}, {then} we have 
			$\tilde{C}^{\delta} =  
			\{(k,l) \in \mathbb R_{>0}^2 \mid l/q>k> ml/M,\: k\leq (M k - ml)\delta\}.
			$
			\begin{itemize}
				\item If $M-mq\leq 0$, then the cone $\tilde{C}^{\delta}$ is empty and $S^{\delta}_{\omega}(T)=0$.
				\item If $M-mq>0$, then there is $\delta_0>0$ such that for any $\delta\geq \delta_0$,
					$\frac{m}{M}<\frac{m\delta}{M\delta-1}<\frac{1}{q}$ and we conclude that
					$$\tilde{C}^{\delta} =  
					\left\{
						\begin{array}{c|l} (k,l) \in \mathbb R_{>0}^2 & \frac{m l \delta}{M\delta-1} \leq k < \frac{l}{q} \end{array} 
					\right\}
					=\mathbb R_{\geq 0}(1,q) + \mathbb R_{>0}\omega^{\delta}
					$$
					with here $\omega^\delta=(1,(M\delta-1)/(m\delta))$ and by Lemma \ref{lemmerationalitedescones} denoting
				$\mathcal P =( ]0,1](1,q) + ]0,1]\omega^{\delta} )\cap \mathbb N^2$ we have 
				$\lim_{T \ra \infty} S^{\delta}_{\omega}(T)=0$ with
				\begin{equation} \label{eqSdelta4}
					S^{\delta}_{\omega}(T)=
					\sum_{(k_0,l_0) \in \mathcal P}
					\frac{\mathbb L^{-\nu k_0 -l_0}T^{\eps(-M k_0 + ml_0)}}
					{(1-\mathbb L^{-\nu-q}T^{\eps( -M+mq)})(1-\mathbb L^{-((\nu,1)\mid \omega^\delta))}T^{(\eps(-M,m)\mid \omega^\delta)})} 
					+ \frac{\mathbb L^{-((\nu,1)\mid \omega^\delta))}T^{(\eps(-M,m)\mid \omega^\delta)}}
					{1-\mathbb L^{-((\nu,1)\mid \omega^\delta))}T^{(\eps(-M,m)\mid \omega^\delta)}} \underset{T\to \infty}{\to} 0.
				\end{equation}
		\end{itemize} 

\end{itemize}
Finally applying Proposition \ref{ex:cas-x^N} and Proposition \ref{lem:zeta=} we obtain 
	\begin{itemize}
				\item if $M< 0$ then 
					$$ \begin{array}{ccl}
						\left(S_{f,x\neq 0} \right)_{((0,0),0)} & = & \left[x^{-M+mq}:\Gm \to \Gm, \sigma_{\Gm}\right] 
						+ [x^{-M}(y-\mu x^q)^m:\Gm^2 \setminus (y=\mu x^q) \to \Gm, \sigma] \\
						&  & 
						-\: [x^{-M}y^m:\Gm^2 \to \Gm,\sigma_{\Gm^2}] 
						- [x^{-M}\xi^m : (y=\mu x^q)\cap \Gm^2 \times \Gm \to \Gm,\sigma_{\Gm^3}] 
					    \end{array}
					$$
				\item if $M=0$ then 
                                        $$ \begin{array}{ccl}
						\left(S_{f,x\neq 0} \right)_{((0,0),0)} & = & \left[x^{mq}:\Gm \to \Gm, \sigma_{\Gm}\right] 
						+ [(y-\mu x^q)^m:\Gm^2 \setminus (y=\mu x^q) \to \Gm, \sigma] 
						\\
						&  & -\: [\xi^m : (y=\mu x^q)\cap \Gm^2 \times \Gm \to \Gm,\sigma_{\Gm^3}] 
					    \end{array}
					$$
				\item if $M>0$ then 
					$$ \begin{array}{ccl}
						\left(S_{f,x\neq 0} \right)_{((0,0),0)} & = & s^{(+)} \left[x^{-M+mq}:\Gm \to \Gm, \sigma_{\Gm}\right] 
						 +  s^{(+)}[x^{-M}(y-\mu x^q)^m:\Gm^2 \setminus (y=\mu x^q) \to \Gm, \sigma] \\
						&  & - \: s^{(+)} [x^{-M}\xi^m : (y=\mu x^q)\cap \Gm^2 \times \Gm \to \Gm,\sigma_{\Gm^3}] 
					    \end{array}
					$$
		\item the motivic Milnor fiber $\left(S_{1/f,x\neq 0} \right)_{((0,0),0)}$ is 0 if $M\leq 0$ otherwise, if $M>0$ 				
			                $$ \begin{array}{ccl}
		 				\left(S_{1/f,x\neq 0} \right)_{((0,0),0)} & = & s^{(-)} \left[x^{M-mq}:\Gm \to \Gm, \sigma_{\Gm}\right] 
						+ s^{(-)}[x^{M}(y-\mu x^q)^{-m}:\Gm^2 \setminus (y=\mu x^q) \to \Gm, \sigma] \\
						&  & -\:s^{(-)} [x^{M}y^{-m}:\Gm^2 \to \Gm,\sigma_{\Gm^2}]
					   \end{array}
					$$
			\end{itemize}
        with for any $\eps$ in $\{\pm\}$, $s^{(\eps)} = 1$ if $-M+mq>0$ and otherwise $s^{(\eps)} = 0$.
	Then for any $M$ in $\mathbb Z$ and $m\geq 1$, equalities (\ref{eq1}) and (\ref{eq2}) are induced by 
	the following equalities
				\begin{equation} \label{seq1}
					[x^{-M+mq}:\Gm\to \Gm,\sigma_{\Gm}] + [x^{-M}(y-\mu x^q)^m:\Gm^2 \setminus (y=\mu x^q) \to \Gm, \sigma] = 
				        [x^{-M}(y-\mu x^q)^m:\Gm \times \mathbb A^{1}_{\k} \setminus (y=\mu x^q) \to \Gm, \sigma]
			        \end{equation}
				\begin{equation} \label{seq2}
					[x^{-M}(y-\mu x^q)^m:\Gm \times \mathbb A^{1}_{\k} \setminus (y=\mu x^q) \to \Gm, \sigma] = 
					[x^{-M}z^m : \Gm^2 \to \Gm, \sigma_{1,1}]
				\end{equation}
				\begin{equation} \label{seq3}
					[x^{-M}\xi^m:(y=\mu x^q)\cap \Gm^2 \times \Gm \to \Gm, \sigma_{\Gm^3}] 
					= [x^{-M}\xi^m:\Gm^2 \to \Gm, \sigma_{1,1}]
				\end{equation}
				These equalities follow from the construction of the Grothendieck ring $\mgg$ and the isomorphisms in the category $Var_{\Gm}^{\Gm}$ 
				$$
				\begin{array}{ccc}
					\begin{array}{ccc}
					      \Gm \times \mathbb A^{1}_{\k} \setminus (y=\mu x^q) & \to & \Gm^2 \\
					      (x,y) & \mapsto & (x,z=y-\mu x^q)
				        \end{array}
					& \text{and} &
					\begin{array}{ccc}
						(y=\mu x^q)\times \Gm^2 \times \Gm & \to & \Gm^2 \\
						(x,y,\xi) & \mapsto & (x,\xi)
					\end{array}
				\end{array}.
				$$
\end{proof}

\section{Motivic invariants at infinity and Newton transformations} \label{section3}

\begin{defn} \label{def:compactification} A \emph{compactification} of {a polynomial $f$ in $\k[x,y]$} is a data $(X,i,\hat{f})$ with $X$ an algebraic $\k$-variety, $\hat{f}$ a proper map and $i$ an open dominant immersion, such that the following diagram is commutative 
$$ \xymatrix{ 
	\mathbb A_{\k}^{2} \ar[r]^{i} \ar[d]_{f} & X \ar[d]^{\hat{f}} \\ 
        \mathbb A^{1}_{\k} \ar[r]_{j} & \mathbb P^{1}_{\k} 
} 
$$ 
where $j$ is the open dominant immersion from $\mathbb A^{1}_{\k}$ to $\mathbb P^{1}_{\k}$ which maps a point $a$ to $[1:a]$. 
With these notations, we denote by $X_\infty$ the closed subset $X\setminus i(\mathbb A^2_\k)$ and by $\infty$ the point $[0:1]$.
We identify $a$ with the point $[1:a]$ and we denote $1/\hat{f}$ the extension of $1/f$ on 
$X\setminus \hat{f}^{-1}(0)$ and for any value $a$, we consider $\hat{f}-a$ the extension of $f-a$ on $X\setminus \hat{f}^{-1}(\infty)$. {In the following, we will compute rational forms of motivic zeta function using a specific compactification defined in subsection \ref{compactification}.}
\end{defn}

\subsection{Motivic Milnor fiber at infinity} \label{subsection:decompositionSfinfini} 
In this subsection, we recall the notion of {\emph{Milnor fiber at infinity} and}  \emph{motivic Milnor fibers at infinity} {of a polynomial $f$ in $\mathbb C[x,y]$}, which is a consequence of {the studies of} Bittner \cite{Bit05a}, Guibert-Loeser-Merle \cite{GuiLoeMer09a} on motivic Milnor fibers and developed for instance in \cite{Matsui-Takeuchi-13} and \cite{Rai11}.
{\subsubsection{Milnor fibration at infinity} 
The following result is a consequence of ideas of Thom, see for instance \cite{Pha83a}.

\begin{thm} Let $f$ be a polynomial in $\mathbb C[x,y]$. There is $R>0$ such that the restriction 
	$$f : \mathbb C^2 \setminus f^{-1}(D(0,R)) \ra \mathbb C \setminus D(0,R)$$ is a $C^\infty$ -- locally trivial fibration called \emph{Milnor fibration at infinity} of $f$. The {\emph Milnor fiber at infinity} is up to an homeomorphism the fiber $f^{-1}(a)$ for $a>R$. The \emph{monodromy at infinity} is induced by the action of $\pi_1(\mathbb C \setminus D(0,R))$ on $f^{-1}(a)$.
\end{thm}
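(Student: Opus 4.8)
The plan is to deduce the statement from the generic local triviality of proper algebraic maps, applied to a compactification of $f$. First I would fix a compactification $(X,i,\hat f)$ of $f$ in the sense of Definition \ref{def:compactification} and, after a resolution of singularities, assume $X$ smooth with $X_\infty = X\setminus i(\mathbb C^2)$ a simple normal crossing divisor; the extended map $\hat f\colon X\ra\mathbb P^1$ is then proper. Choosing a Whitney stratification of $X$ for which $X_\infty$ and each of its strata are unions of strata (so that $i(\mathbb C^2)$ is an open union of strata), Verdier's theorem on the generic triviality of stratified maps, equivalently the Thom--Mather machinery, produces a finite set $\Lambda\subset\mathbb P^1$ such that over $\mathbb P^1\setminus\Lambda$ the map $\hat f$ is a stratified submersion.

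Next I would apply Thom's first isotopy lemma to $\hat f$ over a small disc $D$ around any point of $\mathbb P^1\setminus\Lambda$: this yields a topological (and $C^\infty$ along each stratum) local trivialization of $\hat f$ over $D$ respecting the stratification, hence one that restricts to a local trivialization of $f\colon i(\mathbb C^2)\cap\hat f^{-1}(D)\ra D$. Consequently $f$ is a locally trivial fibration over $(\mathbb P^1\setminus\Lambda)\cap\mathbb C$. Then, taking $R>0$ large enough that every point of $\Lambda\cap\mathbb C$ lies in $D(0,R)$, the complement $\mathbb C\setminus D(0,R)$ is contained in $\mathbb P^1\setminus\Lambda$, so the restriction
$$f\colon \mathbb C^2\setminus f^{-1}(D(0,R))\ra\mathbb C\setminus D(0,R)$$
is a locally trivial fibration. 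Since $\mathbb C\setminus D(0,R)$ is connected, all fibers $f^{-1}(a)$ with $|a|>R$ are homeomorphic; this common fiber $F_\infty$ is the Milnor fiber at infinity (well defined up to homeomorphism, and, enlarging $R$, independent of $R$), and the induced action of $\pi_1(\mathbb C\setminus D(0,R))\cong\mathbb Z$ on $F_\infty$ is the monodromy at infinity. The $C^\infty$ refinement follows either by choosing the controlled vector fields of the isotopy lemma to be smooth on $i(\mathbb C^2)$, or, more classically, by the route of Thom and Pham \cite{Pha83a}: off the finitely many critical values of $f$ together with the finitely many values atypical at infinity (the latter detected by a Malgrange--type condition on $\hat f$ near $X_\infty$), one builds on $\mathbb C^2$ a smooth lift of a vector field generating $\pi_1$ and integrates it (Ehresmann's argument).

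The step I expect to be the main obstacle is precisely the passage from the \emph{stratified} triviality of the proper map $\hat f$ to a genuine $C^\infty$ triviality of its restriction to the open stratum $\mathbb C^2$: one must ensure that the integral flow of the lifted vector field does not escape $i(\mathbb C^2)$ towards $X_\infty$, i.e.\ that there is no vanishing of cycles at infinity over $\mathbb C\setminus D(0,R)$. This is exactly where the finiteness of $\Lambda$, equivalently the finiteness of the set of values atypical at infinity, is used; everything else is a formal consequence of the Thom--Mather--Verdier isotopy theory.
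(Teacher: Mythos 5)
Your argument is correct and is essentially the classical Thom--Mather--Verdier route that the paper does not spell out itself but delegates to the ideas of Thom as presented in \cite{Pha83a}. Note only that the ``main obstacle'' you flag is already taken care of by the isotopy lemma: the controlled vector fields are tangent to the strata, so their flow preserves the open stratum $i(\mathbb C^2)$ and cannot escape to $X_\infty$, and smoothness of the trivialization along that stratum is automatic.
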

}
\subsubsection{Motivic Milnor fiber at infinity} \label{section:Sfinfini}
\begin{defn} \label{defSfinfini}
	Let $(X,i,\hat{f})$ be a compactification of {a polynomial $f$ in $\k[x,y]$}.
	For any $\delta>0$ and $n$ in $\mathbb N^{*}$, we consider
	$$ X_{n}^{\delta}(1/\hat{f}) = \{ 
				  \varphi(t) \in \mathcal L(X) \mid 
					  \ord \varphi^{*}\left(\mathcal I_{X_\infty}\right) \leq n \delta, \:
					  \ord 1/\hat{f}(\varphi(t)) = n 
	            \}. $$ 
	with its structural map to $\hat{f}^{-1}(\infty) \times \Gm$, $\varphi \mapsto \left(\varphi(0),\ac 1/ \hat{f}(\varphi(t))\right)$.
	\end{defn}
\begin{rem} The motivic measure of $X_{n}^{\delta}(1/\hat{f})$ belongs to 
	$\mathcal M_{\hat{f}^{-1}(\infty)\times \Gm}^{\Gm}$. Indeed, even if $X$ is singular, the singular locus is contained in $X_\infty$, and it follows from \cite[Lemma 4.1]{DenLoe99a} that 
	the condition $\ord \varphi^{*}\left(\mathcal I_{X_\infty}\right) \leq n \delta$
	implies that it is not necessary to complete the Grothendieck ring to compute the measure of $X_{n}^{\delta}(1/\hat{f})$ for any $n$ and $\delta$.
\end{rem}

Applying \cite[\S 3.9]{GuiLoeMer09a}, {with notations of subsection \ref{deffctzetamodifiee}, Remark \ref{continuity-pushforward} and Theorem \ref{extensionaugroup}}, we get \cite[Theorem 3.4]{Rai10a}:  

\begin{thm}[Motivic Milnor fiber at infinity]
	Let $(X,i,\hat{f})$ be a compactification of {a polynomial $f$ in $\k[x,y]$} and $\delta>0$. The modified zeta function 
$$Z_{1/\hat{f},i(\mathbb A^2_\k)}^{\delta}(T)=\sum_{n\geq 1} \mes\left(X_{n}^{\delta}(1/\hat{f})\right)T^n$$
is rational
for $\delta$ large enough and has a limit when $T$ goes to infinity independent from the parameter $\delta$.
We denote 
$$S_{1/\hat{f}}([i:\mathbb A^2_\k\to X]) = -\lim_{T\ra \infty} Z_{1/\hat{f},i(\mathbb A^2_\k)}^{\delta}(T) 
\in \mathcal M_{\hat{f}^{-1}(\infty)\times \Gm}^{\Gm} \:\:\text{and}\:\:
S_{f,\infty} = \hat{f}_{!}S_{1/\hat{f}}\left([i:\mathbb A^2_\k \to X]\right)\in \mathcal M_{\{\infty\}\times\Gm}^{\Gm}
.$$ 
The motive $S_{f,\infty}$ does not depend on the chosen compactification and is called \emph{motivic Milnor fiber at infinity} of $f$.
\end{thm}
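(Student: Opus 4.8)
The plan is to obtain the rationality and the $\delta$-independence from the Guibert--Loeser--Merle rationality result recalled in Proposition~\ref{thmrationaliteopen} (in the form valid for a possibly singular $X$ with a smooth dense open set, Remark~\ref{continuity-pushforward}), and to obtain the independence of the compactification by the standard domination argument, reducing to a proper birational morphism over the affine chart and then comparing the two zeta limits on a common log-resolution, as in \cite[\S 3.9]{GuiLoeMer09a} and \cite[Theorem 3.4]{Rai10a}.

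\textbf{Rationality and $\delta$-independence.} An arc $\varphi$ contributing to $Z^{\delta}_{1/\hat f,i(\A^2_\k)}(T)$ satisfies $\ord 1/\hat f(\varphi)=n$ finite and positive, so its origin lies in $(1/\hat f)^{-1}(0)=\hat f^{-1}(\infty)$, which is disjoint from $\hat f^{-1}(0)$; hence $\varphi$ factors through the open subset $X\setminus\hat f^{-1}(0)$, on which $1/\hat f$ is a morphism to $\A^1_\k$. Replacing $X$ by $X\setminus\hat f^{-1}(0)$ and $i(\A^2_\k)$ by its smooth dense open subset $i(\A^2_\k\setminus f^{-1}(0))$ changes neither $\hat f^{-1}(\infty)$ nor the zeta function, so we may assume $1/\hat f$ regular on $X$ and $U:=i(\A^2_\k)$ smooth, the singular locus of $X$ being then contained in $X_\infty$. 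In that situation $Z^{\delta}_{1/\hat f,i(\A^2_\k)}(T)$ is exactly the modified motivic zeta function $Z^{\delta}_{1/\hat f,U}(T)$ of Definition~\ref{deffctzetamodifiee}, with structural maps to $X_0(1/\hat f)\times\Gm=\hat f^{-1}(\infty)\times\Gm$, and Proposition~\ref{thmrationaliteopen} extended as in the second bullet of Remark~\ref{continuity-pushforward} gives the rationality and the $\delta$-independence of the limit for $\delta$ large. That these measures lie in the non-completed ring $\mathcal M^{\Gm}_{\hat f^{-1}(\infty)\times\Gm}$ follows, as noted after Definition~\ref{defSfinfini}, from \cite[Lemma 4.1]{DenLoe99a} applied to the inequality $\ord\varphi^{*}\mathcal I_{X_\infty}\le n\delta$.

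\textbf{Independence of the compactification.} Let $(X_1,i_1,\hat f_1)$ and $(X_2,i_2,\hat f_2)$ be two compactifications of $f$. Since $\hat f_1\circ i_1=j\circ f=\hat f_2\circ i_2$, the immersions $i_1,i_2$ induce $(i_1,i_2)\colon\A^2_\k\to X_1\times_{\mathbb P^1_\k}X_2$; let $X_3$ be the closure of its image. Then $X_3\to\mathbb P^1_\k$ is proper, $(i_1,i_2)\colon\A^2_\k\to X_3$ is a dense open immersion, each projection $\pi_j\colon X_3\to X_j$ is proper and, over $i_j(\A^2_\k)$, is the inverse of the section $(i_1,i_2)$ hence an isomorphism there, and $\hat f_1\circ\pi_1=\hat f_2\circ\pi_2=:\hat f_3$. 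So $(X_3,(i_1,i_2),\hat f_3)$ is a compactification dominating both, and by transitivity it suffices to prove: whenever $\pi\colon X'\to X$ is proper with $\pi\circ i'=i$, $\hat f\circ\pi=\hat f'$, and $\pi$ an isomorphism over $i(\A^2_\k)$, one has $\hat f_!\,S_{1/\hat f}([i])=\hat f'_!\,S_{1/\hat f'}([i'])$ in $\mathcal M^{\Gm}_{\{\infty\}\times\Gm}$. By the first bullet of Remark~\ref{continuity-pushforward}, applied with the open sets $i(\A^2_\k)$, resp.\ $i'(\A^2_\k)$, each side is obtained from a single formal expression: the $T\to\infty$ limit of $\sum_{n\ge1}\mes(X_n^{\delta}(1/\hat f)\to\{\infty\}\times\Gm)T^n$, resp.\ of its analogue on $X'$, where the arc spaces now carry only their map to $\Gm$; so it remains to identify these two limits.

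\textbf{Comparison on a common resolution, and the main obstacle.} From the fact that $\pi$ is an isomorphism over $i(\A^2_\k)$ one gets $\pi^{-1}(X_\infty)=X'_\infty$, and from $\hat f\circ\pi=\hat f'$ one gets $1/\hat f\circ\pi=1/\hat f'$. Choose a log-resolution $h'\colon Y\to X'$ of $(X',X'_\infty)$ with $h'^{-1}(X'_\infty)=\bigcup_{i\in A}E_i$ a normal crossing divisor; then $h:=\pi\circ h'\colon Y\to X$ is a log-resolution of $(X,X_\infty)$ with the same divisor $\bigcup_{i\in A}E_i=h^{-1}(X_\infty)$, and $1/\hat f\circ h=1/\hat f'\circ h'$ forces the data $(N_i(1/\hat f),\nu_i,N_i(\mathcal I_{X_\infty}))_{i\in A}$ and the strata $E_I^{0}$ entering the computation to coincide for the two sides (choosing the auxiliary volume forms compatibly, which is harmless by Remark~\ref{rem:independanceenomega}). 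Running on $Y$ the computation of \cite[Proposition 3.8]{GuiLoeMer06a}, \cite[\S 3.9]{GuiLoeMer09a}, as in the proof of Lemma~\ref{lem:rationalite-zeta-omega} (with $\delta$ large so that the $\mathcal I_{X_\infty}$-inequality is inactive on the surviving $I$), both limits reduce to the same sum $\sum_{I}(-1)^{\abs{I}}[U_I\to\Gm,\hat{\mu}]$ over the subsets $I$ of $\{i\in A\mid N_i(1/\hat f)\neq 0\}$, and therefore agree. The delicate point — and the place where the argument rests on the weak-factorisation/motivic-integration formalism of \cite{Bit05a} and \cite[\S 3.9]{GuiLoeMer09a} rather than on bare computation — is to be sure that this common resolution genuinely computes both limits: that the relative-canonical/volume-form bookkeeping is legitimate although $X$ and $X'$ may be singular along $X_\infty$, $X'_\infty$, and that the varieties $U_I$ with their $\hat{\mu}$-action attached to $1/\hat f$ on $X$ and to $1/\hat f'$ on $X'$ coincide after pull-back to $Y$. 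Granting this, the equality $\hat f_!S_{1/\hat f}([i])=\hat f'_!S_{1/\hat f'}([i'])$, and hence the independence of $S_{f,\infty}$ from the compactification, is formal.
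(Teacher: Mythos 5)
Your proposal is correct and follows essentially the same route as the paper, which obtains rationality and $\delta$-independence from the modified zeta function machinery (Proposition \ref{thmrationaliteopen} together with Remark \ref{continuity-pushforward}) and delegates the independence of the compactification to the domination-and-comparison argument of \cite{GuiLoeMer09a} and \cite{Rai10a,Rai11}. The only step you leave ``granted'' can in fact be closed without any stratum-by-stratum bookkeeping by invoking Theorem \ref{extensionaugroup}: choosing a log-resolution $h\colon Y\to X'$ and setting $U_Y=(\pi\circ h)^{-1}(i(\mathbb A^2_\k))\cong \mathbb A^2_\k$, one has $[i\colon \mathbb A^2_\k\to X]=[U_Y\to X]$ in $\mathcal M_X$, so the defining property gives $S_{1/\hat f}([i])=(\pi\circ h)_!\,\mathcal S_{(1/\hat f)\circ\pi\circ h,\,U_Y}$ and $S_{1/\hat f'}([i'])=h_!\,\mathcal S_{(1/\hat f')\circ h,\,U_Y}$, and since $(1/\hat f)\circ\pi=1/\hat f'$ and $\hat f\circ\pi\circ h=\hat f'\circ h$, applying $\hat f_!$ and $\hat f'_!$ yields the desired equality directly.
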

\begin{rem}\label{rem:casgeneric} \label{assumption-generic} Some remarks:
	\begin{itemize}
	  \item In the following, {similarly to Remark \ref{continuity-pushforward}}, we will identify $\mathcal M_{\{\infty\}\times\Gm}^{\Gm}$ with $\mgg$.
	  \item  For any constant $c$, for any arc $\varphi$ in $\mathcal L(X)$, for any positive integer $n$, $\ord (\hat{f}-c)(\varphi) = -n$ if and only if $\ord \hat{f}(\varphi)=-n$ which implies the equality $S_{f,\infty} = S_{f-c,\infty}$. So to compute $S_{f,\infty}$, we will always assume that $(0,0)$ is a point of the support, namely $f(0,0)\neq 0$.
	  In that case, by Remark \ref{rem:egalitepolygones}, $\overline{\N}(f)$ is equal to $\N_{\infty}(f)$.
	\end{itemize}
\end{rem}

\begin{notation} \label{notation-c-omega}
For a one-dimensional face $\gamma$ in $\mathcal N_{\infty}(f)$ (or $\overline{\N}(f)$) with primitive exterior normal vector $(p,q)$, we define
	\begin{equation}
		c(p,q) = \left\{ 
			\begin{array}{l}
				\text{$p+q$, if $p> 0$ and $q> 0$},\\
				\text{$p$, if $p>0$ and $q< 0$},\\
				\text{$q$, if $p< 0$ and $q>0$},\\
				\text{1, if $(p,q)=(1,0)$ or $(p,q)=(0,1)$} 
			\end{array}
			\right.
			\:\text{and}\:\:\omega_{p,q}(v,w) = v^{( \abs{p} + \abs{q} -1)}dv\wedge dw.
	\end{equation}
\end{notation}

\begin{thm}[Computation of $S_{f,\infty}$] \label{thm:thmSfinfini} \label{casligne} 
	Let $f$ in $\k[x,y]$ {and not in $\k[x]$ or $\k[y]$}.
	Using the compactification $(X,i,\hat{f})$ of subsection \ref{compactification}, 	\begin{itemize}
               \item {if} $f(x,y)=P(x^ay^b)$ with $P$ in $\k[s]$ {of degree $d$} with $(a,b)$ in $(\mathbb N^*)^2$, 
		       then if $\delta > \max(\frac{1}{da},\frac{1}{bd})$  
			we have
			\begin{equation} \label{resultat:cashomogene}
			Z^{\delta}_{1/\hat{f},i(\mathbb A^2)}(T) = [{1/(x^{a}y^{b})^d}:\Gm^2 \to \Gm, \sigma_{{\Gm^2}}] R_{\gamma}^\delta(T)
			\:\:\text{and}\:\:S_{f,\infty} = [1/(x^{a}y^{b})^d:\Gm^2 \to \Gm, \sigma_{{\Gm^2}}]\end{equation}
			
		\item {otherwise, there is $\delta'>0$ such that for any $\delta\geq \delta'$,}
				\begin{equation}\label{ratinfini}
					\begin{array}{c}
						Z_{1/\hat{f},i(\mathbb A^2_k)}^{\delta}(T)  =  \eps_{(a_0,0)}[1/x^{a_0}:\Gm \to \Gm,\sigma_{\Gm}] R_{(a_0,0)}^{\delta}(T) + 
						\eps_{(0,b_0)}[1/y^{b_0}:\Gm \to \Gm,\sigma_{\Gm}] R_{(0,b_0)}^{\delta}(T) \\
						 +  \sum_{\gamma \in \N_\infty(f)^o}[1/f_\gamma : \Gm^{2}\setminus f_\gamma^{-1}(0)\ra \Gm,\sigma_{\gamma}]R_{\gamma}^{\delta,=}(T) 
						 +  \sum_{\gamma \in \N_{\infty}(f)^o}\sum_{\mu \in R_\gamma} 
						(Z^{\delta/c(p,q)}_{1/{f_{{\sigma_{(p,q,\mu)}}}}, \omega_{p,q}, v\neq 0}(T))_{((0,0),0)}
						\end{array}
				\end{equation}

			{and the motivic Milnor fiber at infinity is}  
			\begin{equation}\label{formuleSfinfini}
				\begin{array}{lll}
					S_{f,\infty} & = & \eps_{(a_0,0)}[1/x^{a_0}:\Gm \to \Gm,\sigma_{\Gm}] + 
					\eps_{(0,b_0)}[1/y^{b_0}:\Gm \to \Gm,\sigma_{\Gm}] \\ 
					& + & \sum_{\gamma \in \N_\infty(f)^o}\eps_{\gamma}
					[1/f_\gamma : \Gm^{2}\setminus f_\gamma^{-1}(0)\ra \Gm,\sigma_{\gamma}]
					 + { \sum_{\gamma \in \N_{\infty}(f)^o,\:\dim \gamma = 1 }\sum_{\mu \in R_\gamma} 
					(S_{1/{ f_{{\sigma_{(p,q,\mu)}}}}, v\neq 0})_{((0,0),0)}}.
				\end{array}
			\end{equation}
			\end{itemize}
			In particular, we have 
			\begin{equation} \label{formuleCorollaireAntonio}
				\begin{array}{lll}
					S_{f,\infty} & =  & \sum_{\gamma \in \N_\infty(f)^o} \eps_{\gamma}S_{f_\gamma,\infty}
					+  \sum_{\gamma \in \N_{\infty}(f)^o,\dim \gamma = 1 }\sum_{\mu \in R_\gamma} 
					(S_{1/f_{{\sigma_{(p,q,\mu)}}}, v\neq 0})_{((0,0),0)} 
					- 
					(S_{1/(f_\gamma)_{{\sigma_{(p,q,\mu)}}}, v\neq 0})_{((0,0),0)}.
				\end{array}
			\end{equation}
			{All these formulas use the notation $\N_{\infty}(f)^o$ of Definition \ref{def:Ninfinif}, Notations \ref{rem:factorisationinfini} and \ref{notation-c-omega}, and the following:}
\begin{itemize}
	\item  $\eps_{(a_0,0)}$ and $\eps_{(0,b_0)}$ {are respectively} equal to $1$ and otherwise 0, if and only if $(a_0,0)$ and {$(0,b_0)$ are respectively} faces of $\N_\infty(f)$, $ R_{(a_0,0)}^{\delta}(T)$ and $ R_{(0,b_0)}^{\delta}(T)$ {are respectively} defined in equations (\ref{casb01}) and (\ref{casb02})
	with limit equal to $-1$.

\item the expression of $R_{\gamma}^{\delta,=}(T)$ is given for any zero dimensional face $\gamma$ in $\N_\infty(f)^o$ by equations (\ref{eqR}), (\ref{eqcasa1}), (\ref{eqcasa2}), (\ref{eqcasb1}), (\ref{eqcasb2}), (\ref{eqcasb3}), (\ref{eqcasb4}), (\ref{eqcasb5}) {(and (\ref{casmonom}) for the case $f=P(x^ay^b)$)}, 
	and equation (\ref{eqcasc1}) for one-dimensional faces of $\N_\infty(f)^o$. In particular, in the general case (formula (\ref{ratinfini})), we have
	$ -\lim_{T \to \infty} R_{\gamma}^{\delta,=}(T) = \eps_\gamma$
	where $\eps_\gamma$ is {$(-1)^{\dim \gamma +1}$} {(and otherwise 0)} if $\gamma$ is not contained in a face which contains the origin.
\end{itemize}
\end{thm}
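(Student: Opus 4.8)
The plan is to transport the local computation of Theorem \ref{thmSfeps} to the compactification at infinity. First I would reduce, via Remark \ref{rem:casgeneric}, to the case $f(0,0)\neq 0$, so that $\overline{\N}(f)=\N_\infty(f)$, and fix the compactification $(X,i,\hat f)$ of subsection \ref{compactification}. By the definition of $S_{f,\infty}$ and the pushforward identification of Remark \ref{continuity-pushforward}, it suffices to compute the modified zeta function $Z^\delta_{1/\hat f,i(\mathbb A^2_\k)}(T)$ by counting arcs on $X$ with origin on $X_\infty$, structural map $\ac(1/\hat f)$ to $\Gm$, and the bounded-order condition on $\mathcal I_{X_\infty}$, and then to pass to the limit $T\to\infty$, working in $\mgg$. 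The technical heart of the setup is the observation that, in the charts of $X$ covering $X_\infty$ produced by this compactification together with the Newton transformations at infinity of Definition \ref{def:lestransformationsdeNewton}, the function $1/\hat f$ acquires the shape $x^{-M}g(x,y)$ studied in subsection \ref{setting}, with Newton data dictated by the faces of $\N_\infty(f)$ and the factorisations of Notations \ref{rem:factorisationinfini}; this is what makes the machinery of subsection \ref{Sfeps} applicable.

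Next I would dispose of the quasi-homogeneous case $f=P(x^ay^b)$ with $(a,b)\in(\mathbb N^*)^2$ and $\deg P=d$: here $\N_\infty(f)^o$ reduces to a single one-dimensional face, the relevant $m$-jet spaces are bundles over $(1/(x^ay^b)^d:\Gm^2\to\Gm,\sigma_{\Gm^2})$, and the cone summation of Lemma \ref{lemmerationalitedescones}\ref{cone1} produces formula (\ref{resultat:cashomogene}) directly, with $\lim_{T\to\infty}R^\delta_\gamma(T)=1$, hence $S_{f,\infty}=[1/(x^ay^b)^d:\Gm^2\to\Gm,\sigma_{\Gm^2}]$.

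For the general case I would decompose the arc space according to which dual cone $C_\gamma$, $\gamma\in\N_\infty(f)$, contains the pair of orders $(\ord x,\ord y)$ of the chart-transported arc, obtaining $Z^\delta_{1/\hat f,i(\mathbb A^2_\k)}(T)=\sum_{\gamma\in\N_\infty(f)}Z^\delta_\gamma(T)$. Faces through the origin (the dicritical faces at infinity) contribute $0$, since for $1/\hat f$ with value $0$ the corresponding arc spaces are empty; so only $\N_\infty(f)^o$ together with the vertical face $(a_0,0)$ and the horizontal face $(0,b_0)$ survive. For those last two, $1/\hat f$ is a monomial times a unit transverse to $X_\infty$, and the computation is precisely that of Proposition \ref{ex:cas-x^N} transported to infinity, yielding the $R^\delta_{(a_0,0)}$ and $R^\delta_{(0,b_0)}$ terms with limit $-1$. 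For a face $\gamma\in\N_\infty(f)^o$ with primitive exterior normal $(p,q)$ I would split $Z^\delta_\gamma(T)=Z^{\delta,=}_\gamma(T)+Z^{\delta,<}_\gamma(T)$ as in Remark \ref{rem:n-inferieur-m(ord(x),ord(y)}, depending on whether $f_\gamma$ vanishes on the angular components (the ``$<$'' part being nonzero only when $\dim\gamma=1$): the ``$=$'' part is a jet bundle over $(1/f_\gamma:\Gm^2\setminus f_\gamma^{-1}(0)\to\Gm,\sigma_\gamma)$, and the cone summations of Lemma \ref{lemmerationalitedescones} — exactly as in Proposition \ref{lem:zeta=}, keeping track of the sign conditions $\eps(\gamma\mid\omega_i)>0$ that determine $\eps_\gamma$ — produce $R^{\delta,=}_\gamma(T)$ with $-\lim_{T\to\infty}R^{\delta,=}_\gamma(T)=\eps_\gamma$; the ``$<$'' part is converted, for each root $\mu\in R_\gamma$, by the Newton transformation $\sigma_{(p,q,\mu)}$ of Notations \ref{rem:factorisationinfini} — the Jacobian of the change of variables absorbed by $\omega_{p,q}$ and $\delta$ rescaled by $c(p,q)$, following the proof of Proposition \ref{prop:rationalityZ<} — into $\bigl(Z^{\delta/c(p,q)}_{1/f_{\sigma_{(p,q,\mu)}},\,\omega_{p,q},\,v\neq 0}(T)\bigr)_{((0,0),0)}$. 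Rationality for $\delta$ large follows from Lemma \ref{lem:rationalite-zeta-omega}, and each such limit equals $(S_{1/f_{\sigma_{(p,q,\mu)}},v\neq 0})_{((0,0),0)}$ by Theorem \ref{thmSfeps}. Summing over all faces gives (\ref{ratinfini}) and (\ref{formuleSfinfini}).

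Finally, for the recursive identity (\ref{formuleCorollaireAntonio}), I would re-apply formula (\ref{formuleSfinfini}) to each quasi-homogeneous face polynomial $f_\gamma$ itself: its own ``$=$''-contribution is $\eps_\gamma[1/f_\gamma:\Gm^2\setminus f_\gamma^{-1}(0)\to\Gm,\sigma_\gamma]$ and its ``$<$''-contribution is $\sum_{\mu\in R_\gamma}(S_{1/(f_\gamma)_{\sigma_{(p,q,\mu)}},v\neq 0})_{((0,0),0)}$; substituting this expression of $S_{f_\gamma,\infty}$ into (\ref{formuleSfinfini}) and recombining the torus terms yields (\ref{formuleCorollaireAntonio}). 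I expect the genuine obstacle to be the very first step: producing the compactification of subsection \ref{compactification} and verifying chart by chart along $X_\infty$ that $1/\hat f$ has the form $x^{-M}g(x,y)$ with Newton data matching $\N_\infty(f)$ — in particular that the arc decomposition is exhaustive, that the three pieces $\N_{\infty,\infty}(f)$, $\N_{0,\infty}(f)$, $\N_{\infty,0}(f)$ glue correctly to the vertical and horizontal faces, and that the sign conventions defining $\eps_\gamma$, $\eps_{(a_0,0)}$, $\eps_{(0,b_0)}$ come out right. Once this local-to-infinity dictionary is established, the remaining measure computations and cone summations are routine invocations of Lemma \ref{lemmerationalitedescones}, Theorem \ref{thmSfeps}, and Lemma \ref{lem:rationalite-zeta-omega}.
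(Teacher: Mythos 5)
Your proposal follows essentially the same route as the paper's proof: reduce to $f(0,0)\neq 0$, decompose $Z^{\delta}_{1/\hat f,i(\mathbb A^2_\k)}$ along the dual fan of $\N_{\infty}(f)$, observe that faces through the origin contribute nothing, treat the axis vertices as in Proposition \ref{lem:fctzetainfiniaxe}, split each remaining face into the ``$=$'' part (cone summation, as in Proposition \ref{lem:fctzetaegalinfini}) and the ``$<$'' part (Newton transformations at infinity reducing to $(Z^{\delta/c(p,q)}_{1/f_{\sigma_{(p,q,\mu)}},\omega_{p,q},v\neq 0})_{((0,0),0)}$, then Theorem \ref{thmSfeps}), and obtain (\ref{formuleCorollaireAntonio}) by reapplying the theorem to the face polynomials $f_\gamma$. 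This is exactly the paper's scheme, and your closing remark correctly identifies where the real work lies (the chart-by-chart dictionary along $X_\infty$ and the bookkeeping of $\eps_\gamma$, which at infinity comes from Euler characteristics of the truncated dual cones computed quadrant by quadrant, because the contact-order function $c(\alpha,\beta)$ is only piecewise linear on $\Omega$).

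One concrete slip in your sketch of the quasi-homogeneous case: for $f=P(x^ay^b)$ the relevant face is the vertex $(da,db)$, whose dual cone is the half-plane $\{a\alpha+b\beta>0\}$ truncated by the $\delta$-condition; this is not a simplicial open two-dimensional cone, so Lemma \ref{lemmerationalitedescones}, point \ref{cone1}, does not apply directly — one must decompose along the quadrants as in formula (\ref{casmonom}), using also point \ref{cone3} and the ray cases. The resulting Euler characteristic is $-1$, i.e.\ $\lim_{T\to\infty}R^{\delta}_{\gamma}(T)=-1$ (equivalently $-\lim R^{\delta}_{\gamma}(T)=1$), not $+1$ as you wrote; with your claimed limit the identity $S_{f,\infty}=-\lim Z^{\delta}_{1/\hat f,i(\mathbb A^2)}(T)$ would give $S_{f,\infty}=-[1/(x^ay^b)^d:\Gm^2\to\Gm,\sigma_{\Gm^2}]$, contradicting your (correct) conclusion. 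This is a repairable sign/cone-type slip rather than a missing idea, and the fix is precisely the paper's quadrant decomposition.
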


\begin{proof}
	We give the ideas of the general proof using above notations and refer to subsection \ref{section:preuveformules} for details. It is similar to the proof of Theorem \ref{thmSfeps}. 
{We consider first a polynomial $f$ in $\k[x,y]$ which is not of the form $f(x,y)=P(x^ay^b)$ with $P$ in $\k[s]$ with $(a,b)$ in 
		$(\mathbb N^*)^2$. We work with the Newton polygon at infinity $\N_{\infty}(f)$. Note that by Remark \ref{rem:casgeneric}, we can assume $f(0,0)\neq 0$ and in that case we have $\GN(f)=\N_{\infty}(f)$.} 
	In subsection \ref{compactification} we consider the compactification $(X,i,\hat{f})$ of the graph of $f$ in $(\mathbb P^1_{\k})^3$.
	In Proposition \ref{rem:decomposition}, we decompose the motivic zeta function of $1/\hat{f}$ along $\overline{\N}(f)$
	\begin{equation} 
		Z^{\delta}_{1/\hat{f},i(\mathbb A^2_\k)}(T) = \sum_{\gamma \in \overline{\N}(f)} Z^{\delta}_{\gamma,-}(T).
	\end{equation}
	By Remark \ref{rem:NfinfinipourSfinfini}, it is enough to consider faces $\gamma$ in $\N_\infty(f)^{o}$ namely faces of $\N_\infty(f)$ which do not contain the origin.  
	In Proposition \ref{lem:fctzetainfiniaxe}, we show the rationality and compute the limit of $Z^{\delta}_{\gamma,-}(T)$ in the case of a zero dimensional face $\gamma$ contained in a coordinate axis.
	In Proposition \ref{lem:fctzetaegalinfini} and section \ref{sec:casdim1<}, we consider the case of a face $\gamma$ not contained in a coordinate axis. Depending on the fact that the face polynomial $f_\gamma$ vanishes or not on the angular components of the coordinates of an arc 
	(Remark \ref{rem:m}), we decompose in formula (\ref{decompositionzeta=<}) the zeta function $Z^{\delta}_{\gamma,-}(T)$ as a sum of $Z^{\delta,=}_{\gamma,-}(T)$ and $Z^{\delta,<}_{\gamma,-}(T)$. 
	In particular if the face $\gamma$ is zero dimensional then $Z^{\delta,<}_{\gamma,-}(T)$ is zero.  
	In Proposition \ref{lem:fctzetaegalinfini} we show the rationality and compute the limit of the zeta function $Z^{\delta,=}_{\gamma,-}(T)$. 
	In Proposition \ref{prop:rationalityZ<infini-infini} and Proposition \ref{prop:rationalityZ<infini-zero},
	we prove the decomposition 
	$$Z^{\delta,<}_{\gamma,-}(T) = 
	\sum_{\mu \in R_\gamma} 
        (Z^{\delta/c(p,q)}_{1/{ f_{{\sigma_{(p,q,\mu)}}}}, \omega_{p,q}, v\neq 0})_{((0,0),0)}.$$ 
	We use section \ref{thmSfeps} to obtain the rationality and an expression of the limit of 
	$(Z^{\delta/c(p,q)}_{1/{ f_{{\sigma_{(p,q,\mu)}}}}, \omega_{p,q}, v\neq 0})_{((0,0),0)}$.
	Similarly in subsection \ref{sec:cashorizontalinfini} and \ref{sec:casverticalinfini} we consider the case of the horizontal and vertical faces.
	
	{We assume now that $f(x,y)=P(x^ay^b)$ with $f(0,0)\neq 0$, $(a,b)$ in $(\mathbb N^*)^2$ and $P$ in $\k[s]$ of degree $d$.}
	We denote by $\gamma$ the face $(ad,bd)$.
	Remark that by assumption $\N_{\infty}(f)$ is the segment {$[(0,0),(da,db)]$} not contained in a coordinate axis. 
	Then, by Proposition \ref{rem:decomposition}, Remark \ref{rem:NfinfinipourSfinfini} and Remark \ref{annulation-f-gamma}, we have the equality 
	$$Z^{\delta}_{1/\hat{f}, i(\mathbb A_\k^2)}(T) = Z^{\delta}_{\gamma,-}(T) = Z^{\delta,=}_{\gamma,-}(T).$$
	Then formulas (\ref{resultat:cashomogene}) follow from formula (\ref{casmonom}) of Proposition \ref{lem:fctzetaegalinfini}.
	{The proof of equation (\ref{formuleCorollaireAntonio}) follows from Theorem \ref{thm:thmSfinfini} applied to $f$ and each {face} polynomials $f_\gamma$ for $\gamma$ in {$\N_{\infty}(f)^{o}$} {applying Remark \ref{rem:casgeneric}}}. 
\end{proof}

{\begin{rem} This theorem extends in the case of curves (without non degeneracy or convenient conditions), the computation in the non degenerate case of $S_{f,\infty}$ done in \cite{Matsui-Takeuchi-14} and \cite{Rai10a}.
\end{rem}
}

\newpage
\subsubsection{Realization results}
{In this section we assume $\k = \mathbb C$.}
\paragraph{Generalized Kouchnirenko formula for the generic fiber}
Using Denef-Loeser results (see for instance \cite[3.17]{GuiLoeMer06a}) we have (\cite[\S 2.4]{Rai10a}, \cite{Matsui-Takeuchi-13, Matsui-Takeuchi-14}).

\begin{thm} Let $f$ be a polynomial in $\mathbb C[x,y]$. We have the equality
	$\tilde{\chi}_{c}(S_{f,\infty}^{(1)}) = \chi_{c}\left(F_\infty\right)$
	where $F_\infty$ is the Milnor fiber at infinity of $f$ and $\tilde{\chi}_{c}:\mathcal M_{\k}^{\hat{\mu}} \to \mathbb Z$ is the Euler characteristic realization.
\end{thm}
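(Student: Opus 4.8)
The plan is to deduce this identity directly from the definition of $S_{f,\infty}$ as $\hat{f}_{!}S_{1/\hat{f}}([i:\mathbb{A}^2_\k\to X])$ together with the realization theorem of Denef--Loeser for the ordinary motivic nearby cycles, applied here to the function $1/\hat f$ on the compactification $X$. First I would recall that the class $S_{f,\infty}^{(1)}$ is obtained from $S_{f,\infty}\in\mathcal M_{\{\infty\}\times\Gm}^{\Gm}\simeq\mathcal M_{\Gm}^{\Gm}$ by passing to the fiber over $1$ with its induced $\hat\mu$-action, as in \S\ref{inverse-direct}, and that $\tilde\chi_c$ factors through this operation. Since $\tilde\chi_c$ is a ring morphism commuting with pushforward to a point, $\tilde\chi_c(S_{f,\infty}^{(1)})=\tilde\chi_c\big(\hat f_{!}S_{1/\hat f}([i])\big)=\tilde\chi_c\big((S_{1/\hat f}([i]))^{(1)}\big)$ computed over the point $\infty=[0:1]\in\mathbb{P}^1_{\k}$.

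The main step is to identify $(S_{1/\hat f}([i]))^{(1)}$, fiberwise over $\hat f^{-1}(\infty)$, with the topological nearby fiber of $1/\hat f$ at the value $0$. By Theorem \ref{extensionaugroup} and Proposition \ref{thmrationaliteopen}, $S_{1/\hat f}([i:\mathbb{A}^2_\k\to X])=p_{!}(\mathcal S_{(1/\hat f)\circ p,\,U})$ for a log-resolution $p:Z\to X$ of the pair $(X,\,X_\infty\cup \hat f^{-1}(\infty))$ with $U=p^{-1}(i(\mathbb{A}^2_\k))$; on such a resolution the modified motivic zeta function $Z^\delta_{1/\hat f,\,i(\mathbb{A}^2_\k)}(T)$ for $\delta\gg 0$ coincides, after truncating the contributions of components not dominating $\hat f^{-1}(\infty)$, with the Denef--Loeser motivic zeta function of $1/(\hat f\circ p)$. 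Hence its limit realizes, via $\tilde\chi_c$, on the Euler characteristic of the Milnor fiber of $1/(\hat f\circ p)=1/\hat f$ along its zero locus $X_\infty$, which by the Thom--Pham local triviality statement recalled just above (the Milnor fibration at infinity) is precisely $\chi_c(F_\infty)$: a generic fiber $f^{-1}(a)$ for $|a|$ large, equivalently $(1/f)^{-1}(\epsilon)$ for $|\epsilon|$ small.

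Concretely I would proceed as follows. Step one: reduce via Remark \ref{rem:casgeneric} to the case $f(0,0)\neq 0$ and fix the compactification $(X,i,\hat f)$ of \S\ref{compactification}, noting the singular locus of $X$ lies in $X_\infty$ so the measures of the $X_n^\delta(1/\hat f)$ are honest classes. Step two: choose a log-resolution $p:(Z,E)\to X$ of $(X,\,X_\infty\cup\hat f^{-1}(\infty))$ and write out, à la \cite[Thm 2.2.1]{DenLoe98b} and \cite[\S3.8--3.9]{GuiLoeMer06a}, the rational form of $Z^\delta_{1/\hat f,i(\mathbb{A}^2_\k)}(T)$ as a sum over strata $E_I^0$ with $I\subset C_{1/\hat f}$; the $\delta$-constraint discards exactly the strata meeting the strict transform / exceptional part over which $1/\hat f$ does not vanish, leaving the same combinatorial sum as in the classical nearby-cycles computation over $X_\infty\cap\hat f^{-1}(\infty)$. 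Step three: apply the Euler characteristic realization $\tilde\chi_c$ termwise — each $\lim_{T\to\infty}S_I(\mathbb{L}^{-1},T)$ realizes to $(-1)^{|I|}$ when $I\subset C_{1/\hat f}$ and to $0$ otherwise — to recover the topological Euler characteristic with compact support of the Milnor fiber at infinity through the A'Campo-type formula on the resolution. Step four: invoke the Milnor fibration theorem at infinity recalled in the excerpt to identify this with $\chi_c(F_\infty)$.

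The main obstacle I anticipate is the bookkeeping in step two: one must carefully check that the $\delta$-truncation (the condition $\ord\varphi^*\mathcal I_{X_\infty}\le n\delta$) selects exactly the divisorial components relevant to the nearby cycles of $1/\hat f$ along $X_\infty$, and that the remaining contributions (from components dominating $\hat f^{-1}(\infty)$ but not $X_\infty$, or from the strict transform of $i(\mathbb{A}^2_\k)$) drop out under $\tilde\chi_c$ exactly as in the proof of Lemma \ref{lem:rationalite-zeta-omega}. Once this identification with the Denef--Loeser motivic nearby cycles of $1/\hat f$ on $Z$ is in place, the realization on $\chi_c$ is the standard A'Campo/Denef--Loeser statement and the identification of that Milnor fiber with $F_\infty$ is the content of the Thom theorem quoted above, so no further difficulty arises.
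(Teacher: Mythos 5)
Your proposal is correct and follows essentially the route the paper itself relies on: the paper gives no independent proof of this statement, quoting it as a consequence of the Denef--Loeser / Guibert--Loeser--Merle realization results together with \cite[\S 2.4]{Rai10a} and Matsui--Takeuchi, and those sources argue exactly as you do --- log-resolution of $(X,\,X_\infty\cup\hat f^{-1}(\infty))$, rational form and limit of the modified zeta function $Z^{\delta}_{1/\hat f,\,i(\mathbb A^2_{\k})}(T)$, Euler-characteristic realization of the relative motivic nearby cycles (the A'Campo-type step), and identification of the nearby fiber of $1/\hat f$ intersected with $i(\mathbb A^2_{\k})$ with $f^{-1}(a)$ for $\abs{a}\gg 0$, i.e.\ with $F_\infty$, via the fibration at infinity. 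Two harmless imprecisions to fix in the write-up: the zero locus of $1/\hat f$ is $\hat f^{-1}(\infty)$, which is in general a proper subset of $X_\infty$, and the $\delta$-condition does not ``discard strata over which $1/\hat f$ does not vanish'' (those are already excluded by $\ord 1/\hat f(\varphi)=n\geq 1$); its role, as in Lemma \ref{lem:rationalite-zeta-omega} and \cite[\S 3.8]{GuiLoeMer06a}, is to control arcs too tangent to $X_\infty$ so that in the limit the subsets $I$ with $I\not\subset C_{1/\hat f}$ contribute $0$ while $I\subset C_{1/\hat f}$ contributes $(-1)^{\abs{I}}$.
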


{\begin{rem} \phantomsection \label{thm:lambda}
	We recall that the Milnor fiber at infinity $F_\infty$ of $f$ is the fiber $f^{-1}(R)$ for $R$ large enough, then it is homeomorphic to the generic fiber 
	$f^{-1}(a_{gen})$ of $f$, then we have 
	$\chi_c(F_\infty) = \chi_c(f^{-1}(a_{gen})).$
\end{rem}
}

Using that result, Proposition \ref{prop:caracteristiceuleraire} and Theorem \ref{thm:thmSfinfini} we have 
{
\begin{cor}[{Generalized Kouchnirenko formula for the generic fiber}] \label{Kouchnirenkoformulagenfiber}
	Let $f$ be a polynomial in $\mathbb C[x,y]$, {not in $\mathbb C[x]$ or $\mathbb C[y]$.} 
        With notations of Theorem \ref{thm:thmSfinfini} and subsection \ref{subsection:area}, we have
	\begin{itemize}
		\item if $f(x,y)=P(x^ay^b)$ with $P$ in $\k[s]$ with $(a,b)$ in $(\mathbb N^*)^2$ then we have 
			$\chi_{c}\left(F_\infty\right) = \chi_{c}\left(f^{-1}(a_{gen})\right) = 0,$
		\item otherwise in the general case we have
			\begin{equation} \label{formuleKouchnirenkogenfiber}
				\begin{array}{lll}
					\chi_{c}\left(F_\infty\right) = \chi_{c}\left(f^{-1}(a_{gen})\right)& = & \eps_{(a_0,0)}a_0 + \eps_{(0,b_0)}b_0 
					-2\sum_{\gamma \in \N_\infty(f)^o,\dim \gamma =1} \mathcal S_{\N(f_\gamma),f_{\gamma}}\\
					& + &  \sum_{\gamma \in \N_{\infty}(f)^o,\:\dim \gamma = 1 }\sum_{\mu \in R_\gamma} 
					\tilde{\chi}_{c}\left((S_{{(1/f)_{{\sigma_{(p,q,\mu)}}}}, v\neq 0})^{(1)}_{((0,0),0)}\right),
				\end{array}
			\end{equation}
			with $\eps_{(a_0,0)}$ (resp. $\eps_{(0,b_0)}$) is equal to $1$ and otherwise 0, if and only if $(a_0,0)$ (resp. $(0,b_0)$) is a face of $\N_\infty(f)$.
	\end{itemize}
\end{cor}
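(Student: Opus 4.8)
The plan is to push the formula for $S_{f,\infty}$ from Theorem \ref{thm:thmSfinfini} through the Euler characteristic realization, that is, through the ring morphism $\tilde{\chi}_c:\mathcal{M}_{\mathbb{C}}^{\hat\mu}\to\mathbb{Z}$ sending the class of a variety with $\hat\mu$-action to its compactly supported Euler characteristic. By the realization theorem recalled just above and by Remark \ref{thm:lambda}, the left-hand side becomes $\tilde{\chi}_c(S_{f,\infty}^{(1)})=\chi_c(F_\infty)=\chi_c(f^{-1}(a_{gen}))$, so the statement reduces to evaluating $\tilde{\chi}_c$ on the right-hand sides of formulas (\ref{resultat:cashomogene}) and (\ref{formuleSfinfini}).

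In the general (non-monomial) case I would start from formula (\ref{formuleSfinfini}) and use additivity of $\tilde{\chi}_c$. For the two axis terms, $\tilde{\chi}_c[1/x^{a_0}:\Gm\to\Gm,\sigma_\Gm]$ is $\chi_c$ of the fibre over $1$, i.e. of the scheme $\{x\in\Gm\mid x^{a_0}=1\}$ of $a_0$-th roots of unity, hence equals $a_0$; similarly the $(0,b_0)$ term contributes $b_0$. For a zero-dimensional $\gamma\in\N_\infty(f)^o$ not on a coordinate axis, $f_\gamma$ is a monomial $c\,x^\alpha y^\beta$ with $\alpha,\beta\ge 1$, so $\chi_c(f_\gamma^{-1}(1)\cap\Gm^2)=0$ by the first item of Proposition \ref{prop:caracteristiceuleraire}, and such faces disappear whatever $\eps_\gamma$ is. For a one-dimensional $\gamma\in\N_\infty(f)^o$, $\gamma$ is a maximal face not containing the origin, so $\eps_\gamma=(-1)^{\dim\gamma+1}=1$, while $f_\gamma$ has one of the quasi-homogeneous shapes of Notations \ref{rem:factorisationinfini}; Proposition \ref{prop:caracteristiceuleraire} then gives $\tilde{\chi}_c[1/f_\gamma:\Gm^2\setminus f_\gamma^{-1}(0)\to\Gm,\sigma_\gamma]=\chi_c(f_\gamma^{-1}(1)\cap\Gm^2)=-2\,\mathcal{S}_{\N(f_\gamma),f_\gamma}$, the identification of $-2r\mathcal{S}/\sum_i\nu_i$ with the area of Definition \ref{area-f} being immediate since $f_\gamma$ is supported on the single segment $\gamma$. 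Finally $\tilde{\chi}_c$ applied to the last double sum of (\ref{formuleSfinfini}) is, by definition, $\sum_{\gamma,\,\dim\gamma=1}\sum_{\mu\in R_\gamma}\tilde{\chi}_c\big((S_{1/f_{\sigma_{(p,q,\mu)}},v\neq 0})^{(1)}_{((0,0),0)}\big)$, which is the last sum in (\ref{formuleKouchnirenkogenfiber}). Collecting these four contributions yields exactly formula (\ref{formuleKouchnirenkogenfiber}).

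In the monomial case $f(x,y)=P(x^ay^b)$ with $(a,b)\in(\mathbb{N}^*)^2$ and $\deg P=d$, formula (\ref{resultat:cashomogene}) identifies $S_{f,\infty}$ with the single class $[1/(x^ay^b)^d:\Gm^2\to\Gm,\sigma_{\Gm^2}]$, whose image under $\tilde{\chi}_c$ is $\chi_c(\{(x^ay^b)^d=1\}\cap\Gm^2)$; since $ad$ and $bd$ are positive integers, this vanishes by the first item of Proposition \ref{prop:caracteristiceuleraire}. Hence $\chi_c(F_\infty)=\chi_c(f^{-1}(a_{gen}))=0$.

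Once the realization theorem and Theorem \ref{thm:thmSfinfini} are in hand, the computation is essentially bookkeeping; the only point needing care is to check that the zero-dimensional faces of $\N_\infty(f)^o$ off the axes, together with the weights $\eps_\gamma$ attached to the faces retained in the sum, behave so that (\ref{formuleSfinfini}) specializes cleanly to (\ref{formuleKouchnirenkogenfiber}), and — en route — that every face polynomial $f_\gamma$ occurring here is of the quasi-homogeneous Laurent type covered by Proposition \ref{prop:caracteristiceuleraire}, which is guaranteed by Notations \ref{rem:factorisationinfini}.
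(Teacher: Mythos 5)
Your proposal is correct and follows essentially the same route as the paper, which deduces the corollary precisely by applying the Euler characteristic realization $\tilde{\chi}_c$ to the expression of $S_{f,\infty}$ in Theorem \ref{thm:thmSfinfini} and evaluating the face terms via Proposition \ref{prop:caracteristiceuleraire}. Your term-by-term bookkeeping (roots of unity for the axis vertices, vanishing for monomial vertex faces, $-2\mathcal S_{\N(f_\gamma),f_\gamma}$ with $\eps_\gamma=1$ for the one-dimensional faces of $\N_\infty(f)^o$, and the untouched double sum) is exactly the computation the paper leaves implicit.
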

}
\begin{rem}Using as usual other realizations, we can obtain formula of the monodromy zeta function at infinity of $f$ or the spectrum at infinity of $f$ in terms of the iterated Newton polygons of $f$ in the Newton algorithm at infinity.
\end{rem}
{\begin{example} \label{example:example1Sfinfini}
We extend Example \ref{example:example1}. We observe that $\GN(f)=\N_{\infty}(f)$ and by Theorem \ref{thm:thmSfinfini} we have
\begin{equation} \label{example:formuleexample1Sfinfini}
\begin{array}{ccl}
	S_{f,\infty}  & = & [1/y:\Gm\ra\Gm,\sigma_{\Gm}]+[1/x^2:\Gm\ra\Gm,\sigma_{\Gm}] \\
	& + & 
	[(y(x^2y+1)^3)^{(-1)} : \Gm^{2}\setminus (x^2y+1=0)\ra \Gm,\sigma_{\gamma_{1}^{0}}] +
	[(x^2(xy+1)^4)^{-1}: \Gm^{2}\setminus (xy+1=0)\ra \Gm,\sigma_{\gamma_{2}^{0}}] 
	\\
	& - & [1/x^6y^4: \Gm^{2}\ra \Gm,\sigma_{x^6y^4}]
	+ \left(S_{1/f_1, v\neq 0}\right)_{((0,0),0)} 
	+\left(S_{1/f_2, v\neq 0}\right)_{((0,0),0)}
\end{array}
\end{equation}
with $f_1$ and $f_2$ defined in formulas (\ref{exemplef1}) and (\ref{exemplef2}).
\begin{itemize}
	\item[$\bullet$] Applying Theorem \ref{thmSfeps}, we have 
$$
\begin{array}{ccl}
	\left(S_{1/f_1, v\neq 0}\right)_{((0,0),0)} & = & [v:\Gm \to \Gm,\sigma_{\Gm}] +
	[(8v^{-2}w^3+5v^{-1})^{-1}:\Gm^2 \setminus (8v^{-2}w^3+5v^{-1}=0) \to \Gm, \sigma_{\gamma_1^{(1)}}]\\ 
	& & - [v^2w^{-3}:\Gm^2 \to \Gm] + (S_{1/(f_{1})_{\sigma_{\gamma_1}^{(1)}},v_1\neq 0})_{(0,0),0}.
\end{array}
$$
As $(f_{1})_{\sigma_{\gamma_1}^{(1)}}$ is a base case of Theorem \ref{thm:algo-Newton}, with $M=3$ and $m=1$, by Example \ref{ex:casdebase-f} we have 
$(S_{1/(f_{1})_{\sigma_{\gamma_1}^{(1)}},v_1\neq 0})_{(0,0),0} = 0.$

\item[$\bullet$] As the set $\N(f_2)^{-}$ is empty, applying Theorem \ref{thmSfeps}, we have
$(S_{1/f_2, v\neq 0})_{((0,0),0)} = 0.$
\end{itemize}

Assume now $\k=\mathbb C$. By Corollary \ref{Kouchnirenkoformulagenfiber}, we compute the Euler characteristic of the generic fiber of $f$. We have 
$$ 
\chi_{c}((y(x^2y+1)^3=1)\cap \Gm^{2}) = -2,\: \chi_{c}((x^2(xy+1)^4=1)\cap \Gm^{2}) = -2,\: \chi_{c}((8v^{-2}w^3+5v^{-1}=1) \cap \Gm^2) = -3 
$$
by Corollary \ref{KFSfeps} and Proposition \ref{prop:caracteristiceuleraire}. We conclude by formulas (\ref{formuleKouchnirenkogenfiber}) and (\ref{KFSf+}) that 
$$\chi_{c}(F_\infty)= 1 + 2 - 2 -2 - 0 + (1 -3 - 0) + 0 = -3.$$
\end{example}
}
\subsection{Topological bifurcation set, motivic bifurcation set, motivic nearby cycles at infinity}

\subsubsection{Topological bifurcation set} 
The following result is a consequence of ideas of Thom, see for instance \cite{Pha83a}.
\begin{thm} Let $f$ be a polynomial in {$\mathbb C[x,y]$}. There is a finite set $B$ such that the restriction 
	$$f : \mathbb C^2 \setminus f^{-1}(B) \ra \mathbb C \setminus B$$ is a $C^\infty$ -- locally trivial fibration.
The smallest convenient set $B$, denoted by $B_f^{\text{top}}$, is called \emph{topological bifurcation set} of $f$.  
\end{thm}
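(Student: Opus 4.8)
The plan is to reduce the statement to a \emph{proper} situation by compactifying $f$ and resolving, and then to invoke Thom's first isotopy lemma; this is the classical argument going back to Thom and Pham. Two sources of atypical values must be separated and each shown to be finite: the critical values of $f$ in $\C^2$, and the values where the topology of the fibres jumps because of the behaviour at infinity. (The cases $f\in\C[x]$ or $f\in\C[y]$ being trivial, I assume $f$ non constant in both variables.)

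First I would dispose of the affine critical values. Since $\nabla f$ vanishes identically along the critical locus $\Sigma(f)=\{\nabla f=0\}\subset\C^2$, the function $f$ is constant on each of the finitely many irreducible components of $\Sigma(f)$, so the set $\disc f$ of critical values of $f$ is finite (equivalently, this is generic smoothness, i.e. Sard, for $f\colon\C^2\To\C$).

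Next I would control the behaviour at infinity by working on a good compactification, of exactly the kind used throughout the paper. Starting from a compactification $(X,i,\hat f)$ as in Definition \ref{def:compactification} (for instance the explicit model of subsection \ref{compactification}), I would choose a log-resolution $h\colon\tilde X\To X$ so that $\tilde X$ is smooth, $\tilde f:=\hat f\circ h\colon\tilde X\To\mathbb P^1$ is proper, and the divisor at infinity $D:=\tilde X\setminus i(\C^2)$ together with $\tilde f^{-1}(\infty)$ is a normal crossings divisor. Then I would equip $\tilde X$ with an algebraic Whitney stratification in which $\C^2=\tilde X\setminus D$ is the open stratum and $D$, $\tilde f^{-1}(\infty)$ are unions of strata. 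There are only finitely many strata contained in $D$, and on each the restriction of the regular map $\tilde f$ is either constant or dominant, hence has finitely many critical values; let $B\subset\C$ be the union of $\disc f$ with the (finite) set of points of $\mathbb P^1\setminus\{\infty\}$ occurring as $\tilde f$-images of points of $D$ at which $\tilde f$ fails to be submersive along its stratum. Then $B$ is finite.

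Finally I would run Thom's first isotopy lemma for the proper stratified map $\tilde f$ over the connected open set $\C\setminus B$: there $\tilde f$ is a submersion on every stratum, hence a stratified locally trivial fibration, and the trivialisations can be chosen to respect the stratification, in particular to preserve the open stratum $\C^2$. Restricting to $\C^2$ yields the desired $C^\infty$ locally trivial fibration $f\colon\C^2\setminus f^{-1}(B)\To\C\setminus B$. For the minimality statement, let $B_f^{\text{top}}$ be the set of $a\in\C$ admitting no neighbourhood over which $f$ is locally trivial; the construction gives $B_f^{\text{top}}\subseteq B$, so $B_f^{\text{top}}$ is finite, and $f$ restricted over $\C\setminus B_f^{\text{top}}$ is by its very definition locally trivial, so $B_f^{\text{top}}$ is the smallest suitable set. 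The hard part is the infinity step: producing a resolution in which $\tilde f$ is simultaneously proper and its restriction to the divisor at infinity is stratified compatibly with $\tilde f$, so that only finitely many atypical-at-infinity values arise; once this proper stratified model is in hand, Thom's isotopy lemma and the affine part are routine.
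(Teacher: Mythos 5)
Your argument is correct and is exactly the classical Thom--Pham route that the paper itself invokes (it gives no independent proof, citing "ideas of Thom" via \cite{Pha83a}): compactify $f$ to a proper map, resolve and Whitney-stratify the divisor at infinity, collect the finitely many critical values of the strata together with $\text{disc}(f)$, and apply Thom's first isotopy lemma over the complement. So the proposal matches the paper's approach in substance, with the expected care at the step producing a proper stratified model at infinity.
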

H\`a and L\^e gave the following description of the topological bifurcation set		
\begin{thm}[H\`a-L\^e \cite{VuiTra84a}] \label{Le-Ha} 
	Let $f$ be a polynomial in $\mathbb C[x,y]$. The topological bifurcation set is 
	$$B_{f}^{\text{top}} = \{ a \in \mathbb C  \mid \chi_c(f^{-1}(a)) \neq \chi_c(f^{-1}(a_{gen}))\}.$$ 
\end{thm}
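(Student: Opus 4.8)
The plan is to prove the two inclusions between $B_f^{\text{top}}$ and the set $J:=\{a\in\mathbb C\mid \chi_c(f^{-1}(a))\neq\chi_c(f^{-1}(a_{gen}))\}$ separately. The inclusion $J\subseteq B_f^{\text{top}}$ is soft: by construction $f$ restricts to a $C^\infty$ locally trivial fibration over the connected set $\mathbb C\setminus B_f^{\text{top}}$, so all fibres $f^{-1}(a)$ with $a\notin B_f^{\text{top}}$ are homeomorphic and hence share the same compactly supported Euler characteristic, which is by definition $\chi_c(f^{-1}(a_{gen}))$; thus any $a$ with $\chi_c(f^{-1}(a))\neq\chi_c(f^{-1}(a_{gen}))$ lies in $B_f^{\text{top}}$.

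The substantive inclusion is $B_f^{\text{top}}\subseteq J$, and it is here that the two-variable hypothesis is essential. Fix $a_0\in B_f^{\text{top}}$ and a small disc $D$ around $a_0$ with $D^\ast:=D\setminus\{a_0\}$ disjoint from $B_f^{\text{top}}$, so that $f$ is a locally trivial fibration over $D^\ast$ with fibre $F=f^{-1}(a_{gen})$. I would choose a smooth projective compactification $(X,i,\hat f)$ of $f$ as in Definition~\ref{def:compactification} and shrink $D$ so that $D^\ast$ also avoids the finitely many critical values of $\hat f$ and of $\hat f|_{X_\infty}$; by Ehresmann's theorem both $\hat f$ and $\hat f|_{X_\infty}$ are then locally trivial fibrations over $D^\ast$. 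Combining additivity of $\chi_c$ along the decompositions $\hat f^{-1}(D)=\hat f^{-1}(a_0)\sqcup\hat f^{-1}(D^\ast)$ and $X=i(\mathbb A^2_{\k})\sqcup X_\infty$ with the multiplicativity of $\chi_c$ in the above fibrations and the vanishing $\chi_c(D^\ast)=0$, one obtains an identity
$$\chi_c\bigl(f^{-1}(a_0)\bigr)-\chi_c(F)=\mu(f,a_0)+\lambda_{a_0}(f),$$
where $\mu(f,a_0)=\sum_{p\in f^{-1}(a_0)}\mu_p(f-a_0)$ is the total Milnor number of the affine fibre $f^{-1}(a_0)$ (each summand a positive integer at a singular point and $0$ at a smooth point) and $\lambda_{a_0}(f)$ is the corresponding defect concentrated at the finitely many points of $X_\infty\cap\hat f^{-1}(a_0)$, assembled from the local Milnor numbers of $\hat f$ there and from the jump, between the special and the generic fibre, in the number of intersection points with $X_\infty$.

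To finish, two facts are needed. The first is the non-negativity $\lambda_{a_0}(f)\geq 0$, together with the statement that $\lambda_{a_0}(f)=0$ exactly when the family $(f^{-1}(t))_{t\in D}$ is equisingular at infinity near $a_0$; I would prove this by a purely local analysis at each point of $X_\infty\cap\hat f^{-1}(a_0)$, writing the contribution there, after a resolution at infinity, as a non-negative combination of vanishing-cycle numbers measuring the extra tangency of the fibre germ to the divisor at infinity, computable by a Newton-polygon/toric argument of the type developed throughout this paper. The second is the converse: if $a_0$ is a regular value of $f$ and the family is equisingular at infinity near $a_0$, then $f$ restricts to a trivial fibration over $D$, hence $a_0\notin B_f^{\text{top}}$; this I would obtain by lifting $\partial/\partial t$ to a vector field on a neighbourhood of $\hat f^{-1}(a_0)$ in $X$, tangent to $X_\infty$ and nowhere zero near $\hat f^{-1}(a_0)$ (possible precisely because there are no affine critical points and no vanishing cycles at infinity), and then integrating it, a Thom--Mather / L\^e--Ramanujam isotopy argument whose validity in this form again uses that the fibres are curves. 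Granting both facts: if $a_0\in B_f^{\text{top}}$ then $\mu(f,a_0)>0$ or $\lambda_{a_0}(f)>0$, so the right-hand side of the displayed identity is strictly positive and $\chi_c(f^{-1}(a_0))\neq\chi_c(F)$; together with the first inclusion this proves the equality.

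The main obstacle is the non-negativity of $\lambda_{a_0}(f)$. It is the only genuinely global analytic input, it fails for functions of three or more variables (where Betti numbers at infinity can jump with compensating signs), and proving it forces one into a precise description of how the fibres of $f$ behave near the line at infinity — which is exactly the computation that the present paper carries out motivically, and which in the isolated-singularity case ultimately identifies $B_f^{\text{top}}$ with $B_f^{\text{Newton}}$.
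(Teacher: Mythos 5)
The paper does not prove this statement at all: Theorem \ref{Le-Ha} is quoted from H\`a--L\^e \cite{VuiTra84a}, and your outline is essentially the classical Suzuki/H\`a--L\^e argument that also underlies the neighbouring Theorem \ref{chigenchia} — the soft inclusion from connectedness of the complement of the finite set $B_f^{\text{top}}$, the identity $\chi_c(f^{-1}(a_0))-\chi_c(f^{-1}(a_{gen}))=\mu_{a_0}(f)+\lambda_{a_0}(f)$ obtained from a compactification and additivity of $\chi_c$, non-negativity of $\lambda_{a_0}$ by upper semicontinuity of the Milnor numbers at infinity (the paper cites \cite{Bro88a} for exactly this), and the converse ``no affine critical points and $\mu$-constant at infinity implies local triviality'' by a L\^e--Ramanujam/Thom--Mather isotopy, which is legitimate here because the germs at infinity are plane curve germs. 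So the skeleton is the right one and matches the route the cited literature takes.

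There is, however, one concrete gap relative to the statement you are proving. The theorem is asserted for an arbitrary $f\in\mathbb{C}[x,y]$, but every step of your bookkeeping requires the fibre $f^{-1}(a_0)$ and its germs at infinity to have only isolated singularities, i.e.\ $f-a_0$ squarefree: if $f-a_0$ has a multiple factor, then $\mu(f,a_0)$ is not a finite sum of local Milnor numbers, the germs $H_{a_0}$ at the points of $X_\infty$ need not have isolated critical points, and your displayed identity is not even defined. (When $f$ has isolated singularities every fibre is automatically reduced, which is precisely why Theorem \ref{chigenchia} carries that hypothesis; Theorem \ref{Le-Ha} does not.) In the contrapositive direction $a_0\notin J\Rightarrow a_0\notin B_f^{\text{top}}$ you therefore need a separate argument for the finitely many values with non-squarefree $f-a_0$ — for instance a direct proof that $\chi_c(f^{-1}(a_0))>\chi_c(f^{-1}(a_{gen}))$ at such values, so that the hypothesis $a_0\notin J$ never occurs there, or a reduction to the squarefree case; as written these values simply escape your identity. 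A lesser point: the two ``facts'' you defer are the entire analytic content, and in the triviality step the delicate part is not the existence of a nowhere-zero lift of $\partial/\partial t$ but the gluing, near the finitely many singular points of $\hat f^{-1}(a_0)\cap X_\infty$, of a merely topological L\^e--Ramanujam trivialization of the $\mu$-constant family of germs with a genuine vector-field lift on the rest of a neighbourhood of the fibre, keeping everything tangent to $X_\infty$; your one-sentence description glosses over exactly this.
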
 

\subsubsection{$\lambda$-invariant}
{
	Let $f$ be a non constant polynomial in $\mathbb C[x,y]$. We denote by $d$ its degree and by $f_0$, \dots, $f_d$, its homogeneous components. 
	In this paragraph, we recall for any value $a$ the definition of the invariant $\lambda_{a}(f)$, which roughly speaking measures the non equisingularity at infinity of the fibers of $f$ in $\mathbb P^2_{\mathbb C}$, see for instance formula (\ref{def:lambda}). We consider first, the algebraic variety
	$\mathcal V = \{([x:y:z],a)\in \mathbb P^2_{\mathbb C} \times \mathbb A^1_{\mathbb C} \mid G(x,y,z,a) = 0\}$
	with 
	$G(x,y,z,a) = \tilde{f}(x,y,z)-az^{d}$
	where $\tilde{f}$ is the homogeneous polynomial associated to $f$. We denote by $i_{\mathcal V}:\mathbb A^2_{\mathbb C} \to \mathcal V$ the open dominant immersion which maps $(x,y)$ on $([x:y:1],f(x,y))$. We denote by $\hat{f}_{\mathcal V}:\mathcal V \to \mathbb A^1_{\mathbb C}$ the application 
	which maps $([x:y:z],a)$ to $a$. The triple $(\mathcal V,i_{\mathcal V}, \hat{f}_{\mathcal V})$ is a compactification of $f$. 
	The singular locus of $\mathcal V$ is the closed subset, denoted by $\mathcal V_{sing}$, and given by the equations
	\begin{equation} \label{eqsing} 
		\frac{\partial f_d}{\partial x} = \frac{\partial f_d}{\partial y} = f_{d-1} = z = 0.
	\end{equation}
	We observe that $\mathcal V_{sing}$ is equal to the product $\mathcal P \times \mathbb A^1_{\mathbb C}$ where $\mathcal P$ is the closed subset of 
	$\mathbb P^2_{\mathbb C}$ defined by (\ref{eqsing}). As $\frac{\partial f_d}{\partial x}$, $\frac{\partial f_d}{\partial y}$ and $f_{d-1}$ are homogeneous polynomials in two variables, their zero locus in $\mathbb P^1_{\mathbb C}$ is a finite set, thus $\mathcal P$ is a finite set. 

	We assume $f$ has isolated singularities. Let $p_0=[x_0:y_0:0]$ be an element of $\mathcal P$.
	We can assume for instance $x_0 \neq 0$. Then we work in the chart $x \neq 0$ with the coordinates $u=y/x$ and $v=z/x$. We define $u_0 = y_0/x_0$, $v_0=0$ and for any $a$ in $\mathbb C$, we consider the polynomial $H_{a}(u,v) = G(1,u,v,a).$
	The point $(u_0,v_0)$ is an isolated critical point of $H_a$, and we denote by $\mu_{p_0}(a)$ the Milnor number $\mu_{(u_0,v_0)}(H_a)$, it does not depend on the choice of the chart. It follows from Thom-Mather theorem that the set of values $\mu_{p_0}(a)$ parametrised by $a$ is finite. 
	We finally define for any value $a$, the classical invariants
	\begin{equation}\label{def:lambda} 
		\lambda_{p_0,a}(f) = \mu_{p_0}(a) - \mu_{p_0}(a_{\text{gen}}) \:\: \text{and} \:\: \lambda_{a}(f)= \sum_{p_0 \in \mathcal P} \lambda_{p_0,a}(f).
	\end{equation}
	By upper semi-continuity of the function $a \mapsto \mu_{p_0}(a)$ (see for instance \cite[Prop 2.3]{Bro88a}), we observe that $\lambda_{a}(f)\geq 0$ and equal to zero for almost every value $a$.
	This invariant was studied by Suzuki in \cite{Suz74}, H\`a and L\^e in \cite{VuiTra84a} or the first author in \cite{Cas96}.
	We refer to \cite{ArtLueMel00b} or \cite{Tibar} for generalization in dimension $\geq 3$. It follows from these references and Theorem \ref{Le-Ha} that
}
	\begin{thm} \label{chigenchia} Let $f$ be a polynomial in $\mathbb C[x,y]$ with isolated singularities. 
	We denote by $\chi_{c}(f^{-1}(a_{\text{gen}}))$ the Euler characteristic of the generic fiber of $f$. 
	For any value $a$ in $\mathbb C$, we have 
	\begin{equation} \label{formulechigenerique}
	\chi_{c}(f^{-1}(a))=\chi_{c}(f^{-1}(a_{\text{gen}}))+\mu_a(f)+\lambda_a(f)
	\:\:\text{and}\:\:
	\chi_{c}(f^{-1}(a_{\text{gen}}))=1-(\mu(f)+\lambda(f)) 
        \end{equation}
	with $\mu_a(f)$ equal to the sum of Milnor numbers of critical points of $f^{-1}(a)$, 
	$\mu(f)$ is the sum of Milnor numbers and $\lambda(f)$ is the sum of all $\lambda_a(f)$.
	In particular, we have the equality
	$B_f^{\text{top}} = \text{disc}(f) \cup \{a \in \mathbb C \mid \lambda_a(f)\neq 0\}.$
        \end{thm}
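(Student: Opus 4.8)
The plan is to assemble the statement from the references already cited in the paragraph preceding it, together with the Hà–Lê description of the topological bifurcation set (Theorem \ref{Le-Ha}). First I would recall the classical stratified-Morse-theory computation of the Euler characteristic of a fiber of a polynomial on a surface: for a fixed value $a$, the local behaviour of $f$ near its critical points contributes the sum $\mu_a(f)$ of Milnor numbers, while the failure of the compactification $(\mathcal V, i_{\mathcal V}, \hat f_{\mathcal V})$ to be equisingular at infinity along the finite set $\mathcal P$ contributes exactly $\lambda_a(f) = \sum_{p_0 \in \mathcal P} (\mu_{p_0}(a) - \mu_{p_0}(a_{\text{gen}}))$, as defined in \eqref{def:lambda}. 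This is precisely the content of the results of Suzuki \cite{Suz74}, Hà–Lê \cite{VuiTra84a} and the first author \cite{Cas96}; the second formula in \eqref{formulechigenerique} is the specialisation to $a = a_{\text{gen}}$ of the first, using that $\mathbb C^2$ is contractible so $\chi_c(\mathbb C^2 \setminus f^{-1}(a_{\text{gen}})) $ and an Euler-characteristic count of the total space of the (locally trivial, away from $B_f^{\text{top}}$) fibration gives $\chi_c(\mathbb A^2_{\mathbb C}) = 1 = \chi_c(f^{-1}(a_{\text{gen}})) + \mu(f) + \lambda(f)$, after summing the jumps over all values.

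Next, for the characterisation of $B_f^{\text{top}}$: by Theorem \ref{Le-Ha} (Hà–Lê), a value $a$ lies in $B_f^{\text{top}}$ if and only if $\chi_c(f^{-1}(a)) \neq \chi_c(f^{-1}(a_{\text{gen}}))$. Plugging in the first equality of \eqref{formulechigenerique}, this is equivalent to $\mu_a(f) + \lambda_a(f) \neq 0$. Since both summands are non-negative integers — $\mu_a(f) \geq 0$ trivially and $\lambda_a(f) \geq 0$ by upper semi-continuity of $a \mapsto \mu_{p_0}(a)$, as recalled just before the theorem (see \cite[Prop.~2.3]{Bro88a}) — this sum is non-zero if and only if $\mu_a(f) \neq 0$ or $\lambda_a(f) \neq 0$. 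Now $\mu_a(f) \neq 0$ exactly when $f^{-1}(a)$ contains a critical point of $f$, i.e.\ when $a \in \disc(f)$; hence $B_f^{\text{top}} = \disc(f) \cup \{a \in \mathbb C \mid \lambda_a(f) \neq 0\}$, as claimed.

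I would therefore structure the written proof in two short paragraphs: one quoting \cite{Suz74, VuiTra84a, Cas96} for \eqref{formulechigenerique} (with a brief indication of why the two formulas are equivalent via summation over all values and $\chi_c(\mathbb A^2_{\mathbb C}) = 1$), and one deriving the description of $B_f^{\text{top}}$ from Theorem \ref{Le-Ha}, \eqref{formulechigenerique}, and the non-negativity of $\mu_a(f)$ and $\lambda_a(f)$. The main obstacle is not a genuine mathematical difficulty but a bookkeeping one: making sure that the local Milnor-number contribution at infinity in the Euler-characteristic formula is exactly the $\lambda_a(f)$ defined through the auxiliary polynomials $H_a$ on the singular locus $\mathcal V_{\text{sing}} = \mathcal P \times \mathbb A^1_{\mathbb C}$, rather than some variant normalisation; this is why I would lean on the cited literature for \eqref{formulechigenerique} rather than reprove it, and only spell out the purely formal deduction of the bifurcation-set equality.
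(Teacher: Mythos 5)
Your proposal is correct and follows essentially the same route as the paper, which itself gives no independent argument: it simply invokes the classical results of Suzuki, Hà–Lê and Cassou-Noguès on $\lambda_a(f)$ together with Theorem \ref{Le-Ha}, exactly as you do, with the second formula and the bifurcation-set equality being the same routine bookkeeping (additivity/multiplicativity of $\chi_c$ over the fibration and non-negativity of $\mu_a(f)$ and $\lambda_a(f)$) that you spell out.
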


	\subsubsection{Motivic nearby cycles at infinity} \label{section:Sfainfini}

\begin{defn}[The morphism $S_{\hat{f}-a}^{\infty}$] \label{Sg^infini} 
	Let $f$ be a polynomial in $\k[x,y]$ and $(X,i,\hat{f})$ be a compactification of $f$, with $X_\infty = X\setminus i(\mathbb A^2_{\k})$.
	Let $a$ be a value in $\mathbb A^{1}_{\k}$. We denote by 
	$S_{\hat f -a}^{\infty} : \mathcal M_X \ra \mathcal M_{(X_\infty \cap \hat{f}^{-1}(a))\times \Gm}^{\Gm}$
	the composition of the morphism $S_{\hat{f}-a}:\mathcal M_X \to \mathcal M_{\hat{f}^{-1}(a)\times \Gm}^{\Gm}$ 
	(Theorem \ref{extensionaugroup}) with the morphism 
	$i_{(X_\infty \cap \hat{f}^{-1}(a))\times \Gm}^*:\mathcal M_{\hat{f}^{-1}(a)\times \Gm}^{\Gm} \to \mathcal M_{(X_\infty \cap \hat{f}^{-1}(a))\times \Gm}^{\Gm}$  (subsection \ref{inverse-direct}) and induced by the canonical injection $i:(X_\infty \cap \hat{f}^{-1}(a))\times \Gm \to \hat{f}^{-1}(a)\times \Gm$. 
\end{defn}
We recall some notions of 
\cite[\S 4]{Rai11} (see also constructions of Matsui, Takeuchi and Tib\u{a}r in \cite{Matsui-Takeuchi-13}, \cite{Matsui-Takeuchi-14} and \cite{Takeuchi-Tibar-16}).

\begin{thm}[Motivic nearby cycles at infinity] For any value $a$ in $\mathbb A_{\k}^{1}$, the motive $S_{f,a}^{\infty}$ defined as
	$$S_{f,a}^{\infty} = \hat{f_!}S_{\hat{f}-a}^{\infty}({[i:\mathbb A_{\k}^2\to X]}) \in {\mathcal M_{ \{a\}\times \Gm}^{\Gm}}$$
	does not depend on the chosen compactification $(X,i,\hat{f})$ and is called \emph{motivic nearby cycles at infinity} of $f$ for the value $a$.
\end{thm}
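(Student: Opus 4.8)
The plan is to show that if $(X,i,\hat{f})$ and $(X',i',\hat{f}')$ are two compactifications of $f$, then the resulting motives agree. First I would reduce to the case of a dominating compactification. Let $Y$ be the Zariski closure, with its reduced structure, of the image of the diagonal immersion $(i,i'):\mathbb A^2_{\k}\to X\times_{\mathbb P^1_{\k}}X'$, endowed with the proper morphism $\hat{g}:Y\to\mathbb P^1_{\k}$ induced by either projection. A direct check (using that the graph of $i'$ is closed in $\mathbb A^2_{\k}\times_{\mathbb A^1_{\k}}(X'\setminus(\hat{f}')^{-1}(\infty))$) shows that the induced morphism $\pi:Y\to X$ is proper and restricts to an isomorphism over $i(\mathbb A^2_{\k})$; in particular $\mathbb A^2_{\k}$ embeds as a dense open subset $i_Y:\mathbb A^2_{\k}\hookrightarrow Y$ with $\pi\circ i_Y=i$, the triple $(Y,i_Y,\hat{g})$ is a compactification of $f$, $\hat{f}\circ\pi=\hat{g}$, and $\pi^{-1}(X_\infty)=Y_\infty$. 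The same holds for $\pi':Y\to X'$. Hence it suffices to prove that $S_{f,a}^{\infty}$ computed with $(X,i,\hat{f})$ equals the one computed with $(Y,i_Y,\hat{g})$.

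Second, I would prove that the morphism $S_{\hat{f}-a}$ of Theorem \ref{extensionaugroup} commutes with proper pushforward along $\pi$. Since $\hat{f}\circ\pi=\hat{g}$, the morphism $\pi$ restricts to $\pi:\hat{g}^{-1}(a)\to\hat{f}^{-1}(a)$, inducing $\pi_!:\mathcal M_{\hat{g}^{-1}(a)\times\Gm}^{\Gm}\to\mathcal M_{\hat{f}^{-1}(a)\times\Gm}^{\Gm}$; the claim is $\pi_!\circ S_{\hat{g}-a}=S_{\hat{f}-a}\circ\pi_!$ as morphisms $\mathcal M_{Y}\to\mathcal M_{\hat{f}^{-1}(a)\times\Gm}^{\Gm}$ (working, if needed, over $Y\setminus\hat{g}^{-1}(\infty)$ and $X\setminus\hat{f}^{-1}(\infty)$, which still contain $\mathbb A^2_{\k}$ densely). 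Both sides are additive, so by Bittner's presentation of the Grothendieck ring (or resolution of singularities together with induction on dimension) it is enough to test the equality on classes $[p:W\to Y]$ with $W$ smooth and $p$ proper. For such a class, applying the characterization of Theorem \ref{extensionaugroup} with $U=W$ gives $S_{\hat{g}-a}([p:W\to Y])=p_!(S_{(\hat{g}-a)\circ p,W})$, hence $\pi_!S_{\hat{g}-a}([p:W\to Y])=(\pi\circ p)_!(S_{(\hat{g}-a)\circ p,W})$; symmetrically $\pi_![p:W\to Y]=[\pi\circ p:W\to X]$, and Theorem \ref{extensionaugroup} gives $S_{\hat{f}-a}([\pi\circ p:W\to X])=(\pi\circ p)_!(S_{(\hat{f}-a)\circ\pi\circ p,W})$. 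Since $(\hat{g}-a)\circ p=(\hat{f}-a)\circ\pi\circ p$, the two coincide. (This compatibility is also implicit in the constructions of Guibert--Loeser--Merle \cite{GuiLoeMer06a} and Bittner \cite{Bit05a}.)

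Third, I would conclude by a diagram chase. From $\pi\circ i_Y=i$ we get $[i:\mathbb A^2_{\k}\to X]=\pi_![i_Y:\mathbb A^2_{\k}\to Y]$ in $\mathcal M_{X}$, so by the previous step $S_{\hat{f}-a}([i:\mathbb A^2_{\k}\to X])=\pi_!\,S_{\hat{g}-a}([i_Y:\mathbb A^2_{\k}\to Y])$. Because $\pi^{-1}(X_\infty)=Y_\infty$, the square with vertices $Y_\infty\cap\hat{g}^{-1}(a)$, $\hat{g}^{-1}(a)$, $X_\infty\cap\hat{f}^{-1}(a)$ and $\hat{f}^{-1}(a)$ is Cartesian, so the base change identity $i^{*}\circ\pi_!=\pi_!\circ i^{*}$ for the restriction $i^{*}$ to $(X_\infty\cap\hat{f}^{-1}(a))\times\Gm$ yields $S_{\hat{f}-a}^{\infty}([i:\mathbb A^2_{\k}\to X])=\pi_!\,S_{\hat{g}-a}^{\infty}([i_Y:\mathbb A^2_{\k}\to Y])$. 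Finally, applying $\hat{f}_!$ and using $\hat{f}_!\circ\pi_!=(\hat{f}\circ\pi)_!=\hat{g}_!$ gives $\hat{f}_!\,S_{\hat{f}-a}^{\infty}([i:\mathbb A^2_{\k}\to X])=\hat{g}_!\,S_{\hat{g}-a}^{\infty}([i_Y:\mathbb A^2_{\k}\to Y])$, i.e. the two compactifications produce the same motive $S_{f,a}^{\infty}$.

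The step I expect to be the main obstacle is the second one: deducing the compatibility of $S_{\hat{f}-a}$ with proper pushforward from the characterization in Theorem \ref{extensionaugroup} requires the (standard but not entirely formal) fact that the relative Grothendieck ring $\mathcal M_{Y}$ is generated as a group by classes $[p:W\to Y]$ with $W$ smooth and $p$ proper, and one must make sure that $Y$, although possibly singular, still satisfies the hypotheses under which $S_{\hat{g}-a}$ is defined, and that the squares used in the base-change argument are genuinely Cartesian — all of which hinge on the properness and the isomorphism-over-the-affine-locus properties of $\pi$ established in the first step.
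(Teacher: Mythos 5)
Your argument is correct, and it is essentially the standard proof of this statement: the paper itself does not reprove the theorem but recalls it from \cite{Rai11}, where the independence is obtained exactly as you do, by dominating two compactifications by the closure of the diagonal in their fibre product over $\mathbb P^1_{\k}$ and using that the nearby cycles morphism of Theorem \ref{extensionaugroup} commutes with proper pushforward, together with base change for the restriction to the part at infinity. The only simplification available is that the compatibility of $\mathcal S_{\hat f-a}$ with proper pushforward, which you re-derive from the characterization on classes $[p:W\to Y]$ with $W$ smooth and $p$ proper, is already stated in \cite{Bit05a} and \cite{GuiLoeMer06a} and can be cited directly; your verification of it (and of the properness and isomorphism-over-$i(\mathbb A^2_{\k})$ properties of $\pi$) is nonetheless accurate.
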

\begin{rem}
Some remarks:
\begin{itemize}
	\item The motive $S_{\hat{f}-a}^{\infty}({[i:\mathbb A_{\k}^2\to X]})$ is the limit of the motivic zeta function
\begin{equation} \label{Zetafunctiona}
	Z_{\hat{f}-a,i(\mathbb A^2_{\k})}^{\delta, \infty}(T) = \sum_{n\geq 1} \mes(X_{n}^{\delta,\infty}(\hat{f}-a))T^n
\in \mathcal M_{(X_\infty \cap \hat{f}^{-1}(a))\times\Gm}^{\Gm}[[T]],
\end{equation}
		with $\delta$ an integer large enough (Proposition \ref{thmrationaliteopen}) and for any $n\geq 1$
		$$X_{n}^{\delta,\infty}(\hat{f}-a) = 
		\left\{ \begin{array}{c|c} 
			        \varphi(t) \in \mathcal L(X) & 
				\begin{array}{l} \varphi(0) \in X_{\infty},\: 
				  \ord \varphi^{*}\mathcal I_{X_{\infty}} \leq n \delta,\:
				  \ord (\hat{f}-a)(\varphi(t)) = n 
			        \end{array} 
		         \end{array} 
		\right\}.
		$$
		endowed with the structural application to $(\hat{f}^{-1}(a) \cap X_\infty) \times \Gm$, 
		$\varphi \mapsto (\varphi(0),\ac((\hat{f}-a)(\varphi)))$.
	\item Similarly to Remark \ref{continuity-pushforward}, we will identify $\mathcal M_{\{a\}\times\Gm}^{\Gm}$ with $\mgg$. 
\end{itemize}
\end{rem}

In \cite{FR}, Fantini and the second author proved the following result stated in dimension 2 here:

\begin{thm} \label{thm:inclusion-Btop-Bmot} \label{chilambda} 
	Let $f$ be a polynomial in $\mathbb C[x,y]$ with isolated singularities. Then, for any value $a$, we have the equality
	$$\tilde{\chi}_{c}\left(S_{f,a}^{\infty,(1)}\right) = -\lambda_a(f),$$
	with $S_{f,a}^{\infty,(1)}$ in $\mathcal M^{\hat{\mu}}$ the fiber in 1 of $S_{f,a}^{\infty}$ and 
	$\tilde{\chi}_c : \mathcal M_\k^{\hat{\mu}}\to \mathbb Z$ the Euler characteristic realization.
\end{thm}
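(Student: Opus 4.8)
The plan is to follow the argument of \cite{FR} specialized to surfaces, reducing the statement to the classical identity of Theorem~\ref{chigenchia} by means of the Euler characteristic realization of motivic nearby cycles. Fix any compactification $(X,i,\hat{f})$ of $f$ (e.g. the one of Definition~\ref{def:compactification}, or the variety $\mathcal V$ used to define $\lambda_a$); the motive $S_{f,a}^{\infty}$ does not depend on this choice. First I would use the decomposition $[\mathrm{id}_X] = [i:\mathbb A^2_{\mathbb C}\to X] + [\iota:X_\infty \hookrightarrow X]$ in $\mathcal M_X$ and apply the group morphism $S_{\hat f - a}$ of Theorem~\ref{extensionaugroup}, together with the fact that $\iota$ is proper, so that $S_{\hat f - a}([\iota]) = \iota_!\,\mathcal S_{(\hat f - a)|_{X_\infty}}$. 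This gives
$$ S_{\hat f - a}([i:\mathbb A^2_{\mathbb C}\to X]) = \mathcal S_{\hat f - a} - \iota_!\,\mathcal S_{(\hat f - a)|_{X_\infty}} \in \mathcal M_{\hat f^{-1}(a)\times \Gm}^{\Gm}, $$
where $\mathcal S_{\hat f - a}$ is the motivic nearby cycle of the \emph{proper} morphism $\hat f$ at $a$. Restricting to $(X_\infty\cap\hat f^{-1}(a))\times\Gm$ as in Definition~\ref{Sg^infini} and pushing forward by $\hat f_!$ expresses $S_{f,a}^\infty$ as a difference of two pushforwards of motivic nearby cycles of proper maps.

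\textbf{Euler characteristic realization.} Second I would apply $\tilde{\chi}_{c}$ (fibre at $1$) to this identity, invoking the Denef--Loeser realization: for a regular $g$ on a variety $Z$ and $z_0\in g^{-1}(0)$ one has $\tilde{\chi}_{c}(\mathcal S_{g,z_0}^{(1)})=\chi(F_{g,z_0})$, and by $\chi_c$-additivity the pushforward to a point realizes on $\int_{g^{-1}(0)}\chi(F_{g,z_0})\,d\chi = \chi_c(g^{-1}(a'))$ for $a'$ in a small punctured disc around $0$. Applied to $g=\hat f-a$ on $X$ and on $X_\infty$ (both proper, with critical values forming finite sets) and using $\hat f^{-1}(a)=f^{-1}(a)\sqcup(X_\infty\cap\hat f^{-1}(a))$ together with $\chi_c(\hat f^{-1}(a'))=\chi_c(f^{-1}(a'))+\chi_c(X_\infty\cap\hat f^{-1}(a'))$, the contributions of $X_\infty\cap\hat f^{-1}(a')$ cancel and one is left with
$$ \tilde{\chi}_{c}\bigl(S_{f,a}^{\infty,(1)}\bigr) = \chi_c(f^{-1}(a')) - \int_{f^{-1}(a)}\chi(F_{f-a,x_0})\,d\chi . $$

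\textbf{Conclusion.} Third, since $f$ has isolated singularities, so does $f^{-1}(a)$; by Milnor's theorem the local Milnor fibre at a plane‑curve singular point $x_0$ is homotopy equivalent to a bouquet of $\mu_{x_0}$ circles, so $\chi(F_{f-a,x_0})=1-\mu_{x_0}$ (and $=1$ at smooth points), whence by additivity $\int_{f^{-1}(a)}\chi(F_{f-a,x_0})\,d\chi = \chi_c(f^{-1}(a))-\mu_a(f)$. For the punctured disc small enough $a'\notin B_f^{\text{top}}$ (finiteness of $B_f^{\text{top}}$), so $\chi_c(f^{-1}(a'))=\chi_c(f^{-1}(a_{\text{gen}}))$; hence $\tilde{\chi}_{c}(S_{f,a}^{\infty,(1)}) = \chi_c(f^{-1}(a_{\text{gen}})) - \chi_c(f^{-1}(a)) + \mu_a(f)$, which is exactly $-\lambda_a(f)$ by the first equality in Theorem~\ref{chigenchia}.

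\textbf{Main obstacle.} The delicate point is to make the Euler characteristic realization rigorous although the compactification $X$ is singular precisely along $X_\infty$: one must check that $\tilde{\chi}_{c}$ commutes with the pushforward to a point and with restriction to the closed stratum $X_\infty\cap\hat f^{-1}(a)$, and that the motivic nearby cycle computed on a log resolution of $(X,\,X_\infty\cup\hat f^{-1}(a))$ genuinely realizes on the compactly supported Euler characteristic of the nearby fibre of $\hat f$. This is where Proposition~\ref{thmrationaliteopen} and Remark~\ref{continuity-pushforward} (the modified zeta function and its computation on a resolution) are needed, together with an Ehresmann/transversality argument guaranteeing a Milnor fibration of the proper map $\hat f$ over a punctured disc around $a$, so that $\chi_c(\hat f^{-1}(a'))$ is well defined and independent of $a'$ near $a$.
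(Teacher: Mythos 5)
The paper does not actually prove this statement: it is imported from \cite{FR}, where it is established in arbitrary dimension (there the identity reads $\tilde{\chi}_{c}(S_{f,a}^{\infty,(1)})=(-1)^{d-1}\lambda_a(f)$), so there is no internal proof to compare yours with step by step. Your argument is nonetheless correct in outline and is a genuinely dimension-two route: you reduce the statement to the classical Suzuki/H\`a--L\^e formula recorded as Theorem~\ref{chigenchia} by realizing the motivic nearby cycles morphism topologically and doing $\chi_c$-bookkeeping on a compactification. The computation checks out: the fibre-at-one realization of $S_{\hat f-a}([i:\mathbb A^2_{\mathbb C}\to X])$ is the stalkwise Euler characteristic of $\psi_{\hat f-a}(i_!\mathbb Q_{\mathbb A^2_{\mathbb C}})$, its integral over $X_\infty\cap\hat f^{-1}(a)$ equals $\chi_c(f^{-1}(a'))-\bigl(\chi_c(f^{-1}(a))-\mu_a(f)\bigr)$, and Theorem~\ref{chigenchia} converts this into $-\lambda_a(f)$; there is no circularity, since Theorem~\ref{chigenchia} is classical and independent of the motivic statement.

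Two steps need more than the references you invoke. First, the identity $S_{\hat f-a}([\iota:X_\infty\to X])=\iota_!\,\mathcal S_{(\hat f-a)\circ\iota}([\mathrm{id}_{X_\infty}])$ is not an instance of Theorem~\ref{extensionaugroup} as stated (that theorem requires a smooth source), and $X_\infty$ may be singular; you need the compatibility of the Bittner/Guibert--Loeser--Merle nearby-cycles morphism with proper pushforward, or a resolution of $X_\infty$ together with additivity. Second, the realization statement you use -- that on an arbitrary class (in particular $[\mathrm{id}_X]$ with $X$ singular, or the class of an open immersion) the fibre of $\mathcal S_{\hat f-a}$ realizes to the stalkwise $\chi$ of the topological nearby cycles of the corresponding constructible class, and that for the proper map $\hat f$ the integral of these numbers over the special fibre is $\chi_c$ of a nearby fibre -- is true, but it follows from Denef--Loeser's theorem in the smooth case plus additivity and proper base change, not from Proposition~\ref{thmrationaliteopen} or Remark~\ref{continuity-pushforward} alone. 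You correctly flag this as the delicate point; once those standard inputs are made explicit, your proof is complete.
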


{\begin{rem} Let $f$ be a polynomial in $\k[x,y]$. In the following we compute $S_{f,0}^{\infty}$ in terms of the combinatorics of iterated Newton polygons. The general case $a\neq 0$ can be deduced by translation. 
	We use Notations \ref{def:polygon-global} of Newton polygons.
\end{rem}
}
\begin{thm}[Computation of $S_{f,0}^{\infty}$] \label{thmcyclesprochesinfini} \label{cassegment}

	Let $f$ be a polynomial in $\k[x,y]$, {not in $\k[x]$ or $\k[y]$}. {Using the compactification $(X,i,\hat{f})$ of subsection \ref{compactification},} there is $\delta'>0$ such that for any $\delta\geq \delta'$:
	\begin{itemize}
		\item  If $\GN(f)$ is not a segment,  then we have
{\begin{equation}\label{ratzeroinfini}
		Z_{\hat{f},i(\mathbb A^2_k)}^{\delta,\infty}(T)  =  
		\sum_{\gamma \in {\widetilde{\N}(f)}}[f_\gamma : \Gm^{2}\setminus f_\gamma^{-1}(0)\ra \Gm,\sigma_{\gamma}]R_{\gamma}^{\delta,=}(T) 
		 +  \sum_{\gamma \in {\widetilde{\N}(f)},\: \dim \gamma = 1} \sum_{\mu \in R_\gamma} 
		(Z^{\delta/c(p,q)}_{{f_{{\sigma_{(p,q,\mu)}}}}, \omega_{p,q}, v\neq 0}(T))_{((0,0),0)}
\end{equation}
}
\begin{equation}\label{formulecyclesprochesinfini}
		S_{f,0}^{\infty}  =  \sum_{\gamma \in {\widetilde{\N}(f)}}
		\eps_{\gamma}[f_\gamma : \Gm^{2}\setminus f_\gamma^{-1}(0)\ra \Gm,\sigma_{\gamma}] 
		 +  { \sum_{\gamma \in {\widetilde{\N}(f)},\: \dim \gamma = 1}\sum_{\mu \in R_\gamma} 
		\left(S_{{f_{{\sigma_{(p,q,\mu)}}}}, v\neq 0}\right)_{((0,0),0)} } 
\end{equation}

\item If $f$ is the monomial $x^ay^b$, then we have
	\begin{equation}
					Z_{\hat{f},i(\mathbb A^2_k)}^{\delta,\infty}(T)  = [x^{a}y^{b} : \Gm^{2}\ra \Gm,\sigma_{\Gm^2}]R_{(a,b)}^{\delta,=}(T)
					\:\:\text{and}\:\:
					S_{f,0}^{\infty}  = 0.
			\end{equation}

		\item If $\GN(f)$ is a segment, we denote by $\gamma$ the one dimensional face of $\overline{\N}(f)$. 
	It is a segment with vertices $(a_0,b_0)$ and $(a_1,b_1)$ {such that $a_0\leq a_1$ and if $a_0=a_1$ then $b_0<b_1$}. We define $\delta_{(a_0,b_0)}$ as $1$ if $(a_0,b_0)\neq (0,0)$ otherwise $0$. 
	{We choose an equation of the underlying line of $\gamma$ as $ap+bq=N$ with $(p,q)$ in $\mathbb Z^2 \setminus (\mathbb Z_{\leq 0})^2$ and coprime}
	\begin{itemize}
		\item If {$pq<0$}, we have 
			\begin{equation} \label{ratzetaqh1}
				\begin{array}{lll}
					Z_{\hat{f},i(\mathbb A^2_k)}^{\delta,\infty}(T) & = & 
					\delta_{(a_0,b_0)}[x^{a_0}y^{b_0} : \Gm^{2}\ra \Gm,\sigma_{\Gm^2}]R_{(a_0,b_0)}^{\delta,=}(T)
					+ [x^{a_1}y^{b_1} : \Gm^{2}\ra \Gm,\sigma_{\Gm^2}]R_{(a_1,b_1)}^{\delta,=}(T) \\ 
					& + &[f:\Gm^{2}\setminus f^{-1}(0) \ra \Gm,\sigma_{\gamma}]R_{\gamma}^{\delta,=}(T)\\
					& + &\sum_{\mu \in R_\gamma} 
					(Z^{\delta/c(p,q)}_{{f_{{\sigma_{(p,q,\mu)}}}}, \omega_{p,q}, v\neq 0}(T))_{((0,0),0)}
					+ (Z^{\delta/c(p,q)}_{ {f_{\sigma(-p,-q,\mu)}}, \omega_{p,q}, v\neq 0}(T))_{((0,0),0)},
				\end{array}
			\end{equation}
			\begin{equation} \label{formulescyclesprochesinfinisegment1}
				\begin{array}{lll}
					S_{f,0}^{\infty} & = & 
					\delta_{(a_0,b_0)}\eps_{(a_0,b_0)}[x^{a_0}y^{b_0} : \Gm^{2}\ra \Gm,\sigma_{\Gm^2}]
					+ \eps_{(a_1,b_1)} [x^{a_1}y^{b_1} : \Gm^{2}\ra \Gm,\sigma_{\Gm^2}]\\
					& + & \eps_{\gamma}[f:\Gm^{2}\setminus f^{-1}(0) \ra \Gm,\sigma_{\gamma}]
					+  \delta_N \sum_{\mu \in  R\gamma}[x^{{\abs{N}}}y^{\nu(\mu)}:\Gm^2 \to \Gm, \sigma_{\Gm^2}], 
				\end{array}
			\end{equation}
			with $\delta_N=-1$ if $N\neq 0$ otherwise 0, and for any root $\mu$, $\nu(\mu)$ is the multiplicity of $\mu$.

	\item  If $pq\geq 0$ we have 
			\begin{equation} \label{ratzetaqh2}
				\begin{array}{lll}
					Z_{\hat{f},i(\mathbb A^2_k)}^{\delta,\infty}(T) & = & 
					[x^{a_0}y^{b_0} : \Gm^{2}\ra \Gm,\sigma_{\Gm^2}]R_{(a_0,b_0)}^{\delta,=}(T)
					+ [x^{a_1}y^{b_1} : \Gm^{2}\ra \Gm,\sigma_{\Gm^2}]R_{(a_1,b_1)}^{\delta,=}(T)\\
					& + & [f:\Gm^{2}\setminus f^{-1}(0) \ra \Gm,\sigma_{\gamma}] R_{\gamma}^{\delta,=}(T) 
					 +   \sum_{\mu \in R_\gamma} 
					 (Z^{\delta/c(p,q)}_{ {f_{{\sigma_{(p,q,\mu)}}}}, \omega_{p,q}, v\neq 0}(T))_{((0,0),0)}
				\end{array}
			\end{equation}
			\begin{equation}\label{formulescyclesprochesinfinisegment2}
				S_{f,0}^{\infty} = 0.
			      \end{equation}
	\end{itemize}

\end{itemize}
{All these formulas use Notations \ref{notation-c-omega} and for any face $\gamma$, the expression of $R_{\gamma}^{\delta,=}(T)$ is given in  formula (\ref{eqRgammaa}) of Proposition \ref{prop:casfacecontenuedansaxe}, and formulas (\ref{eqdecompozeta+}), (\ref{eqcas+a1}), (\ref{eqcas+a2}) and (\ref{eqcas+b1})
of Proposition \ref{lem:epsgammafacedim0-infini} 
and $\eps_\gamma$  belongs to $\{-2, -1, 0\}$ if $\gamma$ is zero-dimensional and to $\{0,1\}$ if $\gamma$ is one-dimensional.}
\end{thm}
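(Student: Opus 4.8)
The plan is to imitate the proof of Theorem \ref{thm:thmSfinfini}, replacing everywhere the extension of $1/f$ by the extension of $\hat{f}$ (i.e. $f-0$) on $X\setminus \hat{f}^{-1}(\infty)$, and to compute the modified motivic zeta function $Z_{\hat{f},i(\mathbb A^2_k)}^{\delta,\infty}(T)$ by decomposing the relevant arc space along the compact faces of $\overline{\N}(f)$. First I would fix the compactification $(X,i,\hat{f})$ constructed in subsection \ref{compactification} (the closure of the graph of $f$ in $(\mathbb P^1_\k)^3$), so that $\hat{f}^{-1}(\infty)\cap X_\infty$ is a well-understood divisor, and I would describe the arcs $\varphi$ with $\varphi(0)\in X_\infty$, $\ord\varphi^*\mathcal I_{X_\infty}\le n\delta$, $\ord\hat{f}(\varphi)=n$ in the coordinate charts of $X$ at infinity. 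The key point, exactly as in Remark \ref{rem:m} and Proposition \ref{rem:decomposition}, is that such arcs, read in the $(x,y)$ coordinates, have $(\ord x,\ord y)$ lying in a cone dual to a face of $\overline{\N}(f)$, and the condition $\ord\hat{f}(\varphi)=n$ splits according to whether the face polynomial $f_\gamma$ vanishes on the angular components or not. So I would write
\begin{equation}\label{plandecompo}
 Z_{\hat{f},i(\mathbb A^2_k)}^{\delta,\infty}(T) = \sum_{\gamma\in\overline{\N}(f)} Z^{\delta}_{\gamma,+}(T),
 \qquad Z^{\delta}_{\gamma,+}(T)=Z^{\delta,=}_{\gamma,+}(T)+Z^{\delta,<}_{\gamma,+}(T)
\end{equation}
for non-monomial faces, the second summand being zero when $\dim\gamma=0$, and I would note (analogue of Remark \ref{rem:NfinfinipourSfinfini}) that only the faces in $\widetilde{\N}(f)$ — those of $\overline{\N}(f)$ not contained in $\N(f)$, together with $\gamma_h,\gamma_v$ — can contribute, since faces meeting the origin and touching $\N(f)$ produce zero limit.

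Next I would compute each piece. For $Z^{\delta,=}_{\gamma,+}(T)$ the computation is the same rational-cone summation as in Proposition \ref{lem:fctzetaegalinfini} (with ``$+$'' in place of ``$-$''): the $m$-jet space fibers over $[f_\gamma:\Gm^2\setminus f_\gamma^{-1}(0)\to\Gm,\sigma_\gamma]$, the measure is $\mathbb L^{-\alpha-\beta}$ times this class, and Lemma \ref{lemmerationalitedescones} gives rationality and the limit $\eps_\gamma=(-1)^{\dim\gamma+1}$ (or $0$ if $\gamma$ lies in a face through the origin on the wrong side) — this defines the rational functions $R^{\delta,=}_\gamma(T)$ quoted in the statement. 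For the faces contained in a coordinate axis one applies instead Proposition \ref{prop:casfacecontenuedansaxe}, yielding the $R^{\delta,=}_{(a_0,b_0)}(T)$ terms. For $Z^{\delta,<}_{\gamma,+}(T)$ with $\gamma$ one-dimensional and primitive exterior normal $(p,q)$, a change of variables given by the Newton transformation $\sigma_{(p,q,\mu)}$ (as in Propositions \ref{prop:rationalityZ<infini-infini} and \ref{prop:rationalityZ<infini-zero}) identifies this with the zeta function $(Z^{\delta/c(p,q)}_{f_{\sigma_{(p,q,\mu)}},\omega_{p,q},v\neq0}(T))_{((0,0),0)}$ summed over $\mu\in R_\gamma$, whose rationality and limit $(S_{f_{\sigma_{(p,q,\mu)}},v\neq0})_{((0,0),0)}$ come from Theorem \ref{thmSfeps} (now in the case $\eps=+$). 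Taking the limit $T\to\infty$ using the $\mathcal M_\k$-linearity of $\lim_{T\to\infty}$ gives formulas (\ref{ratzeroinfini})–(\ref{formulecyclesprochesinfini}).

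It remains to treat the segment cases separately, since then $\overline{\N}(f)$ has a single one-dimensional face $\gamma=[(a_0,b_0),(a_1,b_1)]$ whose dual cone in $\mathbb R^2$ is a half-line, and the arcs at infinity can approach $X_\infty$ from two opposite directions; this is why both $\sigma_{(p,q,\mu)}$ and $\sigma_{(-p,-q,\mu)}$ occur when $pq<0$ (formula (\ref{ratzetaqh1})), while if $pq\ge 0$ only one direction meets $X_\infty$ and the other face $\gamma_h$ or $\gamma_v$ plays no role, giving $S_{f,0}^\infty=0$ in (\ref{formulescyclesprochesinfinisegment2}). The monomial case $f=x^ay^b$ is immediate: only the single face $(a,b)$ contributes, with $\eps_{(a,b)}=0$ because it contains the origin, so $S_{f,0}^\infty=0$. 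Finally, for the $pq<0$ segment I would observe, as in Theorem \ref{thmSfeps} Example \ref{ex:casdebase-f}, that the contributions of $\sigma_{(\pm p,\pm q,\mu)}$ add up to $\delta_N\sum_{\mu}[x^{|N|}y^{\nu(\mu)}:\Gm^2\to\Gm,\sigma_{\Gm^2}]$ after the Grothendieck-ring identities (\ref{seq1})–(\ref{seq3}), which yields (\ref{formulescyclesprochesinfinisegment1}). The main obstacle I anticipate is the bookkeeping in the segment case: correctly tracking which side of the segment corresponds to arcs landing in $X_\infty$ versus in the affine part, keeping the exponents $c(p,q)$ and the normalization of $(p',q')$ consistent with Definition \ref{def:lestransformationsdeNewton}, and verifying that the spurious $\gamma_h/\gamma_v$ contributions cancel exactly rather than leaving a residual term; everything else is a routine transcription of the proof of Theorem \ref{thm:thmSfinfini} with $1/\hat f$ replaced by $\hat f$.
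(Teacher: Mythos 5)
Your architecture is the paper's own: the compactification of subsection \ref{compactification}, the decomposition of Proposition \ref{rem:decomposition} along faces of $\overline{\N}(f)$ restricted to $\widetilde{\N}(f)$, the split $Z^{\delta}_{\gamma,+}=Z^{\delta,=}_{\gamma,+}+Z^{\delta,<}_{\gamma,+}$, the identification of $Z^{\delta,<}_{\gamma,+}$ with the local zeta functions of the Newton transforms (Propositions \ref{prop:rationalityZ<infini-infini}, \ref{prop:rationalityZ<infini-zero} and the horizontal/vertical analogues), Theorem \ref{thmSfeps} for their limits, and a separate treatment of the segment and monomial cases. The genuine gap is in your treatment of $Z^{\delta,=}_{\gamma,+}$: you assert it is "the same rational-cone summation as in Proposition \ref{lem:fctzetaegalinfini} with $+$ in place of $-$", with limit $\eps_\gamma=(-1)^{\dim\gamma+1}$ unless $\gamma$ lies in a face through the origin. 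That is the $1/\hat f$ computation, and it fails here. For $\hat f$ the condition $\ord \hat f(\varphi)=n>0$ forces $\m(\alpha,\beta)<0$ for $(\alpha,\beta)=-(\ord x(\varphi),\ord y(\varphi))$, so the summation runs over $C_{\gamma,+}^{\delta,=}\subset C_\gamma\cap H_\gamma\cap\Omega$, not over $C_\gamma$; the resulting $\eps_\gamma=-\chi_c(C_{\gamma,+}^{\delta,=}\cap\Omega)$ is governed by the sign of $N$ and by which quadrants $C_\gamma\cap H_\gamma$ meets. It lies in $\{-2,-1,0\}$ for vertices (each of the two quadrant pieces can contribute $1$) and equals $1$ for a one-dimensional face only when $N<0$ with exterior normal in $\Omega$, being $0$ whenever $N\geq 0$, irrespective of whether the face touches the origin. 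This is exactly the content of Proposition \ref{lem:epsgammafacedim0-infini} (and Proposition \ref{prop:casfacecontenuedansaxe} for faces on the axes), and it is the essential new input of this theorem compared with Theorem \ref{thm:thmSfinfini}. With your values, formula (\ref{formulecyclesprochesinfini}) would be false: an edge of $\N_{\infty,\infty}(f)$ has $N>0$ and does not contain the origin, so your rule gives it $\eps_\gamma=1$ instead of the correct $0$, which would ruin the later vanishing results (Remark \ref{annulationesp}, Proposition \ref{generic-Sfnul}).

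Two smaller corrections. In the segment case with $pq<0$, the reduction of the two Newton-transform contributions to $\delta_N\sum_{\mu}[x^{\abs{N}}y^{\nu(\mu)}:\Gm^2\to\Gm,\sigma_{\Gm^2}]$ does not go through the identities (\ref{seq1})--(\ref{seq3}) of Example \ref{ex:casdebase-f}; the point, as in the paper, is that $f_{\sigma_{(p,q,\mu)}}$ and $f_{\sigma(-p,-q,\mu)}$ are a unit times $v^{-N}w^{\nu(\mu)}$ and $v^{N}w^{\nu(\mu)}$ respectively, so Example \ref{ex:monomial} applies and exactly one of the two motives equals $-[v^{\abs{N}}w^{\nu(\mu)}]$ while the other vanishes (both vanish if $N=0$). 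And in the monomial case $f=x^ay^b$ the face $(a,b)$ does not contain the origin; $\eps_{(a,b)}=0$ again follows from the cone analysis of Proposition \ref{lem:epsgammafacedim0-infini} (case \ref{cas2biieps}, one generator of each quadrant piece lying on the line $a\alpha+b\beta=0$), not from the face passing through $(0,0)$.
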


\begin{proof}
	We give the ideas of the general proof using above notations and refer to subsection \ref{proofthmSfeps} for details. 
	It is similar to the proof of Theorem \ref{thmSfeps} {or Theorem \ref{thm:thmSfinfini}}.
	We consider a polynomial $f$ in $\k[x,y]$ not in $\k[x]$ or $\k[y]$. 
	In subsection \ref{compactification} we consider the compactification $(X,i,\hat{f})$ of the graph of $f$ in $(\mathbb P^1_{\k})^3$.
	In Proposition \ref{rem:decomposition}, we consider the decomposition 
	\begin{equation} 
		Z^{\delta}_{\hat{f},i(\mathbb A^2_\k)}(T) = \sum_{\gamma \in \overline{\N}(f)} Z^{\delta}_{\gamma,+}(T) 
	\end{equation}
	Assume $\GN(f)$ is not a segment. We only consider faces $\gamma$, such that the dual cone $C_\gamma$ is not contained in $\mathbb Z_{\leq 0}\times \mathbb Z_{\leq 0}$, then we only consider faces in $\widetilde{N}(f)$.
	Depending on the fact that the face polynomial $f_\gamma$ vanishes or not on the angular components of the coordinates of an arc 
	(Remark \ref{rem:m}), we decompose in formula (\ref{decompositionzeta=<}), the zeta function $Z^{\delta}_{\gamma,+}(T)$ as a sum of $Z^{\delta,=}_{\gamma,+}(T)$ and $Z^{\delta,<}_{\gamma,+}(T)$. 
	In particular if the face $\gamma$ is zero dimensional then $Z^{\delta,<}_{\gamma,+}(T)$ is zero {and the case of faces contained in coordinate axes is done in Proposition \ref{prop:casfacecontenuedansaxe}}.  
	In Proposition \ref{lem:epsgammafacedim0-infini} we show the rationality and compute the limit of the zeta function $Z^{\delta,=}_{\gamma,+}(T)$. 
	In Proposition \ref{prop:rationalityZ<infini-infini} and Proposition \ref{prop:rationalityZ<infini-zero},
	we prove the decomposition 
	$$Z^{\delta,<}_{\gamma,+}(T) = 
	\sum_{\mu \in R_\gamma} 
	\left(Z^{\delta/c(p,q)}_{{ f_{{\sigma_{(p,q,\mu)}}}}, \omega_{p,q}, v\neq 0}\right)_{((0,0),0)}.$$ 
	We use section \ref{thmSfeps} to obtain the rationality and an expression of the limit of 
	$\left(Z^{\delta/c(p,q)}_{{ f_{{\sigma_{(p,q,\mu)}}}}, \omega_{p,q}, v\neq 0}\right)_{((0,0),0)}$.
	Similarly in subsection \ref{sec:cashorizontalinfini} and \ref{sec:casverticalinfini} we consider the case of the horizontal and vertical faces.
	
	Assume $\GN(f)$ is a segment. We denote by $\gamma$ the one dimensional face of $\overline{\N}(f)$. 
	It is a segment with vertices $(a_0,b_0)$ and $(a_1,b_1)$ {such that $a_0\leq a_1$ and if $a_0=a_1$ then $b_0<b_1$}.
	We choose an equation of the underlying line of $\gamma$ as $ap+bq=N$ with $(p,q)$ in $\mathbb Z^2 \setminus (\mathbb Z_{\leq 0})^2$ and coprime.
	\begin{itemize}
		\item Assume $pq<0$. In that case we have 
			$C_\gamma {\cap \Omega} = \mathbb R_{>0}(p,q) + \mathbb R_{>0}(-p,-q),$
			then using similar ideas as in the previous case, we obtain formula (\ref{ratzetaqh1}).
			We remark also that for any Newton transformation at infinity $\sigma$ associated to a root $\mu$ of $f$ with multiplicity $\nu$, the Newton transforms $f_{{\sigma_{(p,q,\mu)}}}$ and $f_{\sigma(-p,-q,\mu)}$ have the form $u(v,w)v^{-N}w^{\nu}$ and $u(v,w)v^{N}w^{\nu}$ with $u$ a unit.
			Applying Example \ref{ex:monomial}, formula (\ref{formulescyclesprochesinfinisegment1}) is satisfied.
			\item   Assume $pq\geq 0$. In that case we have 
			$C_\gamma {\cap \Omega}= \mathbb R_{>0}(p,q)$
			and then, using similar ideas as in two previous cases, we obtain formula (\ref{ratzetaqh2}). We prove now formula (\ref{formulescyclesprochesinfinisegment2}). 
			First of all, as $pq\geq 0$ we necessarily have $N>0$, then $\eps_\gamma = 0$ and as above 
			$(S_{f_{{\sigma_{(p,q,\mu)}}}, v\neq 0})_{((0,0),0)}$ is zero for any roots $\mu$ of $f$. 
			Furthermore, 
			\begin{itemize}
				\item  if $a_0 = 0$ (resp $b_1 = 0$) then the intersection $C_{(a_0,b_0)}\cap H_{(a_0,b_0)} \cap \Omega$ is empty and 
					$\eps_{(0,b_0)}=0$ (similarly $\eps_{(a_1,0)} = 0$). 
				\item  If $a_0 \neq 0$ (resp $b_1 \neq 0$) then we have
					$\Omega \cap C_{(a_0,b_0)} \cap H_{(a_0,b_0)} = \mathbb R_{>0} (0,-1) + \mathbb R_{>0} (b_0,-a_0) \subset \mathbb R_{>0} \times \mathbb R_{<0}$
					and we obtain $\eps_{(a_0,b_0)} = 0$ (similarly $\eps_{(a_1,b_1)} = 0$) by Proposition \ref{lem:epsgammafacedim0-infini}.
			\end{itemize}
	\end{itemize}
	The monomial case follows from Proposition \ref{lem:epsgammafacedim0-infini}.
\end{proof}

{\begin{rem}  \label{annulationesp} 
	Let  $f$ be a polynomial in $\k[x,y]$ with $\GN(f)$ not a segment. 
	We use notations of Theorem \ref{thmcyclesprochesinfini}.
	\begin{itemize}
		\item We refer to Proposition \ref{prop:casfacecontenuedansaxe} for the value of $\eps_\gamma$ for $\gamma$ a face equal to $(a,0)$ with $a>0$ or $(0,b)$ with $b>0$. We precise two particular cases
			\begin{itemize}
				\item  if the dual cone of the face $(a,0)$ is 
					$C_{(a,0)}=\mathbb R_{>0}(0,-1)+\mathbb R_{>0}\eta$ with $(\eta \mid (1,0))>0$. 
					Then we have $\eps_{(a,0)} = 0$. Indeed, the half-space $H_{(a,0)}=\{(\alpha,\beta)\mid a\alpha<0\}$ does not intersect 
					$C_{(a,0)}$, and we conclude by Proposition \ref{prop:casfacecontenuedansaxe}. 	

				\item[$\bullet$] if the dual cone of the face $(0,b)$ is 
					$C_{(0,b)}=\mathbb R_{>0}(-1,0)+\mathbb R_{>0}\eta$ with $(\eta \mid (0,1))>0$. 
					Then we have $\eps_{(0,b)} = 0$. Indeed, the half-space $H_{(0,b)}=\{(\alpha,\beta)\mid b\beta<0\}$ does not intersect 
					$C_{(0,b)}$, and we conclude by Proposition \ref{prop:casfacecontenuedansaxe}. 	
			\end{itemize}
		\item By Proposition \ref{lem:epsgammafacedim0-infini} and its notations, we have $\eps_\gamma=0$ for any zero-dimensional face $\gamma$ intersection of two one-dimensional faces $\gamma_1$ and $\gamma_2$, with exterior normal vectors $\omega_1$ and $\omega_2$ such that 
			$(\gamma \mid \omega_1) \geq 0$ and $(\gamma \mid \omega_2) \geq 0$. Indeed, under this assumption, the intersection $C_\gamma \cap H_\gamma$ is empty.
		\item If a face $\gamma$ belongs to $\N(f)$ but is not the horizontal or vertical face $\gamma_h$ or $\gamma_v$ of $f$ in Definition 
			\ref{def:Newton-polygon-height}, then its dual cone $C_\gamma$ does not intersect $\Omega$, and $\eps_\gamma = 0$.
	\end{itemize}
\end{rem}
}

From Theorem \ref{thm:inclusion-Btop-Bmot} and Theorem \ref{thmcyclesprochesinfini}, we deduce the following Kouchnirenko {type} formula for the invariant $\lambda$.

\begin{cor}[Kouchnirenko {type} formula for the invariant $\lambda$] \label{calcullambdaaire} 
	{Assume $\k= \mathbb C$}. Let $f$ be a polynomial in $\k[x,y]$ {not in $\k[x]$ or $\k[y]$}, with isolated singularities.

	\begin{itemize}
		\item  Assume $\GN(f)$ is not a segment. Then we have,
			\begin{equation} \label{formule:lambda}	
				\lambda_{0}(f)  =  
				2\sum_{\gamma \in {\widetilde{\N}(f)},\dim \gamma=1}\eps_{\gamma}\mathcal S_{\N(f_\gamma),f_\gamma} 
				- \sum_{\gamma \in {\widetilde{\N}(f)},\: \dim \gamma = 1} \sum_{\mu \in R_\gamma} 
				\tilde{\chi}_{c}\left(\left(S_{f_{{\sigma_{(p,q,\mu)}}}, v\neq 0}\right)^{(1)}_{((0,0),0)}\right),
			\end{equation}
		
		where $\eps_\gamma$ belongs to $\{0,1\}$ for faces of dimension 1.
	\item  Assume $f$ to be quasi homogeneous polynomial with simple roots and which is not monomial. Let $\gamma$ be its one-dimensional face with primitive exterior normal vector $(p,q)$ and underlying line of equation $ap+bq=N$. Then, we have 
		\begin{equation} \label{formule:lambda-segment} 
			\lambda_{0}(f)  =  2\eps_{\gamma}\mathcal S_{\N(f_\gamma),f_\gamma}.
		\end{equation}
		where $\eps_\gamma$ belongs to $\{0,1\}$.
		More precisely we have, if $pq\geq 0$ then $\lambda_{0}(f)=0$, if $pq<0$ and $N=0$ then $\lambda_{0}(f)=0$ and
		if $pq<0$ and $N\neq 0$ then $\lambda_{0}(f) = 2\mathcal S_{\N(f_\gamma),f_\gamma}=2\mathcal S_\gamma$ where $\mathcal S_\gamma$ is the area associated to the face $\gamma$ (Proposition \ref{prop:caracteristiceuleraire}).
	\end{itemize}
\end{cor}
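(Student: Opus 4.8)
The plan is to combine Theorem~\ref{thm:inclusion-Btop-Bmot} (which gives $\tilde{\chi}_{c}(S_{f,0}^{\infty,(1)}) = -\lambda_0(f)$), the explicit computation of $S_{f,0}^{\infty}$ in Theorem~\ref{thmcyclesprochesinfini}, and the realization of the Euler characteristic on the pieces appearing in that formula, namely Proposition~\ref{prop:caracteristiceuleraire} and Corollary~\ref{KFSfeps}. Concretely, in the case where $\GN(f)$ is not a segment, I would apply $\tilde{\chi}_c$ to both sides of formula~(\ref{formulecyclesprochesinfini}). Each term $[f_\gamma : \Gm^{2}\setminus f_\gamma^{-1}(0)\ra \Gm,\sigma_{\gamma}]$ realizes on $\chi_c(f_\gamma^{-1}(1)\cap \Gm^2)$, and since $f$ has isolated singularities each $f_\gamma$ (a face polynomial for $\gamma$ in $\widetilde{\N}(f)$, hence a one-dimensional or zero-dimensional face not contained in a coordinate axis) is, up to a monomial factor, a product $\prod(x^q - \mu_i y^p)^{\nu_i}$ or $\prod(x^qy^p - \mu_i)^{\nu_i}$, so Proposition~\ref{prop:caracteristiceuleraire} gives $\chi_c(f_\gamma^{-1}(1)\cap\Gm^2) = -2\mathcal S_{\N(f_\gamma),f_\gamma}$ for one-dimensional $\gamma$ and $0$ for zero-dimensional $\gamma$ (a monomial). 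The remaining terms $(S_{f_{\sigma_{(p,q,\mu)}}, v\neq 0})_{((0,0),0)}$ realize on $\tilde{\chi}_c((S_{f_{\sigma_{(p,q,\mu)}},v\neq 0})^{(1)}_{((0,0),0)})$ by definition. Multiplying through by $-1$ and remembering $\eps_\gamma = (-1)^{\dim\gamma+1}$ (or $0$), the zero-dimensional contributions vanish, the one-dimensional ones carry $\eps_\gamma = +1$ and contribute $-2\,\eps_\gamma\mathcal S_{\N(f_\gamma),f_\gamma}$ with the extra sign from $-\lambda_0(f)=\tilde\chi_c(S^{\infty,(1)}_{f,0})$, and one arrives at (\ref{formule:lambda}).

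For the segment case, I would instead use formulas (\ref{formulescyclesprochesinfinisegment1}) and (\ref{formulescyclesprochesinfinisegment2}) together with the hypothesis that $f$ is quasi-homogeneous with simple roots, so that $\GN(f)=\overline{\N}(f)$ reduces to the single face $\gamma$ with vertices $(a_0,b_0)$, $(a_1,b_1)$. If $pq\geq 0$ then (\ref{formulescyclesprochesinfinisegment2}) gives $S_{f,0}^{\infty}=0$ directly, hence $\lambda_0(f)=0$. If $pq<0$, I apply $\tilde\chi_c$ to (\ref{formulescyclesprochesinfinisegment1}): the terms $[x^{a_i}y^{b_i}:\Gm^2\to\Gm]$ realize on $\chi_c$ of $(x^{a_i}y^{b_i}=1)\cap\Gm^2$, which is $0$ by the first bullet of Proposition~\ref{prop:caracteristiceuleraire} (as $b_0\neq 0$ or $a_1\neq 0$ in the relevant configurations; one must check the degenerate vertex-on-axis subcases, where the corresponding $\eps$ vanishes); the term $\eps_\gamma[f:\Gm^2\setminus f^{-1}(0)\to\Gm,\sigma_\gamma]$ realizes on $\eps_\gamma\chi_c(f^{-1}(1)\cap\Gm^2) = -2\eps_\gamma\mathcal S_{\N(f_\gamma),f_\gamma}$ (again Proposition~\ref{prop:caracteristiceuleraire}, with $\mathcal S_{\N(f_\gamma),f_\gamma}=\mathcal S_\gamma$ since $f$ is non-degenerate because its roots are simple); and the monomials $[x^{\abs N}y^{\nu(\mu)}:\Gm^2\to\Gm]$ realize on $0$. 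So $\tilde\chi_c(S_{f,0}^{\infty,(1)}) = -2\eps_\gamma\mathcal S_{\N(f_\gamma),f_\gamma}$, i.e.\ $\lambda_0(f)=2\eps_\gamma\mathcal S_{\N(f_\gamma),f_\gamma}$, which is (\ref{formule:lambda-segment}). The refinement ($\lambda_0(f)=0$ if $N=0$, and $=2\mathcal S_\gamma$ if $N\neq 0$) follows from the explicit description of $\eps_\gamma$ and $\delta_N$: when $N=0$ the face passes through the origin so $\eps_\gamma=0$, and when $N\neq 0$ we have $\eps_\gamma=1$ since $\gamma$ is one-dimensional and not contained in a face through the origin (by the quasi-homogeneity and $pq<0$, the segment runs from a point on one axis to a point on the other, not through $(0,0)$).

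The main obstacle I anticipate is the bookkeeping of the vertex (zero-dimensional face) contributions and the $\eps_\gamma$ coefficients: one needs to be careful that every zero-dimensional face appearing in $\widetilde{\N}(f)$ contributes a monomial face polynomial $f_\gamma = c\,x^{a}y^{b}$ with $b\neq 0$ or (if $b=0$) the dual-cone configuration forcing $\eps_\gamma = 0$, so that its Euler characteristic or its coefficient vanishes — this is exactly where Proposition~\ref{prop:caracteristiceuleraire} (first bullet) and Remark~\ref{annulationesp} must be invoked, and it requires matching the combinatorial conditions defining $\eps_\gamma$ in Proposition~\ref{lem:epsgammafacedim0-infini} against the vanishing in Proposition~\ref{prop:caracteristiceuleraire}. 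A secondary subtlety is justifying that $\mathcal S_{\N(f_\gamma),f_\gamma}$ is the quantity produced by $-\frac12\chi_c(f_\gamma^{-1}(1)\cap\Gm^2)$ even when $f_\gamma$ has multiple roots, which is precisely the content of the normalization $\mathcal S_{\N,f}=\sum r_i\mathcal S_i/(s_i+1)$ in Definition~\ref{area-f} and is handled by Proposition~\ref{prop:caracteristiceuleraire}; in the segment case this is automatic since simple roots give $\mathcal S_{\N(f_\gamma),f_\gamma}=\mathcal S_\gamma$. Beyond that the argument is a routine application of additivity of $\tilde\chi_c$ term by term.
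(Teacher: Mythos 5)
Your proposal is correct and follows essentially the paper's own route: apply $\tilde{\chi}_c$ to the formulas of Theorem \ref{thmcyclesprochesinfini} using Theorem \ref{chilambda}, kill the vertex/monomial terms via Proposition \ref{prop:caracteristiceuleraire} and the $\eps_\gamma$ bookkeeping (Remark \ref{annulationesp}, Proposition \ref{lem:epsgammafacedim0-infini}), and identify each one-dimensional contribution with $-2\eps_\gamma\mathcal S_{\N(f_\gamma),f_\gamma}$, keeping the Newton-transform terms as $\tilde{\chi}_c$ of the motives $\left(S_{f_{\sigma_{(p,q,\mu)}},v\neq 0}\right)_{((0,0),0)}$. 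One cosmetic slip: for $S_{f,0}^{\infty}$ the zero-dimensional coefficients $\eps_\gamma$ lie in $\{-2,-1,0\}$ rather than being $(-1)^{\dim\gamma+1}$ (that is the $S_{f,\infty}$ statement), but this is immaterial since those terms are classes of monomial fibrations whose Euler characteristic vanishes in any case.
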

\begin{proof}
	This corollary follows from Theorem \ref{chilambda}, Theorem \ref{thmcyclesprochesinfini}, Remark \ref{annulationesp} 
	and Proposition \ref{prop:caracteristiceuleraire}.
\end{proof}

\begin{example}[Broughton's example] \label{ex:Broughton} 
	We study here Broughton example $f(x,y)=x(xy-1)$.
	\begin{itemize}
		\item The global Newton polygon of $f$ is a segment, then applying {Theorem} \ref{cassegment} we have 
			\begin{equation}\label{eqSfBroughton}
			\begin{array}{ccl}
				S_{f,0}^{\infty} & = & \eps_{x}[x:\Gm^2 \to \Gm, \sigma_{\Gm^2}] + \eps_{x^2y} [x^2y : \Gm^2 \to \Gm, \sigma_{ {\Gm^2} }] 
			+ \eps_{x^2y-x}[x^2y-x : \Gm^2 \setminus (xy=1) \to \Gm,\sigma_{\gamma}] \\
			& + & (S_{ {f_{\sigma(-1,1,1)}},x_1 \neq 0})_{(0,0),0} 
			+ (S_{ { f_{\sigma(1,-1,1)}},x_1 \neq 0})_{(0,0),0}.
		        \end{array}
		\end{equation}
			{We compute now the coefficients $\eps_{x}$, $\eps_{x^2y}$, $\eps_{x^2y-x}$ and the motives 
			$(S_{ {f_{\sigma(-1,1,1)}},x_1 \neq 0})_{(0,0),0}$, 
			$(S_{ { f_{\sigma(1,-1,1)}},x_1 \neq 0})_{(0,0),0}$.}
			\begin{itemize}
				\item  We have $C_x=\{(\alpha,\beta) \mid \alpha+\beta<0\}$ and $H_x=\{(\alpha,\beta)\mid \alpha<0\}$.
					In particular we have
					$$C_x \cap H_x \cap (\mathbb R_{>0} \times \mathbb R_{<0}) = \emptyset \: \text{and} 
					\: 
					C_x \cap H_x \cap (\mathbb R_{<0} \times \mathbb R_{>0}) = \mathbb R_{>0} (-1,0) + \mathbb R_{>0}(-1,1).$$
					Then, by Proposition \ref{prop:casfacecontenuedansaxe} we conclude that $\eps_x=-1$.
				\item We have $C_{x^2y}=\{(\alpha,\beta) \mid \alpha+\beta>0\}$ and 
					$H_{x^2y}=\{(\alpha,\beta)\mid 2\alpha+ \beta<0\}$. 
                                        In particular we have
					$$C_{x^2y} \cap H_{x^2y} \cap (\mathbb R_{>0} \times \mathbb R_{<0}) = \emptyset \: \text{and} 
					\: 
					C_{x^2y} \cap H_{x^2y} \cap (\mathbb R_{<0} \times \mathbb R_{>0}) = 
					\mathbb R_{>0} (-1,1) + \mathbb R_{>0}(-1,2).$$
					Then, by point \ref{cas2biieps} of Proposition \ref{lem:epsgammafacedim0-infini} we conclude that $\eps_{x^2y}=0$.
				\item   By point \ref{cas4beps} of Proposition \ref{lem:epsgammafacedim0-infini}, we have $\eps_{x^2y-x}=1$.
				\item We have $f_{\sigma(-1,1,1)}(x_1,y_1)=x_1y_1(y_1+1)$ and 
					by Example \ref{ex:monomial} 
					$\left(S_{ {f_{\sigma(-1,1,1)}},x_1 \neq 0}\right)_{(0,0),0}
					= -[x_1y_1:\Gm^2 \to \Gm, \sigma_{ {\Gm^2}}].$
				\item We have $f_{\sigma(1,-1,1)}(x_1,y_1)=x_1^{-1}y_1$ and 
					by Example \ref{ex:monomial}  
					$\left(S_{ {f_{\sigma(1,-1,1)}},x_1 \neq 0}\right)_{(0,0),0} = 0.$
			\end{itemize}
			From equality \ref{eqSfBroughton}, we obtain 
			$$S_{f,0}^{\infty} = -[x:\Gm^2 \to \Gm, \sigma_{ {\Gm^2}}]   
			+ [x^2y-x : \Gm^2 \setminus (xy=1) \to \Gm,\sigma_{\gamma}] - [xy:\Gm^2\to \Gm, \sigma_{{\Gm^2}}]$$
			and we have $$S_{f,0}^{\infty,(1)} = -\mathbb L\:\:\text{and}\:\: {\lambda_{0}(f)=1}$$
			using the equalities 
			$[(xy=1) \cap \Gm^2, \sigma_{\hat{\mu}}]=[(z=1) \cap \Gm^2, \sigma_{\hat{\mu}}]=\mathbb L-1,$ by the isomorphism $(x,y)\to (z=xy,y)$ of 
			$\Gm^2$ and
				$[((xy-1)x=1)\cap \Gm^2, \sigma_{\Gm^2}]=[\Gm\setminus \{-1\},\sigma_{\hat{\mu}}]=\mathbb L - 2$
				writing $y=(1+x)x^{-2}$ with the condition $y\neq 0$.
			\item Let $c\neq 0$ be in $\mathbb C$. Applying Theorem \ref{thmcyclesprochesinfini} we have 
				$$\begin{array}{ccl}
					S_{f,c}^{\infty} & = & \eps_x[x:\Gm^2 \to \Gm, \sigma_{\Gm^2}] + \eps_{x^2}[x^2y:\Gm^2 \to \Gm, \sigma_{\Gm^2}] \\
					& + & \eps_{x-c}[x-c:\Gm^2\setminus(x=c) \to \Gm, \sigma_{\Gm^2}] 
					+ \eps_{x^2y-c}[x^2y-c:\Gm^2\setminus(x^2y=c)\to \Gm, \sigma_{\Gm^2}] \\
					& + & \eps_{x^2y-x}[x^2y-x:\Gm^2\setminus (xy-1) \to \Gm, \sigma_{\Gm^2}] 
					+ (S_{ {f_{\sigma(-1,2,c)}},x_1 \neq 0})_{(0,0),0} 
					+ (S_{ {f_{\sigma(1,-1,1)}} ,x_1 \neq 0})_{(0,0),0}.
				   \end{array} $$
				   {We compute now the coefficients $\eps_x$, $\eps_{x^2}$, $\eps_{x-c}$, $\eps_{x^2y-c}$, $\eps_{x^2y-x}$ and the motives 
				   $(S_{ {f_{\sigma(-1,2,c)}},x_1 \neq 0})_{(0,0),0}$, 
				   $(S_{ {f_{\sigma(1,-1,1)}} ,x_1 \neq 0})_{(0,0),0}$.}
				   
				   \begin{itemize}
					   \item We have $C_{x}=\mathbb R_{>0}(0,-1) + \mathbb R_{>0}(1,-1)$ and 
						   $H_{x}=\{(\alpha,\beta)\mid \alpha<0\}$. In particular, $C_x \cap H_x$ is empty, so by Proposition 
						   \ref{prop:casfacecontenuedansaxe}, $\eps_x=0$.
					   \item We have $C_{x^2y} = \mathbb R_{>0}(-1,2) + \mathbb R_{>0}(1,-1)$ and 
						   $H_{x^2y}=\{(\alpha,\beta)\mid 2 \alpha + \beta <0\}$.
						   In particular, $C_{x^2y} \cap H_{x^2y}$ is empty, 
						   so by point \ref{cas2aeps} 
						   of Proposition \ref{lem:epsgammafacedim0-infini} 
						   we have $\eps_{x^2y} = 0$.
					   \item We have $C_{x-c}=\mathbb R_{>0}(0,-1)$, so $C_{x-c}\cap \Omega$ is empty, so by point \ref{cas3aeps} 
					   of Proposition \ref{lem:epsgammafacedim0-infini}, $\eps_{x-c}=0$.
					   \item By point \ref{cas3aeps} of Proposition 
					   \ref{lem:epsgammafacedim0-infini}, we have  $\eps_{x^2y-c}=0$ and $\eps_{x^2y-x} = 0$.
					   \item We have $f_{\sigma(-1,2,c)}(x_1,y_1)=2cy_1-x_1+ cy_1^2-x_1y_1$.
						   Then, by Example \ref{ex:casdebase-f} we have 
						   $(S_{ { f_{\sigma(-1,2,c)}},x_1 \neq 0})_{(0,0),0} = 0.$
					   \item We have $f_{\sigma(1,-1,1)}(x_1,y_1)=x_1^{-1}(y_1-cx_1)$.
						   Then, by Example \ref{ex:casdebase-f} we have 
						   $(S_{ { f_{\sigma(1,-1,1)}},x_1 \neq 0})_{(0,0),0} = 0.$
				   \end{itemize}
				   Then, we conclude that $S_{f,c}^{\infty}=0$ and {$\lambda_{c}(f)=0$}.
	\end{itemize}
\end{example}

{
\begin{example}[Non degenerate and convenient case] 
	In \cite[Th\'eor\`eme 4.8]{Rai10a}, the second author proved that for any convenient and non-degenerate polynomial $f$ at infinity, the motivic nearby cycle 
	$S_{f,c}^{\infty}$ is zero for any value $c$ in $\k$. We can also deduce this result in dimension 2 from Theorem \ref{thmcyclesprochesinfini}.
	Let $c$ be in $\k$ and consider formula (\ref{formulecyclesprochesinfini}) of Theorem \ref{thmcyclesprochesinfini}.
\begin{itemize}
	\item As $f$ is convenient, the Newton polygon at infinity $\N_{\infty}(f)$ has two zero dimensional faces $(a,0)$ and $(0,b)$. Then, we observe that for any face $\gamma$ in $\overline{\N}(f) \setminus \N_{\infty}(f)^o$, the intersection $C_\gamma \cap \Omega$ is empty, so $\eps_\gamma = 0$. Furthermore, any one dimensional face $\gamma$ in $\N_{\infty}(f)^o$, is supported by a line of equation $ap+bq=N$ with $N>0$, and $(p,q)$ the primitive exterior normal vector to $\N_{\infty}(f)$. Then, by point \ref{cas3aeps} of Proposition \ref{lem:epsgammafacedim0-infini} we also have $\eps_\gamma=0$.

	\item  As the Newton polygon is convenient and convex, it follows from Proposition \ref{prop:casfacecontenuedansaxe} and 
		Proposition \ref{lem:epsgammafacedim0-infini} (see also Remark \ref{annulationesp}) that $\eps_\gamma=0$ for $\gamma$ a face of dimension zero. 
	\item Let $\gamma$ be a one dimensional face in $\N_{(\infty,\infty)}$ (the cases $\N_{(0,\infty)}$ and $\N_{(\infty,0)}$ are similar).
		This face $\gamma$ is supported by a line of equation $ap+bq=N$, with $(p,q)$ the primitive exterior normal vector to $\overline{\N}(f)$ and $N>0$. 
		As $f$ is non degenerate for its Newton polygon $\N_{\infty}(f)$, for any roots $\mu$ of $f_\gamma$, applying the Newton transformation ${\sigma_{(p,q,\mu)}}$, 
		we obtain a Newton transform of the form 
		$f_{{\sigma_{(p,q,\mu)}}}(x_1,y_1) = x_1^{-N}y_1 u(x_1,y_1)$ 
	        with $u$ a unit or 
		$f_{{\sigma_{(p,q,\mu)}}}(x_1,y_1) = x_1^{-N}(c_y y_1 + c_x x_1^m + g(x_1,y_1)),$
		with $c_y$ and $c_x$ in $\k$, $m>0$ and $g(x_1,y_1)=\sum_{a+bm>0}c_{a,b}x_1^ay_1^b$. 
		Then, applying Examples \ref{ex:monomial} and \ref{ex:casdebase-f},
		we have $(S_{ f_{{\sigma_{(p,q,\mu)}}},x_1\neq 0})_{(0,0),0} = 0$. 

	\item For the vertical face (not contained in the coordinate axis), the face polynomial is $f_\gamma(x,y)=x^MT(y)$, with $M>0$.
		We remark that for any root $\mu$ of $T$, the polynomial $f(1/x,y+\mu)$ as the form $x^{-M}(c_y y + c_x x^m + g(x,y))$
		with $c_y$ and $c_x$ in $\k$, $m>0$ and $g(x,y)=\sum_{a+bm>0}c_{a,b}x^ay^b$. Then, applying Example \ref{ex:casdebase-f},
		we have $\left(S_{f_{\infty,0,\mu},x\neq 0}\right)_{(0,0),0} = 0$.
		The result and the proof are the same for the horizontal case.
\end{itemize}
Then, by formula (\ref{formulecyclesprochesinfini}), $S_{f,c}^{\infty}=0$. 
\end{example}
}

\begin{example} \label{example:example1Sfcinfini} We consider the polynomial $f$ defined in Example \ref{example:example1}. We remark that $f(0,0)=1$. Let $c$ be in $\k$.  We use all the notations of Example \ref{example:example1}. By formula (\ref{formulecyclesprochesinfini}) of Theorem \ref{thmcyclesprochesinfini}, we have 
	$$
	\begin{array}{ccl}
		S_{f,c}^{\infty} & = & \eps_{y}[y:\Gm^2 \to \Gm] + \eps_{x^2}[x^2:\Gm^2 \to \Gm] + 
		\eps_{\gamma_{1}^{(0)}}[y(x^2y+1)^3 : \Gm^2 \setminus (x^2y+1=0) \to \Gm, \sigma_{\gamma_{1}^{(0)}} ] \\
		& + & \eps_{\gamma_{2}^{(0)}}[x^2(xy+1)^4:\Gm^2 \setminus (xy+1=0) \to \Gm, \sigma_{\gamma_{2}^{(0)}} ] + 
		\eps_{x^4y^6}[x^4y^6:\Gm^2 \to \Gm, \sigma_{\Gm^2}] \\
		& + & \left( S_{f_1-c,v\neq 0} \right)_{(0,0),0} + \left( S_{f_2-c,v\neq 0} \right)_{(0,0),0}.
	\end{array}
	$$
	By Remark \ref{annulationesp} and Propositions \ref{prop:casfacecontenuedansaxe} and \ref{lem:epsgammafacedim0-infini} , we have 
	$\eps_y = 0$, $\eps_{x^2} = 0$, $\eps_{\gamma_{1}^{(0)}}=0$, $\eps_{\gamma_{2}^{(0)}}=0$ and $\eps_{x^4y^6} = 0$.
	Furthermore,  $\N(f_1-c)^{+}$ is empty and by Theorem \ref{thmSfeps}, we have
	$(S_{f_1-c,v\neq 0})_{(0,0),0} = (S_{(f_1)_{\sigma_{\gamma_1}^{(1)}}-c,v\neq 0})_{(0,0),0} = 0$ 
	because $(f_1)_{\sigma_{\gamma_1}^{(1)}}-c$ is an Example \ref{ex:casdebase-f} with $M=3$ and $m=1$ by formula (\ref{exemplef1}), then we obtain the equality
	$$\begin{array}{ccl}
		S_{f,c}^{\infty} = (S_{f_2-c,v\neq 0})_{(0,0),0}.
	\end{array}
	$$

	\begin{itemize}
		\item We assume $c \notin \{1,2\}$. The set $\N(f_2-c)^{+}$ is empty and by Theorem \ref{thmSfeps}, we have
			$$(S_{f_2-c,v\neq 0})_{(0,0),0} = 
			(S_{(f_2)_{\sigma_{\gamma_1,\mu_1}^{(2)}}-c,v\neq 0})_{(0,0),0} + 
			(S_{(f_2)_{\sigma_{\gamma_1,\mu_2}^{(2)}}-c,v\neq 0})_{(0,0),0} = 0
			$$
			because $(f_2)_{\sigma_{\gamma_1,\mu_i}^{(2)}}-c$ for $i$ in $\{1,2\}$  
			is an Example \ref{ex:monomial} or Example \ref{ex:casdebase-f} with $M=0$ and $m=1$ by formula (\ref{exemplef2}).
			Assume $\k=\mathbb C$, as $S_{f,c}^{\infty}=0$, it follows from Corollary \ref{calcullambdaaire} or Theorem \ref{chilambda} that $\lambda_{c}(f)=0$.

		\item We assume $c=2$. We observe that $\N(f_2)^{+} = \{\gamma_{v}, \gamma_{2}^{(2)}\}$. Then, applying Theorem \ref{thmSfeps}, we have
			$$
			\begin{array}{ccl}
				\left( S_{f_2-2,v\neq 0} \right)_{(0,0),0} & = & 
				[v:\Gm \to \Gm] + [2v^{-1}w^2-4w-v : \Gm^2 \setminus (2v^{-1}w^2-4w-v = 0) \to \Gm, \sigma_{\gamma_{2}^{(2)}}] \\
				& + & 
				(S_{(f_2)_{\sigma_{\gamma_1}^{(2)}}-2,v_1 \neq 0})_{(0,0),0} 
				+ (S_{(f_2)_{\sigma_{\gamma_2,r_1}^{(2)}}-2,v_1 \neq 0})_{(0,0),0} 
				+ (S_{(f_2)_{\sigma_{\gamma_2,r_2}^{(2)}}-2,v_1 \neq 0 })_{(0,0),0}
			\end{array}.
			$$
			As the Newton transform $(f_2)_{\sigma_{\gamma_1}^{(2)}}-2$ is an Example \ref{ex:casdebase-f} with $M=0$ and $m=1$, we have 
			$(S_{(f_2)_{\sigma_{\gamma_1}^{(2)}}-2,v_1 \neq 0})_{(0,0),0} = 0.$
			As well, the Newton transforms $(f_2)_{\sigma_{\gamma_2,r_1}^{(2)}}-2$ and $(f_2)_{\sigma_{\gamma_2,r_2}^{(2)}}-2$ are Examples \ref{ex:casdebase-f} with $M=-1$ and $m=1$, then we have 
			$$(S_{(f_2)_{\sigma_{\gamma_2,r_1}^{(2)}}-2,v_1 \neq 0})_{(0,0),0} = 
			(S_{(f_2)_{\sigma_{\gamma_2,r_2}^{(2)}}-2,v_1 \neq 0})_{(0,0),0} = 
			-[xy:\Gm^2 \to \Gm, \sigma_{\Gm^2}].
			$$
			We conclude that 
			$$S_{f,2}^{\infty} = [v:\Gm \to \Gm] + [2v^{-1}w^2-4w-v : \Gm^2 \setminus (2v^{-1}w^2-4w-v = 0) \to \Gm, \sigma_{\gamma_{2}^{(2)}}] 
			-2[xy:\Gm^2 \to \Gm, \sigma_{\Gm^2}].$$
			Assume $\k=\mathbb C$. By Proposition \ref{prop:caracteristiceuleraire} we have
			$\chi_{c}((2v^{-1}w^2-4w-v=1)\cap \Gm^2) = -2$.
			It follows from Corollary \ref{calcullambdaaire} or Theorem \ref{chilambda} that
			$\lambda_{2}(f)=-\chi_{c}(S_{f,2}^{\infty, (1)})=-(1-2+0) = 1.$
		\item We assume $c=1$. Applying Theorem \ref{thmSfeps}, we have
			$$(S_{f_2-1,v\neq 0})_{(0,0),0} = (S_{f_3,v_1 \neq 0})_{(0,0),0},$$
			with
			$(S_{f_3,v_1\neq 0})_{(0,0),0} = 
			[v_1:\Gm \to \Gm,\sigma_{\Gm}]+[4w_1^2-7v_1:\Gm^2 \setminus (4w_1^2=7v_1) \to \Gm,\sigma_{\gamma_1^{(2)}}]
			+ (S_{(f_3)_{\sigma_{\gamma}^{(3)}},v_2 \neq 0})_{(0,0),0},$
			with 
			$(S_{(f_3)_{\sigma_{\gamma}^{(3)}},v_2 \neq 0})_{(0,0),0} = 
			-[v_2^{2}w_2:\Gm^2\to \Gm,\sigma_{\Gm^2}],$
			because $f_3$ is an Example \ref{ex:casdebase-f} with $M=-2$ and $m=1$. 
			Assume $\k=\mathbb C$. By Proposition \ref{prop:caracteristiceuleraire} we have
			$\chi_{c}( (4w_1^{2}-7v_1=1) \cap \Gm^2) = -2.$
			It follows from Corollary \ref{calcullambdaaire} or Theorem \ref{chilambda} that   
			$\lambda_{1}(f)=-\chi_{c}(S_{f,1}^{\infty, (1)})=-(1-2+0) = 1.$	
	\end{itemize}
	By Example \ref{example:example1} and Example \ref{example:example1Sfinfini}, we have
	$\chi_{c}(f^{-1}(a_{\text{gen}}))=-3$, $\mu(f)=2$ and $\lambda(f)=2$. The second formula \ref{formulechigenerique} is satisfied.
\end{example}

\subsubsection{Motivic bifurcation set} \label{sec:motivic-bifurcation-set}
{We recall in dimension 2, the notion of motivic bifurcation set defined in \cite{Rai11} and \cite{FR}.}
\begin{defn}[Motivic bifurcation set]
	 For any polynomial $f$ in $\k[x,y]$,
				the \emph{motivic bifurcation set} defined as
			$$\mathcal B_{f}^{\text{mot}} = \{a\in \mathbb A^1_{\k} \mid S_{f,a}^{\infty} \neq 0\} \cup \text{disc}(f)$$ is a finite set.
\end{defn}

\subsubsection{Equalities of bifurcation sets}

In this section we prove the following theorem. 
\begin{thm} \label{Bftop=Bfmot} Let $f$ be a polynomial in ${\mathbb C}[x,y]$ not in $\mathbb C[x]$ or $\mathbb C[y]$. If $f$ has isolated singularities then 
$$\mathcal B_{f}^{\text{top}} = \mathcal B_{f}^{\text{Newton}} = \mathcal B_{f}^{\text{mot}}.$$
	\end{thm}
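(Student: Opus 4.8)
The plan is to prove the two equalities $\mathcal B_f^{\text{top}} = \mathcal B_f^{\text{Newton}}$ and $\mathcal B_f^{\text{Newton}} = \mathcal B_f^{\text{mot}}$ by a common mechanism: translating everything to a statement about the combinatorial quantities produced by the Newton algorithm at infinity and the area invariants $\mathcal S_{\N(f_\gamma),f_\gamma}$. By Remark \ref{rem:casgeneric} we may translate $f$ by a constant, so it suffices to treat a value $a=0$ at a time. The central tool is Theorem \ref{chilambda}, which gives $\tilde\chi_c(S_{f,a}^{\infty,(1)}) = -\lambda_a(f)$, together with Corollary \ref{calcullambdaaire}, which expresses $\lambda_0(f)$ as a sum of area terms $2\eps_\gamma \mathcal S_{\N(f_\gamma),f_\gamma}$ minus a sum of Euler characteristics $\tilde\chi_c((S_{f_{\sigma_{(p,q,\mu)}},v\ne0})^{(1)}_{((0,0),0)})$ coming from the recursion, and by Corollary \ref{KFSfeps}/Proposition \ref{prop:caracteristiceuleraire} these recursive contributions are themselves sums of area terms of the iterated Newton polygons. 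Unwinding the recursion, $\lambda_0(f)$ becomes a finite nonnegative $\mathbb Z$-linear combination of the rational areas $r_i\mathcal S_i/(s_i+1)$ attached to one-dimensional faces appearing anywhere in the Newton algorithm at infinity of $f$, with explicit coefficients $\eps_\gamma \in \{0,1\}$.

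First I would establish $\mathcal B_f^{\text{mot}} \subseteq \mathcal B_f^{\text{Newton}}$. Fix $a$; if $a \in \disc(f)$ then $a \in \mathcal B_f^{\text{Newton}}$ by Definition \ref{Newtonbifset}, so assume $a\notin\disc(f)$ and $a \notin \mathcal B_f^{\text{Newton}}$. Then by Definition \ref{Newtongenericvalues} and \ref{Newtonbifset}, no Newton transform $f_\Sigma$ occurring in the algorithm at infinity of $f-a$ has a dicritical face producing a non-generic value at $0$; I would argue, face by face using the formulas in Theorem \ref{thmcyclesprochesinfini} together with the base-case computations (Examples \ref{ex:monomial} and \ref{ex:casdebase-f}) and the vanishing statements collected in Remark \ref{annulationesp}, that every term in formula (\ref{formulecyclesprochesinfini}) (or (\ref{formulescyclesprochesinfinisegment1})--(\ref{formulescyclesprochesinfinisegment2}) in the segment case) vanishes, so $S_{f,a}^\infty = 0$, i.e. $a\notin\mathcal B_f^{\text{mot}}$. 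For the reverse inclusion $\mathcal B_f^{\text{Newton}}\subseteq\mathcal B_f^{\text{mot}}$, I would use nonnegativity: $\lambda_a(f)\ge 0$ always, and each area term $\mathcal S_{\N(f_\gamma),f_\gamma}$ is strictly positive whenever the corresponding face is genuinely one-dimensional; a dicritical non-generic value at $0$ for some Newton transform $f_\Sigma$ forces at least one coefficient $\eps_\gamma$ to equal $1$ on a face with positive area, and since all contributions to $\lambda_a(f)$ have the same sign there can be no cancellation — hence $\lambda_a(f)>0$, so $\tilde\chi_c(S_{f,a}^{\infty,(1)}) = -\lambda_a(f)\ne 0$ and $a\in\mathcal B_f^{\text{mot}}$. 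Together with the trivial inclusion $\disc(f)\subseteq \mathcal B_f^{\text{mot}}$ this gives $\mathcal B_f^{\text{Newton}} = \mathcal B_f^{\text{mot}}$.

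Next I would prove $\mathcal B_f^{\text{top}} = \mathcal B_f^{\text{Newton}}$. By Theorem \ref{chigenchia}, $\mathcal B_f^{\text{top}} = \disc(f)\cup\{a \mid \lambda_a(f)\ne 0\}$. The inclusion $\mathcal B_f^{\text{top}}\subseteq\mathcal B_f^{\text{Newton}}$ follows from the paragraph above, since $\lambda_a(f)\ne 0$ combined with $a\notin\mathcal B_f^{\text{Newton}}$ would contradict the vanishing of every area term in the unwound expression for $\lambda_a(f)$; note Lemma \ref{lemme:singularitesisolees} guarantees the isolated-singularity hypothesis is preserved under all Newton transformations so that Corollary \ref{calcullambdaaire} applies at every stage of the recursion. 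For $\mathcal B_f^{\text{Newton}}\subseteq\mathcal B_f^{\text{top}}$: if $a\in\mathcal B_f^{\text{Newton}}\setminus\disc(f)$, then as in the previous paragraph $\lambda_a(f)>0$ by the area/nonnegativity argument, hence $a\in\mathcal B_f^{\text{top}}$; and $\disc(f)\subseteq\mathcal B_f^{\text{top}}$ trivially. This proves all three sets coincide.

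The main obstacle will be the area/nonnegativity argument: I must show that in the fully unwound Newton-recursive expression for $\lambda_a(f)$ the coefficients $\eps_\gamma$ are all $\ge 0$ and that the whole sum is a nonnegative combination of strictly positive areas, so that the appearance of a single dicritical non-generic value somewhere in the algorithm cannot be cancelled by another contribution. This requires a careful induction on the depth of the Newton algorithm, tracking the signs of the $\eps_\gamma$ coming from Theorem \ref{thmcyclesprochesinfini} and Corollaries \ref{KFSfeps}, \ref{calcullambdaaire}, and matching, for each Newton transform $f_\Sigma$, the event ``$f_\Sigma$ has a dicritical non-generic value at $0$'' with ``some $\eps_\gamma = 1$ on a one-dimensional face of positive area in the Newton data of $f_\Sigma$ or of one of its further transforms.'' The termination of the Newton algorithm (Theorem \ref{thm:algo-Newton}) makes the recursion finite, and the base cases (Examples \ref{ex:monomial}, \ref{ex:casdebase-f} and Example \ref{Bf^Newton-base-case}) anchor the induction, since a base case contributes no Newton bifurcation value at a generic value and contributes only controlled, sign-definite area terms. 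Once this combinatorial positivity is in place, the equalities of the three bifurcation sets follow formally from Theorems \ref{chigenchia}, \ref{chilambda} and the definition of $\mathcal B_f^{\text{mot}}$.
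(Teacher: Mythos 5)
Your skeleton is in substance the paper's: the paper closes the cycle $\mathcal B_f^{\text{top}}\subseteq \mathcal B_f^{\text{mot}}$ (Theorems \ref{chigenchia} and \ref{chilambda}), $\mathcal B_f^{\text{mot}}\subseteq \mathcal B_f^{\text{Newton}}$ (Proposition \ref{generic-Sfnul}), $\mathcal B_f^{\text{Newton}}\subseteq \mathcal B_f^{\text{top}}$ (Proposition \ref{lambdanulgenerique}), using exactly the ingredients you cite. However, the two facts on which your ``no cancellation'' step rests are false as stated, and they are precisely where the real work lies. First, an area $\mathcal S_{\N(f_\gamma),f_\gamma}$ attached to a one-dimensional face need not be strictly positive: it vanishes whenever the supporting line of the face passes through the origin, hence for every dicritical face and in the quasi-homogeneous case $pq<0$, $N=0$ of Corollary \ref{calcullambdaaire}. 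So a Newton non-generic value does not directly exhibit a positive-area term; in the paper the positive area only appears one step deeper, after applying the Newton transformation at a multiple root of $P_{\gamma_0}(s)-c_0$, and the residual cases (root $\mu=0$, non-smooth dicritical face at $c_0=f(0,0)$) are eliminated using the isolated-singularity hypothesis propagated by Lemma \ref{lemme:singularitesisolees} — this is the content of Lemma \ref{lambdanulgenerique-h} and Proposition \ref{lambdanulgenerique}. Second, the unwound expression for $\lambda_a(f)$ is not a sum of terms of one sign: formulas (\ref{KFSf+}) and (\ref{formuleKouchnirenkogenfiber}) contain positive vertex contributions $s^{(+)}a$ and $\eps_{(a_0,0)}a_0$, and sign-definiteness holds only after grouping each such term with the adjacent area via the inequality $a-2\mathcal S_{\N(h_\gamma),h_\gamma}=a\bigl(1-\tfrac{rd}{s+1}\bigr)\leq 0$ of formula (\ref{lemcleairebis}) in the proof of Lemma \ref{lem:positivite-nullite}. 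Without these two repairs your implication ``$a\in\mathcal B_f^{\text{Newton}}\setminus \disc(f)\Rightarrow \lambda_a(f)>0$'', which carries both $\mathcal B_f^{\text{Newton}}\subseteq\mathcal B_f^{\text{top}}$ and $\mathcal B_f^{\text{Newton}}\subseteq\mathcal B_f^{\text{mot}}$ in your plan, does not follow.

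A similar caveat applies to your inclusion $\mathcal B_f^{\text{mot}}\subseteq\mathcal B_f^{\text{Newton}}$: it is not true that ``every term vanishes'' face by face. The needed statement is the recursive vanishing $(S_{h-c,\,x\neq 0})_{(0,0),0}=0$ for $c\notin B_h^{\text{Newton}}$ (Proposition \ref{Shcnulle}), proved by induction along the whole Newton algorithm; in the case $c=c_{(0,0)}(h)$ with a smooth dicritical face, three individually nonzero classes cancel only through the identities (\ref{seq1})--(\ref{seq3}), so the base cases and Remark \ref{annulationesp} alone do not suffice. Once Lemma \ref{lem:positivite-nullite}, Lemma \ref{lambdanulgenerique-h}/Proposition \ref{lambdanulgenerique} and Propositions \ref{Shcnulle}/\ref{generic-Sfnul} are established, your formal deduction of the three equalities coincides with the paper's.
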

\begin{proof}
	Indeed, we deduce from Theorem \ref{chigenchia} and Theorem \ref{chilambda} that $\mathcal B_{f}^{\text{top}}$ is included in $\mathcal B_{f}^{\text{mot}}$ (see \cite{FR}).
	We deduce from Proposition \ref{lambdanulgenerique} that $B_{f}^{\text{Newton}}$ is included in $\mathcal B_{f}^{\text{top}}$. We deduce from Proposition \ref{generic-Sfnul} the inclusion of $\mathcal B_{f}^{\text{mot}}$ in $\mathcal B_{f}^{\text{Newton}}$.
\end{proof}

\begin{rem} This result generalize the result in the non-degenerate case of Némethi and Zaharia in \cite{NemZah90a}. We recover also from Proposition \ref{BfNewtonfini} that $B_{f}^{\text{mot}}$ is finite. From Theorem \ref{Bftop=Bfmot} and \cite{FR}, we deduce the equality  
	$$\mathcal B_{f}^{\text{top}} = \mathcal B_{f}^{\text{Newton}} = \mathcal B_{f}^{\text{Serre}} = \mathcal B_{f}^{\text{mot}},$$
	where $\mathcal B_{f}^{\text{Serre}}$ is the Serre bifurcation set defined in \cite{FR} using analytic geometry in the Berkovich sense.
\end{rem}

To prove Proposition \ref{lambdanulgenerique}, we need the following Lemma \ref{lem:positivite-nullite} and Lemma \ref{lambdanulgenerique-h}. 

\begin{lem} \label{lem:positivite-nullite} \label{lem:positivite} Let  $h$ be polynomial in $\k[x^{-1},x,y]$ which is not of the form $x^{-M}u(x,y)$ with $u$ a unit. Then, we have  
	$$\tilde{\chi}_c\left((S_{h,x\neq 0})^{(1)}_{(0,0),0}\right) \leq 0.$$
\end{lem}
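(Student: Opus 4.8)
The statement to prove is Lemma~\ref{lem:positivite-nullite}: for $h$ in $\k[x^{-1},x,y]$ not of the form $x^{-M}u(x,y)$ with $u$ a unit, the Euler characteristic $\tilde{\chi}_c\bigl((S_{h,x\neq 0})^{(1)}_{(0,0),0}\bigr)$ is $\leq 0$. The plan is to prove this by induction on the height $h(h)$ of the Newton polygon of $h$ (Definition~\ref{def:Newton-polygon-origin}), combined with a descending induction along the Newton algorithm, exploiting the formula of Theorem~\ref{thmSfeps} (case ``$\eps=+$'') together with the Kouchnirenko-type formula of Corollary~\ref{KFSfeps}. First I would reduce to the case $g(0,0)=0$ where $h=x^{-M}g(x,y)$ with $g$ not divisible by $x$: if $g(0,0)\neq 0$ then $h$ is a monomial times a unit, hence either of the excluded form (if $h$ has no other face) or, more precisely, by Proposition~\ref{casgunite} the motive $(S_{h,x\neq 0})_{((0,0),0)}$ is either $0$ or $[x^{-\eps M}\colon\Gm\to\Gm]$; the latter has $\tilde\chi_c=0$, and the hypothesis on $h$ forces the relevant sign, so the inequality holds. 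So assume $g(0,0)=0$.

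\textbf{Key steps.} Applying Corollary~\ref{KFSfeps} in the case ``$\eps=+$'', I would write, with $(a,b)=\gamma_h$,
$$\tilde{\chi}_c\bigl((S_{h,x\neq 0})^{(1)}_{(0,0),0}\bigr) = s^{(+)}a - 2\sum_{\gamma\in\N(h)^{+},\dim\gamma=1}\mathcal S_{\N(h_\gamma),h_\gamma} + \sum_{\gamma\in\N(h),\dim\gamma=1}\sum_{\mu\in R_\gamma}\tilde{\chi}_c\bigl((S_{h_{\sigma_{(p,q,\mu)}},v\neq 0})^{(1)}_{(0,0),0}\bigr)$$
when $b=0$ (with $s^{(+)}a$ replaced by $0$ when $b\neq 0$). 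The term $-2\sum\mathcal S_{\N(h_\gamma),h_\gamma}$ is $\leq 0$ since areas are non-negative (Definition~\ref{area-f}). For the recursive sum, each Newton transform $h_{\sigma_{(p,q,\mu)}}$ is again of the form $v^{N}\tilde f$ and the Newton lemma (Lemma~\ref{lem:Newton-alg}, points (2) and (3)) gives $h(h)\geq\nu\geq h(h_{\sigma_{(p,q,\mu)}})$, so the induction on height applies to each such transform \emph{provided} $h_{\sigma_{(p,q,\mu)}}$ is not a monomial times a unit; if it is, the corresponding term vanishes or equals the non-positive monomial contribution as above. The subtle point is the term $s^{(+)}a$ when $b=0$: here $a$ could be positive. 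I would handle this by the base cases of the algorithm: when $b=0$ and $\gamma_h\in\N(h)^{+}$, the horizontal face is $(a,0)$ with $a>0$; I would show that in that situation $h$ is forced to have a non-trivial one-dimensional structure contributing enough negative area to compensate, or invoke that $h$ not being a monomial-times-unit forces $\N(h)$ to have at least one non-horizontal face, and then combine the area bound with a sharpened inequality (the area of a face spanning ``height one in $x$'' dominates the contribution $a$). Concretely, I expect the clean argument runs through the base cases of Definition~\ref{def:algo-Newton}: for $h=U(x,y)x^{-M}y^m$ (Example~\ref{ex:monomial}) one computes directly that $\tilde\chi_c$ is $0$ (if $m=0$, since then $h$ is a monomial times unit, excluded, or the motive is $0$) or $-\tilde\chi_c[(x^{-M}y^m)^{-1}\text{-type}]=0$; for $h=U(x,y)x^{-M}(y-\mu x^q+g)^m$ (Example~\ref{ex:casdebase-f}) one gets $(S_{h,x\neq 0})_{((0,0),0)}=-[x^{-M}y^m\colon\Gm^2\to\Gm]$ when $-M>0$, whose $\tilde\chi_c$ is $0$, and $0$ otherwise.

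\textbf{Main obstacle.} The principal difficulty is controlling the positive term $s^{(+)}a$ arising from the horizontal face in the case $b=0$, and showing it never dominates the accumulated negative contributions from the area terms and the recursion. I expect one resolves this by a careful bookkeeping argument on the Newton polygon: if $(a,0)\in\N(h)^{+}$ with $a>0$, then necessarily $\N(h)$ has a one-dimensional face $\gamma$ adjacent to $\gamma_h$ with dual cone on the correct side, and the area $\mathcal S_{\N(h_\gamma),h_\gamma}$ of that face, weighted by its number of roots over the lattice points, is bounded below in terms of $a$ (since the face descends from the vertex $(a,0)$ to smaller abscissa), giving $2\mathcal S_{\N(h_\gamma),h_\gamma}\geq a$ via the elementary inequality $r\,\mathcal S/(s+1)\geq$ (horizontal extent)$/2$ for a face with $r\geq 1$ roots. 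Once this is in place, the induction closes: the sum of $s^{(+)}a$, the non-positive area terms, and the inductively non-positive recursive terms is $\leq 0$. I would carry out the height-induction base case ($h(h)=0$, i.e.\ $h$ already a monomial times unit modulo the $y$-direction) first, then the base cases of the algorithm, then the inductive step via Corollary~\ref{KFSfeps} as above.
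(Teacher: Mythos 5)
Your architecture is the same as the paper's: induction along the Newton algorithm, formulas (\ref{KFSf+}) and (\ref{KFSf+b}) of Corollary \ref{KFSfeps}, non-positivity of the area terms, and compensation of the possibly positive term $s^{(+)}a$ by the area attached to the face of $\N(h)$ adjacent to the vertex $(a,0)$. The gap is in your justification of that compensation, which is the crux of the lemma. You claim $2\mathcal S_{\N(h_\gamma),h_\gamma}\geq a$ ``via the elementary inequality $r\,\mathcal S/(s+1)\geq(\text{horizontal extent})/2$'', but this inequality is false in general, and even where it holds it bounds the wrong quantity, since the horizontal extent of the adjacent face is unrelated to $a$. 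Concretely: for $h=x^2y+x^3$ the adjacent face has vertices $(2,1)$ and $(3,0)$, horizontal extent $1$, while $a=3$; your bound would give $2\mathcal S_{\N(h_\gamma),h_\gamma}\geq 1$, not $\geq 3$. For $h=x^{-10}y+x$ the face has vertices $(-10,1)$ and $(1,0)$ with $r=1$, and $r\mathcal S/(s+1)=1/2<11/2$, so the claimed inequality itself fails. What actually makes the compensation work — the paper's formula (\ref{lemcleairebis}) — is that in Definition \ref{area-f} the area $\mathcal S_\gamma$ is that of the triangle spanned by the two vertices and the origin: for a face with vertices $(a',d)$ and $(a,0)$ one gets $\mathcal S_\gamma=\abs{\det((a',d),(a,0))}/2=da/2$, independently of $a'$, whence $a-2\mathcal S_{\N(h_\gamma),h_\gamma}=a\bigl(1-\tfrac{rd}{s+1}\bigr)\leq 0$ because $s+1=\gcd(a-a',d)\leq d$ and $r\geq 1$. (One must also check this face lies in $\N(h)^{+}$ when $a>0$, so that its $-2\mathcal S$ term really appears in (\ref{KFSf+}); this holds since its supporting line $p\alpha+q\beta=N$ has $p,q>0$ and $N=pa>0$, cf.\ Remark \ref{Nfepsfacedim1}.)

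A second, smaller gap: in the recursion you allow a Newton transform to be a monomial times a unit and assert the corresponding term ``vanishes or equals the non-positive monomial contribution''. That is not so: if a transform were $x^{N}u(x,y)$ with $u$ a unit and $N>0$, Example \ref{ex:monomial} gives $[x^{N}:\Gm\to\Gm]$, whose fiber at $1$ has $N$ points and $\tilde{\chi}_c=N>0$, which would ruin the induction. The paper excludes this case instead of absorbing it: each transformation in the algorithm is attached to a root $\mu$ of multiplicity $\nu\geq 1$, so by Lemma \ref{lem:Newton-alg} the transform is $x^{N}g_1$ with $g_1(0,y)$ of valuation $\nu\geq 1$, hence never of the form $x^{N}\cdot(\text{unit})$, and the induction hypothesis applies to it. With these two repairs your proof closes and coincides with the paper's.
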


\begin{proof}
	To compute  $\tilde{\chi}_c\left((S_{h,x\neq 0})_{(0,0),0}^{(1)}\right)$, we use formula (\ref{KFSf+}) and (\ref{KFSf+b}) of Corollary \ref{KFSfeps}. 
	We prove the result by induction on the Newton process.
	\begin{itemize}
		\item For the base case {Example} \ref{ex:monomial} with $m \neq 0$ and base case {Example} \ref{ex:casdebase-f}, we have $\tilde{\chi}_c\left(\left(S_{h,x\neq 0}\right)_{(0,0),0}^{(1)}\right) = 0$. 
		\item By Newton algorithm, at each step the considered polynomial is not of the type of {Example} \ref{ex:monomial} with 
		$m = 0$. Indeed, {by assumption $h$ is not of the form} $x^{-M}u(x,y)$ with $u$ a unit. {If $h$ is {an Example} \ref{ex:monomial} necessarily $m \neq 0$.} Otherwise, $h$ has at least one compact one dimensional face, and for each of them, for each root $\mu$ of the face polynomial the corresponding multiplicity $\nu$ is larger than $1$ and the Newton transform is 	
		\begin{center} $h_{{\sigma_{(p,q,\mu)}}}(x,y) = x^{N}\left(y^\nu u(y) + \sum_{(k,l)\notin \gamma} x^{((k,l)\mid (p,q))-N} u_{k,l}(y)\right),$
		\end{center}
		where $u$ and $u_{k,l}$ are units in $y$. Then, $h_{{\sigma_{(p,q,\mu)}}}$ is not of the type of Example \ref{ex:monomial} with $m=0$. 
	        \end{itemize}
	Now assume the induction hypothesis: $\tilde{\chi}_c\left(\left(S_{h_{{\sigma_{(p,q,\mu)}}},x\neq 0}\right)_{(0,0),0}^{(1)}\right) \leq 0$, for each face of the Newton polygon of $h$ and for each root of the corresponding face polynomial. Then, by formulas (\ref{KFSf+}) and (\ref{KFSf+b}) of Corollary \ref{KFSfeps} we get the result. Indeed, 
	\begin{itemize}
		\item  for each one dimensional face $\gamma$ in $\N(h)^{+}$ we have 
			$-2S_{\N(h_\gamma),f_\gamma}\leq 0$. 
		\item  if we assume that the Newton polygon $\N(h)$ has a face $\gamma$ with vertices $(-M,d)$ and $(a,0)$, with $-M$ and $a$ in 
	$\mathbb Z$, $d$ in $\mathbb N^*$ and $a>-M$. We denote by $h_\gamma$ the face polynomial. It follows from Definition \ref{area-f} and its notations that 
	\begin{equation} \label{lemcleairebis} a - 2\mathcal S_{\N({h}_\gamma),h_\gamma} = a\left(1 - \frac{rd}{s+1}\right) {\leq 0} \end{equation}
	because $d\geq s+1$. In particular this is equal to zero if and only if $d=s+1$ and $r=1$. 
	\end{itemize}
	Then, we conclude by induction using the Newton algorithm and get the result.
\end{proof}
\begin{lem} \label{lambdanulgenerique-h} Let $h$ be in $\mathbb C[x^{-1},x,y]$ with isolated singularities and $c_0$ be in {$\mathbb C$}. 
	If ${\tilde{\chi_{c}}}\left((S_{h-c_0,{x}\neq 0})_{(0,0),0}^{(1)}\right) = 0$
	then $c_0$ does not belong to $B_{h}^{\text{Newton}}$.
\end{lem}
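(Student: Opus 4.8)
The plan is to prove the contrapositive: I assume $c_0\in B_{h,loc}^{\text{Newton}}$ and show $\tilde{\chi}_{c}((S_{h-c_0,x\neq 0})_{(0,0),0}^{(1)})<0$, so in particular it is nonzero. As a preliminary, I would note that $h-c_0$ is not a monomial times a unit $x^{-M}u(x,y)$: otherwise $h$ is itself a base case (when $M>0$, writing $h=x^{-M}(u+c_0x^M)$ with $u+c_0x^M$ a unit) or lies in $\k[x,y]$ (when $M\leq 0$), and in either case neither $h$ nor any of its Newton transforms carries a local dicritical face, so — using that $h$ has isolated singularities together with Example \ref{Bf^Newton-base-case} — $B_{h,loc}^{\text{Newton}}=\emptyset$, against the hypothesis. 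Consequently $h-c_0$, and by the argument in the proof of Lemma \ref{lem:positivite} each of its iterated Newton transforms, is not of this form, so Lemma \ref{lem:positivite} applies; as its proof shows, $\tilde{\chi}_{c}((S_{h-c_0,x\neq 0})_{(0,0),0}^{(1)})$ is, through formulas (\ref{KFSf+}) and (\ref{KFSf+b}) of Corollary \ref{KFSfeps}, a sum over the nodes of the Newton algorithm tree of $h-c_0$ of elementary contributions, each of which is $\leq 0$ (the term $s^{(+)}a$, when it occurs, absorbed by the area term $-2\,\mathcal S_{\N(\cdot),\cdot}$ of the face adjacent to the horizontal vertex, as in (\ref{lemcleairebis})). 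It therefore suffices to exhibit one node whose contribution is strictly negative.

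By definition of $B_{h,loc}^{\text{Newton}}$ there is a composition $\Sigma$ of Newton maps from the Newton algorithm of $h$ such that $c_0$ is a Newton non generic value of $g:=h_\Sigma$, so that $g$ has a local dicritical face $\gamma_0$. Since Newton transformations fix constants, $(h-c_0)_\Sigma=g-c_0$; after discarding from $\Sigma$ the steps that only move the monomial at the origin, $g-c_0$ occurs as a node of the algorithm tree of $h-c_0$ (using that along the two algorithms the Newton diagrams differ only at the point $(0,0)$, which matters only at dicritical steps). Running through Definition \ref{Newtongenericvalues-local}, I would show that the Newton non genericity of $c_0$ produces, within one or two further Newton steps starting at $g-c_0$, a node $f_1$ whose Newton polygon carries a compact one-dimensional face of strictly positive level that is "non trivial" in one of the following senses: it is not the only positive-level face, or its face polynomial has at least two distinct roots, or its face polynomial has a single root of multiplicity $\geq 2$. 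Concretely: when $c_0\neq c_{(0,0)}(g)$ and $c_0$ is a root of the discriminant of $\gamma_0$, the face polynomial of $\gamma_0$ (still a face of $\N(g-c_0)$ through the origin) has a root $\mu_0\in\Gm$ of multiplicity $\nu\geq 2$, and by Lemma \ref{lem:Newton-alg}, part (3), the Newton transform $\sigma_{(p,q,\mu_0)}$ produces a node $f_1$ with $f_1(0,w)$ of valuation $\nu$, hence — since the origin is then outside its Newton diagram — with the stated property; when $c_0=c_{(0,0)}(g)$, removing the origin from the support turns the lattice point of $\gamma_0$ next to $(0,0)$ (of ordinate $p$, where $(p,q)$ is the primitive exterior normal of $\gamma_0$, so of ordinate $\geq 2$ exactly when $\gamma_0$ is not smooth) into a vertex of $\N(g-c_0)$ below which the polygon spans ordinate range $\geq 2$, again yielding such an $f_1$ after one step.

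It remains to conclude that the contribution at such an $f_1$ is strictly negative. If $f_1$ has a positive-level face $\gamma_2$ distinct from the one adjacent to the horizontal vertex, then its contribution is $\leq -2\,\mathcal S_{\N((f_1)_{\gamma_2}),(f_1)_{\gamma_2}}<0$. If $f_1$ has a single positive-level face whose face polynomial has $\geq 2$ roots, the analogue of (\ref{lemcleairebis}) with $r\geq 2$ is again $<0$. Finally, if $f_1$ has a single positive-level face with one root of multiplicity $m\geq 2$ and its contribution were $0$, then by the stability lemma (Lemma \ref{lem:stabilite}) this face polynomial is $x^ky^l(y-\mu x^q)^{m}$ and the algorithm stabilises at $f_1$, so $f_1=Ux^{-M}(y-\mu x^q+g(x,y))^{m}$; as $m\geq 2$, such a polynomial has non isolated singularities in $\Gm\times\mathbb A^1$, contradicting Lemma \ref{lemme:singularitesisolees} (every Newton transform of $h-c_0$ has isolated singularities). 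Hence in all cases the contribution at $f_1$ is $<0$, so the whole sum is $<0$, which proves the lemma. The main obstacle is the second paragraph — matching the Newton algorithm of $h$ with that of $h-c_0$ so that the dicritical step witnessing the non genericity of $c_0$ is actually carried out, and, when $c_0=c_{(0,0)}(g)$, determining the shape of $\N(g-c_0)$ once the origin leaves the support; the remaining two paragraphs are direct applications of Corollary \ref{KFSfeps}, Lemma \ref{lem:positivite} and Lemma \ref{lem:stabilite}.
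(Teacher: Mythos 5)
Your overall strategy is the paper's: decompose $\tilde{\chi}_{c}\left((S_{h-c_0,x\neq 0})^{(1)}_{(0,0),0}\right)$ over the Newton tree of $h-c_0$ into non-positive pieces via Corollary \ref{KFSfeps} and Lemma \ref{lem:positivite-nullite}, and show that Newton non-genericity of $c_0$ forces a strictly negative piece, with the isolated-singularity hypothesis entering through Lemma \ref{lemme:singularitesisolees}. But two of your steps fail as written. First, the claim that non-genericity of $c_0$ produces, within one or two steps from $g-c_0$, a node $f_1$ carrying a positive-level face that is ``non trivial'' in one of your three senses is false. Take $h=(x^{-1}y-1)^2+x$ and $c_0=0$: $h$ has no critical point in $\Gm\times\mathbb{A}^1_{\mathbb C}$, its dicritical face has associated polynomial $(s-1)^2$, so $0\in B_h^{\text{Newton}}$; yet the only Newton transform of $h-c_0=h$ is $w^2+v$, whose unique positive-level face has a single simple root, and the next step is already a base case. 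None of your three cases applies, so your case analysis produces no witness — even though the contribution at $w^2+v$ is $1-2=-1$, for a reason you did not list: strictness in (\ref{lemcleairebis}) can come from $d>s+1$ with $r=1$. The right dichotomy is the equality case of (\ref{lemcleairebis}): the local term vanishes exactly when there is a single face, adjacent to the horizontal vertex, with face polynomial of the form $x^k(y-\mu x^q)^{\nu}$ (so $r=1$ and $d=s+1$), and your trichotomy should be keyed to that, not to root multiplicities.

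Second, in that remaining dangerous case the deduction ``its contribution were $0$, then by the stability lemma the algorithm stabilises at $f_1$'' is a non sequitur: Lemma \ref{lem:stabilite} turns constancy of the height along the whole subsequent process into the closed form $Ux^{-M}(y-\mu x^q+g)^m$, whereas vanishing of the local term at the single node $f_1$ says nothing about later heights. To reach the contradiction with Lemma \ref{lemme:singularitesisolees} you must first propagate the vanishing hereditarily down the branch — this is exactly the role of formula (\ref{nulliteh}) in the paper's proof — and then rule out a drop of height without a strictly negative term; your direct contrapositive organization skips this descent. Finally, the tree matching: ``discarding from $\Sigma$ the steps that only move the monomial at the origin'' is not an operation on compositions (dropping a step changes the endpoint), so your identification of $g-c_0$ with a node of the algorithm of $h-c_0$ is unjustified as stated. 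The fact you need is true, but for a reason you should prove: a node admitting a local dicritical face lies outside $\k[x,y]$, hence by Lemma \ref{lem:Newton-alg} every step on the path from $h$ to $g$ is taken along a face of strictly negative level, and such faces, their face polynomials and their roots are unchanged when the constant $c_0$ is subtracted; thus $\Sigma$ itself is a path of the algorithm of $h-c_0$ and no step need be discarded.
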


	\begin{proof}
	{As $h$ has isolated singularities in $\mathbb C^{*} \times \mathbb C$, then by Lemma \ref{lemme:singularitesisolees}, for any composition $\Sigma$ of Newton transformations, the polynomial $h_\Sigma$ has isolated singularities in $\mathbb C^{*} \times \mathbb C$.}
	Let $c_0$ be in {$\mathbb C$}.
	We assume ${\tilde{\chi}_{c}}\left(\left(S_{h-c_0,{x}\neq 0}\right)_{(0,0),0}^{(1)}\right) = 0$. 
	We remark that using {Corollary \ref{KFSfeps} and Lemma \ref{lem:positivite} (and formula (\ref{lemcleairebis}) in its proof)} we have
	\begin{equation} \label{nulliteh}
		\tilde{\chi}_{c}\left(\left(S_{ (h-c_0)_{ {\Sigma}}, v\neq 0}\right)^{ {(1)} }_{((0,0),0)}\right) =  0
	\end{equation}
	for any {composition $\Sigma$ of Newton transformations}. {We denote by $c_{(0,0)}(h)$ the constant term of $h$.} 
	\begin{itemize}
		\item  Assume $c_0$ is a Newton non generic value of $h$. Then, the polynomial $h$ has a dicritical face $\gamma_0$.
	\begin{enumerate}
		\item Assume $c_0\neq c_{(0,0)}(h)$. {Then the face $\gamma_0$ belongs to $\N(h-c_0)$}. {The associated polynomial of the face $\gamma_0$ (Definition \ref{def:face-dicritique-discriminant})} is equal to $P_{\gamma_0}(x^{-a}y^b)$ with 
			$P_{\gamma_0}$ a polynomial in ${\mathbb C}[s]$. {As $c_0$ is a Newton non generic value and $c_0\neq c_{(0,0)}(h)$, by Definition \ref{Newtongenericvalues-local}} the polynomial $P_{\gamma_0}(s)-c_0$ has a non simple root $\mu \neq 0$ {with multiplicity $\nu \geq 2$. Applying the corresponding Newton transformation ${\sigma_{(p,q,\mu)}}$ defined in Definition \ref{defn:Newton-map-local}, we obtain the expression }
			\begin{equation}
				(h-c_0)_{{\sigma_{(p,q,\mu)}}} = u(w)w^{\nu} + \sum_{(k,l)\in \supp{(h-c_0)}\setminus \{\gamma_0\}} v^{\abs{((k,l)\mid((p,q))}}u_{(k,l)}(w),
			\end{equation}
			where $u$ and $u_{k,l}$ are units.
			Then, {$(h-c_0)_{{\sigma_{(p,q,\mu)}}}$ has at least a one dimensional face with a non zero area and by 
			Lemma \ref{lem:positivite-nullite} and Corollary \ref{KFSfeps}} we get 
			$\tilde{\chi}_{c}\left((S_{( h-c_0)_{{\sigma_{(p,q,\mu)}}}, v\neq 0})^{(1)}_{((0,0),0)}\right) \neq 0$ which contradicts 
			{formula (\ref{nulliteh})}. 
		\item Assume $c_0 = c_{(0,0)}(h)$. Let $\gamma$ be the face of maximal dimension of $\N(h-c_0)$ such that $\gamma \cap \gamma_0$ is not empty.

			{It has only one vertex $(a_0,b_0)$ if its dimension is zero, otherwise it has two vertices $(a_0,b_0)$ and $(a_1,b_1)$ with $a_0>a_1$. }
			\begin{itemize}
			\item Assume the polynomial $P_{\gamma_0}(s)-c_0$ has {a root $\mu$ with multiplicity $\nu \geq 2$.}
					\begin{enumerate}
						\item If $\mu\neq 0$ then as above
							$\tilde{\chi}_{c}\left((S_{ (h-c_0)_{{\sigma_{(p,q,\mu)}}}, v\neq 0})^{(1)}_{((0,0),0)}\right) \neq 0$ which contradicts formula (\ref{nulliteh}). 
						\item \label{casb0>1} Assume $\mu = 0$. Necessarily $b_0 \geq 2$ otherwise {$\mu$} is not a root of 
							{$P_{\gamma_0}(s)-c_0$} with multiplicity {$\nu \geq 2$.}. If the point $(a_0,b_0)$ is the horizontal face of $h-c_0$ then the singularities of 
							$h-{c_0}$ are not isolated (the line $y=0$ is critical). Then $(a_0,b_0)$ is not the horizontal face, then there is a {one dimensional} face $\gamma'\neq \gamma$ with {vertex} $(a_0,b_0)$.
							Necessarily $\N^{+}(h-c_0)$ contains $\gamma'$ {with $S_{\N(h_{\gamma'}),h_{\gamma'}}$ a non zero area, then by Lemma \ref{lem:positivite-nullite} and Corollary \ref{KFSfeps}}, 
							$\tilde{\chi}_{c}\left(\left(S_{h-c_0, x\neq 0}\right)^{(1)}_{((0,0),0)}\right) \neq 0$
							which contradicts formula (\ref{nulliteh}). 
					\end{enumerate}
				\item If all the roots of $P_\gamma(s)-{c_0}$ are simple, {then} necessarily the face $\gamma_0$ is not smooth and $b_0\geq 2$. Then, arguing as in (\ref{casb0>1}) we obtain a contradiction {with formula (\ref{nulliteh})}. 

			\end{itemize}

	\end{enumerate}
\item  If $c_0$ is a Newton generic value for $h$, {and} $c_0$ belongs to $B_{h}^{\text{Newton}}$, {then} there is a polynomial $\tilde{h}$ in the Newton algorithm of 
	$h-{c_0}$, which by assumption has isolated singularities and such that 
	$c_0$ belongs to $B_{\tilde{h}}^{\text{Newton}}$, then as above we have 
	$\tilde{\chi}_{c}\left((S_{ {(\tilde{h}-c_0)_{{\sigma_{(p,q,\mu)}}}}, v\neq 0})^{(1)}_{((0,0),0)}\right) \neq  0$
	which is a contradiction {with formula (\ref{nulliteh}).} 
	\end{itemize}
\end{proof}

\begin{prop} \label{lambdanulgenerique} Let $f$ be a polynomial in ${\mathbb C}[x,y]$ with isolated singularities. Let {$c_0$} be in ${\mathbb C}\setminus disc(f)$. If $\lambda_{ {c_0} }(f)=0$ then {$c_0$} does not belong to $B_{f}^{\text{Newton}}$. {Then, we have the inclusion $B_f^{\text{Newton}} \subset B_f^{\text{top}}$.} 
\end{prop}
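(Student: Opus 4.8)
The plan is to reduce the statement for $f \in \mathbb C[x,y]$ to the local statement already established in Lemma \ref{lambdanulgenerique-h}, by transporting the hypothesis $\lambda_{c_0}(f) = 0$ through the Newton algorithm at infinity. First I would recall that, by definition (Definition \ref{Newtonbifset}), a value $c_0 \in B_f^{\text{Newton}}$ precisely when $c_0 \in \text{disc}(f)$, or $c_0$ is a Newton non generic value of $f$, or $c_0$ is a Newton non generic value of some Newton transform $f_\Sigma$ occurring in the Newton algorithm at infinity of $f$. Since by hypothesis $c_0 \notin \text{disc}(f)$, it suffices to rule out the last two possibilities under the assumption $\lambda_{c_0}(f) = 0$. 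The bridge between $\lambda_{c_0}(f)$ and the Newton-algorithmic data is Theorem \ref{chilambda}: $\tilde\chi_c\bigl(S_{f,c_0}^{\infty,(1)}\bigr) = -\lambda_{c_0}(f) = 0$, together with the formula for $S_{f,c_0}^{\infty}$ given in Theorem \ref{thmcyclesprochesinfini} (formula (\ref{formulecyclesprochesinfini}), applying it to $f - c_0$ and using $S_{f,c_0}^\infty = S_{f-c_0,0}^\infty$ after translation, so that the Newton polygon in play is $\GN(f-c_0)$).

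The key step is a \emph{positivity/vanishing} argument, entirely parallel to the one in Lemma \ref{lambdanulgenerique-h}. Translating to $f-c_0$ and applying the Euler characteristic realization to (\ref{formulecyclesprochesinfini}), one gets (using Proposition \ref{prop:caracteristiceuleraire} and the area formula of Corollary \ref{calcullambdaaire}, formula (\ref{formule:lambda})) an expression
$$
\lambda_{c_0}(f) = 2\sum_{\gamma \in \widetilde{\N}(f-c_0),\,\dim\gamma=1} \eps_\gamma \mathcal S_{\N((f-c_0)_\gamma),(f-c_0)_\gamma}
- \sum_{\gamma \in \widetilde{\N}(f-c_0),\,\dim\gamma=1}\sum_{\mu\in R_\gamma}\tilde\chi_c\Bigl(\bigl(S_{(f-c_0)_{\sigma_{(p,q,\mu)}},\,v\neq 0}\bigr)^{(1)}_{((0,0),0)}\Bigr),
$$
all of whose summands are $\leq 0$ by Lemma \ref{lem:positivite-nullite} (after checking that each Newton transform $(f-c_0)_{\sigma_{(p,q,\mu)}}$ of the relevant faces is, by Lemma \ref{lemme:singularitesisolees}, a polynomial with isolated singularities in $\mathbb C^\ast\times\mathbb C$ and not of the forbidden form $x^{-M}u$ with $u$ a unit). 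Hence $\lambda_{c_0}(f) = 0$ forces \emph{each} term to vanish; in particular for every compact one-dimensional face $\gamma$ of $\widetilde{\N}(f-c_0)$ and every root $\mu$ of $(f-c_0)_\gamma$ we get $\tilde\chi_c\bigl((S_{(f-c_0)_{\sigma_{(p,q,\mu)}},v\neq 0})^{(1)}_{((0,0),0)}\bigr) = 0$. I would then propagate this vanishing downward: by Corollary \ref{KFSfeps} and Lemma \ref{lem:positivite-nullite} the same argument at each node of the Newton algorithm shows that $\tilde\chi_c\bigl((S_{h_\Sigma,v\neq 0})^{(1)}_{((0,0),0)}\bigr) = 0$ for every polynomial $h_\Sigma$ appearing in the Newton algorithm at infinity of $f-c_0$ (this is the analogue of formula (\ref{nulliteh}) in the proof of Lemma \ref{lambdanulgenerique-h}).

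Finally, for each Newton transform $f_\Sigma = (f-c_0)_\Sigma$ arising in the Newton algorithm at infinity, I would invoke Lemma \ref{lambdanulgenerique-h}: since $h := f_\Sigma$ has isolated singularities in $\mathbb C^\ast \times \mathbb C$ (Lemma \ref{lemme:singularitesisolees}) and $\tilde\chi_c\bigl((S_{h - c_0,\,x\neq 0})^{(1)}_{(0,0),0}\bigr) = 0$ by the previous step — here one must be a little careful that the relevant "local" value to plug in is again $c_0$, equivalently that the constant adjustment has been normalized consistently, which follows from $S_{f,c_0}^\infty = S_{f-c_0,0}^\infty$ and Remark \ref{rem:egalitepolygones} — the lemma gives that $c_0 \notin B^{\text{Newton}}_{h}$. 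Doing this at every node, and for $f$ itself at the top, shows $c_0$ is not a Newton non generic value of $f$ nor of any of its Newton transforms, hence $c_0 \notin B_f^{\text{Newton}}$. Taking the union over all $c_0 \notin \text{disc}(f)$ with $\lambda_{c_0}(f) = 0$, and using Theorem \ref{chigenchia} which identifies $B_f^{\text{top}} = \text{disc}(f) \cup \{c_0 \mid \lambda_{c_0}(f)\neq 0\}$, yields the inclusion $B_f^{\text{Newton}} \subset B_f^{\text{top}}$. The main obstacle I anticipate is bookkeeping: verifying that at every node of the (finite) Newton algorithm at infinity the polynomials in play genuinely fall outside the degenerate "monomial times a unit" case so that Lemma \ref{lem:positivite-nullite} applies with a \emph{strict} inequality whenever a dicritical or non-smooth face is present — this is exactly what produces the desired contradiction and is the technical heart of the descent, mirroring the case analysis on $\mu = 0$ versus $\mu \neq 0$ and on $b_0 \geq 2$ in Lemma \ref{lambdanulgenerique-h}.
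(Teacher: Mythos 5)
Your reduction correctly handles one of the two non-discriminant sources of Newton bifurcation values, but not the other, and that other one is the technical heart of the paper's proof. By Definition \ref{Newtonbifset}, $c_0\in B_f^{\text{Newton}}$ can happen either because $c_0$ is a Newton non generic value of some Newton transform $f_\Sigma$ (an element of $\k[x^{-1},x,y]$, where non-genericity is the \emph{local} notion of Definition \ref{Newtongenericvalues-local}), or because $c_0$ is a Newton non generic value \emph{of $f$ itself at infinity} (Definition \ref{Newtongenericvalues}), i.e.\ coming from a dicritical face at infinity of $\N_\infty(f)$. Your argument — extract the first-step vanishing $\tilde\chi_c\bigl((S_{(f-c_0)_{\sigma_{(p,q,\mu)}},v\neq 0})^{(1)}_{((0,0),0)}\bigr)=0$ from formula (\ref{formule:lambda}) via Lemma \ref{lem:positivite-nullite}, then feed each transform into Lemma \ref{lambdanulgenerique-h} — disposes of the first source exactly as the paper does (the extra ``propagation downward'' is not needed, since the local Newton bifurcation set of a transform already absorbs all deeper transforms). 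But your phrase ``and for $f$ itself at the top'' is not backed by anything: Lemma \ref{lambdanulgenerique-h} concerns the local bifurcation set and local dicritical faces, and for $f\in\mathbb C[x,y]$ it says nothing about the dicritical faces at infinity. The paper devotes the bulk of the proof to precisely this point, arguing by contradiction: if $c_0$ were non generic at infinity, then either $c_0\neq f(0,0)$ and a multiple root $\mu$ of $P_{\gamma_0}(s)-c_0$ produces, via formula (\ref{NTfc0}), a transform with $\N^{+}$ containing a one-dimensional face of nonzero area, contradicting the vanishing (\ref{nullite}); or $c_0=f(0,0)$, where the subcases $\mu\neq 0$, $\mu=0$ (forcing $a_0\geq 2$ and using isolated singularities to rule out $x^{a_0}\mid f-c_0$, hence producing a face $\gamma'$ with $\eps_{\gamma'}=1$ and nonzero area so $\lambda_{c_0}(f)\neq 0$), and the non-smooth dicritical face must each be treated. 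None of this is in your proposal, and it is where the isolated-singularities hypothesis does real work beyond Lemma \ref{lemme:singularitesisolees}.

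A second, smaller omission: formula (\ref{formule:lambda}) only applies when $\GN(f-c_0)$ is not a segment. The quasi-homogeneous case must be handled separately with formula (\ref{formule:lambda-segment}) of Corollary \ref{calcullambdaaire}, as the paper does at the end of its proof (cases $pq\geq 0$, and $pq<0$ with $N=0$, again using that $c_0\notin\operatorname{disc}(f)$ and isolated singularities). Finally, note a sign slip in your display: by Lemma \ref{lem:positivite-nullite} the terms $\tilde\chi_c(\cdot)$ are $\leq 0$, so with the minus sign every summand contributes \emph{nonnegatively} to $\lambda_{c_0}(f)$; this is what forces each term to vanish, and your conclusion is right even though the stated inequality direction is not.
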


\begin{proof}
	{As $f$ has isolated singularities in $\mathbb C^2$, it follows from Lemma \ref{lemme:singularitesisolees} that for any composition $\Sigma$ of Newton transformations, 
	$f_\Sigma$ has isolated singularities in $\mathbb C^* \times \mathbb C$.}
	Let $c_0$ be in $\mathbb C\setminus disc(f)$ {with $\lambda_{c_0}(f)=0$}. We consider the computations of $\lambda_{c_0}(f)$ in Corollary \ref{calcullambdaaire} for $f-c_0$.\\
	{$\bullet$ Assume $f-c_0$ is not a quasi homogeneous polynomial.}
	As $\lambda_{c_0}(f)=0$, {using Lemma \ref{lem:positivite} we observe that} in formula (\ref{formule:lambda}) we have $\eps_{\gamma} = 0$ for any one dimensional face $\gamma$ in 
	$\overline{\N}(f-c_0)$ and for any Newton transformation in the first step of {the} Newton algorithm at infinity of $f-c_0$, we have
	\begin{equation} \label{nullite}
				\tilde{\chi}_{c}\left((S_{ {(f-c_0)_{{\sigma_{(p,q,\mu)}}}}, v\neq 0})^{(1)}_{((0,0),0)}\right) =  0.
	\end{equation}
	As $f_{{\sigma_{(p,q,\mu)}}}\in {\mathbb C}[x^{-1},x,y]$ {and $f_{{\sigma_{(p,q,\mu)}}}-c_0 = (f-c_0)_{{\sigma_{(p,q,\mu)}}}$}, we deduce by Lemma \ref{lambdanulgenerique-h} that $c_0$ does not belong to 
	$B_{f_{{\sigma_{(p,q,\mu)}}}}^{\text{Newton}}$ and then neither, to any $B_{f_\Sigma}^{\text{Newton}}$ for any composition $\Sigma$ of Newton transformations.
	{Thus,} to prove the result it is enough to show that $c_0$ is not a non Newton generic value at infinity. 
	{We proceed by contradiction. Assume $c_0$ is a non Newton generic value at infinity. So, there is a dicritical face at infinity $\gamma_0$ of $f$ 
		with face polynomial $P_{\gamma_0}(x^{m}y^{n})$ with $P_{\gamma_0}$ in $\mathbb C[s]$.}
	\begin{enumerate}
		\item \label{(case1)} Assume $c_0 \neq f(0,0)$.
			The polynomial $P_{\gamma_0}(s)-c_0$ has a root $\mu$ of multiplicity $\nu\geq 2$. {We denote by $(p,q)$ the primitive exterior normal vector to the face $\gamma_0$.} Applying the corresponding Newton transformation ${\sigma_{(p,q,\mu)}}$, we {get} 
		\begin{equation} \label{NTfc0}
			(f-c_0)_{{\sigma_{(p,q,\mu)}}} = u(w)w^{\nu} + \sum_{(k,l)\in \supp{(f-c_0)}\setminus \{\gamma_0\}} v^{\abs{((k,l)\mid((p,q))}}u_{(k,l)}(w),
		\end{equation}
		where $u$ and $u_{k,l}$ are units. We remark that $\N^{+}(f-c_0)$ is not empty and contains a face of dimension 1, then it follows from Corollary \ref{KFSfeps} that 
		$\tilde{\chi}_{c}\left((S_{ (f-c_0)_{{\sigma_{(p,q,\mu)}}}, v\neq 0})^{(1)}_{((0,0),0)}\right) \neq 0$ which is a contradiction {with formula (\ref{nullite})}. 		
	\item Assume $c_0 = f(0,0)$. 
		Let $\gamma $ be  the face of maximal dimension of $\overline{\N}(f-c_0)$ such that $\gamma \cap \gamma_0 \neq \emptyset$. It has only one vertex $(a_0,b_0)$ if its dimension is zero, otherwise it has two vertices $(a_0,b_0)$ and $(a_1,b_1)$ with $a_0<a_1$.
			\begin{itemize}
				\item Assume that the polynomial $P_{\gamma_0}(s)-c_0$ has a root $\mu$ with a multiplicity $\nu\geq 2$. 
					\begin{enumerate}
				\item \label{cas:dicriticracinenonzero} Assume the root $\mu\neq 0$. The situation is similar to above case \ref{(case1)} and gives a contradiction.
				\item \label{cas:dicriticracinezero} Assume the root $\mu=0$. As $\nu \geq 2$ we have $a_0\geq 2$.
					{If $x^{a_0}$ divides $f-c_0$ then $f-c_0$ does not have isolated singularities in $\mathbb C^2$. Then, $x^{a_0}$ does not divide $f-c_0$, so there is a one dimensional face $\gamma'$ in $\GN(f-c_0)\setminus \N(f-c_0)$ with vertex $(a_0,b_0)$. As $\gamma_0$ is a dicritical face, the face $\gamma'$ is supported by a line of equation $ap+bq=N$ with 
					$(p,q)$ the normal vector exterior to $\overline{\N}(f-c_0)$ and $N<0$. This one dimensional face $\gamma'$ induces a non zero area with $\eps_\gamma'=1$ (by Proposition \ref{lem:epsgammafacedim0-infini}), then by Lemma \ref{lem:positivite} and formula (\ref{formule:lambda}), we have $\lambda_{c_0}(f)\neq 0$. Contradiction.}
			\end{enumerate}
		
		\item Assume that the dicritical face at infinity $\gamma_0$ is non smooth. 
			As $c_0$ is a non critical value {of $f$ and $c_0=f(0,0)$}, the point $(0,0)$ is not critical {and} $x$ or $y$ is in the support of $f-c_0$. Assume for instance $x$ is in the support (the case of $y$ in the support is similar). The dicritical face $\gamma_0$ is supported by an equation $p\alpha+q\beta=0$ with $p<0$ and $q>0$. {By assumption $\gamma_0$} is non smooth, {then}  $q$ is different from $1$. In particular, the face $\gamma$ does not have a point of coordinate $(1,\abs{p})$ {and} we necessarily have $a_0>1$. Then, we are in the similar case as \ref{cas:dicriticracinezero} and obtain a contradiction. 
	\end{itemize}
		\end{enumerate}
		$\bullet$ Assume $f-c_0$ is a quasi homogeneous polynomial. As $\lambda_{c_0}(f)=0$, by Corollary \ref{calcullambdaaire}, we consider the cases $pq<0$ with $N=0$ and $pq\geq 0$. 
		\begin{itemize}
			\item  Assume $pq<0$ and $N=0$. If $f-c_0$ has a root $\mu$ with multiplicity $\nu \geq 2$, then $f$ does not have isolated singularities. If $f(0,0)=c_0$ then as $c_0$ is not a critical value, $(0,0)$ is not a critical point then $x$ or $y$ belongs to the support of $f-c_0$ which is a contradiction with the assumption $N=0$. Then $c_0$ does not belong to $B_f^{\text{Newton}}$.
			\item Assume $pq\geq 0$. As $f-c_0$ is quasi homogeneous, we have $f(0,0)=c_0$. As $c_0$ is not critical then as above $(0,0)$ is not a critical point and we can assume that $x$ is in the support, then up to a constant, there is $p$ in $\mathbb N^*$ such that $f(x,y)-c_0=y^p - \mu x$.
			Then $c_0$ does not belong to $B_f^{\text{Newton}}$.
	\end{itemize}
\end{proof}

\begin{prop} \label{Shcnulle}
	Let $h$ be in $\k[x^{-1},x,y]$ and not in $\k[x,y]$. Let $c$ {be in $\k \setminus B_{h}^{\text{Newton}}$.}
	Then we have $\left(S_{h-c,{x}\neq 0}\right)_{(0,0),0} = 0.$
\end{prop}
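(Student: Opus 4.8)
The statement is exactly the induction step that powers the inclusion $\mathcal B_f^{\mathrm{mot}} \subset \mathcal B_f^{\mathrm{Newton}}$ (it will feed Proposition~\ref{generic-Sfnul}), and the natural strategy is induction along the Newton algorithm of $h-c$, using the formula of Theorem~\ref{thmSfeps} as the recursion and the base cases Example~\ref{ex:monomial} and Example~\ref{ex:casdebase-f} as the induction anchor. First I would reduce to computing $(S_{(h-c)^{+},x\neq 0})_{((0,0),0)}$, i.e.\ the case $\eps=+$, observing that since $h\notin\k[x,y]$ the negative case $\eps=-$ also needs control but is governed by the same argument restricted to $\N(h-c)^{-}$; I will write $g:=h-c$ and note $g\in\k[x^{-1},x,y]$, $g\notin\k[x,y]$, and that $c\notin B_h^{\mathrm{Newton}}$ means $c$ is Newton generic for $g$ and for every Newton transform $g_\Sigma$ arising in the algorithm, and moreover $c\notin\mathrm{disc}(h)$ so $0\notin B_h^{\mathrm{Newton}}$ forces the base-case polynomials to be of the isolated-singularity type described in Example~\ref{Bf^Newton-base-case}.

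\textbf{Key steps.} The induction is on the (finite, by Theorem~\ref{thm:algo-Newton}) depth of the Newton algorithm applied to $g$. For the base of the induction: if $g$ is a base case then $g = U x^{-M} y^m$ or $g = U x^{-M}(y-\mu x^q + \tilde g)^m$, and since $g\notin\k[x,y]$ we have $M>0$; in the monomial case $m\neq 0$ (otherwise $g=Ux^{-M}$ is a unit times a monomial, excluded because $0\notin B_g^{\mathrm{Newton}}$ would fail precisely as in the hypotheses of Lemma~\ref{lem:positivite}), and then Example~\ref{ex:monomial} in the subcase ``$\eps=+$, $-M\le 0$'' together with the subcase ``$\eps=-$, $-M\ge 0$'' — wait, here $-M<0$ — gives $(S_{g,x\neq 0})_{((0,0),0)}=0$ directly from formulas (\ref{eq1})–(\ref{eq2}) of Example~\ref{ex:casdebase-f} and the last bullet of Example~\ref{ex:monomial}; one checks each branch of those two Examples yields $0$. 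For the induction step: apply Theorem~\ref{thmSfeps} to $g$. The right-hand side splits into (i) the horizontal-face term $s^{(\eps)}[\cdots]$ or $-s^{(\eps)}[x^ay^b:\ldots]$, (ii) the face terms $\sum_{\gamma\in\N(g)^\eps\setminus\{\gamma_h\}}(-1)^{\dim\gamma+1}[1/g_\gamma:\ldots]$, and (iii) the Newton-transform terms $\sum_\gamma\sum_{\mu\in R_\gamma}(S_{g_{\sigma_{(p,q,\mu)}}^\eps,v\neq0})_{((0,0),0)}$. For (iii): each $g_{\sigma_{(p,q,\mu)}}$ is again in $\k[x^{-1},x,y]$, is again not in $\k[x,y]$ when $\eps=+$ and $M>0$ (since by Lemma~\ref{lem:Newton-alg} $f_{\sigma}=x_1^{N}u$ with $N$ of sign $\eps$, and one tracks the $x_1^{-M'}$ factor through Lemma~\ref{compositionNT}), and $c$ is Newton generic for it; so the induction hypothesis applies and each such term vanishes. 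For (i) and (ii): these are precisely the terms one must kill using the genericity of $c$.

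\textbf{The main obstacle.} The hard part is step (i)–(ii): showing that the horizontal-face contribution and the lower-dimensional face contributions cancel or vanish, and this is exactly where Newton genericity of $c$ is used and where the proof will be most delicate. The key mechanism is that $c$ Newton generic for $g=h-c$ means either $g$ has no dicritical face, or $c_{(0,0)}(g)=0$ is not a root of the relevant discriminant and the dicritical face is smooth; in the no-dicritical-face case every one-dimensional face of $\N(g)$ has equation $ap+bq=N$ with $N\neq 0$ and face polynomial with simple roots, so the face-term classes assemble — via the identities (\ref{seq1})–(\ref{seq3}) of the Grothendieck ring used in the proof of Example~\ref{ex:casdebase-f}, i.e.\ gluing $[1/g_\gamma:\Gm^2\setminus(g_\gamma=0)]$ with the vertex terms into $[1/g_\gamma:\Gm\times\mathbb A^1\setminus\cdots]\cong[1/x^{-M}z^m:\Gm^2]$ — into a telescoping sum along $\N(g)^+$ (resp.\ $\N(g)^-$) whose total is $0$, using that the horizontal face either is not in $\N(g)^\eps$ or provides exactly the missing endpoint monomial. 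When a (necessarily smooth, with $c_{(0,0)}(g)$ not discriminant-critical) dicritical face is present, one runs the same gluing but now along the faces of the \emph{local} Newton polygon at the origin, again controlled by genericity so that all face polynomials have simple roots and the surviving contributions are absorbed into the Newton-transform terms handled by induction. I expect this combinatorial bookkeeping — matching up which monomial classes appear from vertices, which from the horizontal face, and checking the telescoping is exact in $\mathcal M_{\Gm}^{\Gm}$ — to be the technical heart of the argument; everything else is a routine traversal of the branches of Theorem~\ref{thmSfeps}, Example~\ref{ex:monomial} and Example~\ref{ex:casdebase-f}.
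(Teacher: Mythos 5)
Your overall skeleton (induction along the Newton algorithm, recursion via Theorem \ref{thmSfeps}, base cases from Examples \ref{ex:monomial} and \ref{ex:casdebase-f}, cancellation in $\mgg$ via the identities (\ref{seq1})--(\ref{seq3})) is the same as the paper's, but the part you defer as ``the technical heart'' is exactly the content of the proposition, and your plan for it rests on premises that fail. The key simplification you miss is this: whenever $c\neq c_{(0,0)}(h)$, the origin $(0,0)$ lies in $\mathrm{Supp}(h-c)$, so the function $\m$ attached to $\Delta(h-c)$ satisfies $\m(p,q)\leq 0$ for every $(p,q)\in\mathbb N^2$ and hence $\N(h-c)^{+}$ is \emph{empty}. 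In Theorem \ref{thmSfeps} with $\eps=+$ this kills the horizontal term ($s^{(+)}=0$) and the whole sum over $\N(h-c)^{+}\setminus\{\gamma_h\}$ outright; there is no ``telescoping sum along $\N(g)^{+}$'' to organize. Moreover your claim that Newton genericity of $c$ makes \emph{every} face polynomial of $\N(h-c)$ have simple roots is false: genericity only controls the discriminant of the dicritical face; the other faces coincide with faces of $\N(h)$ and may have multiple roots, and they are handled purely by the induction, not by simplicity.

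The second gap is in your step (iii). You assert that every Newton transform $g_{\sigma_{(p,q,\mu)}}$ is again outside $\k[x,y]$, so the induction hypothesis applies to all of them. That is wrong precisely for the transforms coming from the dicritical face (supporting line $\alpha p+\beta q=0$): there $(h-c)_{\sigma}$ typically lands in $\k[x,y]$, and the inductive statement (which requires $h'\notin\k[x,y]$) cannot be invoked. This is exactly where the hypothesis $c\notin B_h^{\text{Newton}}$ must be used: simplicity of the roots of $P_{\gamma}(s)-c$ forces each such transform to be a base case of type \ref{ex:monomial} or \ref{ex:casdebase-f} with $M=0$, $m=1$, hence with vanishing motive. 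Finally, the only configuration in which face terms actually survive is $c=c_{(0,0)}(h)$ with a smooth dicritical face whose roots are simple; smoothness then forces a monomial $x^{-q}y$ in $h$, so $\N(h-c)^{+}$ is either empty or equal to $\{(a,0),\gamma'\}$ with $\gamma'$ carrying a single simple root, and the cancellation is the explicit three-term identity $[x^{a}:\Gm\to\Gm]+[x^{-q}(y-\lambda x^{a+q}):\Gm^2\setminus(y=\lambda x^{a+q})\to\Gm]-[x^{a}y:\Gm^2\to\Gm]=0$, as at the end of the proof of Example \ref{ex:casdebase-f} --- a finite verification, not a combinatorial bookkeeping over the whole polygon. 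Also note that your base case for $h=Ux^{-M_0}y^{m_0}$, $c\neq 0$, cannot be read off Example \ref{ex:monomial} applied to $h-c$, since $h-c$ is no longer of monomial-times-unit form: one must apply Theorem \ref{thmSfeps} to $h-c$ and use that the dicritical face polynomial $x^{-M_0}y^{m_0}-c$ factors with simple roots. As it stands, the proposal would not close without supplying these points.
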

\begin{proof}
	{(The base cases). We prove here Proposition \ref{Shcnulle} for the base cases Examples \ref{ex:monomial} or \ref{ex:casdebase-f} with in each case $M>0$. The general case is proved below by induction using Newton algorithm.}
	\begin{itemize}
		\item Consider $h(x,y)=U(x,y)x^{-M_0}y^{m_0}$ with $U$ a unit {and $M_0>0$}. We assume $U(0,0)=1$. 
		Let $c$ be in $\k \setminus B_{h}^{\text{Newton}}$.
		
			\begin{itemize}
				\item  Assume $c\neq 0$. The polynomial $h$ has a dicritical face $\gamma$ which is also a face of $h-c$. 
					{The face polynomial of $h-c$ is $x^{-M_0}y^{m_0}-c = \prod_{i=1}^{d}(x^{-q}y^{p}-\mu_i),$
					where $d=\gcd(M_0,m_0)$, $(p,q)=(m_0,M_0)/d$ and $(\mu_i)$ are the $d$-roots of $c$.
				}
					In particular the set $\N(h-c)^{+}$ is empty.
					{Then, applying Theorem \ref{thmSfeps} we have 
					$$\left(S_{h-c,{x}\neq 0}\right)_{(0,0),0} = 
					\sum_{i=1}^{d} \left(S_{(h-c)_{\sigma_{(p,q,\mu_i)}},v\neq 0}\right)_{(0,0),0} = 0$$
					because for each root $\mu_i$ the Newton transform $(h-c)_{\sigma_{(p,q,\mu_i)}}$ 
					is an Example \ref{ex:monomial} or \ref{ex:casdebase-f} with $M=0$ and $m=1$.} 
					
				\item  Assume $c=0$, by Example \ref{ex:monomial}, we have $\left(S_{h,v\neq 0}\right)_{(0,0),0} = 0$ because $M_0>0$. 			\end{itemize}
		\item We consider $h(x,y)=U(x,y)x^{-M_0}(y-\mu_0 x^q+g(x,y))^{m_0}$ with $U$ a unit, {$M_0>0$} and $g(x,y)=\sum_{a+bq>q}c_{a,b}x^ay^b$.
			{We assume $U(0,0)=1$.}
			The polynomial $h$ has {a one dimensional face} with  vertices $(-M_0,m_0)$ and $(-M_0+m_0q,0)$. We denote the constant term of $h$ by $c_{(0,0)}(h)$.
			\begin{itemize}
				\item Assume $-M_0+m_0q<0$. Then, the set $\N(h-c)^{+}$ is empty for any value $c$. The polynomial $h-c$ has only one one dimensional face {denoted by $\gamma$} and its face polynomial $(h-c)_\gamma$ is $x^{-M_0}(y-\mu_0 x^q)^{m_0}$. The Newton transform $(h-c)_{\sigma_{(1,q,\mu_0)}}$ is an Example \ref{ex:monomial} or \ref{ex:casdebase-f} with $M=-M_0+m_0q<0$ and $m=m_0$, 
					so we have $\left(S_{(h-c)_{\sigma_{(1,q,\mu_0)}},v\neq 0}\right)_{(0,0),0}=0.$ 
			        Applying Theorem \ref{thmSfeps} we have $\left(S_{h-c,{x}\neq 0}\right)_{(0,0),0} = 0$.
				\item Assume $-M_0+m_0q= 0$. The polynomial $h$ has a dicritical face $\gamma$. The polynomial face $h_\gamma$ is equal to 
					$P(x^{-q}y)$ with $P(s)=(s-\mu_0)^{m_0}$ in $\k[s]$. Let $c \notin B_{f}^{\text{Newton}}$. 
				      	\begin{itemize}
						\item If $c\neq c_{(0,0}(h)$ then $\gamma$ is a face of $h-c$ and the set $\N(h-c)^{+}$ is empty. The polynomial $h-c$ has only one  one dimensional face, which is the dicritical face $\gamma$. As $c$ does not belong to $B_{f}^{\text{Newton}}$, 
							the face polynomial $(h-c)_\gamma$ has simple roots. Then, for any root $\mu$ of $(h-c)_\gamma$, the Newton transform $(h-c)_{\sigma_{(1,q,\mu)}}$ is an Example \ref{ex:monomial} or \ref{ex:casdebase-f} with $M=0$ and $m=1$. 
							Then, we have $(S_{(h-c)_{\sigma_{(1,q,\mu)}},v\neq 0})_{(0,0),0} = 0$ and as above applying Theorem \ref{thmSfeps} we have $(S_{h-c,{x}\neq 0})_{(0,0),0} = 0$.
						\item If $c=c_{(0,0)}(h)$, then as $c \notin B_{f}^{\text{Newton}}$, the face $\gamma$ is smooth and the polynomial $P(s)-c$ has simple roots. In particular, the polynomial $h$ has a monomial $x^{-M'}y$ with $M'>0$. 
							\begin{itemize}
								\item Assume the horizontal face of $h-c$ is $(a,0)$. Necessarily we have $a>0$. 
									The polynomial $h-c$ has at most two one dimensional faces: the associated face to the dicritical face of $h$ still denoted by $\gamma$, and a one dimensional with vertices $(a,0)$ and $(-M',1)$ denoted by $\gamma'$. We can assume the face polynomial $(h-c)_{\gamma'}$ to be equal to 
									$x^{-M'}(y-\lambda x^{a+M'})$. We have $\N(h-c)^{+}=\{(a,0),\gamma'\}$.
									The polynomial $(h-c)_{\sigma_{(1,a+M',\lambda)}}$ is an Example \ref{ex:monomial} or
									\ref{ex:casdebase-f} 
									with $M=a>0$ and $m=1$ then we have 
									$$(S_{(h-c)_{\sigma_{(1,a+M',\lambda)}},v\neq 0})_{(0,0),0} = 
									-[x^{a}y:\Gm^2 \to \Gm, \sigma_{\Gm}].$$

									By Theorem \ref{thmSfeps} we have 
									$$ 
									\begin{array}{ccl}
										\left(S_{h-c,x\neq 0}\right)_{(0,0),0} & = & [x^{a}:\Gm \to \Gm,\sigma_{\Gm}] + 
										[x^{-M'}(y-\lambda x^{a+M'})  :
										\Gm^2 \setminus (y=\lambda x^{a+M'}) \to \Gm, \sigma_{\Gm}] \\ \\
										& - & [x^{a}y:\Gm^2 \to \Gm] 
										+  \sum_{\mu' \in R_{\gamma}} 
										(S_{(h-c)_{\sigma(1,M',\mu')},v\neq 0})_{(0,0),0}
									\end{array}.
									$$
									As in the proof of Example \ref{ex:casdebase-f} we have 
									$$
									[x^{a}:\Gm \to \Gm,\sigma_{\Gm}] + 
									[x^{-M'}(y-\lambda x^{a+M'})  : \Gm^2 \setminus (y=\lambda x^{a+M'}) \to \Gm, \sigma_{\Gm}] 
									 =  
									[x^{-M'}y:\Gm^2 \to \Gm, \sigma_{\Gm}] = [x^{a}y:\Gm^2 \to \Gm].$$
									Furthermore, as $c$ does not belong to $B_h^{\text{Newton}}$, any roots $\mu$ in $R_\gamma$ has multiplicity one, then the Newton transform $(h-c)_{\sigma_{(1,M',\mu)}}$ is an Example 
									\ref{ex:monomial} or \ref{ex:casdebase-f}  with $M=0$ and $m=1$, then we have 
									$(S_{(h-c)_{\sigma(1,M',\mu)},v\neq 0})_{(0,0),0} = 0,$
									and, we conclude that 
									$(S_{h-c,{x}\neq 0})_{(0,0),0}  = 0.$

								\item If the horizontal face is not of the form $(a,0)$ then, it is the face $(-M',1)$. Then, the set $\N(h-c)^{+}$ is empty, and as above as $c$ does not belong to $B_{h}^{\text{Newton}}$ we have 
									$\left(S_{h-c,{x}\neq 0}\right)_{(0,0),0} = 0$.
							\end{itemize}
					\end{itemize}
				\item Assume $-M_0+m_0q>0$. 
					\begin{itemize}
						\item  If $c$ is different from $0$, then $\N(h-c)^{+}$ is empty. The polynomial $h-c$ has only one face of dimension one with face polynomial $x^{-M_0}y^{m_0}-c$, the Newton transform $(h-c)_{\sigma_{(m_0,M_0,c)}}$ is an Example \ref{ex:monomial} or \ref{ex:casdebase-f} with $M=0$ and $m=1$, then we conclude as above that $\left(S_{h-c,{x}\neq 0}\right)_{(0,0),0} = 0$.
						\item  If $c$ is equal to zero, and does not belong to $B_{h}^{\text{Newton}}$, then with the same proof as in the case 
							$-M_0+m_0q=0$ and $c=c_{(0,0)}(h)$, we obtain $\left(S_{h-c,{x}\neq 0}\right)_{(0,0),0} = 0$. 
					\end{itemize}

			\end{itemize}

	\end{itemize}
\end{proof}
\begin{proof}(The general case).
We consider the general case of $h$ in $\k[x^{-1},x,y] \setminus \k[x,y]$ and argue by induction using the Newton algorithm. {The base cases in the induction are Examples \ref{ex:monomial} or \ref{ex:casdebase-f} with in each case $M>0$; they are treated above.} Let $c$ be a value not in $B_h^{\text{Newton}}$.
	\begin{itemize}
		\item Assume $c$ to be different from the constant term $c_{(0,0)}(h)$ of $h$.
			Thus, $(0,0)\in \text{Supp}(h-c)$ and $\N(h-c)^{+}$ is empty.
			\begin{itemize}
				\item Assume $h$ has a dicritical face $\gamma$. Then, $\gamma$ is  a face of $\N(h-c)$. Furthermore, as $c\notin B_{h}^{\text{Newton}}$,
					all the roots $\mu$ of the face polynomial $(h-c)_\gamma$ have multiplicity one, and for each associated Newton transformation $\sigma_{(p,q,\mu)}$, the polynomial $(h-c)_\sigma$ is a base case {Example} \ref{ex:monomial} or \ref{ex:casdebase-f} with $M=0,m=1$, then 
					$(S_{(h-c)_{\sigma_{(p,q,\mu)}},v\neq 0})_{(0,0),0}=0.$
					Thus, applying Theorem \ref{thmSfeps} we have 
					$$(S_{h-c,{x}\neq 0})_{(0,0),0} = 
					\sum_{\gamma' \in \N(h-c)\setminus \{\gamma\}} \sum_{\mu \in R_{\gamma'}} 
					(S_{(h-c)_{\sigma_{(p',q',\mu)}},v\neq 0})_{(0,0),0}.$$
					Furthermore, for any face $\gamma' \in \N(h-c)\setminus \{\gamma\}$, $\gamma'$ is supported by a line of equation $\alpha p' + \beta q' = N_{\gamma'}$ with $(p',q')$ the normal of the face $\gamma'$ in $\N(h-c)$ and $N_\gamma'<0$. Then, for each Newton transformation $\sigma_{(p',q',\mu)}$ associated to a root $\mu$ of the face polynomial $(h-c)_{\gamma'}$, the Newton transformation 
					$(h-c)_{\sigma_{(p',q',\mu)}}{=h_{\sigma_{(p',q',\mu)}}-c}$ belongs to $\k[x^{-1},x,y]\setminus \k[x,y]$. Finally, as $c$ does not belong to $B_{h}^{\text{Newton}}$, by definition
					$c$ does not belong to ${B_{h_{\sigma_{(p',q',\mu)}}}^{\text{Newton}}}$. Then, applying the induction we obtain 
					$(S_{(h-c)_{\sigma_{(p',q',\mu)}},v\neq 0})_{(0,0),0}=0$ and we conclude that 
                                        $(S_{h-c,{x}\neq 0})_{(0,0),0} = 0$.

				\item Assume $h$ does not have a dicritical face. Then as $h$ belongs to $\k[x^{-1},x,y] \setminus \k[x,y]$, necessarily the horizontal face of $h$ is $(a,0)$ with $a<0$. Then, the set $\N(h-c)^{+}$ is empty and as above applying Theorem \ref{thmSfeps} we have 
					$$\left(S_{h-c,{x}\neq 0}\right)_{(0,0),0} = 
					\sum_{\gamma' \in \N(h-c)} \sum_{\mu \in R_{\gamma'}} 
					(S_{(h-c)_{\sigma_{(p',q',\mu)}},v\neq 0})_{(0,0),0},$$
					where for each face $\gamma'$, for each associated Newton transformation $\sigma_{(p',q',\mu)}$, the polynomial 
					$(h-c)_{\sigma_{(p',q',\mu)}}$ belongs to $\k[x^{-1},x,y] \setminus \k[x,y]$ and $c$ does not belong to ${B_{h_{\sigma_{(p',q',\mu)}}}^{\text{Newton}}}$.
					Then, applying the induction we obtain 
					$(S_{(h-c)_{\sigma_{(p',q',\mu)}},v\neq 0})_{(0,0),0}=0$ and we conclude that 
                                        $(S_{h-c, x\neq 0})_{(0,0),0} = 0$.
			\end{itemize}
		\item Assume $c$ is equal to the constant term $c_{(0,0)}(h)$ of $h$. In particular $(0,0)$ does not belong to the support of $h-c$.
			\begin{itemize}
				\item If $h$ does not have a dicritical face, then we argue as above and get the result.
				\item Assume that $h$ has a dicritical face $\gamma$. As $c$ does not belong to $B_{h}^{\text{Newton}}$, the dicritical face $\gamma$ is smooth and supported by a line of equation $\alpha + \beta q = 1$. Furthermore the face polynomial of $h_\gamma$ is $P_\gamma(x^{-q}y)$ and the polynomial $P_\gamma(s)-c$ has simple roots. Then, we deduce that $h$ has a monomial $x^{-q}y$ and for each non zero root $\mu$ of $P_\gamma(s)-c$ we have as above
				$(S_{(h-c)_{\sigma_{(p,q,\mu)}},v\neq 0})_{(0,0),0}=0.$
				\begin{itemize}
					\item Assume the horizontal face of $h-c$ is $(-q,1)$ then $\N(h-c)^{+}$ is empty and 
						again by Theorem \ref{thmSfeps} we have 
						$$(S_{h-c,{x}\neq 0})_{(0,0),0} = 
						\sum_{\gamma' \in \N(h-c)\setminus \{\gamma\}} \sum_{\mu \in R_{\gamma'}} 
						(S_{(h-c)_{\sigma_{(p',q',\mu)}},v\neq 0})_{(0,0),0},$$
						where for each face $\gamma'$, for each associated Newton transformation $\sigma_{(p',q',\mu)}$, the polynomial 
						$(h-c)_{\sigma_{(p',q',\mu)}}$ belongs to $\k[x^{-1},x,y] \setminus \k[x,y]$ and $c$ does not belong to 
						{$B^{\text{Newton}}_{h_{\sigma_{(p',q',\mu)}}}$}.
						Then, applying the induction we obtain 
						$(S_{(h-c)_{\sigma_{(p',q',\mu)}},v\neq 0})_{(0,0),0}=0$ and we conclude that 
						$(S_{h-c, x\neq 0})_{(0,0),0} = 0$.

					\item Assume the horizontal face $h-c$ is $(a,0)$. Necessarily we have $a>0$. We denote by $\gamma'$ the face with vertices $(a,0)$ and $(-q,1)$. In that case we have $\N(h-c)^{+} = \{(a,0), \gamma'\}$. The face polynomial $(h-c)_{\gamma'}$ is equal to 
						$\alpha x^{-q}(y-\mu x^{a+q})$. The Newton transformation $(h-c)_{\sigma_{(1,a+q,\mu)}}$ is a base case   \ref{ex:monomial} or \ref{ex:casdebase-f} with $M=-a<0$ and $m=1$. Then, we have 
						$(S_{(h-c)_{\sigma_{(p',q',\mu)}},v\neq 0})_{(0,0),0}= -[x^{a}y:\Gm^2 \to \Gm, \sigma_{\Gm}].$
						By Theorem \ref{Sfeps} we have
						$$
						\begin{array}{ccl}
							\left(S_{h-c, {x} \neq 0}\right)_{(0,0),0} & = & [x^a:\Gm \to \Gm, \sigma_{\Gm}] + [ x^{-q}(y-\mu x^{a+q}) : \Gm^2 \setminus (y=\mu x^{a+q}) \to \Gm, \sigma_{\Gm}] \\
							& - & [x^{a}y:\Gm^2 \to \Gm, \sigma_{\Gm}]
							 +  \sum_{\gamma''\in \N(h-c)\setminus\{\gamma,\gamma'\}} 
							\sum_{\mu \in R_{\gamma'}} (S_{(h-c)_{\sigma_{(p'',q'',\mu)}},v\neq 0})_{(0,0),0}
						\end{array}
						$$
						Using similar ideas of the end of the proof of Example \ref{ex:casdebase-f}, we have 
						$$[x^a:\Gm \to \Gm, \sigma_{\Gm}] + [ x^{-q}(y-\mu x^{a+q}) : \Gm^2 \setminus (y=\mu x^{a+q}) \to \Gm, \sigma_{\Gm}] \\
						 - [x^{a}y:\Gm^2 \to \Gm, \sigma_{\Gm}] = 0$$
						Furthermore, for any face $\gamma''$ in $\N(h-c)\setminus\{\gamma,\gamma'\}$, for each associated Newton transformation 
						$\sigma_{(p'',q'',\mu)}$, the polynomial 
						$(h-c)_{\sigma_{(p'',q'',\mu)}}$ belongs to $\k[x^{-1},x,y] \setminus \k[x,y]$ and $c$ does not belong to 
						$B_{h_{\sigma_{(p'',q'',\mu)}}}^{\text{Newton}}$.
						Then, applying the induction we obtain 
						$(S_{(h-c)_{\sigma_{(p'',q'',\mu)}},v\neq 0})_{(0,0),0}=0$ and we conclude that 
						$(S_{h-c,x\neq 0})_{(0,0),0} = 0$.
				\end{itemize}
			\end{itemize}

	\end{itemize}
\end{proof}

\begin{prop} \label{generic-Sfnul}
	For any polynomial $f\in \k[x,y]$, we have the inclusion $B_f^{\text{mot}} \subset B_f^{\text{Newton}}$ and $B_f^{\text{mot}}$ is finite.
\end{prop}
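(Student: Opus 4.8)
The statement claims the inclusion $B_f^{\mathrm{mot}} \subset B_f^{\mathrm{Newton}}$ together with the finiteness of $B_f^{\mathrm{mot}}$. The finiteness part is immediate: by Proposition \ref{BfNewtonfini} the set $B_f^{\mathrm{Newton}}$ is finite, so once the inclusion is established, $B_f^{\mathrm{mot}}$ is a subset of a finite set, hence finite. So the entire content is the inclusion, and the plan is to prove its contrapositive: if $c\notin B_f^{\mathrm{Newton}}$, then $c\notin B_f^{\mathrm{mot}}$, i.e. $c\notin \mathrm{disc}(f)$ and $S_{f,c}^{\infty} = 0$. The first condition is free, since $\mathrm{disc}(f) \subset B_f^{\mathrm{Newton}}$ by Definition \ref{Newtonbifset}. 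So it remains to show $S_{f,c}^{\infty} = 0$ for $c\notin B_f^{\mathrm{Newton}}$.

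\textbf{Key steps.} First I would reduce to the case $c = 0$ by translation, replacing $f$ with $f - c$ (the Newton algorithm at infinity of $f-c$ is what defines the Newton non-generic values contributing to $B_f^{\mathrm{Newton}}$, so $0\notin B_{f-c}^{\mathrm{Newton}}$). Next I would invoke the explicit formula for $S_{f,0}^{\infty}$ from Theorem \ref{thmcyclesprochesinfini} (formula (\ref{formulecyclesprochesinfini}) in the generic case where $\GN(f)$ is not a segment, or the segment formulas (\ref{formulescyclesprochesinfinisegment1}), (\ref{formulescyclesprochesinfinisegment2}), or the monomial case). In each of these, $S_{f,0}^{\infty}$ is a sum of two kinds of terms: combinatorial terms of the form $\eps_\gamma [f_\gamma : \dots]$ indexed by faces $\gamma\in\widetilde{\N}(f)$, and Newton-transform terms $\sum_{\mu\in R_\gamma}\left(S_{f_{\sigma_{(p,q,\mu)}}, v\neq 0}\right)_{((0,0),0)}$. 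For the combinatorial terms, I would use Remark \ref{annulationesp}, which collects the vanishing statements for $\eps_\gamma$: any face of $\widetilde\N(f)$ contributing a nonzero $\eps_\gamma$ must be one of the horizontal/vertical faces of $\N(f)$ (for the faces "at infinity" lying on axes) or a face with a suitable normal-vector sign condition, and one checks that under the hypothesis $0\notin B_f^{\mathrm{Newton}}$ — in particular the absence of Newton non-generic dicritical behaviour — these $\eps_\gamma$ all vanish or else the corresponding face polynomial contributes $0$ after realization. The decisive tool for the Newton-transform terms is Proposition \ref{Shcnulle}: for any $h\in\k[x^{-1},x,y]\setminus\k[x,y]$ and any $c\notin B_h^{\mathrm{Newton}}$, one has $\left(S_{h-c, x\neq 0}\right)_{(0,0),0} = 0$. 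Applying this with $h = f_{\sigma_{(p,q,\mu)}}$ and $c = 0$ (noting $0\notin B_{f_{\sigma_{(p,q,\mu)}}}^{\mathrm{Newton}}$ by Definition \ref{Newtonbifset}, since $\sigma_{(p,q,\mu)}$ is a composition of Newton transformations in the algorithm for $f$, and using that the Newton transforms genuinely lie in $\k[x^{-1},x,y]\setminus\k[x,y]$ because $\N_\infty(f)^o$ faces have normal vectors forcing a negative exponent of $x$) kills every Newton-transform term.

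\textbf{The main obstacle.} The bookkeeping of the combinatorial $\eps_\gamma$ terms is where the real care is needed: one must verify face by face, across the several cases of Theorem \ref{thmcyclesprochesinfini} ($\GN(f)$ not a segment, $\GN(f)$ a segment with $pq<0$, $pq\geq 0$, $f$ a monomial), that the hypothesis $0\notin B_f^{\mathrm{Newton}}$ forces all surviving combinatorial contributions to cancel. The segment case with $pq<0$ and $N\neq 0$ is the most delicate, since there formula (\ref{formulescyclesprochesinfinisegment1}) has a residual term $\delta_N\sum_{\mu}[x^{|N|}y^{\nu(\mu)}:\Gm^2\to\Gm,\sigma_{\Gm^2}]$; one needs $0\notin B_f^{\mathrm{Newton}}$ to force $N = 0$ (so $\delta_N = 0$) or to force the roots $\mu$ to be simple with the geometry degenerating, echoing the argument in the quasi-homogeneous part of the proof of Proposition \ref{lambdanulgenerique}. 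I would organize this by mirroring that proof: use that $f-c$ is either not quasi-homogeneous — reducing to the face-by-face analysis plus Proposition \ref{Shcnulle} — or quasi-homogeneous, in which case $\mathrm{disc}(f)$, which contains $B_f^{\mathrm{Newton}}\cap\{$critical values$\}$, together with the dicritical/smoothness conditions of Definition \ref{Newtongenericvalues} pins down the shape of $f-c$ tightly enough that the explicit segment formulas give $S_{f,0}^{\infty} = 0$ directly.
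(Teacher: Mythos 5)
Your overall strategy is the same as the paper's: reduce to showing $S_{f,c}^{\infty}=0$ for $c\notin B_f^{\text{Newton}}$, expand $S_{f,c}^{\infty}$ by Theorem \ref{thmcyclesprochesinfini}, kill the combinatorial terms by showing $\eps_\gamma=0$ (Remark \ref{annulationesp}, Propositions \ref{prop:casfacecontenuedansaxe} and \ref{lem:epsgammafacedim0-infini}), kill the Newton-transform terms by Proposition \ref{Shcnulle}, and get finiteness from Proposition \ref{BfNewtonfini}. There is, however, one concrete gap: your claim that Proposition \ref{Shcnulle} ``kills every Newton-transform term'' fails precisely at the dicritical faces at infinity. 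Such a face $\gamma$ contains the origin, so its supporting line has $N=0$ and the Newton transform $(f-c)_{\sigma_{(p,q,\mu)}}$ lies in $\k[x,y]$; the hypothesis of Proposition \ref{Shcnulle} ($h\notin\k[x,y]$) is then violated and your appeal to ``normal vectors forcing a negative exponent of $x$'' is only valid for the non-dicritical faces of $\widetilde{\N}(f-c)$ (those with $N>0$). The dicritical faces are exactly where the hypothesis $c\notin B_f^{\text{Newton}}$ must be used at the first level of the algorithm: Newton genericity of $c$ gives that all roots of $P_\gamma(s)-c$ are simple, so each such transform is a base case (Example \ref{ex:monomial} or \ref{ex:casdebase-f}) with $M=0$ and $m=1$, hence contributes $0$. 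Without this step the argument would never use genericity in the case $c\neq f(0,0)$ (since there the $\eps_\gamma$ vanish for purely combinatorial reasons), and the conclusion would then be false, as values that are roots of the discriminant of a dicritical face do produce nonzero $S_{f,c}^{\infty}$.

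Two secondary points. First, when $c\neq f(0,0)$ the vanishing of the $\eps_\gamma$ needs no genericity at all: $(0,0)\in\text{Supp}(f-c)$, so $\overline{\N}(f-c)=\N_{\infty}(f)$ is convex with the origin as a vertex and all relevant $\eps_\gamma$ are zero; in particular your fallback ``the face polynomial contributes $0$ after realization'' is not adequate, since the vanishing must hold motivically (and it does, because $\eps_\gamma=0$). Second, the case $c=f(0,0)$ needs its own argument, which your plan only touches in the segment situation: the paper uses $c\notin\text{disc}(f)$ to produce a monomial $x$ or $y$ in $f-c$ and the smoothness of the dicritical face (Newton genericity again) to conclude that every other one-dimensional face of $\GN(f-c)\setminus\N(f-c)$ has $N>0$, after which the corresponding transforms do lie in $\k[x^{-1},x,y]\setminus\k[x,y]$ and Proposition \ref{Shcnulle} applies, while the dicritical face itself is handled by the simple-roots base-case argument above. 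With these repairs your plan matches the paper's proof; your treatment of the quasi-homogeneous (segment) case, modeled on Proposition \ref{lambdanulgenerique}, is consistent with how the paper handles it.
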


\begin{proof} 
We assume $c\notin B^{\text{Newton}}$ and prove that $S_{f,c}^{\infty}=0$ using formulas (\ref{formulecyclesprochesinfini}), (\ref{formulescyclesprochesinfinisegment1}) and (\ref{formulescyclesprochesinfinisegment2}). 
\begin{itemize}
	\item  Assume $c\neq f(0,0)$.
		We remark first that the point $(0,0)$ belongs to the support of $f-c$, then we have $\eps_\gamma = 0$ for any face $\gamma$ in 
		$\overline{\N}(f-c)=\N_{\infty}(f)$.
		Indeed:
		\begin{itemize}
			\item  each one-dimensional face $\gamma$ not contained in a coordinate axis is supported by a line of equation $ap+bq=N$ with $N>0$, and $(p,q)$ the primitive exterior normal vector to $\N_{\infty}(f)$. Then applying Proposition \ref{lem:epsgammafacedim0-infini}, we obtain that $\eps_\gamma=0$.
			\item  each one dimensional face $\gamma$ contained in a coordinate axis, has a dual cone $C_\gamma$ which does not intersect $\Omega$, then by Proposition \ref{lem:epsgammafacedim0-infini} $\eps_\gamma = 0$.
			\item  for any face $\gamma$ which is not horizontal, not vertical, and not in $\N_{(0,\infty)}$, $\N_{(\infty,0)}$, and 
				$\N_{(\infty,\infty)}$, the dual cone $C_\gamma$ does not intersect $\Omega$, then by Proposition \ref{lem:epsgammafacedim0-infini} $\eps_\gamma = 0$.
			\item  As $(0,0)$ belongs to the convex polygon $\overline{\N}(f-{c})$, using Remark \ref{annulationesp} we have $\eps_\gamma = 0$ for any zero dimensional face of {$\overline{\N}(f-{c})$}.
				
		\end{itemize}
		Then, we have 
		$$ { S_{f,c}^{\infty}=\sum _{\underset{\dim \gamma=1, \eta_\gamma \in \Omega}{\gamma \in \overline{\mathcal{N}}(f-c)} }\sum_{\mu \in R_\gamma} 
		(S_{(f-c)_{\sigma_{(p,q,\mu)}},v\neq0})_{((0,0),0)}}.$$
		\begin{itemize}
			\item  If $\gamma$ is a dicritical face at infinity, then {the face polynomial $(f-{c})_\gamma$ is equal to $P_{\gamma}(x^{-q}y^{p})-{c}$ 
				(or $P_{\gamma}(x^{q}y^{-p})-{c}$) with $P_\gamma$ in $\k[s]$. As ${c}$ is a Newton generic value, all the roots of the polynomial $P_\gamma(s)-{{c}}$ have multiplicity one, then for any root $\mu$, the Newton transform  $(f-{c})_{\sigma_{(p,q,\mu)}}$ belongs to $\k[x,y]$, and is a base case of type \ref{ex:monomial} or \ref{ex:casdebase-f} with $M=0$ and $m=1$}. 
				Then {$(S_{(f-{c})_{\sigma_{(p,q,\mu)}},v\neq0})_{((0,0),0)}=0$}. 
			\item If $\gamma$ is not a dicritical face, then the face polynomial $(f-c)_\gamma$ is equal to the face polynomial $f_\gamma$. Furthermore it is supported by a line of equation $\alpha p + \beta q = N_\gamma$ with $(p,q)$ the exterior normal to the Newton polygon $\N(f-c)$ equal to $\N_\infty(f))$ and $N_\gamma >0$. Then, for each root $\mu$ of the face polynomial $(f-c)_{\gamma} = f_{\gamma}$, the Newton transform we have 
				$(f-c)_{\sigma_{(p,q,\mu)}} = f_{\sigma_{(p,q,\mu)}}-c$ and $f_{\sigma_{(p,q,\mu)}}$ belongs to $\k[x^{-1},x,y] \setminus \k[x,y]$. Furthermore, as $c$ does not belong to $B_f^{\text{Newton}}$, $c$ does not belong to $B_{f_\sigma}^{\text{Newton}}$ then applying Proposition \ref{Shcnulle} we get $(S_{(f-c)_{\sigma_{(p,q,\mu)}},v\neq0})_{((0,0),0)}=0$.
		\end{itemize}
		Then we conclude that $S_{f,c}^{\infty}=0$.

	\item Assume $c=f(0,0)$. In that case, as $c$ does not belong to the discriminant of $f$, the point $(0,0)$ is not a critical point of $f-c$, then $f-c$ has a monomial $x$ or $y$. 
		Assume for instance $x$ is a monomial of $f-c$ (the case $y$ is a monomial is similar). 
		We denote by $\gamma$ the dicritical face of $f$ not contained in the coordinate axes $\mathbb R_{>0}(1,0)$.
		\begin{itemize}
			\item Assume $\gamma$ not contained in the coordinate axes $\mathbb R_{>0}(0,1)$.
				As $c$ is a generic Newton value for $f$, this face is smooth, the face polynomial $(f-c)_\gamma(x,y)$ is equal to $P(x^{-q}y)-c$ with $P(s)-c$ a polynomial with simple roots. Then, $f-c$ has a monomial $x^{-q}y$ . Considering, the monomials $x$ and $x^{-q}y$ we observe that each one dimensional face $\gamma'\neq \gamma$ of $\overline{\N}(f-c)\setminus \N(f-c)$ 
				is supported by a line of equation 
				$\alpha p' + \beta q' = N_{\gamma'}$ with $(p',q')$ the exterior normal vector and $N_{\gamma'}>0$. Then, for each root $\mu$ of the face polynomial $(f-c)_{\gamma'}$, for each Newton transformation $\sigma_{(p',q',\mu)}$, the Newton transform $(f-c)_{\sigma_{(p',q',\mu)}}$ belongs to $\k[x^{-1},x,y]\setminus \k[x,y]$ and $c$ does not belong to $B_{(f-c)_{\sigma_{(p',q',\mu)}}}^{\text{Newton}}$ because $c$ does not belong to 
				$B_{f}^{\text{Newton}}$. Then applying Proposition 
				\ref{Shcnulle} 	we get $(S_{(f-c)_{\sigma_{(p',q',\mu)}},v\neq0})_{((0,0),0)}=0$.
				Furthermore as the face polynomial $(f-c)_\gamma(x,y)$ has simple roots we also have 
                                $(S_{(f-c)_{\sigma_{(p,q,\mu)}},v\neq0})_{((0,0),0)}=0$
				for each root $\mu$.
				Then, we conclude that $S_{f,c}^{\infty}=0$.
			\item Assume $\gamma$ contained in the coordinate axes $\mathbb R_{>0}(0,1)$. The polynomial $f$ has a face $(0,b)$. Using the monomial $y^b$ and $x$ we observe that each one dimensional face $\gamma'$ of ${\overline{\N}}(f-c)\setminus \N(f-c)$ 
				is supported by a line of equation 
				$\alpha p' + \beta q' = N_{\gamma'}$ with $(p',q')$ the exterior normal vector and $N_{\gamma'}>0$. Using the same ideas as in the previous point, we obtain  $S_{f,c}^{\infty}=0$.
		\end{itemize}
\end{itemize}

\end{proof}

\subsection{Complement of the proof of theorems \ref{thm:thmSfinfini} and \ref{thmcyclesprochesinfini}} \label{section:preuveformules} 
Let $f$ be a polynomial in $\k[x,y]$ {not in $\k[x]$ or $\k[y]$, we denote by $d_x$ and $d_y$ the degrees of $f$ in variables $x$ and $y$.
\subsubsection{Compactification} \label{compactification}
In the following, we consider the compactification $(X,i,\hat{f})$ of $f$ 
with $X$ the algebraic variety 
$$X=\left\{
	\begin{array}{c|c} 
		\left([x_0:x_1],[y_0:y_1],[z_0:z_1]\right)\in \left(\mathbb P^{1}_{\k}\right)^3  
		&  z_0 x_{0}^{d_x}y_0^{d_y}f\left(\frac{x_1}{x_0}, \frac{y_1}{y_0} \right) = z_1 x_0^{d_x} y_0^{d_y} 
	\end{array} 
\right\},
$$ 
$i$ and $j$ are the following open dominant immersions and $\hat{f}$ is the following projection which is proper 
$$ \begin{array}{ccccc} 
	i & : & \mathbb A^2_{\k} & \ra & X \\ &   & (x,y) & \mapsto &
	\left([1:x],[1:y],[1:f(x,y)]\right) 
\end{array},\: 
\begin{array}{ccccc} 
	j & : & \mathbb A^1_{\k} & \ra & \mathbb P^{1}_{\k} \\ 
	&   & a & \mapsto & [1:a] 
\end{array},\:
\begin{array}{ccccc} 
	\hat{f} & : & X & \ra & \mathbb P^1_{\k} \\ &   & ([x_0:x_1],[y_0:y_1],[z_0:z_1])
	& \mapsto & [z_0:z_1].  
\end{array} 
$$ 
\begin{rem} \label{rem:cartes-fibre-infini} {We use notations of Definition \ref{def:compactification} of the closed subset at infinity $X_\infty$ and values $\infty$ and $a$.} 
	{The fiber}
\begin{equation}
	\hat{f}^{-1}(\infty) = \left\{([0:1],[y_0:y_1],[0:1]) {\in X} \mid [y_0:y_1] \in \mathbb P^1 \right\} 
	\cup \left\{([x_0:x_1],[0:1],[0:1]) {\in X} \mid [x_0:x_1] \in \mathbb P^1 \right\}.
\end{equation}
 is covered by three charts:
	$\hat{f}^{-1}(\infty) = 
	     (\hat{f}^{-1}(\infty) \cap \{x_1 \neq 0, y_1 \neq 0\}) 
	\cup (\hat{f}^{-1}(\infty) \cap \{x_0 \neq 0, y_1 \neq 0\})
	\cup (\hat{f}^{-1}(\infty) \cap \{x_1 \neq 0, y_0 \neq 0\}).$	
\end{rem}

 \subsubsection{The motivic zeta function $Z^{\delta}_{\hat{f}^\eps,i(\mathbb A_{\k}^2)}(T)$}

 Let $\eps$ be in $\{\pm\}$. If $``\eps=+"$, we simply write $\hat{f}$ for $\hat{f}^{+}$, this function extends $f$ and is equal to $z_1/z_0$. 
 If $``\eps=-"$, we simply write $1/\hat{f}$ for $\hat{f}^{-}$, this function extends $1/f$ 
 and is equal to $z_0/z_1$.
 For any $n\geq 1$, for any $\delta \geq 1$, we consider 
 $$ X_{n}^{\delta}(\hat{f}^\eps) = 
 \left\{ 
	 \begin{array}{c|c} 
		 \varphi(t) \in \mathcal L(X) & 
		 \begin{array}{l} 
			 \varphi(0) \in X_\infty,\:
			 \ord \varphi^{*}(\mathcal I_{X_\infty}) \leq n \delta, \:
			 \ord \hat{f}^\eps(\varphi(t)) = n 
		 \end{array} 
	 \end{array}
 \right\} $$ 
 endowed with the map $\ac \hat{f}^\eps$ to $\Gm$. Following Definition \ref{deffctzetamodifiee} we denote 
 $$Z^{\delta}_{\hat{f}^\eps,i(\mathbb A^2_\k)}(T) = \sum_{n \geq 1} \mes(X_{n}^{\delta}(\hat{f}^\eps)) T^n \in 
 \mathcal M_{\Gm}^{\Gm}[[T]]$$
 and by subsections \ref{section:Sfinfini} and \ref{section:Sfainfini}, we have 
 $S_{f,\infty} =  - \lim_{T \ra \infty} Z^{\delta}_{1/\hat{f},i(\mathbb A^2_\k)}(T) \:\:\text{and}\:\:
 S_{f,0}^{\infty} =  - \lim_{T \ra \infty} Z^{\delta}_{\hat{f},i(\mathbb A^2_k)}(T).$

\subsubsection{Description of arcs of $X_{n}^{\delta}(\hat{f}^\eps)$}

\begin{notation} \label{notationxphiyphi} An arc $\varphi$ in $\mathcal L(X)$ has the form 
\begin{equation} \label{notation:arc} 
	\varphi(t)=([x_0(t):x_1(t)],[y_0(t):y_1(t)],[z_0(t):z_1(t)])
\end{equation}
where coordinates are formal series in $k[[t]]$ satisfying the equation
\begin{equation} \label{eqarc}
	z_0(t) x_{0}(t)^{d_x}y_0(t)^{d_y}f\left( \frac{x_1(t)}{x_0(t)},\frac{y_1(t)}{y_0(t)} \right) = z_1(t) x_0(t)^{d_x} y_0(t)^{d_y}
\end{equation}
and such that none of the couples $(x_0(0),x_1(0))$, $(y_0(0),y_1(0))$ and $(z_0(0),z_1(0))$ is equal to $(0,0)$.
In the following, we only work with arcs not contained in the closed subset $X \setminus i(\Gm^2)$, namely such that the orders of $x_0(t)$, $x_1(t)$, $y_0(t)$, $y_1(t)$ are finite. The set of arcs contained in $X \setminus i(\Gm^2)$ has motivic measure zero.
For an arc $\varphi$ as above, we will denote 
$$x(\varphi) = x_1(t)/x_0(t) \:\: \text{and} \:\: y(\varphi) = y_1(t)/y_0(t).$$
With these notations, we have 
$\ord \hat{f}^{\eps}(\varphi) = \ord f^{\eps}(x(\varphi),y(\varphi)) = \eps \ord(z_1(t)/z_0(t)).$
\end{notation}

\begin{rem} \label{rem:ecriture-carte-origine} Let $\delta>0$ and $n\geq 1$. Let $\varphi$ be an arc {in} $X_{n}^{\delta}(\hat{f}^{\eps})$. Its origin $\varphi(0)$ belongs to $X_\infty$, then the product $x_0(0)y_0(0)$ is equal to $0$ and we consider the following three cases: 
	\begin{enumerate} 
		\item if $x_0(0)=y_0(0)=0$ then the arc can be written as
			\begin{equation} \label{ecriturelocale1}
			   \varphi(t) = ([A(t):1],[B(t):1],[z_0(t):z_1(t)])
		        \end{equation}
			with $\ord A(t)>0$, $\ord B(t)>0$ and $\ord f^{\eps}\left(1/A(t), 1/B(t)\right)=n$.
			Remark that $\ord x(\varphi)=-\ord A(t),\:\ord y(\varphi)=-\ord B(t).$
			The origin $\varphi(0)=\left([0:1],[0:1],[z_0(0):z_1(0)]\right)$ belongs to the chart $x_1y_1 \neq 0$.
			As the equation of $X_\infty$ around $\varphi(0)$ is $x_0 y_0 = 0$, we have the equality
			$\ord \varphi^*\left(\mathcal I_{X_\infty}\right) = \ord A(t) + \ord B(t).$
		
		\item if $x_0(0)\neq 0$ and $y_0(0)=0$, then the arc can be written as 
			\begin{equation} \label{ecriturelocale2} 
				\varphi(t) = ([1:A(t)],[B(t):1],[z_0(t):z_1(t)])
			\end{equation}
			with $\ord A(t)\geq 0$, $\ord B(t)>0$ and $\ord f^{\eps}\left(A(t), 1/B(t)\right)=n$.
			Remark that $\ord x(\varphi)=\ord A(t),\:\ord y(\varphi)=-\ord B(t).$
			The origin $\varphi(0)=([1:{A(0)}],[0:1],[z_0(0):z_1(0)])$ belongs to the chart $x_0y_1 \neq 0$.
			As the equation of $X_\infty$ around $\varphi(0)$ is $y_0 = 0$, we have the equality
			$\ord \varphi^*\left(\mathcal I_{X_\infty}\right) = \ord B(t).$

		\item  if $x_0(0) = 0$ and $y_0(0) \neq 0$, then the arc can be written as 
			\begin{equation} \label{ecriturelocale3}
			    \varphi(t) = ([A(t):1],[1:B(t)],[z_0(t):z_1(t)]) 
		        \end{equation}
			with $\ord A(t)> 0$, $\ord B(t)\geq 0$ and $\ord f^{\eps}\left(1/A(t), B(t)\right)=n$.
			Remark that $\ord x(\varphi)=-\ord A(t),\:\ord y(\varphi)=\ord B(t).$
		The origin $\varphi(0)=([0:1],[1:{B(0)}],[z_0(0):z_1(0)])$ belongs to the chart $x_1y_0 \neq 0$.
			As the equation of $X_\infty$ around $\varphi(0)$ is $x_0 = 0$, we have the equality
			$\ord \varphi^*\left(\mathcal I_{X_\infty}\right) = \ord A(t).$
	\end{enumerate}
\end{rem}

\begin{notation} \label{notation:Omega} We denote by $\Omega$ the set $\mathbb Z^2 \setminus (\mathbb Z_{\leq 0})^2$. \end{notation}

\subsubsection{Decomposition of the motivic zeta function $Z^{\delta}_{\hat{f}^\eps,i(\mathbb A^2_\k)}(T)$} \label{sec:decompositionzetainfininewton} 
 \begin{rem} \label{rem:m} \label{ord1/f} \label{proposition-arc-polyedre}
	We write $f(x,y)=\sum_{(a,b)\in \mathbb N^2} c_{a,b}x^a y^b$. Let $(\alpha,\beta)$ be in $\Omega$.
	By Lemma \ref{prop:dualfan}, the restriction to $\overline{\Delta}(f)$ (Definition \ref{def:Ninfinif}) 
	of the linear form $l_{(\alpha,\beta)}$ equal to 
$\left( (\alpha,\beta) \mid . \right)$ has a maximum {$\m(\alpha,\beta)$} along a face denoted by $\gamma(\alpha, \beta)$ in $\overline{\N}(f)$. 
	Then, we denote 
        $$\tilde{f}_{\gamma(\alpha,\beta)}(x,y,u):=
	f_{\gamma(\alpha,\beta)}(x,y) + 
	\sum_{(a,b)\notin \gamma(\alpha,\beta)}c_{a,b}u^{\m(\alpha,\beta)-(\alpha a + \beta b)}x^ay^b \:\:\text{where}\:\: 
	f_{\gamma(\alpha,\beta)}(x,y) = \sum_{(a,b) \in \gamma(\alpha,\beta)} c_{a,b}x^a y^b. $$ 
	Let $P(t)$ and $Q(t)$ be invertible formal series.  
	Then, we obtain the equalities 
	\begin{equation} \label{ordcompose}
		f\left(\frac{P(t)}{t^\alpha}, \frac{Q(t)}{t^\beta}\right) =  
		          t^{-\m(\alpha,\beta)}\tilde{f}_{\gamma(\alpha,\beta)}(P(t),Q(t),t)
			  \:\:\text{and}\:\: 
	 \eps\ord f^\eps\left(\frac{P(t)}{t^\alpha}, \frac{Q(t)}{t^\beta}\right)  = 
	 -\m(\alpha,\beta) + \ord \tilde{f}_{\gamma(\alpha,\beta)}(P(t),Q(t),t).
        \end{equation}
	In particular, by equation (\ref{eqarc}), for an arc $\varphi$ in $\mathcal L(X)$, writing $x(\varphi)=P(t)/t^\alpha$ and $y(\varphi)=Q(t)/t^\beta$, we have:
	\begin{itemize}
		\item  if $f_{\gamma}(\ac x(\varphi), \ac y(\varphi))\neq 0$ then $\eps\ord \hat{f}^{\eps}(\varphi(t)) 
			= -\m(-\ord x(\varphi),-\ord y(\varphi))$,
		\item  if $f_{\gamma}(\ac x(\varphi), \ac y(\varphi))= 0$ then 
			$\eps\ord \hat{f}^{\eps}(\varphi(t)) > -\m(-\ord x(\varphi),-\ord y(\varphi))$.
	\end{itemize}
	{and by Remark \ref{rem:ecriture-carte-origine}, we have $\ord \varphi^{*}(\mathcal I_{F_{\infty}}) = c(\alpha,\beta)$
	with 
	\begin{equation} \label{c}
		c(\alpha,\beta)=
		\left\{ \begin{array}{ll}
			\alpha & \text{if $\alpha>0$ and $\beta\leq 0$} \\
			\beta  & \text{if $\beta>0$ and $\alpha\leq 0$} \\ 
			\alpha+\beta & \text{if $\alpha>0$ and $\beta> 0$} 
		\end{array}.
		\right.
	\end{equation}
}

\end{rem}

\begin{rem} \label{rem:NfinfinipourSfinfini}
	In this remark we assume ``$\eps = -$''. 
	By {Remark} \ref{assumption-generic} {we can assume that} $(0,0)$ belongs to the support of $f$ and in particular $\N_{\infty}(f)=\overline{\N}(f)$. Then, for any $(\alpha,\beta)$ in $\Omega { = \mathbb Z^2 \setminus (\mathbb Z_{\leq 0})^2}$, $\m(\alpha,\beta)\geq 0$. Furthermore, if $\m(\alpha,\beta)>0$ then the face $\gamma(\alpha,\beta)$ belongs to $\N_\infty(f)^{o}$, the set of faces of $\N_\infty(f)$ which does not contain $0$.
	For instance, for any arc $\varphi$ in $\mathcal L(X)$, with $-(\ord x(\varphi),\ord y(\varphi))$ in $\Omega$, if $\ord 1/\hat{f}(\varphi){>0}$ then the corresponding face $\gamma(-(\ord x(\varphi),\ord y(\varphi)))$ belongs to $\N_\infty(f)^{o}$. 
\end{rem}

\begin{notations}
	Let $\eps$ be in $\{\pm\}$ and $\gamma$ be a face of $\overline{\N}(f)$. 
	By Remarks \ref{rem:ecriture-carte-origine} and \ref{rem:m}, we introduce
	\begin{itemize}
		\item Let $C_\gamma$ be the dual cone of $\gamma$. We consider the following polyhedral rational cones of $\mathbb R^2$ and $\mathbb R^3$ 
			\begin{equation} \label{Cgammadelta=}
				C_{\gamma,\eps}^{\delta, = } := \{(\alpha,\beta) \in C_\gamma \mid 
				c(\alpha,\beta) \leq -\eps\m(\alpha,\beta)\delta \}
			\end{equation}
			\begin{equation} \label{Cgammadelta<}
				C_{\gamma,\eps}^{\delta,<} : = \{(n, (\alpha,\beta))\in \mathbb R_{>0} \times C_\gamma \mid 
				c(\alpha,\beta) \leq n\delta, \;  -\m(\alpha,\beta)<\eps n\}
			\end{equation}
		\item For any integer $n$, we consider 
			\begin{equation} \label{defXngamma}
				X_{n,\gamma}^{\delta}(\hat{f}^\eps)=\{\varphi \in X_{n}^{\delta}(\hat{f}^\eps) \mid -(\ord x(\varphi), \ord y(\varphi)) \in C_\gamma\}
			\end{equation}
			endowed with its structural map $\ac \hat{f}^{\eps}$ to $\Gm$. We consider the motivic zeta function in $\mathcal M^{\Gm}_{\Gm}[[T]]$
                        $$Z^{\delta}_{\gamma,\eps}(T) = 
			\sum_{n\geq 1} \mes(X_{n,\gamma}^{\delta}(\hat{f}^\eps)) T^{n}.$$
			For any face $\gamma$ in $\N(f)\setminus \{\gamma_h,\gamma_v\}$, the dual cone $C_\gamma$ does not intersect $\Omega$, then $Z^{\delta}_{\gamma,\eps}(T)=0$.

		\item We assume that the face $\gamma$ is not contained in a coordinate axis. 
			For any integer $n\geq 1$ and $\delta>0$, for any $(\alpha, \beta)$ in $C_\gamma \cap \Omega$,
		      we consider the arc spaces endowed with their structural map $\ac \hat{f}^{\eps}$ to $\Gm$
			$$X_{n,(\alpha,\beta)}(\hat{f}^\eps) 
			:= \left\{ 
				\begin{array}{c|c} 
					\varphi \in X_{n}^{\delta}(\hat{f}^\eps) & 
					-(\ord x(\varphi), \ord y(\varphi))  = (\alpha, \beta)
				\end{array} 
			\right\},
			$$
			$$
			X_{n,(\alpha,\beta)}^{=}(\hat{f}^\eps) 
			:= \left\{ 
				\begin{array}{c|c} 
					\varphi \in X_{n,(\alpha,\beta)}(\hat{f}^\eps)  & 
					 f_{\gamma}(\ac x(\varphi), \ac y(\varphi)) \neq 0
				\end{array} 
			\right\}, 
			$$
			and
			$$X_{n,(\alpha,\beta)}^{<}(\hat{f}^\eps) 
			:= \left\{ 
				\begin{array}{c|c} 
					\varphi \in X_{n,(\alpha,\beta)}(\hat{f}^\eps)  & 
					f_{\gamma}(\ac x(\varphi), \ac y(\varphi)) = 0 
				\end{array} 
			\right\}. 
			$$
	                The sets
				$\Omega \cap C_{\gamma,\eps}^{\delta,=}$ and $C_{\gamma,\eps}^{\delta,<}\cap (\{n\}\times \Omega)$
			are finite for any integer $n$, and we introduce 
                        $$Z^{\delta,=}_{\gamma,\eps}(T) = 
			\sum_{n\geq 1} 
			\sum_{\text{\tiny{$\begin{array}{c}
					(\alpha,\beta) \in C_{\gamma,\eps}^{\delta,=}\cap \Omega \\
					n=-\eps \m(\alpha, \beta)
				\end{array}$}}}	
				\mes(X^{=}_{\m(\alpha,\beta),(\alpha,\beta)})(\hat{f}^\eps))
			       T^{n} 
			       \:\:\text{and}\:\: Z^{\delta,<}_{\gamma,\eps}(T) = \sum_{n\geq 1}\sum_{
                                \text{\tiny{
				$\begin{array}{c}
					(\alpha,\beta) \in \Omega \\
				        (n,(\alpha,\beta)) \in C_{\gamma,\eps}^{\delta,<}
				\end{array}$}
			}
			}
			\mes(X^{<}_{n,(\alpha,\beta)}(\hat{f}^\eps))
			T^n.$$ 
                        	\end{itemize}
\end{notations}

We deduce from Remark \ref{proposition-arc-polyedre} and from the additivity of the measure the following proposition:

\begin{prop} \label{rem:decomposition} 
	For any $\delta>0$, for any $\eps$ in $\{\pm\}$, we have the decomposition
	$$Z^{\delta}_{\hat{f}^\eps, i(\mathbb A_\k^2)}(T) = 
	\sum_{\gamma \in \overline{\N}(f)} Z^{\delta}_{\gamma,\eps}(T).$$ 
	For any face $\gamma$ which is not contained in a coordinate axes we have
	\begin{equation} \label{decompositionzeta=<}
		Z^{\delta}_{\gamma,\eps}(T) = Z^{\delta,=}_{\gamma,\eps}(T) + Z^{\delta,<}_{\gamma,\eps}(T).
	\end{equation}
	\end{prop}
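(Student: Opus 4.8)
The plan is to prove Proposition \ref{rem:decomposition} by combining two elementary facts: the partition of the relevant arc space according to which face of $\overline{\N}(f)$ the order vector points to, and the additivity of the motivic measure over such partitions.

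First I would fix $\delta>0$, $\eps$ in $\{\pm\}$ and an integer $n\geq 1$, and start from the description of arcs in $X_{n}^{\delta}(\hat f^\eps)$ given in Remark \ref{rem:ecriture-carte-origine} and Notation \ref{notationxphiyphi}. Discarding the measure-zero set of arcs contained in $X\setminus i(\Gm^2)$, every remaining arc $\varphi$ has finite orders for $x_0(t),x_1(t),y_0(t),y_1(t)$, so the pair $-(\ord x(\varphi),\ord y(\varphi))$ is a well-defined element of $\mathbb Z^2$. Because the origin $\varphi(0)$ lies in $X_\infty$, equation (\ref{eqarc}) together with Remark \ref{rem:ecriture-carte-origine} forces $x_0(0)y_0(0)=0$, hence $-(\ord x(\varphi),\ord y(\varphi))$ lies in $\Omega=\mathbb Z^2\setminus(\mathbb Z_{\leq 0})^2$. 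By Lemma \ref{prop:dualfan} applied to $E=\supp f\cup\{(0,0)\}$ (so that $\overline{\Delta}(f)$ is its convex hull), the dual fan $(C_\gamma)_{\gamma\in\overline{\N}(f)}$ partitions $\mathbb R^2$, hence its trace partitions $\Omega\cap\mathbb Z^2$; thus $-(\ord x(\varphi),\ord y(\varphi))$ belongs to $C_\gamma$ for exactly one face $\gamma$ of $\overline{\N}(f)$. This gives the disjoint decomposition
\begin{equation} \label{decomparcsinfini}
	X_{n}^{\delta}(\hat{f}^\eps) = \bigsqcup_{\gamma \in \overline{\N}(f)} X_{n,\gamma}^{\delta}(\hat{f}^\eps)
\end{equation}
with $X_{n,\gamma}^{\delta}(\hat f^\eps)$ as in (\ref{defXngamma}), each piece being a constructible subset endowed with the induced structural map $\ac\hat f^\eps$ to $\Gm$ and the induced $\Gm$-action. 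Applying the additivity of the motivic measure in $\mathcal M_{\Gm}^{\Gm}$ to (\ref{decomparcsinfini}), summing over $n$ with the weights $T^n$, and using that $Z^{\delta}_{\gamma,\eps}(T)=0$ whenever $C_\gamma$ does not meet $\Omega$ (in particular for the faces of $\N(f)$ other than $\gamma_h$ and $\gamma_v$), yields the first displayed equality.

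For the refined equality (\ref{decompositionzeta=<}), I would fix a face $\gamma$ of $\overline{\N}(f)$ not contained in a coordinate axis and further partition $X_{n,\gamma}^{\delta}(\hat f^\eps)$. For each $(\alpha,\beta)$ in $C_\gamma\cap\Omega$ one has $X_{n,\gamma}^{\delta}(\hat f^\eps)=\bigsqcup_{(\alpha,\beta)} X_{n,(\alpha,\beta)}(\hat f^\eps)$, and then each $X_{n,(\alpha,\beta)}(\hat f^\eps)$ splits as the disjoint union of $X_{n,(\alpha,\beta)}^{=}(\hat f^\eps)$ and $X_{n,(\alpha,\beta)}^{<}(\hat f^\eps)$ according to whether $f_\gamma(\ac x(\varphi),\ac y(\varphi))$ is nonzero or zero. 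By Remark \ref{rem:m} (the two bullet points deduced from the computation (\ref{ordcompose})), in the first case $\eps\ord\hat f^\eps(\varphi)=-\m(\alpha,\beta)$, so nonempty pieces satisfy $n=-\eps\m(\alpha,\beta)$ and, combined with $\ord\varphi^*(\mathcal I_{X_\infty})=c(\alpha,\beta)\leq n\delta$, force $(\alpha,\beta)\in C_{\gamma,\eps}^{\delta,=}$; in the second case $\eps n>-\m(\alpha,\beta)$, forcing $(n,(\alpha,\beta))\in C_{\gamma,\eps}^{\delta,<}$. The finiteness of $\Omega\cap C_{\gamma,\eps}^{\delta,=}$ and of the slices $C_{\gamma,\eps}^{\delta,<}\cap(\{n\}\times\Omega)$ — which follows because $\gamma$ is not in a coordinate axis, so $\m$ is a linear form positive on a generating set of the relevant cone — guarantees that the sums defining $Z^{\delta,=}_{\gamma,\eps}(T)$ and $Z^{\delta,<}_{\gamma,\eps}(T)$ are well-defined term by term. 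A final application of additivity of the measure gives (\ref{decompositionzeta=<}).

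I do not expect a serious obstacle here: the statement is purely bookkeeping, and the only points requiring care are (i) checking that the measure-zero locus of arcs in $X\setminus i(\Gm^2)$ may genuinely be discarded, which is standard since that locus has dimension strictly smaller than the generic fibre dimension; (ii) verifying that $-(\ord x(\varphi),\ord y(\varphi))$ lands in $\Omega$, which is exactly the content of the three cases of Remark \ref{rem:ecriture-carte-origine}; and (iii) invoking Remark \ref{rem:m} correctly to see that the constraints $n=-\eps\m(\alpha,\beta)$ versus $-\m(\alpha,\beta)<\eps n$ are the sharp dichotomy governing whether a piece lands in the ``$=$'' or ``$<$'' part. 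The mild subtlety worth stating explicitly is the compatibility of the structural maps and $\Gm$-actions with the partition, so that each $Z^\delta_{\gamma,\eps}(T)$, $Z^{\delta,=}_{\gamma,\eps}(T)$, $Z^{\delta,<}_{\gamma,\eps}(T)$ is a well-defined element of $\mathcal M_{\Gm}^{\Gm}[[T]]$; this is immediate because the partition is by values of $(\ord x,\ord y)$ and of $f_\gamma(\ac x,\ac y)$, quantities unaffected by the $\Gm$-action $\lambda\cdot\varphi(t)=\varphi(\lambda t)$.
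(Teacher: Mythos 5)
Your proposal is correct and follows exactly the paper's own route: the paper deduces the proposition from Remark \ref{rem:m} (the dichotomy on $f_\gamma(\ac x,\ac y)$ governing whether $\eps\ord\hat f^\eps(\varphi)$ equals or exceeds $-\m(\alpha,\beta)$) together with additivity of the motivic measure over the partition of arcs by the dual cones $C_\gamma$, which is precisely your argument spelled out in more detail. The only cosmetic caveat is that the finiteness you invoke is really finiteness of the slices $\{\m(\alpha,\beta)=-\eps n\}\cap C_{\gamma,\eps}^{\delta,=}\cap\Omega$ (using $c(\alpha,\beta)\le n\delta$), not of $C_{\gamma,\eps}^{\delta,=}\cap\Omega$ itself, but this matches the paper's own setup and does not affect the proof.
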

\begin{rem} If $\gamma$ is the origin, then $Z^{\delta}_{\gamma,\eps}(T)=0$ because for any $n$, $X_{n,\gamma}^{\delta}$ is empty. \end{rem}

\subsubsection{The formal series $Z^{\delta}_{\gamma, \eps}$ for $\gamma$ contained in a coordinate axis}
\begin{prop}[Case $``\eps=-"$] \label{lem:fctzetainfiniaxe} 
	Let $\gamma$ be a face $(0,b_0)$ in $\N_\infty(f)$. There is $\delta_0$ such that for any $\delta \geq \delta_0$, the motivic zeta function $Z^{\delta}_{\gamma,-}(T)$
	is rational and 
	$-\lim Z^{\delta}_{\gamma,-}(T)=[y^{-b_0}:\Gm \to \Gm].$
	More precisely, writing $C_\gamma = \mathbb R_{>0}(-1,0) + \mathbb R_{>0}\eta$ 
	with $\eta$ in $\mathbb Z \times \mathbb N^*$, the dual cone of $\gamma$, we have 
	\begin{itemize}
		\item if $\eta = (p,q)$ with $p \leq 0$ and $q>0$, then {we have}
                        $Z^{\delta}_{\gamma,-}(T) = [1/y^{b_0}:\Gm \to \Gm,\sigma_{\Gm}] R_{\gamma}^{\delta}(T),$
			with 
			\begin{equation} \label{casb01} 
				R_{\gamma}^{\delta}(T) = 
			-1+\frac{1}{1-\mathbb L^{p - q}T^{qb_0 }} \sum_{r=0}^{q-1} \mathbb L^{-r - [-pr/q]}T^{rb_0}.
		        \end{equation}
		\item if $\eta = (p,q)$ with $p > 0$ and $q>0$, then {we have}
                        $Z^{\delta}_{\gamma,-}(T) = [1/y^{b_0}:\Gm \to \Gm,\sigma_{\Gm}] R_{\gamma}^{\delta}(T),$
			with 
			\begin{equation} \label{casb02} 
				R_{\gamma}^{\delta}(T) = \frac{\mathbb L^{-1}T^{b_0}}{1-\mathbb L^{-1}T^{b_0}} + 
				(\mathbb L-1)
				\left(\sum_{(\alpha_0,\beta_0)\in \mathcal P_{\gamma,(>,>)}}\frac{\mathbb L^{-\alpha_0-\beta_0}T^{b_0 \beta_0}}
				{(1-\mathbb L^{-1}T^{b_0})(1-\mathbb L^{-p-q}T^{qb_0})}+\frac{T^{b_0}\mathbb L^{-1}}{1-\mathbb L^{-1}T^{b_0}}
			\right)
		        \end{equation}
			with $\mathcal P_{\gamma,(>,>)} = (]0,1](0,1) + ]0,1]\eta)\cap \mathbb Z^2$.

	\end{itemize}
	Similarly, let $\gamma$ be a face $(a_0,0)$ in $\N_\infty(f)$. There is $\delta_0$ such that for any $\delta \geq \delta_0$, the motivic zeta function $Z^{\delta}_{\gamma,-}(T)$
	is rational and 
	$-\lim Z^{\delta}_{\gamma,-}(T)=[x^{-a_0}:\Gm \to \Gm].$
\end{prop} 
\begin{proof}
	The dual cone $C_\gamma$ of $\gamma$ is equal to $\mathbb R_{>0}(-1,0) + \mathbb R_{>0}\eta$ with $\eta$ a primitive vector in 
	$\mathbb Z \times \mathbb Z_{>0}$.
        
	\begin{itemize}
		\item Assume that $\eta$ belongs to $\mathbb Z_{\leq 0}\times \mathbb Z_{>0}$.
                        Writing $\eta=(p,q)$ with $p \leq 0$ and $q>0$. We observe that 
			$$C_{\gamma}= 
			\{(\alpha,\beta) \in \mathbb R^2 \mid \beta>0,\: \alpha <\beta (p /q)  
			= -\beta \abs{p/q}\}=C_{\gamma} \cap (\mathbb R_{< 0}\times \mathbb R_{>0})$$
			{and we conclude that for any $n\geq 1$,
	                        $X_{n,\gamma}^{\delta}(1/\hat{f})=\{\varphi \in X_{n}^{\delta}(1/\hat{f}) \mid \ord x(\varphi) \geq [-\ord y(\varphi) \abs{p/q}]+1\}.$
			}
			By Remark \ref{rem:m}, for any arc $\varphi$ in 
			$X_{n,\gamma}^{\delta}(1/\hat{f})$ we have
			$\ord 1/\hat{f}(\varphi) = -\ord y(\varphi) b_0$.
		        Assume $\delta >1/b_0$, then we have the inequality 
			$\ord \varphi^{*}(X_\infty)=-\ord y(\varphi) \leq -\delta b_0 \ord y(\varphi)$
			and 
			$Z^{\delta}_{\gamma,-}(T) = \sum_{k\geq 1} \mes(X_{k}^{\gamma}(1/\hat{f}))T^{b_0k}$
			with for any $k\geq 1$, 
			$$X_{k}^{\gamma}(1/\hat{f}) = \left\{\varphi \in \mathcal L(X) \mid -\ord y(\varphi)=k,\: \ord x(\varphi) \geq [k\abs{p/q}]+1 \right\}.$$
			As in the proof of formula (\ref{gamma=(a,0)}) in Proposition \ref{ex:cas-x^N}, we get 
			$Z^{\delta}_{\gamma,-}(T) = [1/y^{b_0}:\Gm \to \Gm,\sigma_{\Gm}] 
			R_{\gamma,(<,>)}^{\delta}(T),$
			with 
			\begin{equation} \label{caseta01} R_{\gamma,(<,>)}^{\delta}(T) = 
				-1+\frac{1}{1-\mathbb L^{-\abs{p} - q}T^{qb_0 }} \sum_{r=0}^{q-1} \mathbb L^{-r - [\abs{pr/q}]}T^{rb_0} \underset{T\to \infty}{\longrightarrow} -1
			\end{equation}
			
		\item Assume that $\eta$ belongs to $\mathbb Z_{>0} \times \mathbb Z_{>0}$.
                        Writing $\eta=(p,q)$ with $p > 0$ and $q>0$. We observe that 
			$$C_{\gamma} = (\mathbb R_{ {<0} }\times \mathbb R_{>0}) \sqcup (\mathbb R_{ {>} 0}(0,1) + \mathbb R_{ {\geq} 0} \eta)$$
			{In particular we denote
				\begin{equation} \label{formuledecomposition}
					C_{\gamma,(<,>)} := C_{\gamma} \cap (\mathbb R_{<0}\times \mathbb R_{>0}) = \mathbb R_{<0} \times \mathbb R_{>0} \:\: \text{and} \:\: 
                                    C_{\gamma,(\geq ,>)} := C_{\gamma} \cap (\mathbb R_{\geq 0}\times \mathbb R_{>0}) = \mathbb R_{>0}(0,1) + \mathbb R_{ \geq 0} \eta.
			    \end{equation}
			 }
				We decompose the zeta function {following these quadrants}
			$Z^{\delta}_{\gamma,-}(T) = 
			Z^{\delta}_{\gamma,(<,>)}(T) + Z^{\delta}_{\gamma,(\geq ,>)}(T)$
			with 
			$$Z^{\delta}_{\gamma,(<,>)}(T) = 
			\sum_{n\geq 1}\mes(X_{n,(<,>)}^{\delta}(1/\hat{f}))T^n \:\:\text{and}\:\:
			Z^{\delta}_{\gamma,(\geq ,>)}(T) = 
			\sum_{n\geq 1}\mes(X_{n,(\geq ,>)}^{\delta}(1/\hat{f}))T^n$$
			with for any $n\geq 1$,
			$X_{n,(<,>)}^{\delta}(1/\hat{f}) = \{\varphi \in X_{n,\gamma}^{\delta}(1/\hat{f}) \mid 
			{-(\ord x(\varphi),\ord y(\varphi)) \in C_{\gamma,(<,>)}}\}$,\:
			and
			$$X_{n,(\geq ,>)}^{\delta}(1/\hat{f}) = \{\varphi \in X_{n,\gamma}^{\delta}(1/\hat{f}) \mid 
			{-(\ord x(\varphi),\ord y(\varphi)) \in C_{\gamma,(\geq,>)} } \}.$$
			\begin{itemize}
				\item  It follows from formula (\ref{caseta01}) that for $\delta > 1/b_0$, we have 
					$Z^{\delta}_{\gamma,(<,>)}(T) = [1/y^{b_0}:\Gm \to \Gm,\sigma_{\Gm}] R_{\gamma,(<,>)}^{\delta}(T)$
					with
					\begin{equation} \label{eq<>} R_{\gamma,(<,>)}^{\delta}(T) = \frac{\mathbb L^{-1}T^{b_0}}{1-\mathbb L^{-1}T^{b_0}}
                                        \underset{T\to \infty}{\longrightarrow} -1.
					\end{equation}
				\item  
					{By Remark \ref{rem:ecriture-carte-origine}}, for any arc $\varphi$ in $X_{n,(\geq ,>)}^{\delta}(1/\hat{f})$ 
					we have the equality $\ord \varphi^*(\mathcal I_{X_\infty}) = -\ord x(\varphi) - \ord y(\varphi).$              
					For any $\delta$ satisfying $\delta > \delta_3 = \max \left(\frac{1}{b_0}, \frac{p+q}{b_0q} \right)${=$\frac{p+q}{b_0q}$}, we have the inequality 
					$\ord \varphi^*(\mathcal I_{X_\infty})\leq \delta \ord (1/\hat{f})(\varphi) 
					= -\delta b_0\: {\ord} y(\varphi),$
                                        for any arc $\varphi$ in $\mathcal L(X)$
					with $-(\ord x(\varphi),\ord y(\varphi))$ in $\mathbb R_{>0}(0,1) + \mathbb R_{ \geq 0} \eta$.
					{Indeed, let $-(\ord x(\varphi),y(\varphi))=k(0,1)+l(p,q)$, we have 
						$-\ord x(\varphi) - \ord y(\varphi) = k + l(p+q) \leq \delta b_0 k + \delta b_0 lq \leq -\delta b_0\: {\ord} y(\varphi).$ 
				        }
					{Then}, we can decompose
					$Z^{\delta}_{\gamma,(\geq ,>)}(T) = \sum_{n\geq 1} 
					\sum_{\underset{n=\beta b_0}{(\alpha,\beta)\in {C_{\gamma,(\geq,>)}}}}
					\mes(X_{(\alpha,\beta)})T^{b_0 \beta},$
					with for any $(\alpha, \beta)$ in {$C_{\gamma,(\geq,>)}$}
					$$X_{(\alpha,\beta)} = \{\varphi \in \mathcal L(X) \mid -(\ord x(\varphi),\ord y(\varphi)) = (\alpha,\beta)\}.$$
					It follows from standard computations of motivic measure that 
					$$\mes(X_{(\alpha,\beta)}) = \mathbb L^{-\alpha-\beta}[y^{-b_0}:\Gm^2 \to \Gm] = 
					\mathbb L^{-\alpha-\beta}(\mathbb L-1)[y^{-b_0}:\Gm \to \Gm].$$
					Then, we have the equality
					$Z^{\delta}_{\gamma,(\geq ,>)}(T) = [y^{-b_0}:\Gm \to \Gm] (\mathbb L-1) R_{\gamma,(\geq, >)}(T)$
					with 
					$$R_{\gamma,(\geq, >)}(T) = \sum_{n\geq 1} 
					\sum_{\underset{n=\beta b_0}{(\alpha,\beta)\in {C_{\gamma,(\geq,>)}}}}
					\mathbb L^{-\alpha-\beta}T^{b_0 \beta}.$$
					In particular applying Lemma \ref{lemmedescones} and using 
                                        $\mathcal P_{\gamma} = (]0,1](0,1) + ]0,1]\eta)\cap \mathbb Z^2$
					we obtain that 
					\begin{equation} \label{eqgeq>} 				
						R_{\gamma,(\geq,>)}(T) = 
						\sum_{(\alpha_0,\beta_0) \in \mathcal P_{\gamma}} 
						\frac{\mathbb L^{-\alpha_0-\beta_0}T^{b_0 \beta_0}}
						{(1-\mathbb L^{-1}T^{b_0})
						(1-\mathbb L^{-p-q}T^{qb_0})}+\frac{T^{b_0}\mathbb L^{-1}}{1-\mathbb L^{-1}T^{b_0}}
						\underset{T\to \infty}{\longrightarrow} 0
					\end{equation}
		\end{itemize}

       	\end{itemize}	
        By symmetry, we obtain a similar result for a zero-dimensional face $(a_0,0)$.
	\end{proof}
	
	\begin{prop}[Case $``\eps=+"$] \label{prop:casfacecontenuedansaxe}
		Let $\gamma$ be a face $(a,0)$ {or $(0,b)$} of $\overline{\N}(f)$. There is $\delta_0$ such that for any $\delta \geq \delta_0$, the formal series $Z_{\gamma,+}^{\delta,=}(T)$ is rational and equal to 
	$$ {Z^{\delta}_{\gamma,+}(T)} = 
	[x^a : \Gm^{2} \ra \Gm,\sigma_{\gamma}] R_{\gamma}^{\delta,=}(T) 
	{\:\:\text{or}\:\:
	Z^{\delta}_{\gamma,+}(T) = 
	[y^b : \Gm^{2} \ra \Gm,\sigma_{\gamma}] R_{\gamma}^{\delta,=}(T)}
	$$ 
	with $R_{\gamma}^{\delta,=}(T)$ expressed in (\ref{eqRgammaa}).
	It admits a limit $-\lim {Z_{\gamma,+}^{\delta}(T)}$ in $\mgg$ with 
	$$ - \lim {Z_{\gamma,+}^{\delta}(T)} = \eps_\gamma[x^a:\Gm^2 \ra \Gm,\sigma_\gamma]
	{\:\:\text{or}\:\:
	- \lim {Z_{\gamma,+}^{\delta}(T)} = \eps_\gamma[y^b:\Gm^2 \ra \Gm,\sigma_\gamma]}	
	$$
	with $\eps_\gamma = - \chi_{c}(C_{\gamma,+}^{\delta,=} \cap \Omega)$ belongs to $\{0,-1\}$.
	More precisely, {in the case $\gamma=(a,0)$ (the case $\gamma=(0,b)$ is similar)},
	let $H_\gamma = \{(\alpha,\beta) \in \mathbb R^2 \mid \alpha<0\}$ and $C_\gamma$ be the dual cone of $\gamma$.
	The cone $C_{\gamma,+}^{\delta,=}$, {defined in formula (\ref{Cgammadelta=})}, is included in $C_\gamma \cap H_\gamma$, and
			\begin{itemize}
				\item if $C_\gamma \cap H_\gamma \cap \Omega = \emptyset$ then 
					$C_{\gamma,+}^{\delta,=} {\cap \Omega}$ is empty and {$\eps_\gamma = 0$},
				\item  otherwise, there is $\eta=(p,q)$ with $p<0$ and $q>0$ such that
					$C_\gamma \cap H_\gamma \cap \Omega = \mathbb R_{>0}(-1,0) + \mathbb R_{>0}\eta$.\\
					There is $\delta_1>0$ such that for any $\delta>\delta_1$, we have
					$C_{\gamma,+}^{\delta,=} {\cap \Omega} = \mathbb R_{>0}(-1,0) + \mathbb R_{>0}\eta
					\:\:\text{and}\:\: {\eps_\gamma=-1}.$
			\end{itemize}
\end{prop}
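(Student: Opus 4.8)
The plan is to imitate the proof of Proposition \ref{ex:cas-x^N} (the case of the horizontal face in the local setting), adapting it to the compactification $(X,i,\hat f)$ and to arcs with origin at infinity. Throughout I fix the face $\gamma=(a,0)$ of $\overline{\N}(f)$ with $a>0$ (the vertical case $\gamma=(0,b)$ being symmetric, swapping the roles of $x$ and $y$). Since $``\eps=+"$ here, $\hat f^{+}=\hat f$ extends $f$, and by Remark \ref{rem:m} and formula (\ref{ordcompose}), for any arc $\varphi$ in $X_{n,\gamma}^{\delta}(\hat f)$ with $-(\ord x(\varphi),\ord y(\varphi))=(\alpha,\beta)$ in $C_\gamma\cap\Omega$ we have $\ord f(\varphi)= -\m(\alpha,\beta)= a\,\ord x(\varphi) = -a\alpha$, because $f_\gamma$ is the monomial $c_{(a,0)}x^a$ which never vanishes on the angular components (the torus). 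Hence there is no ``$<$'' part: $Z^{\delta}_{\gamma,+}(T)=Z^{\delta,=}_{\gamma,+}(T)$, and the summation is over $(\alpha,\beta)$ with $-\eps\m(\alpha,\beta)= a\alpha = n$ and the divisorial condition $c(\alpha,\beta)\le n\delta = a\alpha\delta$, which by definition (\ref{Cgammadelta=}) says exactly $(\alpha,\beta)\in C_{\gamma,+}^{\delta,=}$.

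First I would determine $C_{\gamma,+}^{\delta,=}$ as a subset of $\Omega$. Note that $\m(\alpha,\beta)=a\alpha$ on $C_\gamma$, so $-\eps\m(\alpha,\beta)>0$ forces $\alpha<0$, i.e. $(\alpha,\beta)\in H_\gamma$; thus $C_{\gamma,+}^{\delta,=}\subset C_\gamma\cap H_\gamma$. If $C_\gamma\cap H_\gamma\cap\Omega=\emptyset$ then $X_{n,\gamma}^{\delta}(\hat f)$ is empty for all $n$, $Z^{\delta}_{\gamma,+}(T)=0$ and $\eps_\gamma=-\Eu(\emptyset)=0$. Otherwise, since $C_\gamma$ is a relatively open two-dimensional rational cone in $\Omega$ whose closure meets the negative $\alpha$-axis, $C_\gamma\cap H_\gamma\cap\Omega$ is a relatively open cone of the form $\mathbb R_{>0}(-1,0)+\mathbb R_{>0}\eta$ with $\eta=(p,q)$, $p<0$, $q>0$ (the vector $(-1,0)$ is in $C_\gamma$ since it is a face direction of the dual cone of $(a,0)$, and $\eta$ is the other primitive generator of $C_\gamma$; intersecting with $\Omega$ only removes the $\alpha\le 0,\beta\le 0$ part which does not occur here). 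Now for the divisorial constraint: on the ray $\mathbb R_{>0}(-1,0)$ we have $c(\alpha,\beta)=\beta=0\le a\alpha\delta$ trivially, and on the ray $\mathbb R_{>0}\eta$ we have $c(p,q)=q$ and $a\alpha\delta = -ap\delta = a|p|\delta$, so the inequality $q\le a|p|\delta$ holds for $\delta\ge\delta_1:=q/(a|p|)$; by convexity of the constraint in the cone it then holds on all of $C_\gamma\cap H_\gamma\cap\Omega$. Hence for $\delta\ge\delta_1$ we get $C_{\gamma,+}^{\delta,=}\cap\Omega = \mathbb R_{>0}(-1,0)+\mathbb R_{>0}\eta$, and $\eps_\gamma=-\Eu$ of this cone, which by point \ref{cone3} (applied with $\omega_1=(1,0)$ after sign-normalization, or directly: the cone is half-open, of dimension $2$, of shape $\mathbb R_{\ge 0}\omega_1+\mathbb R_{>0}\omega_2$ only along one boundary) equals \dots in fact $\Eu(\mathbb R_{>0}\omega_1+\mathbb R_{>0}\omega_2)=1$, so $\eps_\gamma=-1$.

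Next I would compute the zeta function itself. As in the proof of Proposition \ref{ex:cas-x^N}, for each $(\alpha,\beta)$ in $C_\gamma\cap\Omega$ and each $m\ge\m(\alpha,\beta)$, the truncated jet space $\pi_m(X_{\m(\alpha,\beta),(\alpha,\beta)}^{=}(\hat f))$ with its canonical $\Gm$-action is, using the chart description of Remark \ref{rem:ecriture-carte-origine} (case $x_0(0)=0$ when $\beta>0$, or case $x_0(0)=0,y_0(0)\ne0$ when $\beta\le0$), a bundle over $(x^{a}\colon\Gm^2\to\Gm,\sigma_{\gamma})$ with affine fiber; here $\sigma_\gamma$ is the $\Gm$-action attached to $(\alpha,\beta)\in C_\gamma$ as in Remark \ref{notation-identification-action}, independent of the representative. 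This yields $\mes(X_{\m(\alpha,\beta),(\alpha,\beta)}^{=}(\hat f)) = \mathbb L^{-|\alpha|-|\beta|}[x^{a}\colon\Gm^2\to\Gm,\sigma_\gamma]$, so
\begin{equation*}
Z^{\delta}_{\gamma,+}(T) = [x^a\colon\Gm^2\to\Gm,\sigma_\gamma]\,R_\gamma^{\delta,=}(T),\qquad
R_\gamma^{\delta,=}(T) = \sum_{(\alpha,\beta)\in C_{\gamma,+}^{\delta,=}\cap\Omega}\mathbb L^{-|\alpha|-|\beta|}T^{-a\alpha}.
\end{equation*}
Applying Lemma \ref{lemmerationalitedescones} (point \ref{cone1} or \ref{cone3}, depending on whether the cone is open or half-open) to the forms $\phi(\alpha,\beta)=-a\alpha$ and $\eta'(\alpha,\beta)=|\alpha|+|\beta|$ on the cone $\mathbb R_{>0}(-1,0)+\mathbb R_{>0}\eta$ gives the explicit rational form (\ref{eqRgammaa}) and the limit $-\lim_{T\to\infty}R_\gamma^{\delta,=}(T) = \Eu(C_{\gamma,+}^{\delta,=}\cap\Omega) = -\eps_\gamma$, whence $-\lim_{T\to\infty}Z^{\delta}_{\gamma,+}(T) = \eps_\gamma[x^a\colon\Gm^2\to\Gm,\sigma_\gamma]$ with $\eps_\gamma\in\{0,-1\}$.

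The only genuinely delicate point is the identification of the cone $C_\gamma\cap H_\gamma\cap\Omega$ and the normalization of $\eta=(p,q)$: one must check that $(-1,0)$ really is a generator of $C_\gamma$ for a face $(a,0)$ not contained only in the axis in a degenerate way, and that intersecting the two-dimensional cone $C_\gamma$ with the open half-plane $H_\gamma$ and with $\Omega$ produces again a cone of the stated two-ray shape (rather than, say, an empty set or a single ray). This is a purely combinatorial verification about the dual fan of $\overline{\Delta}(f)$ (Lemma \ref{prop:dualfan}), using that $(a,0)$ lies on the $\alpha$-axis so its dual cone is bounded by the direction $(-1,0)$ on one side; I expect it to go through cleanly once the cases ($C_\gamma$ already inside $H_\gamma$, versus $C_\gamma$ straddling the axis $\alpha=0$) are separated. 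Everything else is a direct transcription of the argument for Proposition \ref{ex:cas-x^N} together with Lemma \ref{lemmerationalitedescones}.
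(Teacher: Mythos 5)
Your argument is correct and follows essentially the same route as the paper's proof: positivity of $\ord \hat f(\varphi)=a\,\ord x(\varphi)$ forces $\alpha<0$, so everything is supported on $C_\gamma\cap H_\gamma\cap\Omega=\mathbb R_{>0}(-1,0)+\mathbb R_{>0}(p,q)$ (the empty case giving $Z^{\delta}_{\gamma,+}=0$), the divisorial condition $\beta\leq -a\alpha\delta$ becomes automatic for $\delta\geq q/(a\abs{p})$ exactly as in the paper, and the fibration computation of the measure together with Lemma \ref{lemmerationalitedescones} yields (\ref{eqRgammaa}), the limit $1=\chi_c$ of the open two-dimensional cone, and $\eps_\gamma=-1$. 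Only cosmetic slips: the relevant chart of Remark \ref{rem:ecriture-carte-origine} is the one with $x_0(0)\neq 0$ and $y_0(0)=0$ (since $\ord x(\varphi)>0$, $\ord y(\varphi)<0$; the case $\beta\leq 0$ never occurs in $\Omega$), and a few signs are transcribed incorrectly ($-a\alpha\delta$ rather than $a\alpha\delta$, and $\lim R_{\gamma}^{\delta,=}(T)=\chi_c(C_{\gamma,+}^{\delta,=}\cap\Omega)$ rather than $-\lim$), but these do not propagate: your stated thresholds and final formulas agree with the paper.
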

\begin{proof}
	Let $\gamma$ be a face $(a,0)$ in $\overline{\N}(f)$ with $a>0$. The dual cone of $\gamma$ has dimension 2.
	Remark that, for any integer $n\geq 1$, for any arc $\varphi$ in $X_{n,\gamma}^{\delta}(\hat{f})$, by Remark \ref{rem:m} we have
	$\ord \hat{f}(\varphi) = \ord x(\varphi)a = {n >0}$
        {which implies that $\ord x(\varphi)>0$, namely $-(\ord x(\varphi),\ord y(\varphi))$ belongs to}
	$C_\gamma \cap H_\gamma \cap \Omega$.
        If $C_\gamma \cap H_\gamma \cap \Omega$ is empty then $Z_{\gamma,+}^{\delta}(T)=0$.
	
	Assume $C_\gamma \cap H_\gamma \cap \Omega = \mathbb R_{>0}(-1,0) + \mathbb R_{>0}(p,q)$ with $p<0$ and $q>0$.
	We observe that 
	$$C_\gamma \cap H_\gamma \cap \Omega = \{(\alpha,\beta)\in \mathbb Z^2 \mid \beta>0, \alpha<0, \beta p/q> \alpha\}$$
        and we conclude that for any $n\geq 1$,
		$$X_{n,\gamma}^{\delta}(\hat{f})=\{\varphi \in X_{n}^{\delta}(\hat{f}) \mid \ord y(\varphi)<0,\: \ord x(\varphi)>0,\: \abs{q/p}\ord x(\varphi) >-\ord y(\varphi) \}.$$
        For any arc $\varphi$ in $X_{n,\gamma}^{\delta}(\hat{f})$, we have $\varphi(0) = ([1:0],[0:1],[1:0])$ and by formula (\ref{ecriturelocale2}) in Remark \ref{rem:ecriture-carte-origine}, we have 
	\begin{equation} \label{eqinegalite}
		\ord \varphi^{*}(\mathcal I_{X_\infty}) = -\ord y(\varphi) \leq \delta a \ord x(\varphi).
	\end{equation}
        For any $\delta \geq \abs{q/p}/a$, we have 
        $X_{n,\gamma}^{\delta}(\hat{f})=\{\varphi \in \mathcal L(X) \mid \ord \hat{f}(\varphi)=n,\:
	(-\ord x(\varphi),-\ord y(\varphi))\in C_\gamma\cap H_\gamma \cap \Omega\}$
        and
	$$ 	Z_{\gamma,+}^{\delta}(T) = \sum_{n\geq 1}\mes(X_{n,\gamma}^{\delta})T^n \\
		 =  \sum_{k\geq 1} \sum_{(-k,l)\in C_\gamma\cap H_\gamma \cap \Omega} \mes(X_{k,l})T^{ka}
	$$
	with for any $(-k,l)$ in $C_\gamma\cap H_\gamma \cap \Omega$,
	$X_{k,l}=\{\varphi \in X_{n,\gamma}^{\delta}(\hat{f}) \mid (-\ord x(\varphi),-\ord y(\varphi))=(-k,l)\}.$
	By construction of the motivic measure we have 
	$\mes(X_{k,l})=\mathbb L^{-k-l}[x^a:\Gm^2 \to \Gm, \sigma_{\Gm}].$
        Then, by application of Lemma \ref{lemmedescones} we obtain the expression
	$Z_{\gamma,+}^{\delta}(T)= [x^a:\Gm^2 \to \Gm, \sigma_{\Gm}]R_{\gamma}^{\delta,=}(T)$,
        where denoting $\mathcal P =( ]0,1](-1,0) + ]0,1](p,q) )\cap \mathbb Z^2$, we have 
	\begin{equation} \label{eqRgammaa}
		R_\gamma^{\delta,=}(T) = \sum_{(k_0,l_0) \in \mathcal P} \frac{\mathbb L^{k_0-l_0}T^{-ak_0}}{(1-\mathbb L^{-1}T^{a})(1-\mathbb L^{p-q}T^{-ap})} \underset{T \to \infty}{\longrightarrow}
		1 = \chi_{c}(C_\gamma\cap H_\gamma \cap \Omega).
	\end{equation}
\end{proof}

	\subsubsection{The formal series $Z^{\delta,=}_{\gamma,\eps}(T)$ for $\gamma$ not contained in a coordinate axes}
\begin{prop}[Case $``\eps = -"$] \label{lem:fctzetaegalinfini} 
	We assume $f(0,0)\neq 0$.
	\begin{itemize}
		\item If $f$ can be written as $f(x,y)=P(x^ay^b)$ with $P$ in $\k[s]$ {of degree $d$} with $(a,b)$ in $(\mathbb N^*)^2$. Let $\gamma$ be the face $(ad,bd)$ of $\N_{\infty}(f)$. 
			If $\delta > \max(1/(da),1/(db))$ then we have
			\begin{equation} Z^{\delta,=}_{\gamma,-}(T) = [{1/(x^{a}y^{b})^d}:\Gm^2 \to \Gm, \sigma_{\Gm}] R_{\gamma}^\delta(T) 
				\:\:\text{and}\:\:
				-\lim Z^{\delta,=}_{\gamma,-}(T) = [1/(x^{a}y^{b})^d:\Gm^2 \to \Gm, \sigma_{\Gm}].
			\end{equation}
			where $R_{\gamma,=}^\delta(T)$ is rational and given in formula (\ref{casmonom}).
	\item  Assume $f$ is not of the previous form. Let $\gamma$ be a face in $\N_\infty(f)^o$ and not contained in a coordinate axes. 
			There is $\delta_0>0$ such that for any $\delta > \delta_0$, we have                
			\begin{equation}\label{eqratformZdelta=gamma-}
				Z^{\delta,=}_{\gamma,-}(T) = 
				[1/f_\gamma : \Gm^{2}\setminus f_\gamma^{-1}(0)\ra \Gm,\sigma_{\gamma}]
				R_{\gamma}^{\delta,=}(T)\:\text{and}\:
				-\lim Z^{\delta,=}_{\gamma,-}(T)=
				\eps_\gamma [1/f_\gamma : \Gm^{2}\setminus f_\gamma^{-1}(0)\ra \Gm,\sigma_{\gamma}] 
			\end{equation}
			with {$\eps_\gamma = -\chi_{c}(C_{\gamma,-}^{\delta,=}\cap \Omega)$ 
			equal to $(-1)^{\dim \gamma + 1}$} if
			$\gamma$ is not contained in a face which contains 0 and otherwise {is }equal to 0, and 
			$R_{\gamma}^{\delta,=}(T)$ is rational and given in formula (\ref{eqR}) 
			(with (\ref{eqcasa1}), (\ref{eqcasa2}), (\ref{eqcasb1}), (\ref{eqcasb2}), 
			(\ref{eqcasb3}), (\ref{eqcasb4}), (\ref{eqcasb5})) for zero-dimensional faces and in  formula (\ref{eqcasc1}) for one-dimensional faces.
	\end{itemize}
\end{prop}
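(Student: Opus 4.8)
The plan is to run, on the compactification $(X,i,\hat f)$ of subsection \ref{compactification} and with $\eps=-$, the same argument as in the proof of Proposition \ref{lem:zeta=}. Fix a face $\gamma$ in $\N_\infty(f)^o$ that is not contained in a coordinate axis. By formula (\ref{ordcompose}) of Remark \ref{rem:m}, an arc $\varphi$ of $\mathcal L(X)$ with $-(\ord x(\varphi),\ord y(\varphi))=(\alpha,\beta)\in C_\gamma\cap\Omega$ and $f_\gamma(\ac x(\varphi),\ac y(\varphi))\neq 0$ satisfies $\ord(1/\hat f)(\varphi)=\m(\alpha,\beta)$, and $\m(\alpha,\beta)>0$ on the relative interior of $C_\gamma$ because $f(0,0)\neq 0$ and $\gamma$ does not contain the origin. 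Hence $Z^{\delta,=}_{\gamma,-}(T)$ is, by its definition, a sum over the lattice points of the rational polyhedral cone $C_{\gamma,-}^{\delta,=}\cap\Omega$ from (\ref{Cgammadelta=}), weighted by $\mes(X^{=}_{\m(\alpha,\beta),(\alpha,\beta)}(\hat{f}^{-}))\,T^{\m(\alpha,\beta)}$.

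First I would compute this motivic measure. The arcs involved have their generic point in $i(\Gm^2)$, so in a chart of $X$ at infinity adapted to the quadrant containing $C_\gamma$ this is the classical jet-space bundle computation of \cite{Gui02a,GuiLoeMer05a,Rai11} already carried out in the proof of Proposition \ref{lem:zeta=}: for $m$ large, $\pi_m(X^{=}_{\m(\alpha,\beta),(\alpha,\beta)}(\hat{f}^{-}))$ with its canonical $\Gm$-action is an affine bundle over $(1/f_\gamma:\Gm^2\setminus f_\gamma^{-1}(0)\to\Gm,\sigma_{\alpha,\beta})$, so $\mes(X^{=}_{\m(\alpha,\beta),(\alpha,\beta)}(\hat{f}^{-}))=\mathbb L^{-\ell(\alpha,\beta)}[1/f_\gamma:\Gm^2\setminus f_\gamma^{-1}(0)\to\Gm,\sigma_{\alpha,\beta}]$, where $\ell$ is the $\mathbb Z$-linear form recording the orders of the chart coordinates (thus $\ell(\alpha,\beta)=\alpha+\beta$ when $C_\gamma\subset(\mathbb R_{>0})^2$). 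By the construction of $\mgg$, as in Remark \ref{notation-identification-action}, this class does not depend on $(\alpha,\beta)$ in $C_\gamma$; writing $\sigma_\gamma$ for it, I obtain $Z^{\delta,=}_{\gamma,-}(T)=[1/f_\gamma:\Gm^2\setminus f_\gamma^{-1}(0)\to\Gm,\sigma_\gamma]\,R^{\delta,=}_\gamma(T)$ with $R^{\delta,=}_\gamma(T)=\sum_{(\alpha,\beta)\in C_{\gamma,-}^{\delta,=}\cap\Omega}\mathbb L^{-\ell(\alpha,\beta)}T^{\m(\alpha,\beta)}$.

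Since $\m$ is linear on $C_\gamma$, this is exactly the type of series covered by Lemma \ref{lemmerationalitedescones}: subdividing $C_{\gamma,-}^{\delta,=}\cap\Omega$ into simplicial cones generated by primitive vectors and applying the three cases of that lemma gives the rationality of $R^{\delta,=}_\gamma(T)$ and, by additivity of $\chi_c$ over the subdivision together with the limit formulas, $\lim_{T\to\infty}R^{\delta,=}_\gamma(T)=\chi_c(C_{\gamma,-}^{\delta,=}\cap\Omega)$; hence $-\lim Z^{\delta,=}_{\gamma,-}(T)=\eps_\gamma[1/f_\gamma:\Gm^2\setminus f_\gamma^{-1}(0)\to\Gm,\sigma_\gamma]$ with $\eps_\gamma=-\chi_c(C_{\gamma,-}^{\delta,=}\cap\Omega)$. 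What remains is to identify the shape of $C_{\gamma,-}^{\delta,=}\cap\Omega$ for $\delta$ large: arguing as in the proofs of Propositions \ref{ex:cas-x^N} and \ref{lem:zeta=}, one checks that above an explicit threshold on $\delta$ the inequality $c(\alpha,\beta)\leq\m(\alpha,\beta)\delta$ only trims $C_\gamma\cap\Omega$ along the facets where $\m$ vanishes. When $\gamma$ is one-dimensional, $C_\gamma=\mathbb R_{>0}\eta$ with $\eta$ in $\Omega$ and $\m(\eta)>0$, so $C_{\gamma,-}^{\delta,=}\cap\Omega=\mathbb R_{>0}\eta$ and $\eps_\gamma=1=(-1)^{\dim\gamma+1}$. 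When $\gamma$ is a vertex, $C_\gamma=\mathbb R_{>0}\omega_1+\mathbb R_{>0}\omega_2$ where $\omega_i$ is the normal of the adjacent edge $\gamma_i$, at most one $\gamma_i$ passes through the origin, and $\m(\omega_i)=0$ exactly when it does; accordingly $C_{\gamma,-}^{\delta,=}\cap\Omega$ is a $2$-dimensional cone with $\chi_c=1$ (no $\gamma_i$ through the origin, i.e. $\gamma$ in no face through the origin, giving $\eps_\gamma=-1=(-1)^{\dim\gamma+1}$) or $\chi_c=0$ (one $\gamma_i$ through the origin, giving $\eps_\gamma=0$). Reading Lemma \ref{lemmerationalitedescones} off each simplicial piece produces the explicit expressions (\ref{eqR}), (\ref{eqcasa1}), (\ref{eqcasa2}), (\ref{eqcasb1}), (\ref{eqcasb2}), (\ref{eqcasb3}), (\ref{eqcasb4}), (\ref{eqcasb5}) for vertices and (\ref{eqcasc1}) for edges.

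For the monomial case $f(x,y)=P(x^ay^b)$ with $f(0,0)\neq 0$, $(a,b)\in(\mathbb N^*)^2$ and $\deg P=d$, the polygon $\N_\infty(f)$ is the segment $[(0,0),(da,db)]$; the vertex $\gamma=(da,db)$ has $f_\gamma=c_d(x^ay^b)^d$, which does not vanish on $\Gm^2$, so $\Gm^2\setminus f_\gamma^{-1}(0)=\Gm^2$, and $C_\gamma=\{(\alpha,\beta):a\alpha+b\beta>0\}\subset\Omega$. The same computation applies; for $\delta>\max(1/(da),1/(db))$ the cone $C_{\gamma,-}^{\delta,=}\cap\Omega$ is a closed $2$-dimensional sector of angular opening $<\pi$, so $\chi_c(C_{\gamma,-}^{\delta,=}\cap\Omega)=-1$ and $\eps_\gamma=1$; identifying $[1/f_\gamma:\Gm^2\to\Gm,\sigma_\gamma]$ with $[1/(x^ay^b)^d:\Gm^2\to\Gm,\sigma_{\Gm^2}]$ in $\mgg$ then gives formula (\ref{casmonom}) and the asserted limit. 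The main obstacle is the bookkeeping in the third paragraph, namely pinning down the precise combinatorial type of $C_{\gamma,-}^{\delta,=}\cap\Omega$ and the corresponding threshold on $\delta$ across all positions of $C_\gamma$ (fully inside a quadrant, adjacent to a coordinate axis, or adjacent to a face through the origin); the rest is the routine measure-to-cone-sum reduction together with Lemma \ref{lemmerationalitedescones}.
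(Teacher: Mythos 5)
Your proposal is correct and takes essentially the same route as the paper: factor the class $[1/f_\gamma:\Gm^2\setminus f_\gamma^{-1}(0)\to\Gm,\sigma_\gamma]$ out of the fiberwise motivic measures, reduce $Z^{\delta,=}_{\gamma,-}(T)$ to a lattice-point generating series over $C^{\delta,=}_{\gamma,-}\cap\Omega$, apply Lemma \ref{lemmerationalitedescones} on pieces where everything is linear, read the limit as an Euler characteristic, and identify the trimmed cone for $\delta$ large (untrimmed open cone of $\chi_c=1$ or ray of $\chi_c=-1$ when no adjacent face contains the origin, half-open sector of $\chi_c=0$ otherwise, closed sector of $\chi_c=-1$ in the case $f=P(x^ay^b)$), exactly as in the paper's proof. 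The only point to make explicit is that the weight $\abs{\alpha}+\abs{\beta}$ and the function $c$ are merely piecewise linear, so your simplicial subdivision must refine the decomposition of $C_\gamma$ along the coordinate quadrants, which is precisely how the paper organizes the computation leading to formulas (\ref{eqR})--(\ref{eqcasc1}) and (\ref{casmonom}).
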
 
\begin{proof} 
	{We assume first that $f(0,0)\neq 0$ and $f$ is not of the form $f(x,y)=P(x^ay^b)$ with $P$ in $\k[s]$ with $(a,b)$ in $(\mathbb N^*)^2$}. 
	Let $\gamma$ be a face in $\N_{\infty}(f)^o$ not contained in a coordinate axes and $\delta >0$. By assumption $(0,0)$ does not belong to the face $\gamma$, then for any $(\alpha,\beta)$ in the dual cone $C_\gamma$, $\m(\alpha,\beta)>0$. Then, {by homogeneity of the functions $\m$ and $c$ defined in {formula} (\ref{c}), there is $\delta'$ such that for any $\delta>\delta'$} the cone $C_{\gamma,-}^{\delta,=}$ {defined in formula (\ref{Cgammadelta=})} is non empty. 
	{Indeed, let $(\alpha, \beta)$ be in the dual cone $C_\gamma$, we have $\m(\alpha, \beta)>0$, there is $\delta'>0$ such that 
	$c(\alpha,\beta)\leq \delta' \m(\alpha,\beta)$, then $(\alpha,\beta)$ belongs to 
	$C_{\gamma,-}^{\delta',=}$. Then, for any $\delta > \delta'$, $\delta'/\delta(\alpha,\beta)$ belongs to $C_{\gamma,-}^{\delta,=}$.} 
	Let $\delta>\delta'$ and $(\alpha,\beta)$ be an element of $C_{\gamma,-}^{\delta,=}$. 
	By standard arguments on the definition of the motivic measure, it follows from \cite[Lemma 3.16]{Rai11}  
	that the motivic measure of
	$X_{\m(\alpha,\beta),(\alpha,\beta)}(1/\hat{f})$ is equal to 
	$\mathbb L^{-\abs{\alpha}-\abs{\beta}} 
	[1/f_\gamma : \Gm^{2}\setminus f_{\gamma}^{-1}(0)\ra \Gm,\sigma_{\alpha,\beta}].$
	By Remark \ref{notation-identification-action} (see also \cite[Proposition 3.13]{Rai11} for details) this measure does not depend on $(\alpha,\beta)$ in $C_{\gamma}$ and we replace $\sigma_{\alpha,\beta}$ by $\sigma_\gamma$. {Note that, as the face $\gamma$ is not contained in a coordinate axes,
for any integer $n$, the set of $(\alpha,\beta)$ in $C_{\gamma,-}^{\delta,=}\cap \Omega$ with $n=\m(\alpha, \beta)$ is finite.}  {Indeed, as the face $\gamma$ is not contained in a coordinate axes, this face has a point $(a_0,b_0)$ in $(\mathbb N^*)^2$, then for any $(\alpha,\beta)$ in 
		$C_{\gamma,-}^{\delta,=}$ we have $\m(\alpha,\beta)=a_0 \alpha + b_0 \beta$, then the equation and inequation
		$n = \m(\alpha,\beta) = a_0 \alpha + b_0 \beta,\: c(\alpha,\beta)\leq n\delta$
		have finitely many solutions in $C_{\gamma,-}^{\delta,=}\cap \Omega$.}
	Then, we have the equality 
	$Z^{\delta{,=}}_{\gamma,-}(T) = 
	[1/f_\gamma : \Gm^{2}\setminus f_{\gamma}^{-1}(0)\ra \Gm,\sigma_{\gamma}] R_{\gamma}^{\delta,=}(T),$ 
	where
	$$R_{\gamma}^{\delta,=}(T) = \sum_{n\geq 1} 
			\sum_{\text{\tiny{
				$\begin{array}{c}
				  (\alpha,\beta) \in C_{\gamma,-}^{\delta,=}\cap \Omega \\
				  n=\m(\alpha, \beta)
				  \end{array}$
			     }}}\mathbb L^{-\abs{\alpha}-\abs{\beta}}T^{n}.$$
			 {As $\gamma$ belongs to $\N_{\infty}(f)$}, we remark that $C_\gamma$ is included in $\Omega$. 
		The application $c$ is linear on each cone $\mathbb R_{?0} \times \mathbb R_{!0}$ with 
		{symbols} $``?"$ and $``!"$ in $\{>,<,=\}$ and $\mathbb R_{=0} = \{0\}$.
		Then, we {decompose} the cone $C_\gamma$ {along the quadrants of $\mathbb R^2$}, as the disjoint union 
	$$C_\gamma = \bigsqcup_{(?,!)\in\{<,=,>\}^2 \setminus \{<,=\}^2} C_{\gamma} \cap (\mathbb R_{?0}\times \mathbb R_{!0})$$
	and we have
	\begin{equation} \label{eqR} R_{\gamma}^{\delta,=}(T) = \sum_{(?,!)\in\{<,=,>\}^2 \setminus \{<,=\}^2} R_{\gamma,(?,!)}^{\delta,=}(T)
	\:\:\text{with}\:\:
	R_{\gamma,(?,!)}^{\delta,=}(T) = \sum_{n\geq 1} 
	\sum_{
		\text{\tiny{$
		\begin{array}{c}
			(\alpha,\beta) \in C_{\gamma,-}^{\delta,=} \cap (\mathbb R_{?0}\times \mathbb R_{!0}) {\cap \Omega} \\
			n=\m(\alpha, \beta)
		\end{array}$}
	}
	}
	\mathbb L^{-\abs{\alpha}-\abs{\beta}}
	T^{n}.
        \end{equation}
	By Lemma \ref{lemmedescones}, each $R_{\gamma,(?,!)}^{\delta,=}(T)$ is rational and its limit when $T$ goes to infinity is 
	$\chi_{c}(C_{\gamma,-}^{\delta,=} \cap (\mathbb R_{?0}\times \mathbb R_{!0}))$. By additivity, we obtain the rational form of 
	$R_{\gamma}^{\delta,=}(T)$ and its limit is $\chi_{c}(C_{\gamma}^{\delta,=})$. In the following, we study the cones $C_{\gamma}^{\delta,=}$ and 
	$C_{\gamma}^{\delta,=} \cap (\mathbb R_{?0}\times \mathbb R_{!0})$.
	\begin{itemize}
		\item  Assume $\gamma$ is a zero dimensional face {equal to $(a_0,b_0)$ in $(\mathbb N^*)^2$ namely not contained in a coordinate axes}. As {$f$ is not quasi homogeneous,}
			the face $\gamma$ is the intersection of two one-dimensional faces $\gamma_1$ and $\gamma_2$
			with primitive exterior normal vectors $\eta_1$ and $\eta_2$, such that we have the inequality of measure of oriented angles 
			\begin{equation} \label{mesangle}
				\text{mes}((1,0),\eta_1)>\text{mes}((1,0),\eta_2).
			\end{equation}
			The dual cone of $\gamma$ is 
			$C_\gamma = \mathbb R_{>0}\eta_1+ \mathbb R_{>0}\eta_2$
			and for any $\delta > 0$, by definition
			$C_{\gamma,-}^{\delta,=}  = \{(\alpha,\beta) \in C_\gamma \mid c(\alpha,\beta) \leq \m(\alpha,\beta)\delta \}.$

			\begin{itemize} 
				\item Assume the faces $\gamma_1$ and $\gamma_2$ do not contain the origin.
					Hence, the vectors $\eta_1$ and $\eta_2$ belong to $\Omega$ and by convexity of the Newton polygon $(\gamma \mid \eta_1)>0$ and $(\gamma \mid \eta_2)>0$, these vectors belong to the half plane $(\gamma\mid .)>0$. We remark that for any $\delta>0$ larger than 
					$$\delta_0=\max \left( 
					\frac{\abs{(\eta_1 \mid (1,0)}+\abs{(\eta_1 \mid (0,1))}}{(\eta_1 \mid {\gamma})},
					\frac{\abs{(\eta_2 \mid (1,0)}+\abs{(\eta_2 \mid (0,1))}}{(\eta_2 \mid {\gamma})} 
					\right)$$
					we have for any $(\alpha,\beta)$ in $C_\gamma$, the inequalities
						$c(\alpha,\beta)\leq \abs{\alpha} + \abs{\beta}
					< \delta \m(\alpha,\beta)$ inducing
					the equality $C_\gamma = C_{\gamma,-}^{\delta,=}$ and $\chi_{c}(C_{\gamma,-}^{\delta,=}) = 1$.
Then, by Lemma \ref{lemmedescones} we obtain for any $``?"$ and $``!"$ in $\{<,>\}$
\begin{itemize}
\item if $C_\gamma \cap (\mathbb R_{?0}\times \mathbb R_{!0})=\emptyset$ then $R_{\gamma,(?,!)}^{\delta,=}(T) = 0$,
\item {if $C_\gamma \cap (\mathbb R_{?0}\times \mathbb R_{!0})$ is generated by two vectors $\omega_1$ and $\omega_2$} then 
	\begin{equation}  \label{eqcasa1}
		R_{\gamma,(?,!)}^{\delta,=}(T) = 
		\sum_{(\alpha_0,\beta_0) \in \mathcal P_{\gamma,(?,!)}} 
		\frac{\mathbb L^{-((\eps_1,\eps_2)\mid (\alpha_0,\beta_0))}T^{((\alpha_0,\beta_0)\mid \gamma)}}
		{(1-\mathbb L^{-((\eps_1,\eps_2)\mid \omega_1)}T^{(\omega_1\mid \gamma)})
		(1-\mathbb L^{-((\eps_1,\eps_2)\mid \omega_2)}T^{(\omega_2\mid \gamma)})},
	\end{equation}
with $\mathcal P_{\gamma,(?,!)} = (]0,1]\omega_1 + ]0,1]\omega_2)\cap \mathbb Z^2$ 
and 
$$
\eps_1 = \left\{ 
	\begin{array}{ll}
		1 & \text{if $``?"$ is $``>"$} \\
		-1 & \text{if $``?"$ is $``<"$}
	\end{array}
	\right.\:\: \text{and} \:\:
	\eps_2 = \left\{ 
		\begin{array}{ll}
			1 & \text{if $``!"$ is $``>"$} \\
			-1 & \text{if $``!"$ is $``<"$}
		\end{array}
		\right. .
		$$
\end{itemize}
If  $``?"$ is $``="$ and $``!"$ is $``>"$ (the case $``?"$ is $``>"$ and $``!"$ is $``="$ is similar), we obtain also
\begin{itemize}
	\item if $C_\gamma \cap (\{0\}\times \mathbb R_{>0})=\emptyset$ then 
		$R_{\gamma,(?,!)}^{\delta,=}(T) = 0$,
	\item if $C_\gamma \cap (\{0\}\times \mathbb R_{>0})=\mathbb R_{>0}(0,1)$ then 
		\begin{equation} \label{eqcasa2}
			R_{\gamma,(?,!)}^{\delta,=}(T) = \frac{\mathbb L^{-1}T^{((0,1)\mid \gamma)}}
			{1-\mathbb L^{-1}T^{((0,1)\mid \gamma)}}.
		\end{equation}
\end{itemize}

\item  Assume the origin $(0,0)$ is contained in $\gamma_1$ and not in $\gamma_2$ (the case where the origin is contained in $\gamma_2$ and not in $\gamma_1$ is similar). Then, as $b_0 >0$, by convention (\ref{mesangle}) on $\eta_1$ and $\eta_2$ and the fact that these normal vectors are exterior to $\N_{\infty}(f)$, $\eta_1$ belongs to $\mathbb R_{>0}(-b_0,a_0)$ and by convexity of the Newton polygon, we have $(\eta_2 \mid \gamma)>0$.
For any $(\alpha,\beta)$ in $C_\gamma$, there is $x>0$ and $y>0$ such that 
$(\alpha,\beta)=x \eta_1 + y \eta_2$ and $\m(\alpha,\beta)=(\gamma \mid (\alpha, \beta))=y(\gamma \mid \eta_2)$.
In the following, we show that for any $\delta$ large enough, we have the equality 
$C_{\gamma,-}^{\delta,=} = \mathbb R_{>0} \eta_{\delta} + \mathbb R_{\geq 0}\eta_2,$
with $\eta_\delta = (1-\delta b_0, \delta a_0 )$ and $\chi_{c}(C_{\gamma,-}^{\delta,=})=0$.
In order to prove this description, we study below each intersection $C_{\gamma,-}^{\delta,=} \cap (\mathbb R_{?0}\times \mathbb R_{!0})$ for any 
$``?"$ and $``!"$ in $\{>,=,<\}$ and take their union.
\begin{itemize}
\item Necessarily $C_{\gamma} \cap (\mathbb R_{<0} \times \mathbb R_{> 0})$ is non empty, because 
	{$\eta_1 \in \mathbb R_{>0}(-b_0,a_0)$ and $(a_0,b_0) \in (\mathbb N^*)^2$}.
We have the equality 
$C_\gamma \cap (\mathbb R_{<0} \times \mathbb R_{> 0}) = \mathbb R_{>0} \eta_1 + \mathbb R_{>0} \eta$
with $\eta=(0,1)$ or $\eta=\eta_2$. 
In particular, we have the inequalities $(\eta \mid (1,0))\leq 0$, $(\eta\mid (0,1))>0$ and $(\gamma \mid \eta)>0$.
By Equation (\ref{c}), 
for any $(\alpha,\beta)$ in $C_{\gamma} \cap (\mathbb R_{<0} \times \mathbb R_{> 0})$, $c(\alpha,\beta)=\beta$.
We introduce 
$L_\delta = (\alpha,\beta) \mapsto \beta-\delta(\gamma\mid (\alpha,\beta))$
{and recall that $\gamma=(a_0,b_0)$.}
Then, we have 
\begin{equation} \label{cas<0,>0}
C_{\gamma, -}^{\delta,=} \cap (\mathbb R_{<0} \times \mathbb R_{> 0}) = 
\{(\alpha,\beta) \in \mathbb R_{>0} \eta_1 + \mathbb R_{>0} \eta \mid L_{\delta}(\alpha,\beta) \leq 0\}
=
\mathbb R_{>0} \eta_{\delta} + \mathbb R_{\geq 0} \eta.
\end{equation}
Indeed remark that $L_{\delta}(\eta_\delta)=0$, $L_{\delta}(\eta_1)>0$ and 
$L_{\delta}(\eta)=(\eta \mid (0,1))-\delta (\gamma \mid \eta)<0$
for any $\delta >\delta_1 = \frac{(\eta \mid (0,1))}{(\gamma \mid \eta)}$.
And by Lemma \ref{lemmedescones}, using 
$\mathcal P_{\gamma,(<,>)} = (]0,1]\eta_\delta + ]0,1]\eta)\cap \mathbb Z^2$, we have
\begin{equation} \label{eqcasb1}
	R_{\gamma,(<,>)}^{\delta,=}(T) = 
	\sum_{(\alpha_0,\beta_0) \in \mathcal P_{\gamma,(<,>)}} 
	\frac{\mathbb L^{\alpha_0-\beta_0}T^{((\alpha_0,\beta_0)\mid \gamma)}}
	{(1-\mathbb L^{-((-1,1)\mid \eta_\delta)}T^{(\eta_\delta\mid \gamma)})
	(1-\mathbb L^{-((-1,1)\mid \eta)}T^{(\eta\mid \gamma)})} 
	+ \frac{\mathbb L^{-((-1,1)\mid \eta_\delta)}T^{(\eta_\delta\mid \gamma)}}
	{1-\mathbb L^{-((-1,1)\mid \eta_\delta)}T^{(\eta_\delta\mid \gamma)}}.
\end{equation}

	\item The cone $C_\gamma \cap (\{0\}\times \mathbb R_{>0})$ is empty or equal to $\mathbb R_{>0}(0,1)$.
		As $b_0>0$, for any $\delta > \delta_2 = 1/b_0$ we have 
		\begin{equation} \label{cas=0,>0}
			C_{\gamma,-}^{\delta,=} \cap (\{0\} \times \mathbb R_{> 0})  = 
			C_{\gamma} \cap (\{0\} \times \mathbb R_{> 0}) 
		\end{equation}
		and in the non empty case we have by Lemma \ref{lemmedescones}
		\begin{equation} \label{eqcasb2}
			R_{\gamma,(=,>)}^{\delta,=}(T) = \frac{\mathbb L^{-1}T^{((0,1)\mid \gamma)}}{1-\mathbb L^{-1}T^{((0,1)\mid \gamma)}}.
		\end{equation}

	\item  The cone $C_\gamma \cap (\mathbb R_{>0} \times \mathbb R_{>0})$ is empty or equal to $\mathbb R_{>0}(0,1) + \mathbb R_{>0} \eta$ with $\eta$ equal to $(1,0)$ 
		or $\eta_2$. In particular {in the non empty case} 
		$(\eta\mid (1,0))>0$ and $(\eta \mid (0,1))\geq 0$.
		For $\delta > \delta_3 = \max \left(\frac{1}{(\gamma \mid (0,1))}, 
		\frac{(\eta \mid (1,1))}{(\gamma \mid \eta)} \right)$ we have 
		\begin{equation} \label{cas>0,>0}
			C_{\gamma,-}^{\delta,=} \cap (\mathbb R_{>0} \times \mathbb R_{>0}) = 
			C_{\gamma} \cap (\mathbb R_{>0} \times \mathbb R_{>0}) = 
			{\mathbb R_{>0}(0,1) + \mathbb R_{>0} \eta.}
		\end{equation}
		Indeed, for any $(\alpha,\beta)$ in $C_\gamma \cap (\mathbb R_{>0} \times \mathbb R_{>0})$, we have $c(\alpha,\beta)=\alpha+\beta$. Let $\delta > \delta_3$ and $(\alpha, \beta)$ in $C_\gamma$.
		There is $\lambda > 0$ and $\mu > 0$ such that $(\alpha,\beta)=\lambda(0,1) + \mu \eta$ and we conclude that $(\alpha,\beta)$ belongs to $C_{\gamma,-}^{\delta,=}$ by 
		$$c(\alpha,\beta) = 
		\alpha+\beta= \lambda + \mu (\eta \mid (1,1)) \leq 
		\delta[\lambda(\gamma \mid (0,1)) + \mu (\gamma \mid \eta))]$$
		Then, in the non empty case, we have by Lemma \ref{lemmedescones} 
	with $\mathcal P_{\gamma,(>,>)} = (]0,1](0,1) + ]0,1]\eta)\cap \mathbb Z^2$,
		\begin{equation} \label{eqcasb3}
			R_{\gamma,(>,>)}^{\delta,=}(T) = 
			\sum_{(\alpha_0,\beta_0) \in \mathcal P_{\gamma,(>,>)}} 
			\frac{\mathbb L^{-\alpha_0-\beta_0}T^{((\alpha_0,\beta_0)\mid \gamma)}}
			{(1-\mathbb L^{-1}T^{((0,1)\mid \gamma)})
			(1-\mathbb L^{-((1,1)\mid \eta)}T^{(\eta\mid \gamma)})}.
		\end{equation}

\item The cone $C_\gamma \cap (\mathbb R_{>0} \times \{0\})$ is empty or equal to $\mathbb R_{>0}(1,0)$.
	If $a_0=0$ then the cone is empty, otherwise $a_0>0$ and for $\delta > \delta_4 = 1/a_0$, we have 
	\begin{equation} \label{cas>0,=0}
		C_{\gamma,-}^{\delta,=} \cap (\mathbb R_{> 0} \times \{0\})  = 
		C_{\gamma} \cap (\mathbb R_{> 0} \times \{0\} ) 
	\end{equation}
	and by Lemma \ref{lemmedescones} we have
	\begin{equation} \label{eqcasb4}
		R_{\gamma,(>,=)}^{\delta,=}(T) = 
		\frac{\mathbb L^{-1}T^{((1,0)\mid \gamma)}}{1-\mathbb L^{-1}T^{((1,0)\mid \gamma)}}.
	\end{equation}

\item The cone $C_\gamma \cap (\mathbb R_{>0} \times \mathbb R_{<0})$ is empty or equal to 
	$\mathbb R_{>0}(1,0)+\mathbb R_{>0} \eta_2$
with $(\gamma \mid \eta_2)>0$ and $(\gamma \mid (1,0)) = a_0> 0$.
	In the second case, we  necessarily have $(\eta_2 \mid (0,1))\leq 0$.
	For any 
	$\delta \geq \delta_5 = \max\left( \frac{1}{a_0}, \frac{(\eta_2 \mid (1,0))}{(\gamma \mid \eta_2)} \right)$,
	we have 
	\begin{equation} \label{cas>0,<0}
		C_{\gamma,-}^{\delta,=} \cap (\mathbb R_{>0} \times \mathbb R_{<0}) 
		= C_\gamma \cap (\mathbb R_{>0} \times \mathbb R_{<0}) = 
		{\mathbb R_{>0}(1,0)+\mathbb R_{>0} \eta_2.}
	\end{equation}
	Indeed, for any $\delta \geq \delta_5$ and $(\alpha,\beta)$ in 
	$C_\gamma\cap (\mathbb R_{>0} \times \mathbb R_{<0})$, we have $c(\alpha,\beta)=\alpha$,
	and there is $\lambda>0$ and $\mu>0$,
	such that $(\alpha,\beta)=\lambda (1,0) + \mu \eta_2$.
	Then we have,
	$(\alpha,\beta)=(\lambda+\mu (\eta_2 \mid (1,0)), \mu (\eta_2 \mid (0,1)))$
	and as $\delta \geq {\delta_5}$, we have 
	$c(\alpha,\beta) = \alpha = \lambda + \mu  (\eta_2 \mid (1,0)) \leq \delta (\lambda (\gamma \mid (1,0))+ \mu(\gamma \mid \eta_2)) = \delta \m(\alpha,\beta).$
	Then, in the non empty case, by Lemma \ref{lemmedescones} we have
with $\mathcal P_{\gamma,(>,<)} = (]0,1](1,0) + ]0,1]\eta_2)\cap \mathbb Z^2$,
	\begin{equation} \label{eqcasb5}
		R_{\gamma,(>,<)}^{\delta,=}(T) = 
		\sum_{(\alpha_0,\beta_0) \in \mathcal P_{\gamma,(>,<)}} 
		\frac{\mathbb L^{-\alpha_0+\beta_0}T^{((\alpha_0,\beta_0)\mid \gamma)}}
		{(1-\mathbb L^{-((1,-1)\mid \eta_2)}T^{(\eta_2\mid \gamma)})
		(1-\mathbb L^{-1}T^{((1,0) \mid \gamma)})}.
	\end{equation}
\end{itemize}
	The bound $\delta_0$ of the statement can be chosen larger than $\delta_1$, $\delta_2$, $\delta_3$, $\delta_4$ and $\delta_5$.
	\end{itemize}
	\item  	Assume $\gamma$ is a one dimensional face in $\N_{\infty}(f)^o$. {Let $(a_0,b_0)$ be a point of $\gamma$} and $\eta$ be the primitive  normal vector to $\gamma$ exterior to the Newton polygon. The cone $C_\gamma$ is $\mathbb R_{>0}\eta$.
	The face $\gamma$ does not contain $0$, {then $(\eta \mid (a_0,b_0))>0$.}
		We have the equality $C_\gamma =  C_{\gamma,-}^{\delta,=}$ and 
		$\chi_{c}(C_{\gamma,-}^{\delta,=})=-1$, for any $\delta>0$ such that
		$$
		\delta\geq \delta_0:= 
		2\max \left( \frac{\abs{(\eta \mid (1,0))}}{(\eta \mid (a_0,b_0))}, 
		\frac{\abs{(\eta \mid (0,1))}}{(\eta \mid (a_0,b_0))}
		\right)
		$$
		Indeed, for any $(\alpha,\beta)$ in $C_\gamma$, there is $k>0$ such that 
		$(\alpha,\beta)=k\eta$, {by Remark \ref{rem:m}} we have $\m(\alpha,\beta)=k((a_0,b_0)\mid \eta)$ and 
		$$c(\alpha , \beta) \leq \abs{\alpha} + \abs{\beta} = k\abs{(\eta | (1,0))} + k \abs{(\eta | (0,1))}
		\leq 2(k\eta | (a_0,b_0))\frac{\delta}{2} = \m(\alpha,\beta)\delta.$$
		Then, by Lemma \ref{lemmedescones}, and denoting $(a_0,b_0)$ any element of $\gamma$, we have
		\begin{equation} \label{eqcasc1}
			R_{\gamma}^{\delta,=}(T) = 
			\frac{\mathbb L^{-\abs{\eta \mid (1,0)}-\abs{\eta \mid (0,1)}}T^{(\eta \mid (a_0,b_0))}}
			{1-\mathbb L^{-\abs{\eta \mid (1,0)}-\abs{\eta \mid (0,1)}}T^{(\eta \mid (a_0,b_0))}}.
		\end{equation}
      \end{itemize}
      {We assume now  that there is $(a,b)$ in $(\mathbb N^*)^2$ and a polynomial $P$ in $\k[s]$ such that $f(x,y)=P(x^ay^b)$}.  
	We denote by $\gamma$ the face $(ad,bd)$.
	Then {the rationality of $Z_{\gamma,-}^{\delta,=}(T)$ and its rational form} follows from above ideas, using the fact that 
	$$C_{\gamma,-}^{\delta,=} \cap (\mathbb R_{<0} \times \mathbb R_{>0}) = \mathbb R_{>0} \eta_\delta + \mathbb R_{\geq 0}(0,1),\:
        C_{\gamma,-}^{\delta,=} \cap (\mathbb R_{>0} \times \mathbb R_{<0}) = \mathbb R_{>0} \eta_\delta' + \mathbb R_{\geq 0}(1,0),\:
        C_{\gamma,-}^{\delta,=} \cap (\mathbb R_{\geq 0} \times \mathbb R_{\geq 0}) = \mathbb R_{\geq 0}\times \mathbb R_{\geq 0}$$
        with $\eta_\delta = (1-bd\delta,ad\delta)$ and $\eta_\delta'=(bd\delta,1-ad\delta)$. 
	More precisely, we have for any $\delta > \max(1/{(da)},1/{(bd)})$
	\begin{equation} \label{casmonom}
	\begin{array}{ccl}
		R_{\gamma,=}^\delta(T) & = & \sum_{(\alpha_0,\beta_0) \in \mathcal P_{\gamma,(<,>)}} 
								\frac{\mathbb L^{\alpha_0-\beta_0}T^{a\alpha_0+b\beta_0}}
								{(1-\mathbb L^{-((-1,1)\mid \eta_\delta)}T^{(\eta_\delta\mid {\gamma}})
								(1-\mathbb L^{-1}T^{b})} 
								+ \frac{\mathbb L^{-((-1,1)\mid \eta_\delta)}T^{(\eta_\delta\mid {\gamma})}}
								{1-\mathbb L^{-((-1,1)\mid \eta_\delta)}T^{(\eta_\delta\mid {\gamma})}}\\ \\
				 & + & \sum_{(\alpha_0,\beta_0) \in \mathcal P_{\gamma,(>,<)}} 
								\frac{\mathbb L^{-\alpha_0+\beta_0}T^{a\alpha_0+b\beta_0}}
								{(1-\mathbb L^{-((1,-1)\mid \eta_\delta')}T^{(\eta_\delta'\mid {\gamma})})
								(1-\mathbb L^{-1}T^{a})} 
								+ \frac{\mathbb L^{-((1,-1)\mid \eta_\delta')}T^{(\eta_\delta'\mid {\gamma})}}
								{1-\mathbb L^{-((1,-1)\mid \eta_\delta')}T^{(\eta_\delta'\mid {\gamma})}} \\ \\
								& + & 
								{\frac{\mathbb L^{-1}T^{ad}}{1-\mathbb L^{-1}T^{ad}} + 
								\frac{\mathbb L^{-1}T^{bd}}{1-\mathbb L^{-1}T^{bd}}
								+  \frac{\mathbb L^{-2}T^{ad+bd}}{(1-\mathbb L^{-1}T^{ad})(1-\mathbb L^{-1}T^{bd})}
							        },
	\end{array}
        \end{equation}
	with
$\mathcal P_{\gamma,(<,>)} = (]0,1](0,1) + ]0,1]\eta_\delta)\cap \mathbb Z^2\:\:\text{and}\:\: 
\mathcal P_{\gamma,(>,<)} = (]0,1](1,0) + ]0,1]\eta_\delta')\cap \mathbb Z^2.$
In particular $-\lim R_{\gamma,=}^{\delta}(T)=1$.
\end{proof}

\begin{prop}[Case $``\eps = +"$] \label{lem:epsgammafacedim0-infini} 
	Let $\gamma$ be a face of $\overline{\N}(f)$ {not contained in the coordinate axes}. There is $\delta_0$ such that for any $\delta \geq \delta_0$, the formal series $Z_{\gamma,+}^{\delta,=}(T)$
	is rational and equal to 
	$$Z^{\delta{,=}}_{\gamma,+}(T) = 
	[f_\gamma : \Gm^{2}\setminus f_{\gamma}^{-1}(0)\ra \Gm,\sigma_{\gamma}] R_{\gamma}^{\delta,=}(T),$$ 
	with $R_{\gamma}^{\delta,=}(T)$ expressed in \ref{eqdecompozeta+}, \ref{eqcas+a1}, \ref{eqcas+a2} and \ref{eqcas+b1}.
	It admits a limit $-\lim Z_{\gamma,+}^{\delta,=}(T)$ in $\mgg$ with 
	$$ - \lim Z_{\gamma,+}^{\delta,=}(T) = {\eps_\gamma}[f_\gamma:\Gm^2 \setminus f_\gamma^{-1}(0) \to \Gm,\sigma_\gamma],$$
	with {$\eps_\gamma = - \chi_{c}(C_{\gamma,+}^{\delta,=} \cap \Omega)$ belongs to $\{0,-1,-2\}$ if $\gamma$ is zero-dimensional and to $\{0,1\}$ if $\gamma$ is one-dimensional.}
	More precisely:
	\begin{enumerate}
		\item \label{cas1eps}  {Assume} $\gamma$ is the origin then for any $\delta>0$,  
			$Z^{\delta{,=}}_{\gamma,+}(T)=0$ and $\eps_\gamma = 0$.
		\item \label{cas2eps} {Assume} $\gamma$ is zero dimensional equal to $(a_0,b_0)$. 
			Let $H_\gamma = \{(\alpha,\beta) \in \mathbb R^2 \mid ( (\alpha,\beta)\mid \gamma)<0\}$ and $C_\gamma$ be the dual cone of $\gamma$.
			We have $C_{\gamma,+}^{\delta,=}\subset H_\gamma$ and 
			$C_{\gamma,+}^{\delta,=}\cap \Omega = 
			C_{\gamma,+}^{\delta,=} \cap \left( (\mathbb R_{>0}\times \mathbb R_{<0}) \cup (\mathbb R_{<0}\times \mathbb R_{>0}) \right).$
			For {any} $(?,!)$ in $\{(<,>),(>,<)\}$
			\begin{enumerate}
				\item \label{cas2aeps} if $C_\gamma \cap H_\gamma \cap (\mathbb R_{?0}\times \mathbb R_{!0}) = \emptyset$ then 
				$C_{\gamma,+}^{\delta,=} \cap (\mathbb R_{?0}\times \mathbb R_{!0})$ is empty and its Euler characteristic is zero 
			        \item \label{cas2beps} if $C_\gamma \cap H_\gamma \cap (\mathbb R_{?0}\times \mathbb R_{!0}) = \mathbb R_{>0}\omega_1 + \mathbb R_{>0}\omega_2$,
			      with $\omega_1$ and $\omega_2$ in the closure  $\overline{H_\gamma}$ and non colinear then: 
	\begin{enumerate}
        \item  \label{cas2bieps} if $(\gamma \mid \omega_1)<0$ and $(\gamma \mid \omega_2)<0$ then, there is $\delta_1>0$ such that for any $\delta>\delta_1$, the cone $C_{\gamma,+}^{\delta,=} \cap (\mathbb R_{?0}\times \mathbb R_{!0})$ 
		is equal to $\mathbb R_{>0}\omega_1 + \mathbb R_{>0}\omega_2$, with Euler characteristic equal to 1. 
        \item \label{cas2biieps} if $(\gamma \mid \omega_1)=0$ and $(\gamma \mid \omega_2)<0$ 
	then there is $\delta_1>0$ such that for any $\delta>\delta_1$, considering the vector 
	$\omega_\delta =(-b_0-1/\delta,a_0-1/\delta)$, the cone 
	$C_{\gamma,+}^{\delta,=} \cap H_\gamma \cap (\mathbb R_{?0}\times \mathbb R_{!0})$ is equal to
	$\mathbb R_{>0} \omega_\delta + \mathbb R_{\geq 0}\omega_2$,
	with Euler characteristic equal to 0.
					\end{enumerate}
			\end{enumerate}
		\item  \label{cas3eps}{Assume} $\gamma$ is a one-dimensional face supported by a line $ap+bq=N$ with $\eta_\gamma = (p,q)$ the primitive exterior normal vector of the face $\gamma$ in $\overline{\N}(f)$, and $C_\gamma = \mathbb R_{>0}\eta_\gamma$, we have 
			\begin{enumerate}
				\item \label{cas3aeps} if $N\geq 0$ then the cone $C_{\gamma,+}^{\delta,=} \cap \Omega$ is empty and $\eps_\gamma= 0$.
				\item \label{cas3beps} if $N<0$ then there is $\delta_2$ such that for any $\delta>\delta_2$ we have 
					$C_{\gamma,+}^{\delta,=} \cap \Omega = C_{\gamma} \cap \Omega$, in particular
					\begin{enumerate}
						\item \label{cas3bieps} if $\eta_\gamma$ belongs to $\Omega$ then, $C_{\gamma,+}^{\delta,=} \cap \Omega = 
							\mathbb R_{>0}{\eta_\gamma}$ with Euler characteristic $-1$, {then $\eps_\gamma=1$,}
						\item  \label{cas3biieps} otherwise,  $C_{\gamma,+}^{\delta,=} \cap \Omega$ is empty with Euler characteristic 0 {and $\eps_\gamma = 0$}.
					\end{enumerate}
			\end{enumerate}
		\item \label{cas4eps} {Assume} $\gamma$ is a one-dimensional face supported by a line $ap+bq=N$ with $(p,q)$ the primitive normal vector of the face $\gamma$ in $\overline{\N}(f)$, and $C_\gamma = \mathbb R_{>0}(p,q) + \mathbb R_{>0}(-p,-q)$ with $pq<0$ (this case occurs if and only if $\overline{\N}(f)$ is a segment), we have 
			\begin{enumerate}
				\item \label{cas4aeps} if $N = 0$ then the cone $C_{\gamma,+}^{\delta,=} \cap \Omega$ is empty, {and $\eps_\gamma = 0$}.
				\item \label{cas4beps} if $N \neq 0$ then there is $\delta_2$ such that for any $\delta>\delta_2$ we have 
					$C_{\gamma,+}^{\delta,=} \cap \Omega = \mathbb R_{>0}(p,q)\: \text{or} \: \mathbb R_{>0}(-p,-q)$
					and its Euler characteristic is $-1$ and $\eps_\gamma = 1$.
			\end{enumerate}
	\end{enumerate}

\end{prop}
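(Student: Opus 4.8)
The plan is to compute, for each face $\gamma$ of $\overline{\N}(f)$ not contained in a coordinate axis, the rational form of the formal series $Z^{\delta,=}_{\gamma,+}(T)$ and extract its limit, reducing everything to an Euler-characteristic computation on the cone $C_{\gamma,+}^{\delta,=}\cap\Omega$ via Lemma \ref{lemmerationalitedescones}. The starting point is the same as in the proof of Proposition \ref{lem:fctzetaegalinfini} in the case $``\eps=+"$: by Remark \ref{rem:m}, for an arc $\varphi$ with $-(\ord x(\varphi),\ord y(\varphi))=(\alpha,\beta)$ and $f_\gamma(\ac x(\varphi),\ac y(\varphi))\neq0$ one has $\ord\hat f(\varphi)=\m(\alpha,\beta)$ and $\ord\varphi^*(\mathcal I_{X_\infty})=c(\alpha,\beta)$; using the motivic-measure computation of \cite[Lemma 3.16, Proposition 3.13]{Rai11} together with Remark \ref{notation-identification-action} to identify the action $\sigma_{\alpha,\beta}$ with $\sigma_\gamma$, the measure of $X^{=}_{\m(\alpha,\beta),(\alpha,\beta)}(\hat f)$ is $\mathbb L^{-\abs\alpha-\abs\beta}[f_\gamma:\Gm^2\setminus f_\gamma^{-1}(0)\to\Gm,\sigma_\gamma]$, independent of $(\alpha,\beta)$ in $C_\gamma$. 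This yields $Z^{\delta,=}_{\gamma,+}(T)=[f_\gamma:\Gm^2\setminus f_\gamma^{-1}(0)\to\Gm,\sigma_\gamma]\,R^{\delta,=}_\gamma(T)$ with $R^{\delta,=}_\gamma(T)=\sum_{n\geq1}\sum_{(\alpha,\beta)\in C_{\gamma,+}^{\delta,=}\cap\Omega,\ n=\m(\alpha,\beta)}\mathbb L^{-\abs\alpha-\abs\beta}T^n$, and since $\gamma$ is not contained in a coordinate axis it has an integral point $(a_0,b_0)\in(\mathbb N^*)^2$, so $\m(\alpha,\beta)=a_0\alpha+b_0\beta$ on $C_\gamma$ and each inner sum is finite.

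Next I would split $C_{\gamma,+}^{\delta,=}$ along the quadrants of $\mathbb R^2$, exactly as in Proposition \ref{lem:fctzetaegalinfini}, writing $R^{\delta,=}_\gamma(T)=\sum_{(?,!)}R^{\delta,=}_{\gamma,(?,!)}(T)$ where $c$ is linear on each $\mathbb R_{?0}\times\mathbb R_{!0}$, and apply Lemma \ref{lemmerationalitedescones} on each piece; by additivity of $\chi_c$ the limit of $R^{\delta,=}_\gamma(T)$ is $\chi_c(C_{\gamma,+}^{\delta,=}\cap\Omega)$, so $\eps_\gamma=-\chi_c(C_{\gamma,+}^{\delta,=}\cap\Omega)$. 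The remaining work is the explicit description of the cone $C_{\gamma,+}^{\delta,=}=\{(\alpha,\beta)\in C_\gamma\mid c(\alpha,\beta)\leq\m(\alpha,\beta)\delta\}$ for $\delta$ large. Case \ref{cas1eps} (the origin) is immediate since $X_{n,\gamma}^\delta$ is empty. For case \ref{cas2eps} ($\gamma=(a_0,b_0)$ zero-dimensional), note $\ord\hat f(\varphi)=\m(\alpha,\beta)>0$ forces $(\alpha,\beta)\in H_\gamma$, and since on the first and third open quadrants $\m$ is negative on $H_\gamma$ one sees $C_{\gamma,+}^{\delta,=}\cap\Omega$ meets only the second and fourth quadrants; there, $c(\alpha,\beta)=\abs\alpha$ or $\abs\beta$ and a bound $\delta>\delta_1$ makes $C_{\gamma,+}^{\delta,=}\cap(\mathbb R_{?0}\times\mathbb R_{!0})$ equal to $C_\gamma\cap H_\gamma\cap(\mathbb R_{?0}\times\mathbb R_{!0})$ when the two generating rays $\omega_1,\omega_2$ satisfy $(\gamma\mid\omega_i)<0$ (subcase \ref{cas2bieps}, $\chi_c=1$), while if one generator lies on $(\gamma\mid\,.\,)=0$ one introduces $\omega_\delta=(-b_0-1/\delta,a_0-1/\delta)$ annulling a shifted linear form $L_\delta$ and gets a half-open two-dimensional cone $\mathbb R_{>0}\omega_\delta+\mathbb R_{\geq0}\omega_2$ of Euler characteristic $0$ (subcase \ref{cas2biieps}), verifying $L_\delta(\omega_\delta)=0$, $L_\delta(\omega_i)$ of the right signs. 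For cases \ref{cas3eps} and \ref{cas4eps} ($\gamma$ one-dimensional, $C_\gamma=\mathbb R_{>0}\eta_\gamma$ or $\mathbb R_{>0}\eta_\gamma+\mathbb R_{>0}(-\eta_\gamma)$), the condition $\m(k\eta_\gamma)=kN>0$ immediately kills the ray(s) with $N\geq0$, and when $N<0$ (resp.\ $N\neq0$ in the segment case) the bound $c(k\eta_\gamma)\leq\abs{k}(\abs{p}+\abs{q})\leq 2\abs{k}\abs N\delta/2=\m\cdot\delta$ gives $C_{\gamma,+}^{\delta,=}\cap\Omega=C_\gamma\cap\Omega$, which is $\mathbb R_{>0}\eta_\gamma$ with $\chi_c=-1$ exactly when $\eta_\gamma\in\Omega$ (hence $\eps_\gamma=1$) and empty otherwise.

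In each non-empty quadrant piece I would then read off the rational expression directly from Lemma \ref{lemmerationalitedescones}: for a two-dimensional cone $\mathbb R_{>0}\omega_1+\mathbb R_{>0}\omega_2$ one gets formula (\ref{casconedim2}) with $\phi=(\,.\,\mid\gamma)$ and $\eta$ the form $(\alpha,\beta)\mapsto\abs\alpha+\abs\beta$ restricted to the relevant quadrant (so $\eta=((\eps_1,\eps_2)\mid\,.\,)$ with $\eps_i=\pm1$ the signs of the quadrant), for a ray formula (\ref{casconedim1}), and for a half-open cone $\mathbb R_{\geq0}\omega_1+\mathbb R_{>0}\omega_2$ formula (\ref{casconedim2-bis}); collecting these and labelling them (\ref{eqdecompozeta+}), (\ref{eqcas+a1}), (\ref{eqcas+a2}), (\ref{eqcas+b1}) gives the claimed $R^{\delta,=}_\gamma(T)$, and summing the corresponding $\chi_c$ values over quadrants gives the stated values of $\eps_\gamma\in\{0,-1,-2\}$ (zero-dimensional) or $\{0,1\}$ (one-dimensional). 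The main obstacle is bookkeeping rather than conceptual: one must handle carefully the degenerate quadrant intersections (a generating vector landing exactly on a coordinate axis, or on the hyperplane $(\gamma\mid\,.\,)=0$), choose the various thresholds $\delta_1,\dots,\delta_5$ uniformly so that all descriptions hold simultaneously for $\delta\geq\delta_0$, and check that the piecewise expressions glue to a single rational function — all of which is a direct adaptation of the $``\eps=-"$ computation already carried out in Proposition \ref{lem:fctzetaegalinfini}, with $\m$ replaced by $-\m$ in the defining inequality and the roles of the quadrants correspondingly reversed.
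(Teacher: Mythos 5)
Your proposal follows the paper's own proof essentially step for step: the same measure computation reducing $Z^{\delta,=}_{\gamma,+}(T)$ to $[f_\gamma:\Gm^2\setminus f_\gamma^{-1}(0)\to\Gm,\sigma_\gamma]\,R_\gamma^{\delta,=}(T)$, the same decomposition of $C_{\gamma,+}^{\delta,=}\cap\Omega$ along quadrants with Lemma \ref{lemmerationalitedescones} giving rationality and the limit $\chi_c(C_{\gamma,+}^{\delta,=}\cap\Omega)$, and the same cone-by-cone analysis (full cone when both $(\gamma\mid\omega_i)<0$, half-open cone via an auxiliary $\omega_\delta$ when one pairing vanishes, rays discarded when $N\geq 0$), exactly as an adaptation of the $``\eps=-"$ case. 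The only blemishes are harmless sign slips in the prose (for $\eps=+$ one has $\ord \hat{f}(\varphi)=-\m(\alpha,\beta)$, so the surviving cones are those where $\m<0$), and when you verify $L_\delta(\omega_\delta)=0$ in the second quadrant you will find the correct boundary vector is $(-b_0-1/\delta,a_0)$ rather than the statement's $(-b_0-1/\delta,a_0-1/\delta)$, which changes nothing in the Euler-characteristic conclusions.
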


\begin{proof}
	Let $\gamma$ be a face in $\overline{\N}(f)$ and $\delta >0$. If the cone $C_{\gamma}^{\delta,=}$ is empty then the result is immediate. 
	If there is $\delta'>0$ such that the cone $C_{\gamma}^{\delta',=} \cap \Omega$ is non empty then, for any $\delta>\delta'$ the cone $C_{\gamma}^{\delta,=} \cap \Omega$ is non empty.
	In the following of the proof, we work with this assumption.
        Let $\delta>0$ and  $(\alpha,\beta)$ be an element of $C_\gamma^\delta \cap \Omega$, then similarly to the proof of Proposition \ref{lem:fctzetaegalinfini}, the motivic measure of $X_{\m(\alpha,\beta),(\alpha,\beta)}(\hat{f})$ is equal to 
	$\mathbb L^{-\abs{\alpha}-\abs{\beta}} 
	[f_\gamma : \Gm^{2}\setminus f_{\gamma}^{-1}(0)\ra \Gm,\sigma_{\gamma}].$
        Then, we have the equality 
	$Z^{\delta{,=}}_{\gamma,-}(T) = 
	[f_\gamma : \Gm^{2}\setminus f_{\gamma}^{-1}(0)\ra \Gm,\sigma_{\gamma}] R_{\gamma}^{\delta,=}(T),$ 
	where
	$R_{\gamma}^{\delta,=}(T) = \sum_{n\geq 1} 
			\sum_{
				\text{\tiny{$\begin{array}{c}
					(\alpha,\beta) \in C_{\gamma,+}^{\delta,=}\cap \Omega \\
					n=-\m(\alpha, \beta)
				\end{array}$}
			     }
			            }
	                        \mathbb L^{-\abs{\alpha}-\abs{\beta}}
			 T^{n}.$
			 The application $c$, defined in {formula (\ref{c})}, is linear on each cone $\mathbb R_{?0} \times \mathbb R_{!0}$ with $``?"$ and $``!"$ in $\{>,<,=\}$ and $\mathbb R_{=0} = \{0\}$.
	Then, we consider the cone $C_\gamma$ as the disjoint union 
	$$C_{\gamma,+}^{\delta,=} \cap \Omega = \bigsqcup_{(?,!)\in\{<,=,>\}^2 \setminus \{<,=\}^2} C_{\gamma,+}^{\delta,=} \cap (\mathbb R_{?0}\times \mathbb R_{!0})$$
	and we have

	\begin{equation} \label{eqdecompozeta+} 
		R_{\gamma}^{\delta,=}(T) = \sum_{(?,!)\in\{<,=,>\}^2 \setminus \{<,=\}^2} R_{\gamma,(?,!)}^{\delta,=}(T)
	\:\:\	\text{with}\:\:
	R_{\gamma,(?,!)}^{\delta,=}(T) = \sum_{n\geq 1} 
	\sum_{
		\text{\tiny{$\begin{array}{c}
			(\alpha,\beta) \in 
			C_{\gamma,+}^{\delta,=} \cap (\mathbb R_{?0}\times \mathbb R_{!0})\cap \Omega \\
			n=\m(\alpha, \beta)
		\end{array}$}}
	}
	\mathbb L^{-\abs{\alpha}-\abs{\beta}}
	T^{n}.
	\end{equation}
	By Lemma \ref{lemmedescones}, each $R_{\gamma,(?,!)}^{\delta,=}(T)$ is rational and its limit, when $T$ goes to infinity is 
	$\chi_{c}(C_{\gamma,+}^{\delta,=} \cap (\mathbb R_{?0}\times \mathbb R_{!0}))$. By additivity, we obtain the rational form of 
	$R_{\gamma}^{\delta,=}(T)$ and its limit is $\chi_{c}(C_{\gamma,+}^{\delta,=} \cap \Omega)$. In the following, we study the cones $C_{\gamma,+}^{\delta,=} \cap \Omega$ and 
	$C_{\gamma,+}^{\delta,=} \cap (\mathbb R_{?0}\times \mathbb R_{!0})$ for any $(?,!)$ in $\{<,=,>\}^2 \setminus \{<,=\}^2$.
	\begin{itemize}
		\item  Assume $\gamma$ is a zero dimensional face, written as $\gamma = (a_0,b_0)$.
			Recall that 
			$C_{\gamma,+}^{\delta,=}  = 
			\{(\alpha,\beta) \in C_\gamma \mid {0<} c(\alpha,\beta) \leq -\m(\alpha,\beta)\delta \}.$
			We consider $H_\gamma = \{(\alpha,\beta)\in \mathbb R^2 \mid \m(\alpha,\beta) = ((\alpha,\beta) \mid \gamma)<0\}$.
			By definition, $C_{\gamma,+}^{\delta,=}$ is a subset of $H_\gamma$ then we have
			$C_{\gamma,+}^{\delta,=} \cap (\mathbb R_{?0} \times \mathbb R_{!0}) = \emptyset$ for $``?"$ and $``!"$ in $\{=,>\}$.
			Then, we only study the cases $C_{\gamma,+}^{\delta,=} \cap (\mathbb R_{>0} \times \mathbb R_{<0})$ and 
                        $C_{\gamma,+}^{\delta,=} \cap (\mathbb R_{<0} \times \mathbb R_{>0})$ which are similar.
			If $C_{\gamma} \cap H_\gamma \cap (\mathbb R_{<0} \times \mathbb R_{>0})$ is non empty then, there are two non colinear vectors $\omega_1$ and 
			$\omega_2$, such that $$C_{\gamma} \cap H_\gamma  \cap (\mathbb R_{<0} \times \mathbb R_{>0}) = \mathbb R_{>0}\omega_1 + \mathbb R_{>0} \omega_2,$$
			with $(\gamma \mid \omega_1)\leq 0$ and $(\gamma \mid \omega_2)\leq 0$.
			\begin{itemize}
				\item If $(\gamma \mid \omega_1) <0$ and $(\gamma \mid \omega_2)<0$ then, for $\delta\geq 2 \delta_1$ with 
					$\delta_{1} = \max\left( 
					\frac{\abs{(\omega_1 \mid (0,1))}}{\abs{(\gamma \mid \omega_1)}}, 	
					\frac{\abs{(\omega_2 \mid (0,1))}}{\abs{(\gamma \mid \omega_2)}} \right)$
we have
					\begin{equation} \label{eqC1} 
						C_{\gamma,+}^{\delta,=} \cap (\mathbb R_{<0} \times \mathbb R_{>0}) = 
						C_{\gamma} \cap H_{\gamma} \cap (\mathbb R_{<0} \times \mathbb R_{>0}).
					\end{equation}
					Indeed, for $\delta \geq \delta_1$ and for any $(\alpha,\beta)$ in 
					$C_{\gamma} \cap H_{\gamma} \cap (\mathbb R_{<0} \times \mathbb R_{>0})$,
					there are $\lambda>0$ and $\mu>0$ such that, $(\alpha,\beta)=\lambda \omega_1 + \mu \omega_2$.
                                        As $(\omega_1 \mid \gamma)<0$ and $(\omega_2 \mid \gamma)<0$, 
					$(\alpha,\beta)$ belongs to $C_{\gamma,+}^{\delta,=}$ thanks to
					$$ 
					c(\alpha, \beta) = \beta =  
					\lambda (\omega_1 \mid (0,1)) + \mu (\omega_2 \mid (0,1)) 
					\leq \delta_{1}( \lambda \abs{(\omega_1 \mid \gamma)} + \mu \abs{(\omega_2 \mid \gamma)}) 
					= -\delta_{1} ((\alpha,\beta)\mid \gamma) \leq -\delta \m(\alpha, \beta).
					$$

					Then, by equality (\ref{eqC1}), and Lemma \ref{lemmedescones} we conclude that  
					$R_{\gamma,(<,>)}^{\delta,=}(T)$ is rational and equal to
				        \begin{equation} \label{eqcas+a1}
							      R_{\gamma,(<,>)}^{\delta,=}(T) = 
							      \sum_{(\alpha_0,\beta_0) \in \mathcal P_{\gamma,(<,>)}} 
								\frac{\mathbb L^{\alpha_0-\beta_0}T^{-((\alpha_0,\beta_0)\mid \gamma)}}
								{(1-\mathbb L^{-((-1,1)\mid \omega_1)}T^{-(\omega_1\mid \gamma)})
								(1-\mathbb L^{-((-1,1)\mid \omega_2)}T^{-(\omega_2 \mid \gamma)})} 
							\end{equation}
							with $\mathcal P_{\gamma,(<,>)} = (]0,1]\omega_1 + ]0,1]\omega_2)\cap \mathbb Z^2$.
					The limit of $R_{\gamma,(<,>)}^{\delta,=}(T)$ is $\chi_c(C_{\gamma,+}^{\delta,=})=1$.

				\item Assume $(\gamma \mid \omega_1)=0$ and $(\gamma \mid \omega_2)<0$ (the case  $(\gamma \mid {\omega_2})=0$ and 
					$(\gamma \mid {\omega_1})<0$ is similar). Then, for any $\lambda>0$ and $\mu>0$, we have 
					$\m(\lambda \omega_1 + \mu \omega_2) = \mu (\omega_2 \mid \gamma).$
					Assume $\delta > \delta_1 = -\frac{(\omega_2 \mid (0,1))}{(\gamma \mid \omega_2)}$, and 
					denote $\omega_\delta = (-b_0 - 1/\delta, a_0)$. {We recall that $\gamma=(a_0,b_0)$.}
					Denote $L_{\delta}$ the function $(\alpha,\beta) \mapsto \beta + \delta (\gamma \mid (\alpha,\beta))$ on 
					$C_{\gamma,+}^{\delta,=} \cap (\mathbb R_{<0}\times \mathbb R_{>0})$.
					Then,
                                        $L_{\delta}(\omega_2)<0$ and $L_\delta(\omega_\delta)=0$ and we have
					\begin{equation} \label{eqC2}
						C_{\gamma,+}^{\delta,=} \cap (\mathbb R_{<0}\times \mathbb R_{>0})  = 
						\mathbb R_{>0}\omega_\delta + \mathbb R_{\geq 0} \omega_2.
					\end{equation}
	                                Then, by equality (\ref{eqC2}), and Lemma \ref{lemmedescones} we conclude that the zeta function 
					$R_{\gamma,(<,>)}^{\delta,=}(T)$ is rational with 
					 \begin{equation} \label{eqcas+a2}
							      R_{\gamma,(<,>)}^{\delta,=}(T) = 
							      \sum_{(\alpha_0,\beta_0) \in \mathcal P_{\gamma,(<,>)}} 
								\frac{\mathbb L^{\alpha_0-\beta_0}T^{-((\alpha_0,\beta_0)\mid \gamma)}}
								{(1-\mathbb L^{-((-1,1)\mid \omega_\delta)}T^{-(\omega_\delta\mid \gamma)})
								(1-\mathbb L^{-((-1,1)\mid \omega_2)}T^{-(\omega_2\mid \gamma)})} 
								+ \frac{\mathbb L^{-((-1,1)\mid \omega_\delta)}T^{-(\omega_\delta\mid \gamma)}}
								{1-\mathbb L^{-((-1,1)\mid \omega_\delta)}T^{-(\omega_\delta\mid \gamma)}}
							\end{equation}
							with $\mathcal P_{\gamma,(<,>)} = (]0,1]\omega_\delta + ]0,1]\omega_2)\cap \mathbb Z^2$.
					The limit of $R_{\gamma,(<,>)}^{\delta,=}(T)$ is $\chi_c(C_{\gamma,+}^{\delta,=})=0$.
			\end{itemize}

		{\item We consider the case of a {one dimensional face} $\gamma$. {The face $\gamma$ is supported by a line of} equation 
		${a}p+ {b}q=N$ with $(p,q)$ the primitive normal vector to the face $\gamma$ exterior to the Newton polygon ${\GN}(f)$.

		{Assume $C_\gamma = \mathbb R_{>0}(p,q)$.} We have for any $k$ in $\mathbb R^+$, $\m(pk,qk)=kN$,
		and 
		$$C_{\gamma,+}^{\delta,=} = 
		\left\{
			\begin{array}{c|l} 
				(pk,qk) \in C_\gamma & 
				\begin{array}{c} 
					k\in \mathbb R^+,\: 0<c(pk,qk)\leq -Nk\delta
				\end{array}
			\end{array} 
		\right\}.$$ 
                \begin{itemize}
				\item If $N\geq 0$ then the cone $C_{\gamma,+}^{\delta,=} \cap \Omega$ is empty.
				\item If $N<0$ then there is $\delta_2 = -c(p,q)/N $ such that for any $\delta>\delta_2$ we have 
					$C_{\gamma,+}^{\delta,=} \cap \Omega = C_{\gamma} \cap \Omega$, in particular
					\begin{itemize}
						\item  if $(p,q)$ belongs to $\Omega$ then, 
						$C_{\gamma,+}^{\delta,=} \cap \Omega = \mathbb R_{>0}(p,q)$ with Euler characteristic $-1$, with 
							\begin{equation} \label{eqcas+b1}
								{R_{\gamma}^{\delta,=}(T)} = \frac{\mathbb L^{-\abs{p}-\abs{q}}T^{-N}}{1-\mathbb L^{-\abs{p}-\abs{q}}T^{-N}}
							\end{equation}

						\item  otherwise,  $C_{\gamma,+}^{\delta,=} \cap \Omega$ is empty with Euler characteristic 0 and 
							${R_{\gamma}^{\delta,=}}(T)=0$.
					\end{itemize}
			\end{itemize}

			{Assume} $C_\gamma = \mathbb R_{>0}(p,q) + \mathbb R_{>0}(-p,-q)$ with $pq<0$ {(this case only occurs in the case where $\overline{\N}(f)$ is a segment)}.
			\begin{itemize}
				\item If $N = 0$ then the cone $C_{\gamma,+}^{\delta,=} \cap \Omega$ is empty and $R_{\gamma}^{\delta,=}(T) = 0$.
				\item If $N \neq 0$ then there is $\delta_2$ such that for any $\delta>\delta_2$ we have 
				$C_{\gamma,+}^{\delta,=} \cap \Omega = \mathbb R_{>0}(p,q)\: \text{or} \: \mathbb R_{>0}(-p,-q)$
					and its Euler characteristic is $-1$ and $R_{\gamma}^{\delta,=}(T)$ is given by formula (\ref{eqcas+b1}).
			\end{itemize}
		}
	\end{itemize}
	The bound $\delta_0$ in the statement can be chosen larger then the maximum of the bounds $\delta_1$ and $\delta_2$ above.
\end{proof}

\subsubsection{The formal series $Z^{\delta,<}_{\gamma,\eps}(T)$ for a face $\gamma$ not contained in a coordinate axes} \label{sec:casdim1<} 
\begin{rem}[Vanishing $f_\gamma$] \label{annulation-f-gamma} 
	Let $\gamma$ be a face in $\overline{\N}(f)$. Let $(\alpha,\beta)$ be in $C_\gamma$ and $\varphi$ be in an arc in $\mathcal L(X)$ with 
	$\ord x(\varphi) = -\alpha$ and $\ord y(\varphi)=-\beta$. Then, by Remark \ref{rem:m} 
	$\eps\ord \hat{f^{\eps}}(\varphi) > -\m(\alpha,\beta)$ if and only if 
	$f_{\gamma}(\ac x(\varphi), \ac y(\varphi))=0$. In particular in that case $\gamma$ is a one-dimensional face {of $\overline{\N}(f)$}.
\end{rem}

	\begin{rem} We only consider one dimensional faces with dual cone $C_\gamma$ in $\Omega$, then there are five cases to study: the face $\gamma$ belongs to $\N_{\infty,\infty}(f)$, $\N_{0,\infty}(f)$, $\N_{\infty,0}(f)$ or is horizontal or vertical. \end{rem}

\paragraph{The face $\gamma$ belongs to $\N_{\infty,\infty}(f)$.}
{The dual cone of the one dimensional face $\gamma$} is the cone $C_\gamma = \mathbb R_{>0}(p,q)$ 
{with $(p,q)$ the primitive normal vector to $\gamma$ exterior to $\overline{\N}(f)$} and $\gamma$ is supported by a line of equation $p\alpha+q\beta = N$. {As $\gamma$ belongs to $\N_{\infty,\infty}(f)$, we have $p>0$ and $q>0$.}
We write $f_\gamma(x,y)=x^{a_\gamma}y^{b_\gamma}\prod_{\mu_\in R_\gamma} (x^q-\mu_i y^p)^{\nu_{\mu}}$.
Let $\mu$ be a root in $R_\gamma$. Using Notations \ref{notationxphiyphi} {and equation (\ref{Cgammadelta<})}, for any $(n,(\alpha,\beta))$ in $C_{\gamma,\eps}^{\delta,<}$, we denote 
$$X_{n,(\alpha,\beta),\mu}(\hat{f}^\eps) = \{ \varphi \in X_{n,(\alpha,\beta)}(\hat{f}^\eps) \mid 
-(\ord x(\varphi), \ord y(\varphi))=(\alpha,\beta),\: \ac x(\varphi)^q=\mu \ac y(\varphi)^p \}.$$ 
endowed with the induced structural map to $\Gm$.

{\begin{rem} \label{remecriturelocale}
 The origin of any arc of $X_{n,(\alpha,\beta),\mu}(\hat{f}^\eps)$ is the point $([0:1],[0:1],[0:1])$ in $X_{\infty}$ and by Remark \ref{rem:ecriture-carte-origine} (formula \ref{ecriturelocale1}) the arc space $X_{n,(\alpha,\beta),\mu}(\hat{f}^\eps)$ is isomorphic to the arc space 

 $$
		\left\{
			\begin{array}{c|l}
				(A(t),B(t))\in \mathcal L(\mathbb A^2_{\k}) & 
			\begin{array}{l}
				\ord A(t)=\alpha, \: \ord B(t)=\beta,\:\ord f(1/A(t),1/B(t))=n, \\
				\ac A(t)^{-q}-\mu \ac B(t)^{-p} =0 \:\:\text{namely}\:\: \ac B(t)^p = \mu \ac A(t)^q\\
			\end{array}
		        \end{array}
		\right\}.
	$$
\end{rem}
}

\begin{prop} \label{prop:mesXnmu-infini-infini} 
	Let $\mu$ be a root {in $R_{\gamma}$}
	and ${\sigma_{(p,q,\mu)}}$ the induced Newton transform 
	$$\begin{array}{ccccl}
		\sigma_{(p,q,\mu)}& : & \k[x,y] & \longrightarrow & \k[v^{-1},v,w] \\ 
		                  &   &  g(x,y) & \mapsto         & g_{{\sigma_{(p,q,\mu)}}}(v,w)=g(\mu^{q'}v^{-p}, v^{-q}(w+\mu ^{p'}))\\
	   \end{array}
	$$
	defined in Definition \ref{def:lestransformationsdeNewton} with $qq'-pp'=1$. 
			For any $k>0$, for any $(n,(\alpha,\beta))$ in $C_{\gamma}^{\delta,<}$ with 
		$\alpha = pk$ and $\beta = qk$, we have
		$$\mes(X_{n,(\alpha,\beta),\mu}({\hat{f}^{\eps}}))=\mathbb L^{-(p+q-1)k}
		\mes(Y_{(n,k)}({(f_{{\sigma_{(p,q,\mu)}}})^{\eps}}))
	\in \mathcal M_{\Gm}^{\Gm}$$
	with 
	$Y_{(n,k)}\left({\left(f_{{\sigma_{(p,q,\mu)}}}\right)^{\eps}}\right) = 
	\left\{ 
		\begin{array}{c|c}
			\left(v(t),w(t)\right) \in \mathcal L(\mathbb A^2_k) &
			\begin{array}{l} 
				\ord v(t)=k,\:\ord w(t)>0,\:
				\ord {\left(f_{{\sigma_{(p,q,\mu)}}}\right)^{\eps}}(v(t),w(t)) = n 
			\end{array}
		\end{array} 
	\right\},
	$ 
	endowed with the structural map {$\ac \left(f_{{\sigma_{(p,q,\mu)}}}\right)^{\eps}$} to $\Gm$.
\end{prop}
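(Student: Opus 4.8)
The plan is to reduce the computation of $\mes(X_{n,(\alpha,\beta),\mu}(\hat f^\eps))$, after the local description at the point $([0:1],[0:1],[0:1])$ from Remark \ref{remecriturelocale}, to an arc-space identity in $\mathcal L(\mathbb A^2_{\k})$ and then apply the change of variables induced by $\sigma_{(p,q,\mu)}$, exactly as in the local computation \cite[Lemma 3.3]{CassouVeys13} and Proposition \ref{prop:mesXnmu}. First I would fix $k>0$ with $\alpha=pk$, $\beta=qk$ and use Remark \ref{remecriturelocale} to identify $X_{n,(\alpha,\beta),\mu}(\hat f^\eps)$ with the space of $(A(t),B(t))\in\mathcal L(\mathbb A^2_{\k})$ with $\ord A=pk$, $\ord B=qk$, $\ac B(t)^p=\mu\,\ac A(t)^q$ and $\ord f^\eps(1/A,1/B)=n$; I would set $x(\varphi)=1/A$, $y(\varphi)=1/B$ so that this becomes a statement about arcs $(x(t),y(t))$ with $\ord x=-pk$, $\ord y=-qk$, $\ac y^p=\mu\,\ac x^q$. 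The key point is that on this locus one can write $x(t)=\mu^{q'}v(t)^{-p}$ and $y(t)=v(t)^{-q}(w(t)+\mu^{p'})$ for a unique pair $(v(t),w(t))$ with $\ord v=k$ and $\ord w>0$: this is solved by first extracting a $p$-th root to define $v$ (using that $\k$ is algebraically closed of characteristic zero and $\gcd(p,q)=1$, so the condition $\ac y^p=\mu\ac x^q$ together with $qq'-pp'=1$ pins down the branch), and then reading off $w$ from $y$. Under this substitution $f^\eps(x(t),y(t))=\bigl(f_{\sigma_{(p,q,\mu)}}\bigr)^\eps(v(t),w(t))$ by the very definition of the Newton transformation in Definition \ref{def:lestransformationsdeNewton}, so the order condition $\ord f^\eps(x,y)=n$ becomes $\ord\bigl(f_{\sigma_{(p,q,\mu)}}\bigr)^\eps(v,w)=n$, and the angular component $\ac f^\eps(x,y)$ matches $\ac\bigl(f_{\sigma_{(p,q,\mu)}}\bigr)^\eps(v,w)$, which identifies the $\Gm$-equivariant structural maps to $\Gm$ on both sides.

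Next I would track how the motivic measure transforms under this map. Working at the level of truncations $\mathcal L_m$ for $m$ large, the map $(v,w)\mapsto(x,y)=(\mu^{q'}v^{-p},v^{-q}(w+\mu^{p'}))$ on the relevant jet spaces is a morphism whose fibres and Jacobian I would compute explicitly: the passage $v\mapsto v^{-p}$ (more precisely, choosing a $p$-th root) contributes a factor governed by the $p$-torsion, and altogether the Jacobian order along an arc with $\ord v=k$ equals $(p+q-1)k$; this is precisely the exponent appearing in the statement. Combining the change-of-variables formula for motivic integration (in the form used throughout \cite{DenLoe99a}, \cite{CNS}) with the identification of structural maps, one gets
$$\mes(X_{n,(\alpha,\beta),\mu}(\hat f^\eps)) = \mathbb L^{-(p+q-1)k}\,\mes\!\left(Y_{(n,k)}\!\left(\left(f_{\sigma_{(p,q,\mu)}}\right)^{\eps}\right)\right)$$
as an equality in $\mathcal M_{\Gm}^{\Gm}$, where $Y_{(n,k)}$ is exactly the arc space described in the statement, endowed with $\ac(f_{\sigma_{(p,q,\mu)}})^\eps$ to $\Gm$. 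I would also note that the condition $\ord w>0$ on the target corresponds to the condition $\ord y = -qk$ being \emph{exactly} $-qk$ (i.e. the face $\gamma$ being the one selected), and that the locus where $\ord v\neq k$ or $w$ is not a unit plus positive order is negligible.

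Finally, I would observe that this is formally the same computation as Proposition \ref{prop:mesXnmu} in the local case, the only differences being: the chart is now the "infinity" chart $\{x_1\neq 0, y_1\neq 0\}$ of $X$ with local equation of $X_\infty$ equal to $x_0y_0=0$, which is handled by Remark \ref{remecriturelocale}; and the Newton transformation at infinity $\sigma_{(p,q,\mu)}$ of Definition \ref{def:lestransformationsdeNewton} involves negative powers $v^{-p}, v^{-q}$ rather than $x_1^p, x_1^q$, which merely flips signs of orders but does not change the Jacobian order $(p+q-1)k$ nor the factorisation $f_\gamma(v^{-p},v^{-q}w)=v^{-N}w^{b_S}\prod(1-\mu_iw^p)^{\nu_i}$ recorded in Notations \ref{rem:factorisationinfini}. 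I expect the main obstacle to be the careful bookkeeping of the $p$-th root extraction: one must check that, on the locus $\ac y^p = \mu\,\ac x^q$, the assignment $v(t)$ is well-defined as a single arc (not a $\mu_p$-torsor) precisely because of the constraint linking $\ac x$ and $\ac y$ via $\mu$ together with the Bézout relation $qq'-pp'=1$, and that the resulting $\Gm$-action on $v$ is the monomial one making the structural map to $\Gm$ match — this is the step where the equivariant structure and the choice of $(p',q')$ (canonical by the convention $p'\le q$, $q'<p$ of the Remark following Definition \ref{defn:Newton-map-local}) genuinely enter, and where a naive count would be off by a root-of-unity factor.
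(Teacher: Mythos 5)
Your proposal is correct and follows essentially the same route as the paper: the same reduction to the chart $x_1y_1\neq 0$ via Remark \ref{remecriturelocale}, the same substitution $(v,w)\mapsto(\mu^{-q'}v^{p},\,v^{q}(w+\mu^{p'})^{-1})$ on the arcs $(A,B)=(1/x,1/y)$, and the same resolution of the branch ambiguity of the formal $p$-th root by the angular-component condition $\ac B(t)^{p}=\mu\,\ac A(t)^{q}$ combined with $qq'-pp'=1$ (in the paper this is the explicit inverse of $\Phi^{\sigma_{(p,q,\mu)}}$, built with Hensel's lemma and with constant term $\varphi_2(0)^{q'}/\varphi_1(0)^{p'}$). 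The one methodological difference is how the factor $\mathbb L^{-(p+q-1)k}$ is obtained: you invoke the change-of-variables/Jacobian formula of motivic integration, whereas the paper computes no Jacobian at all — it truncates at level $L$, views both sides as $\mathbb A^{\alpha+\beta}$- and $\mathbb A^{k}$-bundles over the spaces of unit parts, and exhibits an explicit isomorphism between those, giving $[X^{(L)}]=\mathbb L^{-\alpha-\beta+k}[Y^{(L)}]$ with $\alpha+\beta-k=(p+q-1)k$. This matters because the Newton substitution is generically $p$-to-$1$, not birational, so the standard change-of-variables statement does not apply verbatim; since you have already established the bijection on the relevant cylinders your Jacobian count can be made rigorous (it amounts to redoing the Denef--Loeser fibration lemma on that cylinder), but the paper's direct jet-space comparison buys you the exponent without ever leaving the injective locus, which is why it is phrased that way here and in \cite[Lemma 3.3]{CassouVeys13}. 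One small slip in a side remark: the condition $\ord w(t)>0$ on the target corresponds (for the chosen branch) to the angular constraint $\ac B^{p}=\mu\,\ac A^{q}$, not to $\ord y(\varphi)$ being exactly $-qk$; the latter only excludes $w(0)=-\mu^{p'}$.
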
 
\begin{proof}
	The proof is inspired from that of \cite[Lemma 3.3]{CassouVeys13}. Let $L$ be an integer bigger than $\alpha$, $\beta$ and $n$. We consider
	{$$ X_{(n,\alpha,\beta),\mu}^{(L)} = 
	\left\{ 
		\begin{array}{c|l}
			(A(t),B(t)) \in 
			\left(\k[[t]]/(t^{L+1})\right)^2 & 
			\begin{array}{l}
				\ord A(t)=\alpha,\: \ord B(t)=\beta,\:
				\ac B(t)^p = \mu \ac A(t)^q \\ 
				\ord f^{\eps}(1/A(t),1/B(t))=n 
			\end{array} 
		\end{array}
	\right\},
	$$ 
         }
	{which is isomorphic to the jet-spaces $\pi_{L}( X_{n,(\alpha,\beta),\mu}(\hat{f}^\eps))$ 
	by Remark \ref{remecriturelocale}}, 
	$$\overline{X}_{(n,\alpha,\beta),\mu}^{(L)} = 
	\{
		\begin{array}{c|l} 
			(\varphi_1(t),\varphi_2(t))\in \left(\k[[t]]/t^{L+1}\right)^2 & \begin{array}{l}
				\varphi_1(0)\neq 0, \varphi_2(0)\neq 0, \:
				{ \varphi_2(0)^p = \mu \varphi_1(0)^q},\:
				{\ord f^{\eps}\left(1/(t^{\alpha} \varphi_1),1/(t^{\beta} \varphi_2)\right)=n}
			\end{array} 
		\end{array} 
	\},
	$$
	$$\overline{Y}_{(n,k)}^{(L)} = 
	\{ 
		\begin{array}{c|l}
			(\psi_1(t),\psi_2(t))\in \left(\k[[t]]/t^{L+1}\right)^2 &
			\begin{array}{l} 
				{\ord \left(f_{{\sigma_{(p,q,\mu)}}}\right)^{\eps}(t^k \psi_1(t),\psi_2(t))=n},\:
				\ord \psi_1(t)=0,\: \ord \psi_2(t)\geq 1 
			\end{array} 
		\end{array} 
	\},
	$$
	$$Y_{(n,k)}^{(L)} = 
	\{ 
		\begin{array}{c|l} 
			(v'(t),w'(t))\in \left(\k[[t]]/t^{L+1}\right)^2 & 
			\begin{array}{l} 
				{\ord \left(f_{{\sigma_{(p,q,\mu)}}}\right)^{\eps}(v'(t),w'(t))=n},\:
				\ord v'(t)=k,\: \ord w'(t)\geq 1 
			\end{array} 
		\end{array} 
	\},
	$$
	which is  
	{$\pi_{L}\left( Y_{(n,k)}\left(\left(f_{{\sigma_{(p,q,\mu)}}}\right)^{\eps}\right)\right)$.}
	The application
	$(\varphi_1,\varphi_2)\mapsto (t^\alpha \varphi_1 \mod t^{L+1}, t^\beta \varphi_2 \mod t^{L+1})$ 
	induces a structure of bundle on $\overline{X}_{(n,\alpha,\beta),\mu}^{(L)}$
	over $X_{(n,\alpha,\beta),\mu}^{(L)}$ with fiber $\mathbb A^{\alpha+\beta}$. 
	Also, the application 
	$(\psi_1,\psi_2)\mapsto (t^k \psi_1 \mod t^{L+1}, \psi_2 \mod t^{L+1})$ 
	induces a structure of bundle on $\overline{Y}_{(n,k)}^{(L)}$ over $Y_{(n,k)}^{(L)}$ with fiber $\mathbb A^{k}$.
	We deduce the equalities 
	$[X_{(n,\alpha,\beta),\mu}^{(L)}] = \mathbb L^{-\alpha-\beta} [\overline{X}_{(n,\alpha,\beta),\mu}^{(L)}]$
	and 
	$[\overline{Y}_{(n,k)}^{(L)}]= \mathbb L^{k}[Y_{(n,k)}^{(L)}].$
	We consider the application
	{$$\begin{array}{ccccl}
		\Phi^{{\sigma_{(p,q,\mu)}}} & : &
		\overline{Y}_{(n,k)}^{(L)} & \ra &
		\overline{X}_{(n,\alpha,\beta),\mu}^{(L)} \\
		& & (\psi_1,\psi_2) & \mapsto & 
		\left(\mu^{-q'} \psi_1^{p}, 
		\psi_1^{q}(\psi_2 +\mu^{p'})^{-1}
		\right)=:(\varphi_1,\varphi_2).
	\end{array}
	$$ 
        }
	Using the relation { $qq'-pp'=1$}, we can check that  $\Phi^{{\sigma_{(p,q,\mu)}}}(\overline{Y}_{(n,k)}^{(L)}) \subset  \overline{X}_{(n,\alpha,\beta),\mu}^{(L)}$. Indeed, if 
	$\left(\varphi_1(t),\varphi_2(t) \right)$ is equal to $\Phi^{{\sigma_{(p,q,\mu)}}}(\psi_1(t),\psi_2(t))$ then
	{$ \varphi_2(0)^p - \mu \varphi_1(0)^q = (\psi_1(0)^{q}\mu^{-p'})^{p} - \mu(\mu^{-q'}\psi_1(0)^{p})^{q} = 0$}
	and using the relations $\alpha=pk$, $\beta=qk$, and the definitions we deduce the equality 
	{$f^{\eps}\left(1/(t^{\alpha} \varphi_1(t)),1/(t^{\beta} \varphi_2(t)) \right) = 
	\left((f_{{\sigma_{(p,q,\mu)}}}(t^k\psi_1(t)),\psi_2(t))\right)^{\eps}.$}

	We prove that $\Phi^{{\sigma_{(p,q,\mu)}}}$ is an isomorphism building the inverse application.
	Consider $\varphi(t)=\left(\varphi_1(t),\varphi_2(t) \right)$ in $\overline{X}_{(n,\alpha,\beta),\mu}^{(L)}$.
	Remark that if there is $\psi(t)=(\psi_1(t),\psi_2(t))$ in $\overline{Y}_{(n,k)}^{(L)}$ such that
	$\varphi(t)=\Phi^{{\sigma_{(p,q,\mu)}}}(\psi_1(t),\psi_2(t))$ then we have the equality
	${\varphi_2(0)^{q'}}/{\varphi_1(0)^{p'}} = \psi_1(0).$ 
	Furthermore, denoting $\varphi_1(t) = \varphi_1(0) \tilde{\varphi_1}(t)$, by Hensel lemma, there is a
	unique formal series $a(t)$ such that $a(0)=1\:\:\text{and}\:\:a(t)^p = \tilde{\varphi_1}(t).$ 
	The formal series $a(t)$ is denoted by $\tilde{\varphi_1}(t)^{1/p}$.  Hence, the
	inverse map is given by 
	 $$\left(\varphi_1(t),\varphi_2(t) \right) \mapsto
	\left(
	\frac{\varphi_2(0)^{q'}}{\varphi_1(0)^{p'}}\tilde{\varphi_1}(t)^{1/p} \mod t^{L+1}, 
	-\mu^{p'} + 			
	\left( \frac{\varphi_2(0)^{q'}}{\varphi_1(0)^{p'}}\tilde{\varphi_1}(t)^{1/p}\right)^{q}\varphi_2(t)^{-1} 
	\mod t^{L+1}
	\right).
$$ 
	Thus $\Phi^{{\sigma_{(p,q,\mu)}}}$ is an isomorphism, we have
	$[X_{(n,\alpha,\beta),\mu}^{(L)}]=\mathbb L^{-\alpha-\beta+k} [Y_{(n,k)}^{(L)}],$
	and conclude by definition of the motivic measure.
\end{proof}

\begin{prop}\label{prop:rationalityZ<infini-infini} 
	For $\delta$ large enough, the motivic zeta function
	$Z_{\gamma,\eps}^{\delta,<}$ is rational and can be decomposed as 
\begin{equation} \label{eq:cle-infini-infini}
	Z^{\delta,<}_{\gamma,\eps}(T)= 
	\sum_{\mu \in R_\gamma} 
	\left(
	Z^{\delta/(p+q)}_{\left({f_{{\sigma_{(p,q,\mu)}}}}\right)^\eps, \omega_{p,q}, v\neq 0}(T)\right)_{((0,0),0)}
	\:\text{and}\:
        -\lim_{T \ra \infty} Z^{\delta,<}_{\gamma,\eps}(T)= 
	\sum_{\mu \in R_\gamma} 
	(S_{\left({f_{{\sigma_{(p,q,\mu)}}}}\right)^\eps, v\neq 0})_{((0,0),0)}
\end{equation}
	with the differential form 
	$\omega_{p,q}(v,w) = v^{( p +q -1)}dv\wedge dw.$ 
\end{prop}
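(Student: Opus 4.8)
The statement decomposes the ``decreasing part'' $Z^{\delta,<}_{\gamma,\eps}$ along the roots of the face polynomial $f_\gamma$ and identifies each summand with a motivic zeta function attached to a Newton transform of $f$, restricted to the open set $\{v\neq 0\}$ and localized at the indeterminacy point $((0,0),0)$. The strategy is entirely parallel to the proof of Proposition~\ref{prop:rationalityZ<} (the local analogue), with the bookkeeping of exponents carried out via Notation~\ref{notation-c-omega} and Remark~\ref{rem:m}. First I would recall from Remark~\ref{annulation-f-gamma} that, for a face $\gamma$ in $\N_{\infty,\infty}(f)$ and an arc $\varphi$ with $-(\ord x(\varphi),\ord y(\varphi))\in C_\gamma=\R_{>0}(p,q)$, the condition $\eps\ord\hat{f}^\eps(\varphi)>-\m(\alpha,\beta)$ holds exactly when $f_\gamma(\ac x(\varphi),\ac y(\varphi))=0$, i.e.\ $\ac x(\varphi)^q=\mu\,\ac y(\varphi)^p$ for some root $\mu\in R_\gamma$. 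This gives, by additivity of the motivic measure, the decomposition
\begin{equation*}
Z^{\delta,<}_{\gamma,\eps}(T)=\sum_{\mu\in R_\gamma}\ \sum_{(n,(\alpha,\beta))\in C^{\delta,<}_{\gamma,\eps}\cap(\{n\}\times\Omega)}\mes(X_{n,(\alpha,\beta),\mu}(\hat{f}^\eps))\,T^n.
\end{equation*}

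Next I would reparametrize: since $C_\gamma=\R_{>0}(p,q)$, every $(\alpha,\beta)\in C_\gamma\cap\Z^2$ is of the form $(pk,qk)$ with $k>0$, and $\m(pk,qk)=kN$ where $N=(\gamma\mid(p,q))$. Then Proposition~\ref{prop:mesXnmu-infini-infini} applies verbatim and yields
$\mes(X_{n,(pk,qk),\mu}(\hat{f}^\eps))=\L^{-(p+q-1)k}\mes(Y_{(n,k)}((f_{\sigma_{(p,q,\mu)}})^\eps))$,
so that
\begin{equation*}
\sum_{(n,(pk,qk))\in C^{\delta,<}_{\gamma,\eps}}\mes(X_{n,(pk,qk),\mu}(\hat{f}^\eps))\,T^n=\sum_{n\geq 1}\ \sum_{k\in\overline{C}^{\delta,<}_{\gamma,\eps,n}\cap\N}\L^{-(p+q-1)k}\mes(Y_{(n,k)}((f_{\sigma_{(p,q,\mu)}})^\eps))\,T^n,
\end{equation*}
where $\overline{C}^{\delta,<}_{\gamma,\eps}=\{(n,k)\in\R_{>0}^2\mid kN<\eps n,\ p+q\leq c(pk,qk)\leq n\delta\}$ after substituting $c(pk,qk)=(p+q)k$ (using $p,q>0$). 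The key point is then to recognize the right-hand side, via Definition~\ref{def:zetafomega} and formula~(\ref{zeta}) applied to the polynomial $f_{\sigma_{(p,q,\mu)}}\in\k[v^{-1},v,w]$ with the differential form $\omega_{p,q}$ and the parameter $\delta/(p+q)$: indeed $\ord\omega_{p,q}(v(t),w(t))=(|p|+|q|-1)k=(p+q-1)k$ on arcs with $\ord v(t)=k$, and the constraint $(p+q)k\leq n\delta$ is exactly $k\leq n\cdot\tfrac{\delta}{p+q}$. One must also check, using Lemma~\ref{lem:Newton-alg} (as in the remark preceding Proposition~\ref{prop:rationalityZ<}), that $f_{\sigma_{(p,q,\mu)}}$ is of the form $v^{N}\tilde{f}(v,w)$ with $\tilde f\in\k[v,w]$, so that it satisfies the hypotheses of subsection~\ref{setting} and the motive $(S_{(f_{\sigma_{(p,q,\mu)}})^\eps,\omega_{p,q},v\neq 0})_{((0,0),0)}$ is well-defined. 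Summing over $\mu\in R_\gamma$ gives the first equality of (\ref{eq:cle-infini-infini}); the rationality and the existence of the limit independent of $\delta$ large then follow from Lemma~\ref{lem:rationalite-zeta-omega} (equivalently Theorem~\ref{thmSfeps}), and passing to the limit $T\to\infty$ and using the definition of $(S_{(f_{\sigma_{(p,q,\mu)}})^\eps,v\neq 0})_{((0,0),0)}$ gives the second equality.

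I expect the main obstacle to be the careful verification of the two index-set identifications: that the cone $\overline{C}^{\delta,<}_{\gamma,\eps}$ in the variables $(n,k)$ matches exactly the constraint defining the arc spaces $Y_{n,k}^{\delta/(p+q),\sigma_{(p,q,\mu)}}$ appearing in $(Z^{\delta/(p+q)}_{(f_{\sigma_{(p,q,\mu)}})^\eps,\omega_{p,q},v\neq 0}(T))_{((0,0),0)}$ — in particular that the factor $p+q$ enters the $\delta$-rescaling precisely through $c(p,q)=p+q$ — and that the exponent of $\L$ coming from $\mes(X_{n,(pk,qk),\mu})$ (namely $(p+q-1)k$) combines correctly with the $\L^{-(\nu-1)k}$-type weight hidden in the definition (\ref{zeta}) of the zeta function with differential form for $f_{\sigma_{(p,q,\mu)}}$, where the role of $\nu$ is played by $p+q$ via $\omega_{p,q}(v,w)=v^{p+q-1}dv\wedge dw$. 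Everything else is routine bookkeeping, and the proof is genuinely a transcription of the local case (Proposition~\ref{prop:rationalityZ<} and \cite[Lemma~3.3]{CassouVeys13}) to the coordinate chart at infinity described in Remark~\ref{rem:ecriture-carte-origine}.
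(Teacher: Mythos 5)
Your proposal is correct and follows essentially the same route as the paper's proof: decomposition of $Z^{\delta,<}_{\gamma,\eps}$ over the roots $\mu\in R_\gamma$ via the vanishing of $f_\gamma$ on angular components, reparametrization of the cone by $(\alpha,\beta)=(pk,qk)$ with $c(pk,qk)=(p+q)k$, the measure comparison of Proposition \ref{prop:mesXnmu-infini-infini}, and identification with $\bigl(Z^{\delta/(p+q)}_{(f_{\sigma_{(p,q,\mu)}})^\eps,\omega_{p,q},v\neq 0}\bigr)_{((0,0),0)}$ followed by the rationality and limit statements of subsection \ref{section:mot-zeta-omega}. The only cosmetic slip is the sign of the exponent in $f_{\sigma_{(p,q,\mu)}}=v^{\pm N}\tilde f$, which does not affect the argument since the Newton transform in any case satisfies the hypotheses of subsection \ref{setting}.
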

	\begin{rem}
		In the statement of the proposition, we can also use the differential  
	$$\omega_{p,q,\mu}(v,w) =v^{( p +q -1)}(w+\mu ^{p'})^{-2}dv\wedge dw.$$
because we work locally at $(0,0)$, in particular $\ord w(t)>0$, then we have equality of orders
$$\ord \omega_{p,q,\mu}(v(t),w(t))= \ord \omega_{p,q}(v(t),w(t)) = (p+q-1)\ord v(t).$$
\end{rem}
	
\begin{proof}
	For any element $(\alpha,\beta)$ in $C_\gamma$ there is $k>0$ such that
	$\alpha =pk$ and $\beta = qk$. In particular, as $\gamma$ belongs to $\N_{\infty,\infty}(f)$, $p>0$ and $q>0$,
	and we have $\alpha>0$ and $\beta>0$. The set of integer points of the cone
	$C_{\gamma}^{\delta,<}$ {defined in equation (\ref{Cgammadelta<}) is}, 
	$$ C_{\gamma}^{\delta,<} \cap \mathbb N^{3} =
	\left\{ \begin{array}{c|l} 
		(n,(\alpha,\beta)) \in {\mathbb R_{>0}} \times C_\gamma & 
		               \begin{array}{l} 
			              -\m(\alpha,\beta)<\eps n,\:
			              1 \leq \alpha + \beta \leq n\delta 
		               \end{array} 
	         \end{array} 
	 \right\} {\cap \mathbb N^3} ,$$ 
is in bijection with the cone
$\overline{C}_{\gamma}^{\delta,<} = 
\left\{ 
	\begin{array}{c|l} 
		(n,k) \in \mathbb N^{*2} & 
		\begin{array}{l} 
			-\m(pk,qk)<\eps n,\:
			1 \leq (p+q)k \leq n\delta 
		\end{array} 
	\end{array}
\right\},
$
by $(n,k)\mapsto (n,(pk,qk))$.
Using this notation we prove the equality \ref{eq:cle-infini-infini}.
\noindent Indeed, using Proposition \ref{prop:mesXnmu-infini-infini} we have 

$$ \begin{array}{ccl} 
	Z^{\delta,<}_{\gamma}(T) & = & 
	\sum_{\mu \in R_\gamma} {\sum_{n\geq 1}} \sum_{(n,(\alpha,\beta)) 
		\in C_{\gamma}^{\delta,<}\cap \mathbb N^3} 
		\mes(X_{(n,(\alpha,\beta)),\mu})T^n \\ \\ 
		& = & \sum_{\mu \in R_\gamma} {\sum_{n\geq 1}}\sum_{(n,k)\in \overline{C}_{\gamma}^{\delta,<}}  \mathbb L^{-(p+q-1)k} 
		\mes { \left( Y_{(n,k)}\left(\left({f_{{\sigma_{(p,q,\mu)}}}}\right)^{\eps}\right) \right)}T^{n},
	\end{array} 
	$$ 
	but using the definition of the zeta function  in subsection \ref{section:mot-zeta-omega} 
	(see also equation (\ref{zeta})) 
	we have  
	$$ \begin{array}{ccl}
		\left(
		Z_{\left(f_{{\sigma_{(p,q,\mu)}}}\right)^{\eps},\omega_{p,q}, v\neq 0}^{\delta/(p+q)}(T)
		\right)_{((0,0),0)}
		& = & 
		\sum_{n \geq 1} 
		\left[
		\sum_{k \geq 1} \mathbb L^{-(p+q-1)k} 
		\mes\left(
		Y_{(n,k)}^{\delta/(p+q)}
		\left(
		      \left(f_{{\sigma_{(p,q,\mu)}}}\right)^{\eps}
		\right)
		\right)
	        \right]
		T^{n} \\
		& = & \sum_{n\geq 1}\sum_{(n,k)\in \overline{C}_{\gamma}^{\delta,<}} 
		\mathbb L^{-( p+q-1)k}
		\mes\left(Y_{(n,k)}\left(\left({f_{{\sigma_{(p,q,\mu)}}}}\right)^{\eps}\right)\right)T^{n}
	\end{array}
	$$ 
	with  
	$Y_{(n,k)}^{\delta/(p+q)}
		\left(\left({f_{{\sigma_{(p,q,\mu)}}}}\right)^{\eps}\right)= 
	\left\{
		\begin{array}{c|c} 
			\left(v(t),w(t)\right) \in \mathcal L(\mathbb A^2_k) &
			\begin{array}{l} 
				(v(0),w(0))=0,\:
				\ord v(t) = k \leq n\delta /(p+q) \\
				\ord \left({f_{{\sigma_{(p,q,\mu)}}}}\right)^{\eps}(v(t),w(t)) = n \\ 
			\end{array}
		\end{array} 
	\right\}.
	$

	In particular we can conclude thanks to  section \ref{section:mot-zeta-omega},
	for $\delta$ large enough, that the motivic zeta function is rational and
	has a limit independent on $\delta$ when $T$ goes to infinity
	$$-\lim_{T \ra \infty}	
	\left(
	Z_{\left({f_{{\sigma_{(p,q,\mu)}}}}\right)^{\eps}, \omega_{p,q}, v\neq 0}^{\delta/(p+q)}(T)
	\right)_{((0,0),0)} = 
	\left(S_{\left({f_{{\sigma_{(p,q,\mu)}}}}\right)^{\eps}, \omega_{p,q}, v\neq 0}\right)_{((0,0),0)} \in \mgg.
	$$
\end{proof}

\paragraph{The face $\gamma$ belongs to $\N_{\infty,0}(f)$ or $\N_{0,\infty}(f)$.}
Let $\gamma$ be a face in $\N_{i(\gamma),j(\gamma)}$ with $(i(\gamma),j(\gamma))$ equal to $(\infty,0)$ or $(0,\infty)$
with $C_\gamma = \mathbb R_{>0}(p,q)$  with $p>0$ and $q<0$, or, $p<0$ and $q>0$. We denote by $R_\gamma$ the roots of $f_\gamma$ (see Notations \ref{rem:factorisationinfini}).
\begin{prop}\label{prop:rationalityZ<infini-zero} 
	For $\delta$ large enough, the motivic zeta function
	$Z_{\gamma,\eps}^{\delta,<}$ is rational with
	$$Z^{\delta,<}_{\gamma,\eps}(T) = 
	\sum_{\mu \in R_\gamma} 
	\left(Z^{\delta/c(p,q)}_{\left({f_{{\sigma_{(p,q,\mu)}}}}\right)^{\eps}, \omega_{p,q}, v\neq 0}\right)_{((0,0),0)}
	\:\text{and}\:
	-\lim_{T \ra \infty} Z^{\delta,<}_{\gamma,\eps}(T) = 
	\sum_{\mu \in R_\gamma} 
	\left(S_{\left({f_{{\sigma_{(p,q,\mu)}}}}\right)^{\eps}, v\neq 0}\right)_{((0,0),0)}$$ 
	with 
	$\omega_{p,q}(v,w) = v^{( \abs{p} + \abs{q} -1)}dv\wedge dw$ and the convenient Newton transformations defined in definition \ref{def:lestransformationsdeNewton} in (\ref{transformation-Newton-infini-zero}) and (\ref{transformation-Newton-zero-infini}).
\end{prop}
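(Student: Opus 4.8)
The plan is to follow verbatim the architecture of the proof of Proposition \ref{prop:rationalityZ<infini-infini}, replacing the chart $x_1y_1\neq 0$ of $\hat f^{-1}(\infty)$ by the chart adapted to the relevant quadrant. Indeed a one dimensional face $\gamma$ in $\N_{\infty,0}(f)$ (resp. $\N_{0,\infty}(f)$) has dual cone $C_\gamma=\mathbb R_{>0}(p,q)$ with $p>0$, $q<0$ (resp. $p<0$, $q>0$), so for $(\alpha,\beta)=(pk,qk)$ in $C_\gamma$ exactly one of $\alpha,\beta$ is positive. First I would set up, for each root $\mu$ in $R_\gamma$, the arc spaces $X_{n,(\alpha,\beta),\mu}(\hat f^\eps)$ exactly as in the $\N_{\infty,\infty}$ paragraph, using the factorisations of Notations \ref{rem:factorisationinfini} to encode the condition that the angular components lie on the root $\mu$, and the Newton transformation $\sigma_{(p,q,\mu)}$ of formula (\ref{transformation-Newton-infini-zero}) when $\gamma\in\N_{\infty,0}(f)$ and of formula (\ref{transformation-Newton-zero-infini}) when $\gamma\in\N_{0,\infty}(f)$.

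The core step is the analogue of Proposition \ref{prop:mesXnmu-infini-infini}: for $k>0$, $(\alpha,\beta)=(pk,qk)$ and $(n,(\alpha,\beta))$ in $C_{\gamma,\eps}^{\delta,<}$,
$$\mes(X_{n,(\alpha,\beta),\mu}(\hat f^\eps))=\mathbb L^{-(\abs p+\abs q-1)k}\,\mes\big(Y_{(n,k)}((f_{\sigma_{(p,q,\mu)}})^\eps)\big)\in\mathcal M_\Gm^\Gm,$$
with $Y_{(n,k)}$ the arc space of Proposition \ref{prop:mesXnmu-infini-infini}. I would prove this by the same scheme: by Remark \ref{rem:ecriture-carte-origine}, an arc in $X_{n,(\alpha,\beta),\mu}(\hat f^\eps)$ with $\alpha>0,\beta<0$ is written, via formula (\ref{ecriturelocale2}), in the chart $x_0\neq 0,\,y_1\neq 0$ as $\varphi(t)=([1:A(t)],[B(t):1],[z_0(t):z_1(t)])$ with $\ord A(t)=\alpha$, $\ord B(t)=\abs\beta$ and $\ord\varphi^{*}(\mathcal I_{X_\infty})=\ord B(t)=\abs\beta=c(\alpha,\beta)$ (symmetrically, for $\gamma\in\N_{0,\infty}(f)$ one uses the chart $x_1\neq 0,\,y_0\neq 0$ and formula (\ref{ecriturelocale3}), getting $\ord\varphi^{*}(\mathcal I_{X_\infty})=\alpha=c(\alpha,\beta)$). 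Truncating to a level $L\gg n,\abs\alpha,\abs\beta$, one produces an explicit isomorphism $\Phi^{\sigma_{(p,q,\mu)}}$ between the truncated "bar" jet spaces, using the relation $pp'-qq'=1$ and Hensel's lemma to extract the appropriate $\abs p$-th or $\abs q$-th root; the two bundle structures with fibres $\mathbb A^{\abs\alpha+\abs\beta}$ and $\mathbb A^{k}$ then yield the claimed power of $\mathbb L$, the exponent $\abs p+\abs q-1$ matching $\omega_{p,q}(v,w)=v^{\abs p+\abs q-1}dv\wedge dw$ of Notations \ref{notation-c-omega}.

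With this in hand the conclusion is identical to Proposition \ref{prop:rationalityZ<infini-infini}. The set $C_{\gamma,\eps}^{\delta,<}\cap\mathbb N^3$, with $C_{\gamma,\eps}^{\delta,<}$ as in formula (\ref{Cgammadelta<}), is in bijection via $(n,(pk,qk))\mapsto(n,k)$ with
$$\overline C_{\gamma,\eps}^{\delta,<}=\left\{(n,k)\in\mathbb N^{*2}\ \middle|\ -\m(pk,qk)<\eps n,\ 1\le c(p,q)\,k\le n\delta\right\},$$
the fibres over each $n$ being finite since $C_\gamma$ is one dimensional; substituting the measure identity gives
$$Z^{\delta,<}_{\gamma,\eps}(T)=\sum_{\mu\in R_\gamma}\sum_{(n,k)\in\overline C_{\gamma,\eps}^{\delta,<}}\mathbb L^{-(\abs p+\abs q-1)k}\,\mes\big(Y_{(n,k)}((f_{\sigma_{(p,q,\mu)}})^\eps)\big)T^n.$$
Since for arcs with $\ord w(t)>0$ one has $\ord\omega_{p,q}(v(t),w(t))=(\abs p+\abs q-1)\ord v(t)$, this series is term by term $(Z^{\delta/c(p,q)}_{(f_{\sigma_{(p,q,\mu)}})^\eps,\omega_{p,q},v\neq 0}(T))_{((0,0),0)}$ as in formula (\ref{zeta}). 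Rationality for $\delta$ large, $\delta$-independence of the limit, and its identification with $\sum_{\mu\in R_\gamma}(S_{(f_{\sigma_{(p,q,\mu)}})^\eps,v\neq 0})_{((0,0),0)}$ then follow from Lemma \ref{lem:rationalite-zeta-omega} and Section \ref{section:mot-zeta-omega}. The main obstacle is bookkeeping rather than a genuine new difficulty: one must keep the two mirror cases $\N_{\infty,0}(f)$ and $\N_{0,\infty}(f)$ straight, pick the correct Newton transformation among (\ref{transformation-Newton-infini-zero}) and (\ref{transformation-Newton-zero-infini}) together with the matching root datum of Notations \ref{rem:factorisationinfini}, and — the only place where the argument genuinely departs from Proposition \ref{prop:rationalityZ<infini-infini} — use the right chart of $X_\infty$ so that $\ord\varphi^{*}(\mathcal I_{X_\infty})$ equals $c(\alpha,\beta)=\abs\alpha$ or $\abs\beta$, which is precisely what makes the scaling factor $c(p,q)$, and not $p+q$, appear.
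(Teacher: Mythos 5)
Your proposal is correct and matches the paper's intent exactly: the paper's own proof of this proposition is a one-line reference ("similar to the proof of Proposition \ref{prop:rationalityZ<infini-infini}"), and what you write out — the analogue of Proposition \ref{prop:mesXnmu-infini-infini} in the appropriate chart of $X_\infty$, the bundle/Hensel argument giving the factor $\mathbb L^{-(\abs{p}+\abs{q}-1)k}$, the cone bijection, and the identification with $(Z^{\delta/c(p,q)}_{(f_{\sigma_{(p,q,\mu)}})^{\eps},\omega_{p,q},v\neq 0})_{((0,0),0)}$ — is precisely that adaptation, including the correct observation that $\ord\varphi^{*}(\mathcal I_{X_\infty})=c(\alpha,\beta)$ in these charts is what produces the scaling $c(p,q)$ instead of $p+q$.
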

 
\begin{proof}
	The proof is similar to the proof of Proposition \ref{prop:rationalityZ<infini-infini}. \\
\end{proof}

\begin{rem} Assume $\eps=+$. In the case where $C_\gamma = \mathbb R_{>0}(p,q)+\mathbb R_{>0}(-p,-q)$ with $pq<0$ then applying twice the previous proposition we obtain, that for $\delta$ large enough, the motivic zeta function
	$Z_{\gamma,\eps}^{\delta,<}$ is rational with
	$$Z^{\delta,<}_{\gamma,\eps}(T) = 
	\sum_{\mu \in R_\gamma} 
	\left(Z^{\delta/c(p,q)}_{{f_{\sigma(\abs{p},-\abs{q},\mu)}}, \omega_{p,q}, v\neq 0}\right)_{((0,0),0)}
	+ \left(Z^{\delta/c(p,q)}_{{f_{\sigma(-\abs{p},\abs{q},\mu)}}, \omega_{p,q}, v\neq 0}\right)_{((0,0),0)}
	$$ 
	It has a limit (independent from $\delta$)
	$-\lim_{T \ra \infty} Z^{\delta,<}_{\gamma,\eps}(T) = 
	\sum_{\mu \in R_\gamma} 
	(S^{\delta/c(p,q)}_{{f_{\sigma(\abs{p},-\abs{q},\mu)}}, v\neq 0})_{((0,0),0)}
	+ (S^{\delta/c(p,q)}_{{f_{\sigma(-\abs{p},\abs{q},\mu)}}, v\neq 0})_{((0,0),0)}.
	$ 
\end{rem}

\paragraph{The face $\gamma$ is horizontal.} \label{sec:cashorizontalinfini}

There are at most two one-dimensional horizontal faces. There is only one with exterior normal vector in $\Omega$, we denote it $\gamma_H$.
The  face polynomial  $f_{\gamma_H}$ has the 
form $y^MT(x)$ where $M\geq 1$ and $T$ is a polynomial in $\k[x]$. We denote by $R_{\gamma_H}$ the set of roots of $T$ in $\Gm$ and call it set of roots of $f_{\gamma_H}$. 
In that case, we have $C_{\gamma_H} = \mathbb R_{>0}(0,1)$.

\begin{rem} \label{annulationdefgamma}
Let $(n,(\alpha,\beta))$ in $C_{\gamma_H,\eps}^{\delta,<}$ necessarily $\alpha=0$ and $\beta>0$. 
By Remark \ref{rem:ecriture-carte-origine}, any arc $\varphi$ in 
$X_{n,(\alpha,\beta)}$ can be written as 
$\varphi(t)= ([1:A(t)],[B(t):1],[z_0(t):z_1(t)])$
with $\ord A(t)=0$ and $\ord B(t)>0$ and 
$\ord \hat{f}^{\eps}(\varphi(t)) = \ord f^{\eps}\left( A(t), 1/B(t) \right) = n.$
Furthermore, as $n<-\eps \m(\alpha,\beta)$, $A(0)$ is a root of $f_\gamma$.
\end{rem}
\begin{prop} \label{prop:face-hozitontale}
For $\delta$ large enough, the motivic zeta function
	$Z_{\gamma_H,\eps}^{\delta,<}$ is rational with
        $$Z_{\gamma_H,\eps}^{\delta,<}(T) = 
	\sum_{\mu \in R_{\gamma_H}} 
	(Z^{\delta}_{f^{\eps}_{0,\infty,\mu},y\neq 0})_{((0,0),0)}
	\:\text{and}\:
        -\lim_{T \ra \infty} Z_{\gamma_H,\eps}^{\delta,<}(T) = 
	\sum_{\mu \in R_{\gamma_H}} 
	\left(S_{f^{\eps}_{0,\infty,\mu},y\neq 0}\right)_{((0,0),0)},
	$$
	where for any root $\mu$ of $f_{\gamma_H}$, we consider {$f_{0,\infty,\mu}(x,y) = f(x+\mu,1/y).$}
\end{prop}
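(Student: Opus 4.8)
The statement to prove is Proposition \ref{prop:face-hozitontale}, the analogue for the horizontal face $\gamma_H$ of Propositions \ref{prop:rationalityZ<infini-infini} and \ref{prop:rationalityZ<infini-zero}. The strategy is exactly the one used there: identify the part $Z_{\gamma_H,\eps}^{\delta,<}(T)$ of the motivic zeta function coming from arcs whose angular components annihilate $f_{\gamma_H}$, stratify these arcs by the root $\mu\in R_{\gamma_H}$ of $f_{\gamma_H}=y^MT(x)$ that the angular component of the first coordinate hits, show that on each stratum the change of variables induced by the Newton transformation $\sigma_{(0,\infty,\mu)}$ (the transformation $f(x,y)\mapsto f(x+\mu,1/y)$, cf. the second Remark after Definition \ref{def:lestransformationsdeNewton}) matches the measure up to a power of $\mathbb L$ with the measure entering the zeta function $(Z_{f^\eps_{0,\infty,\mu},y\neq 0})_{((0,0),0)}$, and then conclude by the rationality results of subsection \ref{section:mot-zeta-omega}.

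First I would describe the arcs precisely. By Remark \ref{annulationdefgamma}, any arc $\varphi$ contributing to $Z_{\gamma_H,\eps}^{\delta,<}(T)$ has $-(\ord x(\varphi),\ord y(\varphi))=(\alpha,\beta)$ with $\alpha=0$, $\beta>0$, origin in the chart $\{x_1\neq 0, y_0\neq 0\}$, hence writes as $([1:A(t)],[B(t):1],[z_0(t):z_1(t)])$ with $\ord A(t)=0$, $\ord B(t)=\beta>0$, and $A(0)$ a root $\mu$ of $f_{\gamma_H}$ (equivalently of $T$). Then by additivity of the motivic measure, using Definition \ref{def:zero-partie-initiale}, $Z_{\gamma_H,\eps}^{\delta,<}(T)=\sum_{\mu\in R_{\gamma_H}} \sum_{(n,(0,\beta))\in C_{\gamma_H,\eps}^{\delta,<}\cap\mathbb N^3}\mes(X_{n,(0,\beta),\mu}(\hat f^\eps))T^n$ where $X_{n,(0,\beta),\mu}$ is the locus where moreover $\ac x(\varphi)=\mu$ (note $\ord x(\varphi)=0$). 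Next I would establish the key measure identity, mimicking Proposition \ref{prop:mesXnmu-infini-infini}: after truncating to level $L$, the bundle structure coming from dividing out leading coefficients relates $X_{n,(0,\beta),\mu}$ to the jet space of $Y_{(n,\beta)}(f^\eps_{0,\infty,\mu})=\{(v(t),w(t))\mid \ord v(t)=\beta,\ \ord w(t)>0,\ \ord f^\eps_{0,\infty,\mu}(v(t),w(t))=n\}$ (here in the variable $y\neq 0$, so the role of $x$ and $y$ is swapped compared to Proposition \ref{prop:face-hozitontale}'s horizontal-into-vertical geometry); explicitly the substitution $x=v+\mu$, $y=1/w$ together with the translation-by-$\mu$ of the angular component gives $\mes(X_{n,(0,\beta),\mu}(\hat f^\eps))=\mathbb L^{-\beta}\mes(Y_{(n,\beta)}(f^\eps_{0,\infty,\mu}))$. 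Since $c(0,\beta)=\beta$, the constraint $\beta\le n\delta$ is directly the constraint defining $X_{n}^{\delta,\infty}(f^\eps_{0,\infty,\mu},y\neq 0)$, with no renormalisation of $\delta$ needed, so summing yields $Z_{\gamma_H,\eps}^{\delta,<}(T)=\sum_{\mu\in R_{\gamma_H}}(Z^{\delta}_{f^\eps_{0,\infty,\mu},y\neq 0}(T))_{((0,0),0)}$.

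Finally, the rationality of each $(Z^{\delta}_{f^\eps_{0,\infty,\mu},y\neq 0}(T))_{((0,0),0)}$ for $\delta$ large and the independence of its limit on $\delta$ follow from Lemma \ref{lem:rationalite-zeta-omega} (together with the Remark after Definition \ref{notations:Xeps-omega} on swapping $x,y$, which is how $\k[x,y,y^{-1}]$ is handled, cf. the Remark preceding Lemma \ref{lem:Newton-alg}), and taking $-\lim_{T\to\infty}$ term by term gives $-\lim_{T\to\infty}Z_{\gamma_H,\eps}^{\delta,<}(T)=\sum_{\mu\in R_{\gamma_H}}(S_{f^\eps_{0,\infty,\mu},y\neq 0})_{((0,0),0)}$, which is the claimed formula. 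The main obstacle I anticipate is the careful bookkeeping in the measure identity: one must verify that the Newton map $\sigma_{(0,\infty,\mu)}$ really does send the $y\neq 0$-setting arc space bijectively onto the stratum $X_{n,(0,\beta),\mu}$ at the level of truncated jets (building an explicit inverse, using a $p$-th root via Hensel's lemma is unnecessary here since $p=1$, so it is cleaner than Proposition \ref{prop:mesXnmu-infini-infini}), and that the exponent of $\mathbb L$ coming from the bundle fibers is exactly $-\beta$, not something involving $M$ or the multiplicity of $\mu$ — those enter only the \emph{value} $n=\ord f^\eps$, not the measure. Everything else is routine and parallel to the infinity-infinity and infinity-zero cases already treated.
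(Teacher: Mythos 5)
Your overall strategy is the same as the paper's: use Remark \ref{annulationdefgamma} to write the relevant arcs as $([1:A(t)],[B(t):1],\dots)$ with $\ord A=0$, $\ord B=\beta>0$ and $A(0)\in R_{\gamma_H}$, stratify by the root $\mu$, transport each stratum through the Newton map $x\mapsto x+\mu$, $y\mapsto 1/y$, note that $c(0,\beta)=\beta$ so no rescaling of $\delta$ occurs, and conclude by the rationality results of subsection \ref{section:mot-zeta-omega}. That is exactly how the paper argues. However, there is a concrete error precisely at the step you single out as the crux: the measure identity. You claim $\mes(X_{n,(0,\beta),\mu}(\hat f^{\eps}))=\mathbb L^{-\beta}\,\mes\bigl(Y_{(n,\beta)}(f^{\eps}_{0,\infty,\mu})\bigr)$, but the correct identity carries no power of $\mathbb L$. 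The correspondence here is the plain translation $(w(t),B(t))\mapsto(w(t)+\mu,B(t))$, which at every truncation level $L$ is an isomorphism between subsets of the same ambient space $\bigl(\k[t]/t^{L+1}\bigr)^2$, compatible with the structural maps $\ac f^{\eps}$ to $\Gm$; unlike Proposition \ref{prop:mesXnmu-infini-infini} there is no mismatch of orders between the two sides (the condition $A(0)=\mu$ corresponds exactly to $\ord w>0$, and $\ord B=\beta$ is the same on both sides), so no bundle argument and no Jacobian-type exponent is needed. Equivalently, in the general pattern the exponent is $-(\abs{p}+\abs{q}-1)k$, and for the horizontal face $(p,q)=(0,1)$ this is $0$.

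This is not a harmless normalisation: with your extra $\mathbb L^{-\beta}$ the sum you form is the zeta function weighted by the form $y^{\nu-1}dx\wedge dy$ with $\nu=2$, whereas the proposition's right-hand side $(Z^{\delta}_{f^{\eps}_{0,\infty,\mu},y\neq 0})_{((0,0),0)}$ is the one attached to $\omega_{0,1}=dv\wedge dw$, i.e.\ $\nu=1$ (this is why no $\omega$ and no $\delta/c(p,q)$ appear in the statement). So the first displayed equality of rational zeta functions would be false as you have set it up; only the limit statement would survive, by the independence of $S_{g,\omega,U}$ on the multiplicities of $\omega$ (Remark \ref{rem:independanceenomega}). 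A quick sanity check with $f(x,y)=(x-1)y$, $\mu=1$, shows both sides have measure $(\mathbb L-1)^2\mathbb L^{-n-2\beta}$ for fixed $(n,\beta)$, confirming the trivial factor. There is also a small coordinate slip in your definition of $Y_{(n,\beta)}$: the order-$\beta$ condition must sit on the variable corresponding to $B$ (the one defining the open set $y\neq 0$), while the translated variable $w$ only satisfies $\ord w>0$; as written you attach $\ord v=\beta$ to the first argument of $f_{0,\infty,\mu}$. Once the factor $\mathbb L^{-\beta}$ is deleted and the variables are matched correctly, your argument coincides with the paper's proof.
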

\begin{rem} The motives $(S_{f^{\eps}_{0,\infty,\mu},y\neq 0})_{((0,0),0)}$ are computed in subsection \ref{Sfeps}.
\end{rem}

	\begin{proof} 
		For any $n \geq 1$, $\delta\geq 1$ and $\mu \in R_{\gamma_H}$ we consider
		$$X_{n,\eps,\gamma_H,\mu}^{\delta} =  
		\left\{
			\begin{array}{c|c}
			(A(t),B(t)) & 
			  \begin{array}{l}
				A(0)=\mu,\: B(0)\neq 0,\:
				0<\ord B(t) \leq n\delta,\:
				\ord f^{\eps}(A(t),1/B(t))=n
			  \end{array}
		        \end{array}
		\right\}
		$$
		and using remark \ref{annulationdefgamma} we obtain the decomposition of the zeta function 
		$Z_{\gamma_H,\eps}^{\delta,<}(T)= 
		\sum_{\mu \in R_{f_{\gamma_H}}} 
		Z_{\hat{f}^{\eps},\gamma_H,\mu}^{\delta}(T)
		$
		with 
		 for any root $\mu$ in $R_{\gamma_H}$,
		$Z_{\hat{f}^{\eps},\gamma_H,\mu}^{\delta}(T) = 
		\sum_{n\geq 1} \mes\left(X_{n,\eps,\gamma_H,\mu}^{\delta}\right)T^n.$
					We have the isomorphism
			$$ \begin{array}{ccc}
				X_{n, y\neq 0,(0,0)}^{\delta}(f^{\eps}_{0,\infty,\mu}) & \to & 
				X_{n,\gamma_H,\mu}^{\delta} \\
				(w(t),B(t)) & \mapsto & (w(t)+\mu,B(t))
			\end{array}
			$$
			where 
			$ X_{n, y\neq 0,(0,0)}^{\delta}(f^{\eps}_{0,\infty,\mu}) = 
			\left\{
				\begin{array}{c|c}
				(w(t),t^k B(t)) & 
				\begin{array}{l}
					\ord w(t)>0,\:
					0<\ord B(t) \leq n\delta,\:
					\ord f^{\eps}_{0,\infty,\mu}(w(t), B(t)) = n
				\end{array}
				\end{array}
			\right\}.
			$
			Then, we conclude that we have the equality
			$Z_{\hat{f}^{\eps},\gamma_H,\mu}^{\delta}(T) = 
			(Z_{f^{\eps}_{0,\infty,\mu},y\neq 0}^{\delta}(T))_{((0,0),0)},$
		which induces the result.
		\end{proof}
			
\paragraph{The face $\gamma$ is vertical.} \label{sec:casverticalinfini}
There are at most two one-dimensional vertical faces. There is only one with exterior normal vector in $\Omega$, we denote it $\gamma_V$.
The  face polynomial  $f_\gamma$ has the 
form $x^MT(y)$ where $M\geq 1$ and $T$ is a polynomial in $\k[y]$. We denote by $R_{\gamma_V}$ the set of roots of $T$ in $\Gm$. We call this set, set of roots of $f_{\gamma_V}$. 
In that case, we have $C_{\gamma_V} = \mathbb R_{>0}(1,0)$.

\begin{prop} \label{prop:face-verticale}
For $\delta$ large enough, the motivic zeta function $Z_{\gamma_V,\eps}^{\delta,<}$ is rational with
$$Z_{\gamma_V,\eps}^{\delta,<} = \sum_{\mu \in R_{f_{\gamma_V}}} \left(Z^{\delta}_{f^{\eps}_{\infty,0,\mu},x\neq 0}\right)_{((0,0),0)}
\:\text{and}\:-\lim_{T \ra \infty} Z_{\gamma_V,\eps}^{\delta,<}(T) = \sum_{\mu \in R_{f_{\gamma_V}}} 
	\left(S_{f^{\eps}_{\infty,0,\mu},x\neq 0}\right)_{((0,0),0)}.
$$ 
where for any root $\mu$ of $f_{\gamma_V}$ we consider {$f_{\infty,0,\mu}(x,y) = f(1/x,y+\mu).$}
\end{prop}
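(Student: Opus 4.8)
The plan is to carry out the exact mirror image of the proof of Proposition~\ref{prop:face-hozitontale}, exchanging the roles of the two coordinates: where the horizontal case works in the chart $\{x_0\neq 0,\ y_1\neq 0\}$ of $X_\infty$ and substitutes in the first variable, the vertical case will work in the chart $\{x_1\neq 0,\ y_0\neq 0\}$ and substitute in the second variable, and the Newton map $f_{0,\infty,\mu}(x,y)=f(x+\mu,1/y)$ is replaced by $f_{\infty,0,\mu}(x,y)=f(1/x,y+\mu)$.

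First I would describe the arcs contributing to $Z_{\gamma_V,\eps}^{\delta,<}(T)$. Since $C_{\gamma_V}=\mathbb R_{>0}(1,0)$, any $(n,(\alpha,\beta))$ in $C_{\gamma_V,\eps}^{\delta,<}$ (see~(\ref{Cgammadelta<})) satisfies $\beta=0$ and $\alpha>0$; hence an arc $\varphi$ counted in that stratum, with $-(\ord x(\varphi),\ord y(\varphi))=(\alpha,0)$, has $\ord x(\varphi)=-\alpha<0$ and $\ord y(\varphi)=0$, so by case~(3) of Remark~\ref{rem:ecriture-carte-origine} it can be written as $\varphi(t)=([A(t):1],[1:B(t)],[z_0(t):z_1(t)])$ with $\ord A(t)=\alpha>0$, $\ord B(t)=0$, $\ord\varphi^{*}(\mathcal I_{X_\infty})=\ord A(t)$, and $\ord\hat f^{\eps}(\varphi)=\ord f^{\eps}(1/A(t),B(t))=n$. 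The $\delta$-condition then reads $0<\ord A(t)\le n\delta$, and, as in Remark~\ref{annulationdefgamma} and by Remark~\ref{annulation-f-gamma}, belonging to the ``$<$'' stratum means $f_{\gamma_V}(\ac x(\varphi),\ac y(\varphi))=0$, which together with $f_{\gamma_V}(x,y)=x^{M}T(y)$ and $\ac x(\varphi)\neq 0$ forces $T(B(0))=0$, i.e.\ $B(0)\in R_{\gamma_V}$. By additivity of the motivic measure I then get
\[
Z_{\gamma_V,\eps}^{\delta,<}(T)=\sum_{\mu\in R_{\gamma_V}}Z_{\hat f^{\eps},\gamma_V,\mu}^{\delta}(T),\qquad
Z_{\hat f^{\eps},\gamma_V,\mu}^{\delta}(T)=\sum_{n\ge 1}\mes\bigl(X_{n,\eps,\gamma_V,\mu}^{\delta}\bigr)T^{n},
\]
where $X_{n,\eps,\gamma_V,\mu}^{\delta}$ is the set of pairs $(A(t),B(t))$ with $A(0)=0$, $B(0)=\mu$, $0<\ord A(t)\le n\delta$ and $\ord f^{\eps}(1/A(t),B(t))=n$, endowed with the structural map $(A(t),B(t))\mapsto\ac f^{\eps}(1/A(t),B(t))$.

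Next, for each $\mu\in R_{\gamma_V}$ I would put $f_{\infty,0,\mu}(x,y)=f(1/x,y+\mu)$ and observe that, $M$ being the degree of $f$ in $x$ (so $M\ge 1$) and $T(y)$ the nonzero coefficient of $x^{M}$ in $f$, one has $f_{\infty,0,\mu}(x,y)=x^{-M}\tilde g(x,y)$ with $\tilde g\in\k[x,y]$ not divisible by $x$, since $\tilde g(0,y)=T(y+\mu)\not\equiv 0$; thus $f_{\infty,0,\mu}$ is precisely of the shape studied in subsection~\ref{Sfeps}, and the motive $(S_{f^{\eps}_{\infty,0,\mu},x\neq 0})_{((0,0),0)}$ is well defined. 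After stratifying $X_{n,\eps,\gamma_V,\mu}^{\delta}$ by $\ord A(t)=k$, $1\le k\le n\delta$, the substitution $B(t)=w(t)+\mu$ provides, for each $n$ and $k$, an isomorphism between $X_{n,x\neq 0,(0,0)}^{\delta}(f^{\eps}_{\infty,0,\mu})$ and $X_{n,\eps,\gamma_V,\mu}^{\delta}$, namely $(A(t),w(t))\mapsto(A(t),w(t)+\mu)$, compatible with the structural maps to $\Gm$ because $f^{\eps}(1/A(t),w(t)+\mu)=f^{\eps}_{\infty,0,\mu}(A(t),w(t))$. Comparing with the definition of the zeta function of subsection~\ref{section:mot-zeta-omega} (see also equation~(\ref{zeta})), this identification gives $Z_{\hat f^{\eps},\gamma_V,\mu}^{\delta}(T)=\bigl(Z_{f^{\eps}_{\infty,0,\mu},x\neq 0}^{\delta}(T)\bigr)_{((0,0),0)}$, and summing over $\mu$ yields the first formula. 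Rationality and the existence of a $\delta$-independent limit $-\lim_{T\to\infty}\bigl(Z_{f^{\eps}_{\infty,0,\mu},x\neq 0}^{\delta}(T)\bigr)_{((0,0),0)}=(S_{f^{\eps}_{\infty,0,\mu},x\neq 0})_{((0,0),0)}$ then follow from Lemma~\ref{lem:rationalite-zeta-omega} together with Theorem~\ref{thmSfeps} and Proposition~\ref{casgunite}, which yields the second formula.

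The computation is entirely routine and parallel to the horizontal case, so I do not expect a genuine obstacle; the only points requiring care are the chart identification at $X_\infty$ (here $\{x_1\neq 0,\ y_0\neq 0\}$ instead of $\{x_0\neq 0,\ y_1\neq 0\}$) and the resulting equality $\ord\varphi^{*}(\mathcal I_{X_\infty})=\ord A(t)$, which fixes the correct $\delta$-constraint, and the verification that $f_{\infty,0,\mu}$ indeed lies in the class $x^{-M}g(x,y)$ with $g$ not divisible by $x$ so that the results of subsection~\ref{Sfeps} apply directly.
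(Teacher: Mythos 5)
Your proposal is correct and is precisely the intended argument: it mirrors, coordinate for coordinate, the paper's proof of the horizontal case (Proposition \ref{prop:face-hozitontale}), using case (3) of Remark \ref{rem:ecriture-carte-origine} for the chart and the measure-preserving substitution $B(t)=w(t)+\mu$, and then invoking subsection \ref{Sfeps} (whose setting $x^{-M}g(x,y)$ with $g$ not divisible by $x$ you correctly verify for $f_{\infty,0,\mu}$, where in fact $g(0,0)=T(\mu)=0$, so Theorem \ref{thmSfeps} rather than Proposition \ref{casgunite} is the relevant case). This matches the paper's (implicit) proof, so no further comment is needed.
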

\section*{Acknowledgement}
The first author is partially supported by the spanish grants: MTM2016-76868-C2-2-P (directors Pedro Gonzalez Perez and Alejandro Melle), MTM2016-76868-C2-1-P (directors Enrique Artal and Jose Ignacio Cogolludo) and the group {\it Geometria, Topologia, Algebra y Cryptografia en singularidades y sus aplicaciones}.
The second author is partially supported by ANR-15-CE40-0008 (Defigeo)
\bibliographystyle{plain}
\bibliography{biblio_complete}
\end{document}